\definecolor{wwhhii}{rgb}{1.,1.,1.}
\definecolor{rreedd}{rgb}{1.,0.,0.}
\definecolor{uuuuuu}{rgb}{0.26666666666666666,0.26666666666666666,0.26666666666666666}
\title{The directed landscape from Brownian motion}
\author{Duncan Dauvergne \and B\'alint Vir\'ag}
\newcommand{\Z}{\mathbb{Z}}
\newcommand{\Q}{\mathbb{Q}}
\newcommand{\N}{\mathbb{N}}
\newcommand{\R}{\mathbb{R}}
\newcommand{\p}{\mathbb{P}}
\newcommand{\E}{\mathbb{E}}
\newcommand{\indic}{\mathbf{1}}
\newcommand{\fl}[1]{{\left\lfloor #1 \right\rfloor}}
\newcommand{\cl}[1]{{\left\lceil #1 \right\rceil}}
\newcommand{\sset}{\subset}
\newcommand{\lf}{\left}
\newcommand{\rg}{\right}
\newcommand{\ga}{\gamma}
\newcommand{\Ga}{\Gamma}
\newcommand{\ep}{\epsilon}
\newcommand{\ka}{\kappa}
\newcommand{\de}{\delta}
\newcommand{\sig}{\sigma}
\newcommand{\la}{\lambda}
\newcommand{\al}{\alpha}
\newcommand{\Om}{\Omega}
\newcommand{\Rd}{\mathbb{R}^4_\uparrow}
\newcommand{\cB}{\mathcal{B}}
\newcommand{\cC}{\mathcal{C}}
\newcommand{\cD}{\mathcal{D}}
\newcommand{\cF}{\mathcal{F}}
\newcommand{\cI}{\mathcal{I}}
\newcommand{\cJ}{\mathcal{J}}
\newcommand{\cK}{\mathcal{K}}
\newcommand{\cL}{\mathcal{L}}
\newcommand{\cN}{\mathcal{N}}
\newcommand{\cP}{\mathcal{P}}
\newcommand{\cQ}{\mathcal{Q}}
\newcommand{\cS}{\mathcal{S}}
\newcommand{\cW}{\mathcal{W}}
\newcommand{\fB}{\mathfrak{B}}
\DeclareMathOperator*{\argmax}{arg\,max}
\newcommand{\eqd}{\stackrel{d}{=}}
\newcommand{\cvgd}{\stackrel{d}{\to}}
\newcommand{\cvgp}{\stackrel{\mathbb{P}}{\to}}
\newcommand{\X}{\times}
\newcommand{\cvgdown}{\downarrow}
\newcommand{\id}{\text{id}}
\newcommand{\smin}{\setminus}
\newcommand{\bw}{\mathbf{w}}
\newcommand{\ba}{\mathbf{a}}
\newcommand{\bb}{\mathbf{b}}
\newcommand{\bx}{\mathbf{x}}
\newcommand{\bn}{\mathbf{n}}
\newcommand{\bm}{\mathbf{m}}
\newcommand{\by}{\mathbf{y}}
\newcommand{\bz}{\mathbf{z}}
\newcommand{\bp}{\mathbf{p}}
\newcommand{\bq}{\mathbf{q}}
\renewcommand{\P}{\mathbb{P}}
    \newtheorem{theorem}{Theorem}[section]
    \newtheorem{lemma}[theorem]{Lemma}
    \newtheorem{proposition}[theorem]{Proposition}
    \newtheorem{corollary}[theorem]{Corollary}
    \newtheorem{conjecture}[theorem]{Conjecture}
\theoremstyle{definition} 
    \newtheorem{remark}[theorem]{Remark}
    \newtheorem{example}[theorem]{Example}
    \newtheorem{problem}[theorem]{Problem}
\theoremstyle{remark} 
\newcommand\ee{\end{equation}}
\newcommand{\comment}[1]{}
\newcommand{\sm}{{\raise0.3ex\hbox{$\scriptstyle \setminus$}}}
\newcommand{\btau}{{\boldsymbol \tau}}
\newcommand{\bbtau}{\bar {\boldsymbol \tau}}
\title{The directed landscape from Brownian motion}
\author{Duncan Dauvergne and B\'alint Vir\'ag}
\begin{document}
	\maketitle

\begin{abstract}
We construct an almost sure bijection that recovers the directed landscape on the half-plane from a sequence of independent Brownian motions. This map is the natural scaling limit of the Robinson--Schensted--Knuth (RSK) correspondence. The Brownian motions arise as the marginals of the multi-path stationary horizon associated with the directed landscape. The inverse map is fully explicit and yields a natural coupling in which Brownian last-passage percolation converges in probability to the directed landscape.

As an application, we prove that the directed landscape restricted to a strip can be reconstructed from the parabolic Airy line ensemble, resolving a conjecture of the first author and Zhang.

Along the way we develop two new versions of RSK in the semi-discrete setting, introduce a general theory of sorting via Pitman operators that generates a faithful action of the biHecke monoid, and establish key identities for the multi-path stationary horizon for both the directed landscape and Brownian last-passage percolation.
\end{abstract}
	\tableofcontents

\section{Introduction}

Since Robinson's 1938 paper \cite{Robinson1938}, versions of the Robinson--Schensted--Knuth (RSK) correspondence have been central to representation theory, algebraic combinatorics, and probability. The classical Robinson-Schensted correspondence gives a bijection between permutations and pairs of standard Young tableaux of the same shape. 

Via Greene's theorem this bijection admits a last-passage percolation interpretation on the permutation matrix: the shape records the lengths of the longest disjoint increasing subsequences, realized as maximal path sums in the matrix between a corner and two boundaries. This last-passage viewpoint extends naturally to general environments and forms the foundation for the continuum RSK theory, the main topic of this paper.

Baik, Deift, and Johansson showed that the normalized length of the longest increasing subsequence under the uniform measure on permutations converges to the GUE Tracy-Widom distribution \cite{baik1999distribution}. More generally, the full directed metric given by last-passage values on \([n]^2\) itself converges in the scaling limit to the directed landscape \(\mathcal{L}\) on the plane,  \cite{dauvergne2021scaling}.

By the KPZ universality conjecture, models with homogeneous locally independent planar noise are expected to scale to \(\mathcal{L}\). This has been established for several exactly solvable models, many of which possess an underlying RSK structure. We therefore expect pre-limiting RSK correspondences to converge to a limiting RSK theory for the directed landscape. 

The main technical achievement of this paper is to establish this theory and to prove that the limiting map remains a bijection. Since classical inverses lose all meaning in the limit, we construct the inverse using a suite of new geometric, probabilistic, and algebraic results.

In the rest of the introduction we highlight the main results; more precise statements appear in Section \ref{S:results}.

\subsection{RSK in the directed landscape on a bounded time interval: existing and new results}

The original construction of the directed landscape relies on a version of RSK restricted to the time interval \([0,1]\). Here the analogue of the pair of Young tableaux is the parabolic Airy line ensemble \(\mathfrak{A}\). It has long been known that \(\mathcal{L}(0,0;x,1)=\mathfrak{A}_1(x)\). More generally, \cite{dauvergne2021disjoint} established the multi-path formula
\[
\sum_{i=1}^k \mathfrak{A}_i(x)=\mathcal{L}(0,0^k;x^k,1),
\]
where the right-hand side denotes the maximal $\mathcal L$-length of $k$ disjoint paths between $(0,0)$ and $(x,1)$.
This is the direct analogue of the RSK map on the strip. Dauvergne and Zhang have the following natural conjecture.
\begin{conjecture}[\cite{dauvergne2021disjoint}, Conjecture 1.10]\label{c:1}
The map $\mathcal L|_{t\in[0,1]}\mapsto \mathfrak A$ is invertible. 
That is, for \(s<t\in[0,1]\) and \(x,y\in\mathbb{R}\), \(\mathcal{L}(x,s;y,t)\) is a measurable function of \(\mathfrak{A}\).
\end{conjecture}

The original construction of $\mathcal L$ gives hope: \(\mathcal{L}(x,0;y,1)\) can be recovered as a last-passage limit in the environment  \(\mathfrak{A}\). Full bijectivity on the strip has remained open. As a corollary (Corollary \ref{C:strip-reconstruction}) to our main result, we get the following.
\begin{theorem}[RSK on the strip]
\label{T:Airy-RSK-intro}
RSK on the strip is an almost sure bijection: Conjecture \ref{c:1} holds.  \end{theorem}

In the strip case the inverse is not fully explicit, leaving open the following problem. 
\begin{problem}
Express the inverse map in terms of last passage on \(\mathfrak{A}\).
\end{problem}

The RSK correspondence simplifies dramatically, and the inverse becomes fully explicit, when the directed landscape is restricted to a half-plane \(\{t\le 0\}\) (the case \(\{t\ge 0\}\) is symmetric).

\subsection{The main result:  RSK for the directed landscape on \texorpdfstring{$\{t\le 0\}$}{tle0}}

For the lower half-plane the RSK map may be viewed as a normalized limit of the strip map. It sends \(\mathcal{L}|_{\{t\le 0\}}\) to a sequence of paths \(B_i\) defined by the {\bf multi-path Busemann function}
\begin{equation}
\label{e:Bdef-intro}
\sum_{i=1}^k B_i(x) = \lim_{t\to-\infty} \mathcal{L}(0^k,t;x^k,0) - \mathcal{L}(0^k,t;0^k,0).
\end{equation}
Busemann techniques together with the Brownian Gibbs property yield the following (part of Theorem \ref{T:landscape-recovery-intro}).

\begin{proposition}
The limits in \eqref{e:Bdef-intro} exist almost surely and the  \(B_i\) are independent two-sided Brownian motions (of variance 2, a convention in KPZ and here).
\end{proposition}

Inverting the map \(\mathcal{L}|_{\{t\le 0\}}\mapsto B\) therefore gives a natural construction of the directed landscape from independent randomness. This is the main result of the paper.

To state the inverse we work in the last-passage percolation setting. Let \(f\) be a sequence of continuous functions. The last-passage value \(f[(x,n)\to(y,m)]\) (and its multi-path version) is defined in the usual way, see \eqref{E:lpp-defn-intro}. The fundamental convergence result of \cite{DOV} is:

\begin{theorem}[Theorem 1.5, \cite{DOV}]
\label{T:DL-cvg-intro}
For every \(a\in\mathbb{R}\), let \(B^a\) be an environment of independent two-sided Brownian motions of variance 2 and common drift \(a\). With the scaling \((x,s)_a=(x-as/4,\lfloor sa^3/8\rfloor+1)\), we have
\[
\mathcal{L}^a(x,s;y,t):=B^a[(x,s)_a\to(y,t)_a]-\frac{a^2}{4}(t-s)\to\mathcal{L}(x,s;y,t)
\]
in distribution as \(a\to-\infty\).
\end{theorem}

In our coupling the environments \(B^a\) are constructed directly from a fixed zero-drift Brownian environment \(B\) via multi-path Busemann functions. For \(a\ge 0\), define the {\bf Busemann shear}
\[
\sum_{i=1}^k B^a_i(x)=\lim_{t\to-\infty}B[(t,\lfloor\theta |t|\rfloor)^k\to(x,1)^k]-B[(t,\lfloor\theta t\rfloor)^k\to(0,1)^k],\qquad\theta=a|a|/2,
\]
with the case \(a<0\) defined analogously by reversal, see  \eqref{e:reversed-LP} and Figure \ref{fig:BrownianRSK}.

\begin{theorem}[Main]
\label{T:RSK-intro}\label{t:main-intro}
RSK for the directed landscape on \(\{t\le 0\}\) admits an explicit inverse. With \(B^a\) defined as above, the convergence of Theorem \ref{T:DL-cvg-intro} holds in probability on \(\{s<t\le 0\}\). Moreover, \(B\) and \(\mathcal{L}|_{\{t\le 0\}}\) are coupled through the Busemann relation \eqref{e:Bdef-intro}.
\end{theorem}

For \(a\neq 0\) the Busemann shear \(B\mapsto B^a\) itself forms an RSK correspondence on the Brownian environment and satisfies a simple group law.

\begin{theorem}
For \(a,b\in\mathbb{R}\) we have \((B^a)^b=B^{a+b}\) almost surely. In particular, \((B^a)^{-a}=B\).
\end{theorem}

\subsection{The multi-path stationary horizon}

The map sending the zero-drift environment \(B\) to the first \(k\) lines of \(B^a\) is closely related to the {\bf  stationary horizon}. In the directed landscape setting, the stationary horizon was defined in \cite{busani2024stationary} as the joint law of Busemann functions in several directions at a fixed time.

We develop this theory by viewing the independent Brownian motions appearing in the horizon construction as differences of multi-path Busemann functions in the directed landscape. The following result highlights the structure (a special case of Theorems \ref{T:Busemann-law-intro} and \ref{T:landscape-recovery-intro}). For $\theta \in \mathbb R^k$ increasing, let $\mathfrak B^\theta$ denote the {\bf multi-path stationary horizon},  the $k$-path Busemann function in direction $\theta$ defined through the natural generalization \eqref{E:cL-busemann-intro} of \eqref{e:Bdef-intro}.

\begin{theorem}
\label{t:bus-intro} 
\begin{equation}
\label{e:ind-intro}
\mathfrak B^{\theta_1}, \mathfrak B^{(\theta_1, \theta_2)}-\mathfrak B^{\theta_1}, \ldots, \mathfrak B^{\theta}-\mathfrak B^{(\theta_1,\ldots,\theta_{k-1})}
\end{equation}
 are independent Brownian motions with drifts $\theta_1/2,\ldots \theta_k/2$ respectively.
For the $2^k-1$ nonempty subsequences $\eta$ of $\theta$, $\mathfrak B^\eta$ can be recovered from \eqref{e:ind-intro} via an action of the biHecke monoid $\mathfrak D_k$.
\end{theorem}

Up to parametrization, the same result holds for Brownian last passage percolation, see Theorem \ref{T:Busemann-law-intro}.
Our main algebraic contribution is the introduction of the action of the biHecke monoid $\mathfrak D_k$ on last passage percolation. This is explained in detail in the next subsection.

\subsection{Proof strategy}
\label{SS:strategy}

The proof of Theorem \ref{T:RSK-intro} has four main steps and requires a combination of new algebraic, probabilistic and geometric ideas.

The first step is devoted to developing an algebraic language built around the biHecke monoid. Underlying the algebra is the observation that RSK is fundamentally a sorting operation, see \cite{biane2005littelmann}. The Pitman transform acts as a conditional swap for neighboring disordered lines. Iterating these operators generates a faithful action of the {\bf 0-Hecke monoid} \(\mathfrak{M}_n\) introduced in \cite{Norton1979}. Adjoining the reverse operators yields the {\bf biHecke monoid} \(\mathfrak{D}_n\), first introduced in \cite{HST13}. We prove that the Pitman and reverse Pitman operators give an action of \(\mathfrak{D}_n\) in the last-passage setting. This algebraic structure is essential for establishing isometries and identifying inverses.

The second step proves generalized isometries relating multi-path Busemann values in the landscape and in the Brownian environment.  A specific case of this isometry states that the drifted Brownian motions $B^a$, defined as multi-path Busemann values in the environment $B$ in Theorem \ref{t:main-intro} are in fact multi-path Busemann values in $\mathcal L$:
\begin{theorem}[Isometry between $\mathcal L|_{t\le 0}$ and $B$, part 1]
\begin{equation}
\sum_{i=1}^k B_i^a(x) = \lim_{t \to -\infty} \cL((at/2,t)^k; (x, 0)^k) - \cL((at/2,t)^k; (0, 0)^k)
\end{equation}
\end{theorem}

The third step supplies the core geometric argument of the paper. For large \(k\) the \(k\)-path optimizers occupy a remarkably rigid, almost deterministic ``truss'' shape. By carefully sending starting points to \(-\infty\) at a controlled rate and driving directions also to \(-\infty\), these trusses converge in the \(k\to\infty\) limit to the horizontal line \(\ell = [-\infty,0] \times \{-1\}\). \usetikzlibrary{arrows.meta, decorations.pathmorphing, positioning, calc}

\begin{figure}[htbp]
\centering
\definecolor{forestgreen}{rgb}{0.13, 0.55, 0.13}
\begin{tikzpicture}[
    scale=0.97,
    thick,
    >=Stealth,
    every node/.style={font=\small}
]

\draw[thick] (-4.8,4.6) -- (4.8,4.6) node[right] {\(t=0\)};
\draw[dashed, gray!70] (-4.8,2.1) -- (4.8,2.1) node[right] {\(t=-1\)};

\draw[gray!60, ->] (-5.3,4.7) -- (-5.3,1.0);
\node[left, gray!70] at (-5.3,0.7) {\(t \to -\infty\)};


\draw[blue!80, thick] 
    (-4.5,4.45) 
    .. controls (-3.0,4.35) and (-1.2,3.95) .. (1.9,3.05)   
    .. controls (2.7,2.55) and (1.7,1.75) .. (-0.65,1.43)    
    .. controls (-2.3,1.03) and (-3.6,0.73) .. (-4.6,0.33);

\draw[blue!75, thick] 
    (-4.5,4.45) 
    .. controls (-3.0,4.25) and (-1.3,3.80) .. (1.6,2.95)   
    .. controls (2.5,2.45) and (1.55,1.85) .. (-0.75,1.48)  
    .. controls (-2.4,1.08) and (-3.65,0.78) .. (-4.6,0.38);

\draw[blue!70, thick] 
    (-4.5,4.45) 
    .. controls (-3.0,4.15) and (-1.4,3.65) .. (1.3,2.85)   
    .. controls (2.3,2.35) and (1.4,1.95) .. (-0.85,1.53)   
    .. controls (-2.5,1.12) and (-3.7,0.83) .. (-4.6,0.43);

\draw[blue!65, thick] 
    (-4.5,4.45) 
    .. controls (-3.0,4.05) and (-1.5,3.50) .. (1.0,2.75)   
    .. controls (2.1,2.25) and (1.25,2.05) .. (-0.95,1.58)  
    .. controls (-2.6,1.18) and (-3.75,0.88) .. (-4.6,0.48);

\draw[forestgreen, very thick] 
    (2.3,4.6) .. controls (2.4,4.0) and (2.6,3.4) .. (2.6,3.15)
    .. controls (2.6,2.55) and (1.85,1.70) .. (-0.55,1.35)
    .. controls (-2.2,0.92) and (-3.55,0.62) .. (-4.6,0.25);


\filldraw[black] (2.3,4.6) circle (1.6pt);
\node[above=11pt] at (2.3,4.6) {\(y\)};

\draw[->, thick, forestgreen] (1.1,5.05) -- (3.5,5.05);
\draw[<-, thick, forestgreen] (1.1,5.05) -- (3.5,5.05);

\node[forestgreen, right, font=\small] at (-4.35, 0.05) {extra optimizer from \(y\)};
\node[blue!75, above, font=\small] at (-0.9, 2.35) {truss};

\end{tikzpicture}


\caption{Truss formed by large-\(k\) disjoint optimizers in the directed landscape.}
\label{f:truss}
\end{figure}

Introducing a new optimizer started from a bounded point \(y\) then reveals that the incremental change in the \((k+1)\)-path optimizer is essentially identical to the change in \(\mathcal{L}(0,-1;y,0)\), because only the portion after time \(-1\) varies significantly while the earlier portion is forced to hug the truss, see Figure \ref{f:truss}. This rigidity allows full reconstruction of \(\mathcal{L}(x,s;y,0)\) for all \(x,y\) and \(s<0\). A two-slit variant extends the reconstruction to the complete \(\mathcal{L}(x,s;y,t)\) for \(s<t\le 0\).

The final step employs a novel abstract measure-theoretic argument to upgrade convergence in law to convergence in probability under the explicit coupling.

\subsection{Future research directions}

We hope this work opens new avenues both for understanding the directed landscape and for proving convergence results. Several directions appear particularly promising with current techniques.

\begin{problem}
Construct the directed landscape directly from Brownian last passage percolation without any appeal to determinantal formulas.
\end{problem}

\begin{problem}
Upgrade the convergence in Theorem \ref{T:RSK-intro} to quantitative rates: for compact \(K\subset\mathbb{H}^2_\uparrow\) show the existence of \(\alpha>0\) such that
\[
\mathbb{E}\sup_{u\in K}|\mathcal{L}^a(u)-\mathcal{L}(u)|\le |a|^{-\alpha+o(1)}
\]
as \(a\to-\infty\), and optimize \(\alpha\).
\end{problem}

Discrete models whose multi-type stationary distributions admit queuing-theoretic descriptions (colored TASEP, q-PushTASEP and relatives) are natural candidates for an RSK-based convergence proof to the landscape. 
\begin{problem}
\label{P:problem-three}
Prove that discrete particle systems whose multi-type stationary distributions have queuing-theoretic descriptions converge to the directed landscape.
\end{problem}

The techniques developed here also extend naturally to ``zigzag'' Brownian last-passage models with alternating drifts. 

\begin{problem}
\label{P:problem-four}
Let $B = (B_i, i \in \Z)$ be an environment of independent Brownian motions of drift $1$ if $i$ is even and drift $-1$ if $i$ is odd. For points $(x, n), (y, m) \in \R \X \Z$ with $n > m$ define
$$
B[(x, n) \rightsquigarrow (y, m)] = \sup_{z_{n+1} = x, \dots, z_m = y} \sum_{i=m}^n B_i(z_i) - B_i(z_{i+1}),
$$
where in the supremum, we require $z_{i+1} \le z_i$ if $i$ is odd, and $z_{i+1} \ge z_i$ if $i$ is even. This requirement along with the drift condition on the Brownian paths means the above supremum is always finite.
Prove that under some scaling, this model converges to the directed landscape.
\end{problem}
Establishing convergence for such models would provide a clean test case for the present ideas.

\subsection{Background and related work}
\label{S:background}

We give a brief review of the literature on KPZ, RSK, and the directed landscape, focusing on papers closely related to the present work. For a gentle introduction to KPZ suitable for a newcomer to the area, see \cite{romik2015surprising} or the introductory articles \cite{corwin2016kardar, ganguly2021random}. Review articles and books that go into more depth include \cite{ferrari2010random, quastel2011introduction,weiss2017reflected, zygouras2018some}.

The importance of RSK to problems in the KPZ universality class goes back at least to \cite{logan1977variational, vershik1977asymptotics}, who identified the law of large numbers for the longest increasing subsequence in a uniform permutation. The Baik-Deift-Johansson theorem \cite{baik1999distribution} went further, analyzing exact formulas for the Young tableaux arising from RSK to identify the one-point distribution $\cL(0,0; 0, 1)$ as a GUE Tracy-Widom random variable, see also \cite{johansson2000shape}. Pr\"ahofer and Spohn expanded on these methods \cite{prahofer2002scale} to prove convergence to the parabolic Airy line ensemble for the PNG droplet, see also \cite{johansson2003discrete}. The Pr\"ahofer-Spohn theorem can be viewed as taking the KPZ scaling limit of the Young tableaux side of the RSK correspondence. More recent approaches to studying KPZ limits have found exact formulas for multi-time distributions, see \cite{johansson2019multi, baik2017multi} and \cite{matetski2016kpz} for the construction of the KPZ fixed point.

A series of papers by O'Connell and coauthors \cite{o2001brownian, o2002representation, o2003path, biane2005littelmann} developed RSK in the semi-discrete setting and started to understand the correspondence in terms of Pitman transforms. One of the upshots of this work was a version of the RSK isometry which had also been observed in \cite{noumi2002tropical}. This isometry was used in \cite{DOV} as the key tool for constructing the directed landscape, and of course drives much of the present paper.

The relevance of Busemann functions to studying KPZ models goes back to Hoffman \cite{hoffman2005coexistence, hoffman2008geodesics} in first passage percolation. See \cite[Section 5]{auffinger201750} for discussion surrounding open problems on Busemann functions in that setting. In solvable models of last passage percolation, Busemann functions have been used to study competition and coexistence, to construct stationary solutions and establish uniqueness of these solutions, and to establish results about semi-infinite geodesics, see \cite{cator2012busemann, cator2013busemann, bakhtin2014space, georgiou2015ratios, georgiou2017geodesics, georgiou2017stationary} for a sample of recent work in this direction. 
The last-passage/queuing-theoretic description for the law of the stationary horizon in \eqref{E:W-B-intro} has its roots in the work of Ferrari and Martin \cite{ferrari2007stationary, ferrari2005multiclass} on coloured TASEP and the Hammersley process. A similar description in exponential LPP was given by Sepp\"al\"ainen and Fan \cite{fan2020joint} before being adapted to Brownian LPP \cite{seppalainen2023busemann}.  Busemann functions in the directed landscape were first studied in \cite{rahman2021infinite}. The law of the whole stationary horizon found in \cite{busani2021diffusive, seppalainen2023global}, connected to the directed landscape in \cite{busani2024stationary}, and shown to be the scaling limit for stationary measures in discrete models in \cite{busani2022scaling, busani2023scaling}. 

The use of multi-path last passage values and disjoint optimizers to extract information about underlying geometry in KPZ is a more recent phenomenon. Hammond \cite{hammond2016brownian, hammond2020exponents} first connected exact statistics about disjoint optimizers from a common point in Brownian LPP to the rarity of disjoint geodesics.  A precise combinatorial connection between disjoint optimizers and the RSK isometry was exploited in \cite{DOV}, and used there as a key tool in the original construction of the directed landscape. The theory of disjoint optimizers at the level of the directed landscape was developed in \cite{dauvergne2021disjoint}. This was used to help classify geodesic networks in the directed landscape in \cite{dauvergne202327}, and to prove structural results about the spacetime difference profile in \cite{ganguly2022fractal}. The phenomenon of optimizer rigidity for large $k$ that is so important in the proof of Theorem \ref{T:landscape-recovery-intro} was anticipated in the watermelon scaling exponents uncovered in \cite{basu2022interlacing}.

\subsection*{Acknowledgements} The authors would like to thank Timo Sepp\"al\"ainen and Evan Sorensen for several useful discussions about the paper, and Nikos Zygouras for pointing out a connection with the Sch\"utzenberger involution (see Corollary \ref{C:RSK}).

We devote the next section to a detailed and more precise description of our main results. The structure of Section \ref{S:results} follows the three consecutive RSK limits used in the proof, most  requiring substantial new ideas beyond simply passing to the limit.

\section{Main results in detail: the three limits of RSK}
\label{S:results}
 To introduce the RSK correspondence, we need to first define multi-path LPP across a functional environment. Consider a continuous function $f:\R \to \R^I$, where $I \subset \Z$ is an interval. For a nonincreasing right-continuous function $\pi:[x,y]\to \{m, \dots, n\}$ (henceforth a \textbf{path} from $(x, n)$ to $(y, m)$) define its \textbf{length} with respect to $f$ by
\begin{equation}
	\label{E:pi-f-intro}
	\|\pi\|_f = \sum_{i=m}^n f_i(t_{i-1}) - f_i(t_i)
\end{equation}
where $t_i = \inf \{t \in [x, y] : \pi(t) \le i\}$ and $t_{m-1} = y$. We then define the \textbf{last passage value} 
\begin{equation}
	\label{E:lpp-defn-intro}
	f[(x, n) \to (y, m)] = \max_{\pi:(x, n) \to (y, m)} \|\pi\|_f,
\end{equation}
where the maximum is over all paths from $(x, n)$ to $(y, m)$. Last passage percolation (LPP) is best thought of as a kind of directed metric on the set $\R \X I$ with distances distorted by the environment $f$, and where we maximize rather than minimize path length. With this perspective in mind, paths that achieve the maximum in \eqref{E:lpp-defn-intro} are called \textbf{geodesics}. We also need to define multi-path last passage values. First, for a linearly ordered set $X$, we write 
$$
X^k_\le = \{(x_1, \dots, x_k)\in X^k: x_1 \le x_2 \le \cdots \le x_k\},
$$ 
and similarly define $X^k_<, X^k_\ge, X^k_>$. Consider $\bp = (\bx, \bn), \bq = (\by, \bm) \in \R^k_\le \times I^k_\le$. We say that $\pi = (\pi_1, \dots, \pi_k)$ is a \textbf{disjoint $k$-tuple} from $\bp$ to $\bq$ if each $\pi_i$ is a path from $(x_i, n_i)$ to $(y_i, m_i)$ and if $\pi_i(t) < \pi_{i+1}(t)$ for $t \in (x_i, y_i) \cap (x_{i+1}, y_{i+1})$. If there is at least one disjoint $k$-tuple from $\bp$ to $\bq$, we can define the multi-path last passage value
$$
f[\bp \to \bq] = \sup_{\pi: \bp \to \bq} \sum_{i=1}^k \|\pi_i\|_f,
$$
where the supremum is over all disjoint $k$-tuples from $\bp$ to $\bq$. We call a $k$-tuple achieving the above supremum a \textbf{(disjoint) optimizer}. To simplify the notation for multi-path last passage values in the situations that occur most, we write $x^k := (x, \dots, x) \in \R^k$. Typically, we will consider multi-path last passage when all the points in $\bp$ lie on a common vertical or horizontal line (and similarly for $\bq$). In this case, we will write $\bp = (\bx, n) := (\bx, n^k)$ and $\bp = (x, \bn) := (x^k, \bn)$. See Figure \ref{fig:dis-opt} for illustrations of this definition.

\begin{figure}
	\centering
	\begin{tikzpicture}[line cap=round,line join=round,>=triangle 45,x=4cm,y=5cm]
		\clip(-0.15,-0.15) rectangle (2.15,1.15);
		

		\draw (0.,0.1) node[anchor=east]{$5$};
		\draw (0.,0.3) node[anchor=east]{$4$};
		\draw (0.,0.5) node[anchor=east]{$3$};
		\draw (0.,0.7) node[anchor=east]{$2$};
		\draw (0.,0.9) node[anchor=east]{$1$};

		\draw plot coordinates {(0.1,0.1) (0.2,0.) (0.3,0.11) (0.4,0.16) (0.5,0.04) (0.6,0.08) (0.7,0.06) (0.8,0.01) (0.9,0.02) (1.,0.14) (1.1,0.12) (1.2,0.03) (1.3,0.06) (1.4,0.) (1.5,0.04) (1.6,0.12) (1.7,0.06) (1.8,0.04) (1.9,0.05) (2.,0.07) };
		\draw plot coordinates {(0.1,0.3) (0.2,0.38) (0.3,0.21) (0.4,0.29) (0.5,0.34) (0.6,0.28) (0.7,0.26) (0.8,0.31) (0.9,0.32) (1.,0.23) (1.1,0.22) (1.2,0.33) (1.3,0.26) (1.4,0.2) (1.5,0.36) (1.6,0.33) (1.7,0.26) (1.8,0.28) (1.9,0.37) (2.,0.33) };
		\draw plot coordinates {(0.1,0.5) (0.2,0.54) (0.3,0.59) (0.4,0.46) (0.5,0.44) (0.6,0.48) (0.7,0.46) (0.8,0.51) (0.9,0.58) (1.,0.49) (1.1,0.45) (1.2,0.53) (1.3,0.56) (1.4,0.49) (1.5,0.54) (1.6,0.52) (1.7,0.56) (1.8,0.47) (1.9,0.45) (2.,0.46) };
		\draw plot coordinates {(0.1,0.7) (0.2,0.74) (0.3,0.63) (0.4,0.73) (0.5,0.64) (0.6,0.78) (0.7,0.66) (0.8,0.71) (0.9,0.65) (1.,0.63) (1.1,0.71) (1.2,0.63) (1.3,0.66) (1.4,0.77) (1.5,0.71) (1.6,0.78) (1.7,0.63) (1.8,0.64) (1.9,0.75) (2.,0.72) };
		\draw plot coordinates {(0.1,0.9) (0.2,0.98) (0.3,0.81) (0.4,0.87) (0.5,0.84) (0.6,0.91) (0.7,0.86) (0.8,0.85) (0.9,0.82) (1.,0.93) (1.1,0.82) (1.2,0.96) (1.3,0.96) (1.4,0.91) (1.5,0.88) (1.6,0.91) (1.7,0.87) (1.8,0.94) (1.9,0.95) (2.,0.93) };

		\draw [thick] [red] plot coordinates {(0.4,0.16) (0.5,0.04) (0.6,0.08) (0.7,0.06) (0.8,0.01) (0.9,0.02) (1.,0.14)};
		\draw [dashed] [red] plot coordinates {(1.,0.14) (1.,0.23)};
		\draw [thick] [red] plot coordinates {(1.,0.23) (1.1,0.22) (1.2,0.33) (1.3,0.26) (1.4,0.2) (1.5,0.36)};
		\draw [dashed] [red] plot coordinates {(1.5,0.36) (1.5,0.54)};
		\draw [thick] [red] plot coordinates {(1.5,0.54) (1.6,0.52) (1.7,0.56)};
		\draw [dashed] [red] plot coordinates {(1.7,0.56) (1.7,0.63)};
		\draw [thick] [red] plot coordinates {(1.7,0.63) (1.8,0.64) (1.9,0.75)};
		\draw [dashed] [red] plot coordinates {(1.9,0.75) (1.9,0.95)};
		\draw [thick] [red] plot coordinates {(1.9,0.95) (2.,0.93)};
		
		\draw [thick] [blue] plot coordinates {(0.1,0.1) (0.2,0.) (0.3,0.11)};
		\draw [dashed] [blue] plot coordinates {(0.3,0.11) (0.3,0.21)};
		\draw [thick] [blue] plot coordinates {(0.3,0.21) (0.4,0.29)};
		\draw [dashed] [blue] plot coordinates {(0.4,0.29) (0.4,0.46)};
		\draw [thick] [blue] plot coordinates {(0.4,0.46) (0.5,0.44) (0.6,0.48) (0.7,0.46) (0.8,0.51) (0.9,0.58)};
		\draw [dashed] [blue] plot coordinates {(0.9,0.58) (0.9,0.65)};
		\draw [thick] [blue] plot coordinates {(0.9,0.65) (1.,0.63) (1.1,0.71) (1.2,0.63) (1.3,0.66) (1.4,0.77) (1.5,0.71) (1.6,0.78)};
		\draw [dashed] [blue] plot coordinates {(1.6,0.78) (1.6,0.91)};
		\draw [thick] [blue] plot coordinates {(1.6,0.91) (1.7,0.87) (1.8,0.94)};
		
		\draw [thick] [green] plot coordinates {(0.1,0.3) (0.2,0.38)};
		\draw [dashed] [green] plot coordinates {(0.2,0.38) (0.2,0.54)};
		\draw [thick] [green] plot coordinates {(0.2,0.54) (0.3,0.59)};
		\draw [dashed] [green] plot coordinates {(0.3,0.59) (0.3,0.63)};
		\draw [thick] [green] plot coordinates {(0.3,0.63) (0.4,0.73) (0.5,0.64) (0.6,0.78)};
		\draw [dashed] [green] plot coordinates {(0.6,0.78) (0.6,0.91)};
		\draw [thick] [green] plot coordinates {(0.6,0.91) (0.7,0.86) (0.8,0.85) (0.9,0.82) (1.,0.93) (1.1,0.82) (1.2,0.96) (1.3,0.96) (1.4,0.91)};
		
	\end{tikzpicture}
	\caption{A disjoint optimizer for $k=3$ from $((0,0,0.2),5)$ to $((0.7,0.9,1),1)$. Note that in our coordinate system, we view paths as moving up and to the right across the page.}   \label{fig:dis-opt}
\end{figure}
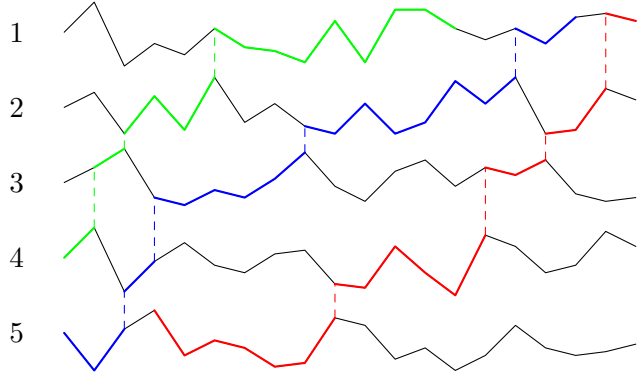

Via Greene's theorem \cite{greene1974extension}, we can describe many classical versions of RSK in terms of multi-path LPP across a continuous environment $f:[a, b] \to \R^n$. Essentially, the RSK map takes $f$ to a pair of environments $W^U f, W^R f$ which record multi-path last passage values from the bottom corner $(a, n)$ to the upper (U) boundary $[a, b] \X \{1\}$ and the right (R) boundary $\{b\} \X \{1, \dots, n\}$ of $[a, b] \X \{1, \dots, n\}$, respectively. More precisely, letting $T_n := \{(i, j) \in \{1, \dots, n\}^2 : i \le n - j + 1\}$, define two functions $W^Uf:[a, b] \to \R^n$ and $W^R f: T_n \to \R, (i, j) \mapsto (W^R f)_i(j)$ as follows:
\begin{alignat}{2}
	\label{E:WiUj}
\sum_{i=1}^k (W^Uf)_i(x) &= f[(a^k, n) \to (x^k, 1)], \qquad &&1 \le k \le n, \;\; x \in [a, b]. \\
\nonumber
	\sum_{i=1}^k (W^R f)_i(j) &= f[(a^k, n) \to (b^k, j)], \qquad &&(k, j) \in T_n.
\end{alignat}
The map $f \mapsto Wf := (W^U f, W^R f)$ has many remarkable properties. We highlight three that are particularly important for this paper:
\begin{description}
	\item[Invertibility:] The RSK map $W$ is invertible on its image with an \textit{explicit inverse}. Moreover, its image is easy to describe: a pair $(h, g)$ equals $Wf$ for some continuous $f$ if and only if $h:[a, b] \to \R^n$ is continuous function into the ordered set $\R^n_\le$ with $h(a) = 0$, $g$ satisfies the Gelfand-Tsetlin inequalities $g_i(j) \ge g_i(j+1) \ge g_{i+1}(j)$, and $h(b) = g_\cdot(1)$. The inequalities in this description of the image imply that $h, g$ can be viewed as path representations of Young tableaux, and the condition $h(b) = g_\cdot(1)$ says that these tableaux have the same shape. By restricting to certain classes of piecewise linear functions $f$ in our framework we can recover classical matrix versions of RSK (i.e. the Robinson-Schensted correspondence for permutations, usual RSK, dual RSK).
	\item[Measure Preservation:] The bijectivity of RSK implies that it maps nice measures to nice measures. For example, if $a, b \in \Z$ and $f$ is an environment of independent Bernoulli walks, then $W^U f$ consists of $n$ independent Bernoulli walks conditioned to stay ordered. Similarly, if $B:[a, b] \to \R^n$ is an environment of independent Brownian motions, then $W^U B$ consists of $n$ non-intersecting Brownian motions. There are similar interpretations for $W^R$.
	\item[Isometry:] The map $W^U$ preserves bottom-to-top last passage values. More precisely, we have the \emph{boundary isometry}
	\begin{equation}
		\label{E:isom-intro}
		f[(\bx, n) \to (\by, 1)] = W^U f[(\bx, n) \to (\by, 1)]
	\end{equation}
	for all $f, \bx, \by$. Similarly, $W^R$ preserves left-to-right passage values.
\end{description}
We refer the reader to \cite{dauvergne2022rsk} for precise versions of the statements above and a presentation of RSK in this framework, and \cite{fulton1997young, stanley1999enumerative} for more classical presentations.

In this paper, we will take three successive limits of RSK so that at every stage we have useful and interesting notions of invertibility, measure preservation, and isometry. Our final limit of the RSK correspondence will make explicit the function in Theorem \ref{T:RSK-intro}.

\subsection{The first limit: RSK for functions \texorpdfstring{$f:\R \to \R^n$}{f: R to Rn}}
\label{SS:RSK-1}
In Section \ref{S:sorting-monoids}, we define an RSK map on the domain
$$
\cC^n(\R) := \{f:\R \to \R^n: \la(f) := \lim_{x \to \pm \infty} \frac{f(x)}{x} \text{ exists}\}.
$$
This map is the limit of RSK on functions $f:[a, b] \X \{1, \dots, n\}$ where we take $a \to -\infty, b \to \infty$. We restrict to functions with an asymptotic slope for simplicity; the map could be defined on a larger space, but we will not need this here. The motivation for taking this limit is that when we let $[a, b]$ tend to $(-\infty, \infty)$, the RSK correspondence becomes simpler and picks up additional symmetries. In particular, the inverse map in this setting becomes a straightforward conjugation of the forward map.

 Since our right boundary $\{b\} \times \{1, \dots, n\}$ is tending to infinity, the information from $W^R f$ should disappear in this limit, and the correspondence should now produce just a single function $W f = (W f_1, \dots, Wf_n)$, which takes the place of $W^U f$. 
The corner $(a, n)$ that played the key anchoring role in \eqref{E:WiUj} also converges to $(-\infty, n)$ in this limit. For this reason, we need to define last passage from $-\infty$: for $\bm \in \{1, \dots, n\}^k_<$, let
\begin{equation}
	\label{E:finftyM}
f[(-\infty, \bm) \to (\bx, 1)] := \lim_{t \to -\infty} f[(t, \bm) \to (\bx, 1)] + \sum_{i=1}^k f_{m_i}(t).
\end{equation}
This limit does not necessarily exist, but if we ask that $f \in \cC^n_<(\R) := \{f \in \cC^n(\R) : \la(f) \in \R^n_<\}$ then it always will. With the above setup in place, we can now take the limit of the RSK formula \eqref{E:WiUj} to describe $Wf$ when $f \in \cC^n_<(\R)$:
\begin{equation}
\label{E:Wff}
\sum_{i=1}^k Wf_i(x) = f[(-\infty, (n, \dots, n - k + 1)) \to (x^k, 1)].
\end{equation}
With this definition, we have versions of invertibility, measure preservation, and isometry.
 
\begin{theorem}
\label{T:RSK-on-CnR-intro}
The map $W$ in \eqref{E:Wff} has the following properties:
\begin{enumerate}
	\item (Invertibility) $W$ maps $\cC^n_<(\R)$ bijectively onto $\cC^n_>(\R) := \{f \in \cC^n(\R) : \la(f) \in \R^n_>\}$. Moreover, the inverse map is given by $RWR:\cC^n_>(\R) \to \cC^n_<(\R)$, where $R:\cC^n(\R) \to \cC^n(\R)$ is the reflection map $Rf(x) = f(-x)$.
	\item (Measure preservation) Let $\mu_\la$ denote the law on $\cC^n(\R)$ of $n$ independent Brownian motions with drift vector $\la$ and variance $2$. Then if $\la \in \R^n_<$ and $B \sim \mu_\la$, then $WB - WB(0) \sim \mu_{\la^*}$ where $\la^* = (\la_n, \dots, \la_1)$.
	\item (Isometry) The isometry \eqref{E:isom-intro} holds with $Wf$ in place of $W^U f$.
\end{enumerate}
\end{theorem}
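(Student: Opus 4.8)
The plan is to obtain all three properties by transporting the corresponding statements for the finite-box RSK map $W^U$ of \cite{dauvergne2022rsk} through the limit $[a,b] \uparrow \R$, supplemented — for invertibility — by the extra reflection symmetry that only emerges in this limit. The first technical task is to make sense of the limit defining $f[(-\infty, \bm) \to (\bx, 1)]$ in \eqref{E:finftyM} for $f \in \cC^n_<(\R)$. Writing $L_t := f[(t,\bm) \to (\bx,1)] + \sum_i f_{m_i}(t)$, one uses superadditivity of last passage under concatenation at the level-$\bm$ entry points together with the fact that the strict ordering $\la(f) \in \R^n_<$ forces the optimizers out of $(t,\bm)$ to stabilize near $-\infty$ (the leading linear terms of the length and of $\sum_i f_{m_i}(t)$ cancel, and the ordering prevents the optimal transition points from escaping to $-\infty$), so $L_t$ converges. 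A parallel argument — now using that a disjoint $k$-tuple out of the single corner $(a^k,n)$ must split immediately to the levels $n, \dots, n-k+1$, contributing a forced initial descent — shows
$$
f[(a^k, n) \to (x^k, 1)] - c_k(a) \longrightarrow f[(-\infty,(n,\dots,n-k+1)) \to (x^k,1)]
$$
locally uniformly in $x$ as $a \to -\infty$, for a normalizing constant $c_k(a)$ depending only on $f$ near $a$; letting $b \to \infty$ is harmless since last passage to the fixed point $(x^k,1)$ with $x<b$ never sees the right boundary. Hence $W^U f$, suitably recentred, converges to $Wf$ locally uniformly on compacts, and $Wf \in \cC^n(\R)$.

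Given this, isometry and measure preservation come for free. The boundary isometry \eqref{E:isom-intro} holds on every box by \cite{dauvergne2022rsk}; both sides are continuous in the environment, and both converge under the previous step (the left side now involving the $-\infty$ last passage value, whose existence was just established), so \eqref{E:isom-intro} passes to the limit with $Wf$ in place of $W^U f$. For measure preservation, on $[a,b]$ the map $W^U$ sends independent Brownian motions to non-intersecting Brownian motions anchored at the corner; when the drift vector is strictly ordered, $\la \in \R^n_<$, the non-intersection constraint is asymptotically inactive as $a \to -\infty, b \to \infty$ and the Doob $h$-transform for the Weyl chamber degenerates, so the centred limit $WB - WB(0)$ is a vector of independent Brownian motions, with drifts identified as the reversed vector $\la^* = (\la_n,\dots,\la_1)$ either via the first step or via the isometry. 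This gives $WB - WB(0) \sim \mu_{\la^*}$.

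For invertibility I would first record that $W$ lands in $\cC^n_>(\R)$: from \eqref{E:Wff}, $\sum_{i=1}^k Wf_i(x) = \la_n x + \la_{n-1} x + \cdots + \la_{n-k+1} x + o(x)$ as $x \to \pm\infty$, whence $\la(Wf) = (\la_n, \dots, \la_1) \in \R^n_>$. For the explicit inverse, the cleanest route is to decompose $W$ into a composition of semi-infinite Pitman-type sorting operators $W = P_1 \cdots P_N$ (as developed in Section \ref{S:sorting-monoids}); each such operator $P$ is invertible on the relevant slope-ordered domain with inverse $RPR$, and since conjugation by $R$ reverses the order of a composition, $(P_1 \cdots P_N)^{-1} = R P_N R \cdots R P_1 R = R (P_N \cdots P_1) R$, which one checks equals $R W R$. (Alternatively one can take the $[a,b] \uparrow \R$ limit of the explicit box inverse of \cite{dauvergne2022rsk} and verify it converges to $RWR$; the two approaches amount to the same computation.) Surjectivity onto all of $\cC^n_>(\R)$ then follows since $RWR$ is defined on all of $\cC^n_>(\R)$ and $W (R W R) = \id$ by the same computation with the roles of $\cC^n_<(\R)$ and $\cC^n_>(\R)$ swapped.

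The main obstacle is the invertibility, and specifically the identity $R W R \circ W = \id$: unlike the isometry and measure preservation, it is not simply inherited from the box, since on a finite box the inverse of $W^U$ is a genuinely different, non-reflection-symmetric map, and the symmetry only materializes once the anchoring corner is sent to $-\infty$. Making the Pitman-operator decomposition of $W$ and the operator identities $P \circ R P R = \id$ precise on the noncompact domain $\cC^n_<(\R)$ — in particular controlling the infima over semi-infinite rays that define the semi-infinite Pitman transforms, and checking that the slope-ordering hypotheses propagate correctly through the composition — is the technical heart of the argument; the existence statements from the first step are exactly what guarantees these infima are finite and the operators well defined.
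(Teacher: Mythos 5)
Your proposal diverges from the paper's route in a significant way: the paper never takes the limit of the full $n$-line box map $W^U$ on $[a,b]$. Instead, it defines the two-line Pitman transform $\cP$ directly on $\cC^2(\R)$ by an explicit one-sided supremum, establishes the two-line boundary isometry (Lemma \ref{L:two-line-isometry}) and the two-line Brownian Burke theorem (Theorem \ref{T:Brownian-Burke}, citing O'Connell--Yor), and then defines and analyses the $n$-line map $W = \cP_{\operatorname{RSK}}$ purely as a composition of these two-line operators. The formula \eqref{E:Wff} is then derived \emph{from} the composition (Proposition \ref{P:orbits}), not used to define $W$. Your route is a genuine alternative, but it pushes the work into a different place: the ``degenerating Doob $h$-transform'' step for measure preservation is a real limit theorem (non-intersecting Brownian motions with distinct drifts started from a corner at time $a$, watched on a fixed window, converge to independent Brownian motions as $a\to -\infty$), and you would have to prove it; the paper bypasses this entirely by iterating an \emph{exact} two-line distributional identity. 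Whether this trades up or down is debatable, but you should be aware the paper does not prove the Doob statement at all.

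The genuine gap is in your invertibility argument, and you have misdiagnosed where the difficulty lies. You write $(P_1\cdots P_N)^{-1}=R(P_N\cdots P_1)R$, which is fine algebra, but then assert ``which one checks equals $RWR$.'' That requires $P_N\cdots P_1 = P_1\cdots P_N$ as maps on $\cC^n_<(\R)$. This is not automatic: Pitman operators do not commute, and reversing a word in the sorting monoid generally gives a different word. The identity is in fact true here, but it is a nontrivial statement that requires one of two ideas the proposal doesn't contain. Either one invokes the bijection between the sorting monoid $\mathfrak{M}_n$ and $S_n$ (so that any two compositions realizing the same sort of $\R^n_<$ into $\R^n_>$ coincide as operators; this is the content of the abstract monoid machinery in Proposition \ref{p:semigroup-isom} and is what makes $\cP_\sigma$ well-defined independent of the word), or one argues at the level of orbits: $R$ maps $\cP$-orbits to $\cP$-orbits via $\cP_{\bar\tau_i}=R\cP_{\tau_i}R$, and Lemma \ref{L:orbits}.ii gives a \emph{unique} orbit element with slopes in $\R^n_\ge$, so $RWRg$ and $W^{-1}g$ are both that unique element. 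Your framing of the ``technical heart'' --- controlling infima over semi-infinite rays, propagating slope-ordering hypotheses --- identifies the wrong hard part; those are routine once the $n=2$ Pitman transform is defined. The actual work is the combinatorial/algebraic well-definedness statement that lets you identify the reversed composition with $W$, and without it your chain of equalities does not close.
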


Parts of Theorem \ref{T:RSK-on-CnR-intro} are not completely new. Indeed, the isometry is essentially immediate from the isometry in the finite setting, and the fact that $RWR$ inverts $W$ is related to a similar description of the inverse RSK map on functions $f:[0, \infty) \to \R^n$ given in \cite{dauvergne2022rsk}.
The bijection $W$ turns out to also be the same as a certain queuing map described in Sorensen's Ph.D. thesis \cite{sorensen2023stationary}, see Section 2.3.3 and Lemma 2.3.18 therein. The description in terms of multi-path LPP and the connection with RSK is not discussed there.

When $n = 2$, the measure preservation property in Theorem \ref{T:RSK-on-CnR-intro} is well-known: this is one aspect of the so-called \textit{Brownian Burke theorem}, first observed in \cite{harrison1990quasireversibility} and applied to LPP in \cite{o2001brownian}. To move from measure preservation for $n=2$ to general $n$, we will build the $n$-line map by repeatedly applying the $2$-line map to pairs of adjacent lines. This approach to RSK was first developed in the closely related context of functions $f:[0, \infty) \to \R^n$ by Biane, Bougerol, and O'Connell \cite{biane2005littelmann}, building on work of \cite{o2001brownian, o2002representation, o2003path}. This approach was further studied in \cite{DOV, dauvergne2022rsk}. We expand on these ideas here, and along the way describe a whole family of useful and interesting maps on the space of functions $\cC^n(\R)$ which generalize the maps $W$ and $RWR$. This broader family will be crucial for understanding the next limit transition. Each of these maps has nice invertibility, measure-preservation, and isometry properties and are useful for studying different aspects and models of LPP.

To describe things more precisely, define the \textbf{Pitman transform} $\cP:\cC^2(\R) \to \cC^2(\R)$ by letting $\cP|_{\cC^2_<(\R)}$ be the $2$-line map $W$ described above, and setting $\cP|_{\cC^2_\ge(\R)} = \id$. Next, for $i = 1, \dots, n - 1$, define \textbf{sorting operators} $\tau_i, \bar \tau_i:\R^n \to \R^n$ by
$$
\tau_i(x) = \begin{cases}
x, \qquad &x_i \ge x_{i+1}, \\
(x_1, \dots, x_{i+1}, x_i, \dots, x_n), \qquad &x_i < x_{i+1}
\end{cases}
$$
and
$$
\bar \tau_i(x) = \begin{cases}
	x, \qquad &x_i \le x_{i+1}, \\
	(x_1, \dots, x_{i+1}, x_i, \dots, x_n), \qquad &x_i > x_{i+1}.
\end{cases}
$$
Equivalently, $\bar \tau_i = R \tau_i R$, where $R x = - x$. Let $\mathfrak D_n$ denote the monoid generated by the $\tau_i, \bar \tau_i, i = 1, \dots, n - 1$. We then define operators $\cP_{\tau_i}, \cP_{\bar \tau_i}:\cC^n(\R) \to \cC^n(\R)$ by setting 
$$
\cP_{\tau_i}(f_1, \dots, f_n) = (f_1, \dots, f_{i-1}, \cP(f_i, f_{i+1}), f_{i+2}, \dots, f_n),
$$
and letting $\cP_{\bar \tau_i} = R \cP_{\tau_i} R$. It turns out that these definitions extend to an action of the whole monoid $\mathfrak D_n$. For an arbitrary element $\sig = \pi_1 \cdots \pi_k \in \mathfrak D_n$ where $\pi_i \in \{\tau_i, \bar \tau_i, i = 1, \dots, n - 1\}$, we can define $\cP_\sig = \cP_{\pi_1} \cdots \cP_{\pi_k}$; this definition is independent of the choice of the decomposition $\pi_1 \cdots \pi_k$ of $\sig$, and we have the following theory for this monoid action, which can be viewed as a generalization of Theorem \ref{T:RSK-on-CnR-intro}.

\begin{theorem}
	\label{T:RSK-on-CnR-2-intro}
	The maps $\cP_\sig, \sig \in \cD_n$ have the following properties.
	\begin{enumerate}
		\item (Invertibility via last passage) Let $f \in \cC^n_\le(\R)$. Let $O(f)$ denote the orbit of $f$ under the action of $\cP_\sig, \sig \in \mathfrak D_n$ and let $O(\la(f))$ denote the orbit of $\la(f)$ under the action of $\mathfrak D_n$. Then the map $g \mapsto \la(g)$ is a bijection from $O(f)$ to $O(\la(f))$ and $\la \circ \cP_\sig = \sig \circ \la$ as functions from $O(f) \to O(\la(f))$. Moreover, if $g \in O(f)$, then for all $x \in \R, k \in \{1, \dots, n\}$ we have
		$$
		\sum_{i=1}^k g_i(x) = f[(-\infty, \bm^{k, g}) \to (x^k, 1)],
		$$
		where $\bm^{k, g} \in \{1, \dots, n\}^k_<$ is the minimal vector such that as multi-sets, 
		$$
		\{\la(g)_1, \dots, \la(g)_k\} = \{\la(f)_{\bm^{k, g}_1}, \dots, \la(f)_{\bm^{k, g}_k}\}.
		$$
		\item (Measure preservation) If $\la \in \R^n$ and $B \sim \mu_\la$, then $\cP_\sig B - \cP_\sig B(0) \sim \mu_{\sig(\la)}$.
		\item (Isometry) The isometry \eqref{E:isom-intro} holds with $\cP_\sig f$ in place of $W^U f$.
	\end{enumerate}
\end{theorem}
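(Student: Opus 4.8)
The plan is to prove all three parts by induction on the word length of $\sig \in \mathfrak D_n$, with the base case handled by the $n=2$ results of Theorem \ref{T:RSK-on-CnR-intro} (together with the definition $\cP|_{\cC^2_<(\R)} = W$, $\cP|_{\cC^2_\ge(\R)} = \id$) applied to a single pair of adjacent lines. Before doing this I would settle the well-definedness of the monoid action, i.e.\ that the operators $\cP_\pi$, $\pi \in \{\tau_i, \bar\tau_i : 1 \le i \le n-1\}$, satisfy the defining relations of $\mathfrak D_n$. Idempotency $\cP_\pi^2 = \cP_\pi$ (from $\cP^2 = \cP$) and far-commutativity ($\cP_{\tau_i}\cP_{\tau_j} = \cP_{\tau_j}\cP_{\tau_i}$ and the mixed versions for $|i - j| \ge 2$, since the operators act on disjoint blocks of coordinates) are immediate; the mixed relations such as $\cP_{\tau_i}\cP_{\bar\tau_i} = \cP_{\tau_i}$ follow from $\cP|_{\cC^2_\ge(\R)} = \id$ together with how $\cP$ moves slopes. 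The only genuinely nontrivial relations are the braid relations $\cP_{\tau_i}\cP_{\tau_{i+1}}\cP_{\tau_i} = \cP_{\tau_{i+1}}\cP_{\tau_i}\cP_{\tau_{i+1}}$ and their $\bar\tau$ and mixed analogues on $\cC^3(\R)$; these I would obtain either by reducing to piecewise-linear environments and invoking the classical (finite-matrix, Littelmann-path) braid relation for RSK, or intrinsically by checking that both sides satisfy the last-passage characterization of part (1) and hence coincide.

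For the slope identity, a single generator $\tau_i$ either fixes lines $i, i+1$ (when $\la_i \ge \la_{i+1}$) or applies $W$ to them, which by Theorem \ref{T:RSK-on-CnR-intro}(2) swaps those two slopes --- exactly the action of $\tau_i$ on $\R^n$. Hence $\la \circ \cP_\pi = \pi \circ \la$, and composing over the letters of $\sig$ gives $\la \circ \cP_\sig = \sig \circ \la$ on all of $O(f)$. Since $\mathfrak D_n$ acting on the weakly increasing vector $\la(f)$ sweeps out precisely $O(\la(f))$, this shows $g \mapsto \la(g)$ surjects from $O(f)$ onto $O(\la(f))$; injectivity will then follow from the last-passage formula, which reconstructs $g$ from $f$ together with the multisets $\{\la(g)_1, \dots, \la(g)_k\}$, $1 \le k \le n$.

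The heart of the matter is the last-passage formula in part (1), which I would prove one generator at a time. The key point is that applying $\cP_{\tau_i}$ changes the partial sum $\sum_{j=1}^k g_j$ only for $k = i$: for $k \ne i$ one has $\sum_{j=1}^k (\cP_{\tau_i} g)_j = \sum_{j=1}^k g_j$ because $\cP$ preserves the total of the two lines it acts on, and correspondingly $\bm^{k, \cP_{\tau_i}g} = \bm^{k, g}$, so the formula is unchanged; for $k = i$ one gets $\sum_{j=1}^i (\cP_{\tau_i} g)_j = \sum_{j < i} g_j + \cP(g_i, g_{i+1})_1$, and one must show
\[
\sum_{j < i} g_j(x) + \cP(g_i, g_{i+1})_1(x) = f[(-\infty, \bm^{i, \cP_{\tau_i}g}) \to (x^i, 1)].
\]
Feeding in the inductive formulas for $\sum_{j \le i-1} g_j$, $\sum_{j \le i} g_j$ and $\sum_{j \le i+1} g_j$ as last-passage values in $f$, together with the $n=2$ identity $\cP(g_i, g_{i+1})_1(x) = (g_i, g_{i+1})[(-\infty, \text{top}) \to (x, 1)]$ from Theorem \ref{T:RSK-on-CnR-intro}, this becomes a concatenation-of-last-passage identity --- the semi-discrete incarnation of a single RSK row-insertion step --- for which the isometry \eqref{E:isom-intro} applied to the environments already built is the essential tool, allowing last-passage values to be transported through the Pitman transform. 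I expect this concatenation identity to be the main obstacle.

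Parts (2) and (3) are then comparatively routine. For measure preservation, induct on word length from the $n=2$ Brownian Burke theorem contained in Theorem \ref{T:RSK-on-CnR-intro}(2): if $\cP_{\sig'} B$ is, after recentering at $0$, a vector of independent variance-$2$ Brownian motions with drift vector $\sig'(\la)$, then applying one more generator $\tau_i$ leaves every line but $i, i+1$ untouched and independent, while the Burke theorem turns the pair on lines $i, i+1$ into independent Brownian motions with those two drifts sorted, i.e.\ with drift vector $\tau_i(\sig'(\la)) = \sig(\la)$; the only bookkeeping is that the additive constants at $0$ get permuted and are then annihilated by recentering. For the isometry, since $\cP_{\tau_i}$ alters only lines $i$ and $i+1$ and bottom-to-top multi-path last passage from $(\bx, n)$ to $(\by, 1)$ decomposes as a maximum, over entry and exit data, of a contribution from the lines above $i+1$, a two-line last-passage value on lines $\{i, i+1\}$, and a contribution from the lines below $i$, the $n=2$ isometry (and its $R$-conjugate for $\cP_{\bar\tau_i}$) keeps the slab contribution fixed and hence keeps the whole value fixed; composing over the letters of $\sig$ completes the proof.
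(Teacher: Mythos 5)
Your architecture --- induct on word length, bootstrap from the two-line results, use the boundary isometry to verify the algebraic relations, and get measure preservation from the two-line Burke property --- matches the paper's in broad strokes. Parts (2) and (3) are handled the same way the paper does (Corollary~\ref{C:Brownian-sorting} and Lemma~\ref{L:two-line-isometry} followed by composition). But there are two genuine gaps in the core of part (1) and in the well-definedness step.

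First, the ``concatenation-of-last-passage identity'' that you correctly flag as the main obstacle is not supplied, and it is the entire content of part (1). The paper does not prove the last-passage formula one adjacent transposition at a time; instead Lemma~\ref{L:block-up} establishes it for a full cycle $\sigma=\tau_{n-1}\cdots\tau_1$ that moves line~$1$ to position~$n$ in one step (under the hypothesis $\la(f)_1<\la(f)_i$ for all $i>1$), and Proposition~\ref{P:orbits} then iterates this block move to reach any orbit element. This is meaningfully cleaner than your single-$\tau_i$ step: when you apply $\cP_{\tau_i}$ the vector $\bm^{i,g}$ changes by swapping one index, and the identity you have to prove mixes $f$-last-passage values with a two-line $W$-transform of the $g$-lines at three different levels; it is not clear this closes up without essentially rediscovering the full-cycle lemma. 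You should prove or cite a version of Lemma~\ref{L:block-up}.

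Second, your treatment of well-definedness is too casual in two respects. (a) You state that the ``mixed relations such as $\cP_{\tau_i}\cP_{\bar\tau_i}=\cP_{\tau_i}$'' are essentially free, but when $\la_i(f)>\la_{i+1}(f)$ the operator $\cP_{\bar\tau_i}$ acts nontrivially, then $\cP_{\tau_i}$ acts nontrivially on the result, and you must show these two Pitman reflections genuinely cancel; this is an analytic identity (Lemma~\ref{L:Pitman-properties}.iii in the paper: one checks $Sf\ge f_{i+1}-f_i$ and $Sf$ nondecreasing force $\cP_{\bar\tau_i}\cP_{\tau_i}f_i=f_i$), not a formal consequence of $\cP|_{\cC^2_\ge(\R)}=\id$. (b) More seriously, your plan to verify well-definedness by ``checking the defining relations of $\mathfrak D_n$'' presupposes a presentation of $\mathfrak D_n$ by the braid, commutation and idempotent relations, and the paper explicitly notes that no such presentation is known. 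The paper's workaround is the abstract Proposition~\ref{p:semigroup-isom}, whose proof converts arbitrary words in $\tau_i,\bar\tau_i$ to effective words, passes through the symmetric group, and uses the slope data together with the partial-inverse relation; you would need that (or the alternate route you sketch --- showing $\cP_w g$ is pinned down by the last-passage formula and the slope vector --- but that route only works \emph{after} the last-passage formula is established, i.e.\ after gap one is closed).
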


Theorem \ref{T:RSK-on-CnR-2-intro} can be interpreted as saying that given an environment $f$, we can build a family of boundary isometric environment $\cP_\sig f$ which permute the slopes of the lines of $f$ in different ways. We can access different information about last passage values across $f$ by studying different elements in this family. 
Theorem \ref{T:RSK-on-CnR-2-intro} is proven in the text as a combination of Corollary \ref{C:Pitman-iso}, Lemma \ref{L:orbits}, and Proposition \ref{P:orbits}. The special case of Theorem \ref{T:RSK-on-CnR-intro} is given in Corollary \ref{C:RSK}.

\subsection{The second limit: RSK for functions \texorpdfstring{$f \in \cC^\N(\R)$}{}}
\label{SS:RSK-2}

Next, we take a limit of RSK on $\cC^n(\R)$ as $n \to \infty$. This is the goal of Section \ref{S:Pitman-Brownian}. This limit naturally results in \textbf{Busemann functions} for Brownian LPP, foreshadowing the description of the maps in Theorem \ref{T:RSK-intro}. Let $B = (B_i, i \in \N) \in \cC^\N(\R)$ be an environment of independent two-sided Brownian motions with no drift. For $\theta \in [0, \infty)^k_\le$ and $\bx \in \R^k_\le$ define the multi-path Busemann function
\begin{equation}
	\label{E:Buse-multi-intro}
	\cB^\theta(\bx; B) = \lim_{t \to -\infty} B[(t, \fl{\theta |t|} +k) \to (\bx, 1)] - N_\theta(t)
\end{equation}
Here the notation $\fl{\theta t}$ should be understood coordinatewise: $\fl{\theta t} = (\fl{\theta_1 t}, \dots, \fl{\theta_k t})$. We use similar coordinatewise notation throughout the paper.
We have added $+k$ above to ensure that the Busemann function is well-defined even if $\theta = 0^k$. The function $N_\theta(t)$ is a normalization term needed in order to ensure that the limit \eqref{E:Buse-multi-intro} exists. There are a couple of different choices for this normalization term; we choose the normalization that retains the most information. Letting $\hat \theta \in [0, \infty)^m_<$ be uniquely chosen so that $\theta = (\hat \theta_1^{\ell_1}, \dots, \hat \theta_m^{\ell_m})$, set
$$
N_\theta(t) := \sum_{i=1}^{m} B[(t, \lfloor \hat \theta_i^{\ell_i} |t| \rfloor  +k) \to (0^{\ell_i}, 1)],
$$
where the operation $\hat \theta_i^{\ell_i} \mapsto \lfloor \hat \theta_i^{\ell_i} |t| \rfloor  +k$ should again be understood coordinatewise. Busemann functions are closely linked to the study of \textbf{semi-infinite optimizers}. A semi-infinite optimizer in direction $\theta$ ending at $(\bx, 1)$ is a $k$-tuple $\pi = (\pi_1, \dots, \pi_k)$ such that $\pi_i:(-\infty, x_i] \to \N$, $\pi(t)/|t| \to \theta$ as $t \to -\infty$ and such that $\pi|_{[y, x_k]}$ is an optimizer ending at $(\bx, 1)$ for any $y < x_1$. A semi-infinite optimizer with one path is called a \textbf{semi-infinite geodesic}.

 The single-path Busemann process and semi-infinite geodesics in Brownian LPP have been extensively studied in recent years. The state-of-the-art results are due to Sepp\"al\"ainen and Sorensen \cite{seppalainen2023global}, building on \cite{alberts2020busemann, seppalainen2023busemann}. They showed that there is a random countable set $\Theta \subset [0, \infty)$ such that for $\theta \notin \Theta$, the limit \eqref{E:Buse-multi-intro} exists for all $x \in \R$ and the process $(\theta, x) \mapsto \cB^\theta(x ; B)$ is continuous in both $x$ and $\theta$ on $\R \X [0, \infty) \smin \Theta$. Moreover, semi-infinite geodesics exist ending at all points and in all directions $\theta$, and for $\theta \notin \Theta$, all semi-infinite geodesics in direction $\theta$ eventually coalesce. We use these results to help build an analogous toolkit for working with multi-path Busemann functions for Brownian LPP, see Section \ref{S:Busemann-A} for details.
 
 Now let us return to the problem of defining a limit for the RSK correspondence on $\cC^n(\R)$ as $n \to \infty$. Consider the Brownian environment $B$ as above, and let $B' \in \cC^\N(\R)$ be an independent Brownian environment, where each $B'_i$ has drift $a > 0$. For each $n \in \N$, let $W^n = \cP_{\sig_n}(B_1, \dots, B_n, B_1', \dots, B_n')$, where $\sig_n$ is chosen so that the slopes are reversed, i.e. $\la(W^n) = (a^n, 0^n)$. By Theorem \ref{T:RSK-on-CnR-2-intro}.2, $W^n - W^n(0) \sim \mu_{(a^n, 0^n)}$. In particular, $W^n - W^n(0)$ converges in law as $n \to \infty$ to an environment $B^a \in \cC^\N(\R)$ of infinitely many independent Brownian motions with slope $a$. It turns out that this convergence also happens almost surely, and that $B^a$ is described purely in terms of Busemann functions for the unsloped environment $B$; the dependence on $B'$ disappears in the limit. We have now defined a collection of maps indexed by $a > 0$ that play the role of RSK on $\cC^\N(\R)$. It turns out they are invertible, measure-preserving, and naturally satisfy a \textit{Busemann isometry}.
 
 To make things more precise, define
 $
 \cW^\la(\bx; B) := \cB^{\la |\la|/2}(\bx; B),
 $
 which parametrizes Busemann functions by the asymptotic slope of the function $\bx \mapsto \cW^\la(\bx; B)$, rather than the Busemann direction. For $a \ge 0$ we can alternately define $B^a\in \cC^\N(\R)$ by the formula
 \begin{equation}
 	\label{E:Ba-def-1}
 	\sum_{i=1}^k B^a_i(x) = \cW^{a^k}(x^k ; B), \qquad k \in \N.
 \end{equation}
 We also build environments for $a < 0$. First, for an environment $f$, define $Rf(x) = f(-x)$, and for $\bx \in \R^k_\le$, let $-\bx = (-x_k, \dots, -x_1) \in \R^k_\le$. For $\theta \in (-\infty, 0]^k_\le, \bx \in \R^k_\le$, define
 \begin{equation}
 \label{e:reversed-LP}
 \cW^\theta(\bx ; B) = \cB^{\theta|\theta|/2}(\bx ; B) := \cB^{-\theta|\theta|/2}(-\bx ; RB), \qquad       
 \end{equation}
 See Figure \ref{fig:BrownianRSK} for an illustration of this definition when $\theta \in (-\infty, 0)$. Using this definition, we can understand formula \eqref{E:Ba-def-1} for arbitrary $a \in \R$. The full suite of RSK transformations $B \mapsto B^a, a \in \R$  exhibits a group structure, and acts on Busemann functions via a simple parameter translation. 
 \begin{theorem}
 	\label{T:busemann-shear-intro}
 	Let $B = (B_i, i \in \N)$ be an environment of independent two-sided Brownian motions with zero drift.
 	Then:
 	\begin{enumerate}
 		\item (Invertibility and group structure) For any $a, b \in \R$, almost surely:
 		\begin{equation*}
 			B^0 = B, \qquad (B^a)^{-a} = B, \qquad (B^a)^b = B^{a + b}.
 		\end{equation*}
 		\item (Measure preservation) For any $a \in \R$, $B^a$ is a sequence of independent Brownian motions of drift $a$. 
 		\item (Busemann isometry) Fix $a \in \R$ and $\la \in (-\infty, 0]^k_\le \cup [0, \infty)^k_\le$. Suppose also that $\la + a \in (-\infty, 0]^k_\le \cup [0, \infty)^k_\le$. Then a.s.\ for all $\bx \in \R^k_\le$ we have
 		\begin{equation*}
 			\cW^{\la + a}(\bx; B) = \cW^\la(\bx; B^a).
 		\end{equation*}
 	\end{enumerate}
 \end{theorem}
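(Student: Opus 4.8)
The plan is to obtain all three parts by passing the finite-$n$ theory of Theorem~\ref{T:RSK-on-CnR-2-intro} through the limit $n \to \infty$, with the multi-path Busemann machinery of Section~\ref{S:Busemann-A} supplying the control needed to interchange limits. The key input, which I would establish first, is the almost sure identification of $B^a$ as a limit of finite RSK maps: taking $B_i$ of drift $0$, an independent environment $B_i'$ of drift $a > 0$, and $\sig_n \in \mathfrak D_n$ with $\la\big(\cP_{\sig_n}(B_1, \dots, B_n, B_1', \dots, B_n')\big) = (a^n, 0^n)$, one shows that the first $n$ coordinates of $\cP_{\sig_n}(B_1, \dots, B_n, B_1', \dots, B_n')$, recentred at $0$, converge almost surely and uniformly on compact sets to $B^a$ from \eqref{E:Ba-def-1}. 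By Theorem~\ref{T:RSK-on-CnR-2-intro}.1, for $k \le n$ the $k$-th partial sum of these coordinates is the multi-path last passage value from $-\infty$ in the environment $(B_1, \dots, B_n, B_1', \dots, B_n')$ with the $k$ semi-infinite paths starting at levels $n+1, \dots, n+k$; so the real content is that these values stabilize as $n \to \infty$ and that the limit depends on $B$ alone, i.e.\ that the contribution of the drifted lines $B'$ washes out. This is a stabilization/coalescence statement for semi-infinite multi-path optimizers: for $n$ large an optimizer realizing the value above cannot descend far below level $n$, its increments along the $B_i'$ lines converge, and the limit is identified with \eqref{E:Buse-multi-intro}. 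The case $a < 0$ is reduced to $a > 0$ via the reflection $R$, using $\cW^\theta(\bx; B) = \cB^{-\theta|\theta|/2}(-\bx; RB)$.

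Granting this, part 2 is immediate: by Theorem~\ref{T:RSK-on-CnR-2-intro}.2 the first $n$ recentred coordinates of $\cP_{\sig_n}(B_1, \dots, B_n, B_1', \dots, B_n')$ have law $\mu_{a^n}$; these laws form a consistent projective family, so their almost sure limit $B^a$ is a sequence of independent Brownian motions of drift $a$.

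For part 3, part 2 first makes $B^a$ a genuine Brownian environment, so $\cW^\la(\cdot\,; B^a)$ is well-defined by the Section~\ref{S:Busemann-A} theory. The identity is then the $n \to \infty$ shadow of the finite-$n$ isometry: both $\cW^{\la + a}(\bx; B)$ and $\cW^\la(\bx; B^a)$ are realized, via the same stabilization argument as in the first paragraph, as limits of multi-path last passage values from $-\infty$ in $\cP_\sig$-transforms of $(B_1, \dots, B_n)$ augmented by auxiliary Brownian lines, and a single $\cP_\sig$ matches the levels of asymptotic slope $\la$ on the $B^a$ side with those of asymptotic slope $\la + a$ on the $B$ side (using that $B^a$ is itself such a limit, for the shear that shifts slopes by $a$). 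The isometry built into Theorem~\ref{T:RSK-on-CnR-2-intro} (parts 1 and 3) gives equality at finite $n$, which survives the limit by the same estimates used in the first step. Requiring $\sig$ to be chosen compatibly for both sides is exactly what forces both $\la$ and $\la + a$ to lie in $(-\infty, 0]^k_\le \cup [0, \infty)^k_\le$, so the relevant directions are honest Busemann directions; the reflection identity above reduces the mixed-sign bookkeeping to two sign-definite cases.

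Part 1 then follows with little extra work. The equality $B^0 = B$ is a direct computation with \eqref{E:Buse-multi-intro}: for $\theta = 0^k$ the only disjoint $k$-tuple from $(t^k, k^k)$ to $(x^k, 1)$ is forced and its length telescopes, so that $\cW^{0^k}(x^k; B) = \sum_{i=1}^k B_i(x)$ after $N_{0^k}$ is subtracted and one uses $B_i(0) = 0$. Given this and part 3 applied with $\la = b^k$ (so that $\la$ and $\la + a = (a+b)^k$ automatically lie in the allowed region), for all $a, b \in \R$, $k \in \N$ and $x \in \R$ we get
\[
\sum_{i=1}^k (B^a)^b_i(x) = \cW^{b^k}(x^k; B^a) = \cW^{(a+b)^k}(x^k; B) = \sum_{i=1}^k B^{a+b}_i(x),
\]
hence $(B^a)^b = B^{a+b}$; taking $b = -a$ and using $B^0 = B$ yields $(B^a)^{-a} = B$. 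The main obstacle is the stabilization/coalescence step of the first paragraph --- proving the almost sure convergence of the finite-$n$ RSK outputs and identifying the limit as a function of $B$ alone --- which rests on the multi-path extension of the Sepp\"al\"ainen--Sorensen Busemann theory developed in Section~\ref{S:Busemann-A}; once those uniform estimates are in hand, the remainder is bookkeeping with slopes and permutations together with routine interchanges of limits.
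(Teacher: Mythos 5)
Your proposal takes a genuinely different route from the paper, and the step you flag as ``the main obstacle'' is precisely the one the paper's proof is engineered to avoid. You want to establish \emph{almost sure} convergence of the recentred finite-$n$ RSK outputs $\cP_{\sig_n}(B_1,\dots,B_n,B_1',\dots,B_n')$ to the Busemann-defined $B^a$, and then push the finite-$n$ isometries of Theorem~\ref{T:RSK-on-CnR-2-intro} through that limit. This almost sure convergence is asserted heuristically in the paper's introduction (Section~\ref{SS:RSK-2}) but is never proven in the body. What Section~\ref{S:Busemann-A} and the proof of Theorem~\ref{T:brownian-law} actually deliver is that the relevant LPP quantity in the augmented environment $W^m=(B_1,\dots,B_m,W^0)$ agrees with $\hat\cB^{\theta^{\bn}}(\bx;B)$ only \emph{with probability tending to one} as $m\to\infty$ --- the argmax localization rests on the shape bounds of Propositions~\ref{P:cross-prob}, \ref{P:top-bd} and Lemma~\ref{L:Blx-continuity}, and this yields convergence in probability, hence a distributional identity, which is all Theorem~\ref{T:brownian-law} claims. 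Promoting this to almost-sure-along-the-full-sequence convergence would require quantitative Borel--Cantelli estimates and a coalescence argument for multi-path semi-infinite optimizers showing the crossing time from the drifted block into the undrifted block stabilizes almost surely (and that the limit, which a priori lives on the joint probability space of $B$ and $B'$, is almost surely $B'$-measurable only through $B$). None of this is supplied by the sources you cite, and it is not an obviously routine upgrade; saying it ``rests on'' Section~\ref{S:Busemann-A} overstates what that section proves.

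The paper's actual proof circumvents all of this. For part~3 it uses Theorem~\ref{T:brownian-law} only to identify the \emph{law} of the multi-path Busemann process, then constructs two coupled environments $W^m$ and $\tilde W^m$ with the same law that agree on the first $m$ lines $B^a_1,\dots,B^a_m$. The key device is a martingale argument: since both sides have the same conditional expectation given $\cF_m$ and are jointly equal in distribution together with these conditional expectations, applying the martingale convergence theorem to the $B^a$-measurable side and taking a distributional limit yields $(\cW^{\la+a}(\bx;B),\cW^{\la}(\bx;B^a))\eqd(\cW^{\la}(\bx;B^a),\cW^{\la}(\bx;B^a))$, which forces almost sure equality. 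This trades the hard almost-sure stabilization problem for a soft measure-theoretic argument and is essential to how the almost-sure conclusion is reached.

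Your treatment of parts~1 and~2 and the derivation of the group structure from part~3 together with $B^0=B$ are sound \emph{granted} your key input, and they match the paper's logic in spirit. But for part~3 the claim that the finite-$n$ isometry ``survives the limit by the same estimates used in the first step'' presupposes almost-sure estimates that are neither in the paper nor routine, so as written the argument has a genuine gap at its central step.
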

Theorem \ref{T:busemann-shear-intro}.3 also allows us to define the function $\cW^\la(\bx ; B)$ when $\la \in \R^k_\le$ but possibly has coordinates of both signs: we simply set $\cW^\la(\bx ; B) = \cW^{\la - \la_1}(\bx; B^{-\la_1})$. Theorem \ref{T:busemann-shear-intro} is proven in the text as Theorem \ref{T:Busemann-shear}.
 
 We can also take different limits of the RSK correspondence on $\cC^n(\R)$ that result in new representations for the multi-path Busemann process for $B$. More precisely, let $\theta \in [0, \infty)^k_\le$ and let $B' \sim \mu_\theta$. Let the transform $\cP_{\sig_n}, \sig_n \in \mathfrak{D}_{n+k}$ be chosen so that $W^n := \cP_{\sig_n}(B_1, \dots, B_n, B')$ satisfies $\la(W^n) = (\theta, 0^n)$. Again, the maps $W^n - W^n(0)$ have an almost sure limit $W$ as $n \to \infty$ which comes from the Busemann process for $B$. The next theorem describes this limit. 
 \begin{theorem}
 	\label{T:Busemann-law-intro}
 	Let $\theta \in [0, \infty)^k_\le$, and define $W = (W_1, \dots, W_k):\R\to \R^k$ by the formula:
 	$$
 	\sum_{i=1}^\ell W_i(x) = \cW^{(\theta_1, \dots, \theta_\ell)}(x^\ell ; B), \qquad 1 \le \ell \le k.
 	$$
 	Then $W - W(0) \sim \mu_{\theta}$. Moreover, consider any  $\bn \in \{1, \dots, k\}^\ell_<$ for some $\ell \le k$ and suppose that $\bn \le \bm$ for any $\bm \in \{1, \dots, k\}^\ell_<$ with $\theta_{n_i} = \theta_{m_i}$ for all $i$. Then almost surely, for all $\bx \in \R^\ell_\le$ we have
 	\begin{equation}
 		\label{E:N-x}
 		\cB^{(\theta_{n_1}, \dots, \theta_{n_\ell})}(\bx; B) = W[(-\infty, \bn) \to (\bx, 1)].
 	\end{equation}
 \end{theorem}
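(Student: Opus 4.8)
The plan is to realize $W$ as an almost sure limit of the finite-$n$ RSK maps and then transfer the finite-$n$ properties through the limit. Fix an environment $B' \in \cC^k(\R)$ of independent two-sided Brownian motions with drift vector $\theta$, independent of $B$, and for each $n$ let $W^n := \cP_{\sig_n}(B_1, \dots, B_n, B')$ where $\sig_n \in \mathfrak D_{n+k}$ is chosen so that $\la(W^n) = (\theta, 0^n)$. By Theorem \ref{T:RSK-on-CnR-2-intro}.2, $W^n - W^n(0) \sim \mu_{(\theta, 0^n)}$, so the first $k$ coordinates of $W^n - W^n(0)$ have law $\mu_\theta$ for every $n$. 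The first step is to show that $\sum_{i=1}^\ell W^n_i \to \sum_{i=1}^\ell W_i$ almost surely and locally uniformly for each $\ell \le k$, where $W$ is the process in the statement; taking successive differences this gives $W^n_\ell \to W_\ell$ locally uniformly for each $\ell \le k$, and then part 1 follows since $W^n - W^n(0) \to W - W(0)$ in the uniform-on-compact topology while each term has the fixed law $\mu_\theta$.

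For the convergence, apply the invertibility-via-last-passage statement (Theorem \ref{T:RSK-on-CnR-2-intro}.1) with $f = (B_1, \dots, B_n, B')$ and $g = W^n \in O(f)$: for $1 \le \ell \le k$,
$$
\sum_{i=1}^\ell W^n_i(x) = (B_1, \dots, B_n, B')\bigl[(-\infty, \bm^{\ell, n}) \to (x^\ell, 1)\bigr],
$$
where $\bm^{\ell,n} \in \{1, \dots, n+k\}^\ell_<$ is the minimal vector whose multiset of slopes equals $\{\theta_1, \dots, \theta_\ell\}$ --- concretely, it uses the lowest-index zero-slope lines of $B$ together with the matching lines of $B'$. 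The structural input is monotonicity in $n$: inserting a line into a continuous environment never decreases any multi-path last passage value (an optimizer in the smaller environment can spend zero time on the new line), and the normalization built into the left-passage-from-$-\infty$ value is unchanged when passing from $n$ to $n+1$ because the starting lines of $B'$ are reused. Hence $\sum_{i=1}^\ell W^n_i(x)$ is nondecreasing in $n$, and it remains to identify the monotone limit with $\cW^{(\theta_1, \dots, \theta_\ell)}(x^\ell; B)$. This is where the multi-path Busemann machinery of Section \ref{S:Busemann-A} enters: as $n \to \infty$ the positive-drift boundary lines of $B'$, seen after the optimizer crosses the $n$ zero-slope lines of $B$, behave exactly like entering from the direction dual to the slope $\theta$, so the finite-boundary last passage value converges to the direction-parametrized multi-path Busemann function; almost sure finiteness of the limit (so the monotone sequence does not run off to $+\infty$) is precisely the a.s.\ existence of $\cW^{(\theta_1, \dots, \theta_\ell)}$ proven there, and joint continuity of the Busemann process (together with Dini's theorem, using monotonicity) promotes pointwise convergence to locally uniform convergence.

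The last-passage identity \eqref{E:N-x} is proved the same way, one level down. Combining Theorem \ref{T:RSK-on-CnR-2-intro}.1 with the fact that $\cP_{\sig_n}$ is boundary isometric (Theorem \ref{T:RSK-on-CnR-2-intro}.3) yields a finite-$n$ identity expressing $W^n\bigl[(-\infty, \bn) \to (\bx, 1)\bigr]$ --- left passage from $-\infty$ across the output $W^n$, started at its lines $\bn$ --- as a left passage from $-\infty$ across the input $(B_1, \dots, B_n, B')$ started at the matching lines of the boundary $B'$. Sending $n \to \infty$: the left side converges to $W\bigl[(-\infty, \bn) \to (\bx, 1)\bigr]$ since $W^n \to W$ locally uniformly and left passage from $-\infty$ is continuous in the environment (again with the monotonicity safeguard), while the right side converges to $\cB^{(\theta_{n_1}, \dots, \theta_{n_\ell})}(\bx; B)$ by the Busemann-convergence argument above. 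Here the hypothesis that $\bn$ is minimal among all $\bm$ with $\theta_{n_i} = \theta_{m_i}$ is exactly what is needed: it forces the optimizer to start from the boundary lines of $B'$ whose values diverge to $+\infty$ fastest, which is what matches the direction-based definition of $\cB$ when $\theta$ has repeated entries.

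\textbf{Main obstacle.} The single hard point is the identification of the $n \to \infty$ limit of the finite-boundary last passage value with the direction-parametrized multi-path Busemann function $\cW^{(\theta_1, \dots, \theta_\ell)}(\cdot\,; B)$, together with the bookkeeping of the minimal vectors $\bm^{\ell,n}$ and $\bn$. This is exactly the content that the multi-path version of the direction-to-slope correspondence and the control of semi-infinite optimizers from Section \ref{S:Busemann-A} are designed to supply; everything else --- monotonicity, passing the fixed law $\mu_\theta$ through the limit, and the finite-$n$ isometry identity --- is routine given Theorems \ref{T:RSK-on-CnR-2-intro} and \ref{T:busemann-shear-intro}.
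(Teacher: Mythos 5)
The central step of your argument does not go through: the uncentered sequence $\sum_{i=1}^{\ell} W^n_i(x)$ diverges to $+\infty$ as $n\to\infty$, so it cannot converge almost surely to the finite quantity $\cW^{(\theta_1,\dots,\theta_\ell)}(x^\ell; B)$. To see this concretely with $\ell=1$ and $\theta_1>0$: by Proposition~\ref{P:orbits} and the metric composition law at level $n$,
$$
W^n_1(0) \;=\; (B_1,\dots,B_n,B')\bigl[(-\infty,\, n+1)\to(0,1)\bigr] \;=\; \max_{z\le 0}\; B'_1(z) + B\bigl[(z,n)\to(0,1)\bigr],
$$
and since $B'_1(z)\approx \theta_1 z$ while $B[(z,n)\to(0,1)]\approx\sqrt{8n|z|}$, the maximum is achieved near $|z|\approx 2n/\theta_1^2$ and its value is $\approx 2n/\theta_1\to\infty$. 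Your monotonicity observation (inserting a line of $B$ never decreases the last-passage value, with the $B'$-normalization preserved) is correct, but it is exactly what makes this divergence inevitable: the monotone limit is $+\infty$, not the Busemann function. The mistake is identifying the $(-\infty,\bm)$-normalization $\sum_i f_{m_i}(t)$ with the Busemann normalization $N_\theta(t)$ (which subtracts off last-passage values to $0$, not starting-line values); these differ by a quantity that grows with $n$. Consequently the step ``almost sure finiteness of the limit \dots\ is precisely the a.s.\ existence of $\cW^{(\theta_1,\dots,\theta_\ell)}$'' is a non-sequitur, and the downstream claims ($W^n\to W$ locally uniformly, hence $W^n[(-\infty,\bn)\to(\bx,1)]\to W[(-\infty,\bn)\to(\bx,1)]$) collapse.

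The paper sidesteps this by working exclusively with the centered quantities $\cK_m(\bn,\bx) = W^m[(-\infty,\bn+m)\to(\bx,1)] - W^m[(-\infty,\bn+m)\to(0^{|\bn|},1)]$, which remain bounded because the linear-in-$n$ divergence cancels. More importantly, the paper's argument is distributional rather than almost-sure at this stage: the Burke stationarity of Corollary~\ref{C:existence-of-stationary} gives $\cK_m\eqd\cK_0$ for every $m$, and then metric composition, the shape bounds (Propositions~\ref{P:cross-prob},~\ref{P:top-bd}), and the local constancy of the Busemann process in direction (Lemma~\ref{L:Blx-continuity}) show that $\cK_m$ equals $\hat\cB^{\theta^\bn}(\cdot;B)$ with probability tending to $1$. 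Combining these two facts yields the distributional identity \eqref{E:B-theta-norm-1}, and the almost-sure identity in part~2 is then extracted by choosing a coupling of $W^0$ with $B$ realizing \eqref{E:B-theta-norm-1}. If you want to preserve the spirit of your construction, you must replace $\sum_{i=1}^\ell W^n_i(x)$ by the increment $\sum_{i=1}^\ell W^n_i(x)-\sum_{i=1}^\ell W^n_i(0)$ throughout; but those increments are not monotone in $n$, which removes the mechanism you were relying on to upgrade convergence in law to convergence almost surely, and you are then essentially forced into the convergence-in-probability plus Burke-stationarity route the paper actually takes.
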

The identity \eqref{E:N-x} may look familiar to the reader in the special case when $\bn$ is a singleton. Indeed, as part of their comprehensive study of Brownian Busemann functions, Sepp\"al\"ainen and Sorensen \cite{seppalainen2023global} showed that given $\theta \in [0, \infty)^k_<$, there exists an environment $\tilde W \sim \mu_\theta$ such that the following identity holds in distribution, as functions of $(x, i) \in \R \X \{1, \dots, k\}$.
\begin{equation}
	\label{E:W-B-intro}
	\tilde W[(-\infty, i) \to (x, 1)] - \tilde W[(-\infty, i) \to (0, 1)] \eqd \cW^{\theta_i}(x; B)
\end{equation}
One upshot of Theorem \ref{T:Busemann-law-intro} is that we realize this as an \emph{almost sure identity}, naturally constructing $\tilde W$ in terms of $B$.
A multi-path almost sure approach to the identity \eqref{E:W-B-intro} was previously suggested in \cite[Remark 1.3.4]{dauvergne2023last} and developed there in the context of the Airy sheet. Theorem \ref{T:Busemann-law-intro} is proven in the text as Theorem \ref{T:brownian-law}.
\begin{figure}[t]
	\centering
	\includegraphics[width=10cm]{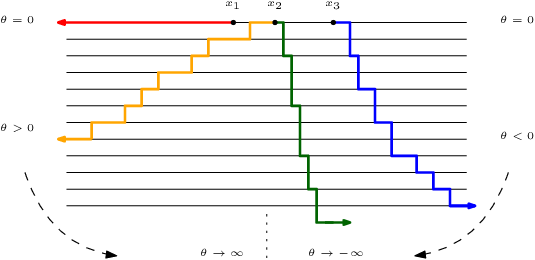}
	\caption{Semi-infinite geodesics in $B$, indexed by the direction $\theta \in \R$. To make sense of the direction when $\theta < 0$, we look along down-right paths, or equivalently, paths in the reversed environment. }
	\label{fig:BrownianRSK}
\end{figure}

\begin{figure}[ht]
	\centering
	\includegraphics[width=10cm]{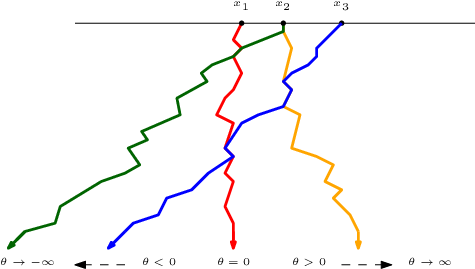}
	\caption{Semi-infinite geodesics in $\cL$. The colouring of paths in Figures \ref{fig:BrownianRSK} and \ref{fig:LSRK} is chosen so that the start points and directions match in the two pictures under the version of the RSK correspondence in Theorem \ref{T:landscape-recovery-intro}.}
	\label{fig:LSRK}
\end{figure}

\subsection{The final limit: RSK for the directed landscape}
\label{SS:RSK-3}

At this point, we have constructed analogues of the RSK correspondence on an infinite line environment on $\R\X\N$ indexed by $a \in \R$. We can make one final limiting transition. This transition can be viewed either by taking $a \to \pm \infty$ in Theorem \ref{T:busemann-shear-intro}, or equivalently by decreasing the spacing between lines to $0$, in a manner that takes Brownian LPP to the directed landscape. This is the goal of Section \ref{S:reconstruction}. 

Recall the main theorem of \cite{DOV}, cited above as Theorem \ref{T:DL-cvg-intro}, which defines the directed landscape as the scaling limit of Brownian LPP. In Theorem \ref{T:DL-cvg-intro}, the convergence in distribution is in the compact topology on $\cC(\Rd)$.
Just as with LPP, the directed landscape is best thought of as assigning distances to pairs of points $p = (x, s), q = (y, t)$. The value $\cL(p; q)$ is best thought of as a distance between two points $p$ and $q$ in the space-time plane, and indeed, it satisfies a reverse triangle inequality
\begin{equation}
	\label{E:landscape-triangle}
	\cL(x, s; y, t) \ge \cL(x, s; z, r) + \cL(z, r; y, t),
\end{equation}
just like LPP. Unlike a usual metric, $\cL$ is not symmetric, does not assign distances to every pair of points in the plane, and may take negative values. 

We define the analogue of RSK for $\cL$ by modifying the Busemann definition \eqref{E:Ba-def-1} for the limiting context. To set things up properly we need to define path lengths, geodesics, and optimizers in the directed landscape.
For a continuous function $\pi:[s, t]\mapsto \R$,  referred to as a \textbf{path}, let $\bar \pi(r) = (\pi(r), r)$. Following \cite[Section 12]{DOV}, define the \textbf{length} of $\pi$ by
\begin{equation}
	\label{E:length}
	\|\pi\|_\cL=\inf_{k\in \N}\inf_{s=t_0<t_1<\ldots<t_k=t}\sum_{i=1}^k\cL(\bar \pi(t_{i-1});\bar \pi(t_i))\,.
\end{equation}
Next, for $s < t$ and $\bx, \by \in \R^k_\le$, define the \textbf{extended landscape value}
\begin{equation}
	\label{E:extended-land-value}
	\cL(\bx, s; \by, t) = \max_{\pi = (\pi_1, \dots, \pi_k)} \sum_{i=1}^k \|\pi_i\|_\cL,
\end{equation}
where the maximum is over all \textbf{disjoint $k$-tuples} of paths $\pi = (\pi_1, \dots, \pi_k)$ with $\pi(r) \in \R^k_<$ for $r \in (s, t)$, and such that $\pi(s) = \bx, \pi(t) = \by$. In \cite{dauvergne2021disjoint}, the authors show that the maximum in \eqref{E:extended-land-value} is almost surely achieved for all $s < t$ and $\bx, \by \in \R^k_\le$. As before, we call a maximizer a \textbf{disjoint optimizer}, or \textbf{geodesic} if $k = 1$. Moreover, the extended landscape is continuous, and is the limit of multi-path Brownian LPP. Using this structure, we can define (multi-path) Busemann functions in $\cL$ as in \eqref{E:Buse-multi-intro}. For $\theta, \bx \in \R^k_\le$, let
\begin{equation}
	\label{E:cL-busemann-intro}
	\fB^\theta(\bx; \cL) := \lim_{t \to -\infty} \cL(\theta |t|, t; \bx, 0) - \sum_{i=1}^{m} \cL( \hat \theta^{\ell_i} |t|, t; 0^{\ell_i}, 0).
\end{equation}
Putting aside the question of Busemann existence for one moment, by analogy with \eqref{E:Ba-def-1}, for every $a \in \R$ we can define an RSK map for $\cL$. For $a \in \R$, define $B^{\cL,a}(\cdot) \in \cC^\N(\R)$ by:
\begin{equation}
\label{E:B-a-L}
\sum_{i=1}^k B^{\cL,a}_i(x) = \fB^{(a/2)^k}(x; \cL).
\end{equation}
We can now ask for the usual properties of these maps: invertibility, measure-preservation, isometry. The next theorem gives analogues of these properties here. 

\begin{theorem}
	\label{T:landscape-recovery-intro}
Let $\cL$ denote the directed landscape, and let $\mathbb H^2_\uparrow = \{(x, s; y, t) \in \R^4 : s < t \le 0\}$. For every $a \in \R$ define an environment $B^{\cL,a} \in \cC^\N(\R)$ as in \eqref{E:B-a-L} from $\cL|_{\mathbb H^2_\uparrow}$.
Then:
\begin{enumerate}
	\item (Measure preservation) For any $a \in \R$, $B^{\cL,a}$ is a sequence of independent Brownian motions of drift $a$. Moreover, the joint law of $B^{\cL,a}, a \in C$ is the same as the joint law of the environments $B^a, a \in C$ in Theorem \ref{T:busemann-shear-intro} for any countable set $C \subset \R$ (we restrict to countable $C$ to avoid topological issues). In particular, almost surely $B^{\cL,a + b} = (B^{\cL,a})^b$ for any fixed $a, b\in \R$. 
	\item (Busemann isometry) For any $a \in \R$ and $\la \in \R^k_\le$, a.s.\ for all $\bx \in \R^k_\le$ we have
	$$
	\cW^{\la + a}(\bx; B^{\cL,a}) = \fB^{(\la + a)/2}(\bx; \cL).
	$$
	\item (Invertibility) If we let the environment $\cL^a|_{\mathbb H^2_\uparrow}$ be defined as in Theorem \ref{T:DL-cvg-intro} where $B^a = B^{\cL,a}$, then
	$$
	\cL^a|_{\mathbb H^2_\uparrow} \cvgp \cL|_{\mathbb H^2_\uparrow}
	$$
	as $a \to -\infty$, where the convergence in probability is in the compact topology on functions from $\mathbb H^2_\uparrow \to \R$. Since any two of the environments $B^{\cL, a}, B^{\cL, b}$ are a.s.\ measurable functions of each other by part $1$, this implies that for every $a_0 \in \R$ there is a measurable map $f:\cC^\N(\R) \to \cC(\mathbb H^2_\uparrow)$ such that $\cL|_{\mathbb H^2_\uparrow} = f(B^{\cL,a_0})$ a.s.
\end{enumerate}
\end{theorem}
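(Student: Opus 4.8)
The plan is to obtain all three parts by transporting the Brownian last passage Busemann theory of Section~\ref{S:Pitman-Brownian} (Theorems~\ref{T:busemann-shear-intro} and~\ref{T:Busemann-law-intro}) through the scaling limit of Theorem~\ref{T:DL-cvg-intro}. The structural input needed first is the multi-path Busemann process for $\cL$: that the limit in~\eqref{E:cL-busemann-intro} exists almost surely for all $\theta,\bx$, that $(\theta,\bx)\mapsto\fB^\theta(\bx;\cL)$ has a version jointly continuous off a countable random set of exceptional directions, and that semi-infinite optimizers in $\cL$ exist and coalesce in non-exceptional directions. This is built exactly as in the Brownian last passage setting — using continuity of the extended landscape, almost sure existence of disjoint optimizers \cite{dauvergne2021disjoint}, monotonicity in the endpoints, and a coalescence argument — and it is what makes the definition~\eqref{E:B-a-L} of $B^{\cL,a}$ meaningful.

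For measure preservation, fix a countable $C\subset\R$ and realize $\cL$ together with its Busemann process as the scaling limit, as $c\to-\infty$, of Brownian LPP in a drift-$c$ environment. Under the rescaling of Theorem~\ref{T:DL-cvg-intro} the shears of the Brownian environment correspond to the family $(B^{\cL,a})_{a\in\R}$, so $(B^{\cL,a})_{a\in C}$ has the law of the family $(B^a)_{a\in C}$ from Theorem~\ref{T:busemann-shear-intro}; in particular each $B^{\cL,a}$ is a sequence of independent two-sided Brownian motions of drift $a$ by Theorem~\ref{T:busemann-shear-intro}.2, and the almost sure identity $(B^a)^b=B^{a+b}$ transfers to give $B^{\cL,a+b}=(B^{\cL,a})^b$, first on a rich countable set and then for all $a,b$ by letting $C$ vary and using continuity in the shear parameter (the $a\ge 0$ directions can alternatively be reached by applying the $a<0$ case to the reflected landscape). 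The delicate point is that $\cL$'s Busemann functions are a double limit — $t\to-\infty$ inside~\eqref{E:cL-busemann-intro} after $c\to-\infty$ in the rescaling — so the two limits must be interchanged; this is handled via uniform tail estimates for multi-path last passage percolation and the joint continuity of both Busemann processes. The Busemann isometry (part~2) then follows for $\la\in(-\infty,0]^k_\le\cup[0,\infty)^k_\le$ by carrying Theorem~\ref{T:busemann-shear-intro}.3 through the same scaling limit, or equivalently as a direct consequence of part~1 and the definition~\eqref{E:B-a-L}; the mixed-sign case reduces to this via the identity $\cW^\la(\bx;B)=\cW^{\la-\la_1}(\bx;B^{-\la_1})$ and part~1.

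Invertibility is the crux. Building $\cL^a|_{\mathbb H^2_\uparrow}$ from $B^a=B^{\cL,a}$ as in Theorem~\ref{T:DL-cvg-intro} already gives $\cL^a\cvgd\cL$; what has to be established is that this convergence is in probability and that the limit is the same $\cL$ we started from. For this I would use the Busemann isometry of part~2 to rewrite the rescaled last passage values defining $\cL^a(x,s;y,t)$ in terms of $\cL$'s own Busemann process in directions that, as $a\to-\infty$, carry precisely the information needed to recover $\cL(x,s;y,t)$; combined with tightness of the family $\{\cL^a\}$ from Theorem~\ref{T:DL-cvg-intro}, the joint continuity of $\cL$'s Busemann process, and a Cauchy-in-probability argument for the $\cL^a$, this identifies the limit as $\cL$ and upgrades the convergence to convergence in probability. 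The final assertion — a measurable $f$ with $\cL|_{\mathbb H^2_\uparrow}=f(B^{\cL,a_0})$ — is then immediate from part~1, since $B^{\cL,a}=(B^{\cL,a_0})^{a-a_0}$ makes every $B^{\cL,a}$, and hence the limit, a measurable function of $B^{\cL,a_0}$.

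The main obstacle is precisely this upgrade in part~3 from distributional to in-probability convergence together with the identification of the limit: Theorem~\ref{T:DL-cvg-intro} by itself only produces a limit with the law of $\cL$, and extracting the stronger statement requires the Busemann isometry to act as a rigid link tying $\cL^a$ to $\cL$, with genuine care needed at the countable exceptional set of Busemann directions and in the uniform-on-compacts estimates. A secondary obstacle is the interchange of limits in part~1 and, upstream of everything, the construction of the multi-path Busemann process for $\cL$ itself.
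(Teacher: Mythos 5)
Parts 1 and 2 of your plan match the paper: the paper does carry the Brownian Busemann shear theory through the scaling limit, using a modified continuous mapping theorem (Lemma~\ref{L:cty-theorem}) to handle the interchange of the $t\to-\infty$ Busemann limit with the $a\to-\infty$ scaling limit, and the Busemann isometry is indeed an immediate consequence.

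For part 3, however, there is a genuine gap in both the mechanism and the logical order. You propose to get convergence in probability via ``a Cauchy-in-probability argument for the $\cL^a$,'' and then derive the existence of a measurable map $f$ as a consequence. The paper does the opposite, and it does so out of necessity: the family $\cL^a$ is not obviously Cauchy in probability, and there is no apparent way to show this directly. What the paper actually does is first prove the much stronger \emph{abstract} measurability statement (Theorem~\ref{T:landscape-recovery}.2): $\cL|_{\mathbb H^2_\uparrow}$ is a.s.\ a measurable function of $B^{\cL,0}$. This is the hard part, and your proposal contains none of the machinery needed for it. The paper's route is: (i) show that multi-path Busemann functions encode \emph{double-slit} Busemann functions $\fB^\theta(z\mid S_u)$ (Proposition~\ref{P:double-slit-reconstruction}); this requires an \emph{optimizer rigidity} statement (Proposition~\ref{P:slit-existence}) that semi-infinite $k$-path optimizers in very steep directions converge, as $k\to\infty$, to a degenerate ray $(-\infty,x]\times\{s\}$, proved via Lemmas~\ref{L:Fnab} and \ref{L:Bla-distribution}; (ii) use double-slit Busemann functions together with a metric-composition law and \emph{ergodicity of the Airy process} to recover the shock measure $\mu_{s,t}$ and hence the Airy sheet on each rational time interval, and thus $\cL|_{\mathbb H^2_\uparrow}$. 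Only after this measurability is in hand does the paper upgrade $\cL^a\cvgd\cL$ to $\cL^a\cvgp\cL$, and the upgrade is not a Cauchy argument but a measure-theoretic lemma (Lemma~\ref{L:measure-theory-lemma-b}, using Lusin's theorem and Dugundji's extension theorem) applied with the crucial observation that, thanks to the Busemann isometry of part 2, the Busemann environment attached to each $\cL^{a_n}$ is \emph{identically equal} to the fixed object $X$ built from $\cL$ (not merely equal in law). Without the abstract measurability step your argument for part 3 does not close, and the measurability you state as a corollary is precisely what is missing from the hypotheses.
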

Theorem \ref{T:landscape-recovery-intro} contains Theorem \ref{T:RSK-intro}, and is proven in the text as part of the stronger Theorem \ref{T:landscape-recovery}.
To understand where Theorem \ref{T:landscape-recovery-intro} comes from, we should examine what happens to the Busemann functions for $B^a$ under the scaling in Theorem \ref{T:DL-cvg-intro}. If we momentarily assume that we can exchange the order of the $t \to -\infty$ limit in \eqref{E:Buse-multi-intro} and the $a \to -\infty$ limit in Theorem \ref{T:DL-cvg-intro}, we can use the coupling of the environments $B^a$ in Theorem \ref{T:busemann-shear-intro} to see that:
\begin{equation}
	\label{E:land-cpu}
	\begin{split}
	\fB^\theta(\bx; \cL) &\eqd \lim_{a \to -\infty} \cB^{a^3/(2a + 8 \theta)}(\bx; B^a) = \lim_{a \to -\infty} \cW^{|a| \sqrt{1/(1 + 4\theta/a)}}(\bx; B^a) \\
	&= \lim_{a \to -\infty} \cW^{|a| \sqrt{1/(1 + 4\theta/a)} + a}(\bx; B) = \cW^{2 \theta}(\bx ; B).
	\end{split}
\end{equation}
Putting this together gives a candidate for the law of the extended Busemann process $\fB^\theta(\bx; \cL)$, which is verified by Theorem \ref{T:landscape-recovery-intro}.1,2. Comparing Figures \ref{fig:BrownianRSK} and \ref{fig:LSRK} illustrates this part of the theorem. The law we have identified is consistent with known results about the single-path Busemann process in the directed landscape \cite{rahman2021infinite, busani2021diffusive, busani2024stationary}. In \cite{busani2021diffusive}, the process $\fB^\theta(x, \cL)$ for $\theta, x \in \R$ was termed the stationary horizon, so naturally, we call the extended process $(\theta, \bx) \mapsto \fB^\theta(\bx, \cL)$ for arbitrary $(\theta, \bx) \in \bigcup_{k=1}^\infty \R^k_\le \X \R^k_\le$ the \textbf{extended stationary horizon}.

Now, given Theorem \ref{T:landscape-recovery-intro}.1,2, the coupling of the environments $\cL^a$ through the environments $B^{\cL, a}$ gives a sequence of maps converging to $\cL$ in law, and whose multi-path Busemann functions converge in probability. One may optimistically expect that in such a coupling, we also have convergence in probability of the directed landscape on the whole lower half-plane. Indeed, this is the case, and is verified by Theorem \ref{T:landscape-recovery-intro}.3, using the truss-slit framework described in Section \ref{SS:strategy}. See Sections \ref{SS:slit}, \ref{SS:multi-path-to-restricted}, \ref{SS:optimizers-rays} for details.




\section{Preliminaries}
\label{S:prelim}
In this section, we gather a few basic results about LPP and the directed landscape. For basic definitions, we refer the reader back to the introduction.

\subsection{Basics of last passage percolation}
\label{SS:basics-lpp}
Recall the definition of LPP, multi-path LPP, geodesics, and semi-infinite optimizers in Section \ref{SS:RSK-1}. We use the following basic facts about these objects throughout the paper. We state all results here for $k$-path optimizers and multi-path last passage values. When $k = 1$, these results specialize to the simpler setting of geodesics and last passage values. Throughout Section \ref{SS:basics-lpp}, we fix $f \in \cC^\Z(\R)$. All results will also apply for $f \in \cC^I(\R)$ for integer intervals $I$, when the relevant objects are well-defined. For $\bp = (\bx, \bn), \bq = (\by, \bm) \in \R^k_\le \times \Z^k_\le$, we call $(\bp, \bq)$ an \textbf{endpoint pair} if there is at least one disjoint $k$-tuple from $\bp$ to $\bq$.

\begin{lemma}[Optimizer existence: Lemma 2.2, \cite{dauvergne2021disjoint}]
\label{L:optimizer-existence-monotoncity}
Let $(\bp, \bq)$ be an endpoint pair. Then there exists an optimizer $\pi^L$ from $\bp$ to $\bq$ such that $\pi^L \le \pi$ for any optimizer $\pi$ from $\bp$ to $\bq$. We call $\pi^L$ the \textbf{leftmost} optimizer. Similarly, there exists a \textbf{rightmost} optimizer $\pi^R$ from $\bp$ to $\bq$ such that $\pi^R \ge \pi$ for any optimizer $\pi$ from $\bp$ to $\bq$.
\end{lemma}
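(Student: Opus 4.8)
The plan is to exhibit the set of optimizers from $\bp$ to $\bq$ as a nonempty, compact, finite-dimensional set that is closed under a linewise meet and join, and then extract the leftmost and rightmost elements from this lattice structure.

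\textbf{Existence.} A disjoint $k$-tuple $\pi$ from $\bp$ to $\bq$ is determined by finitely many real parameters: for each line $j$ and each level $i$ the step time $t^{\pi_j}_{i} = \inf\{t \in [x_j, y_j] : \pi_j(t) \le i\}$, subject to monotonicity in $i$ within each line and the cross-line disjointness constraints. The set of such parameter vectors, together with its degenerate ("touching") limit points, is compact, and the map $\pi \mapsto \|\pi\|_f = \sum_j \sum_i \big(f_i(t^{\pi_j}_{i-1}) - f_i(t^{\pi_j}_{i})\big)$ extends continuously to it because $f$ is continuous. Hence a maximizer exists on the closure. If such a maximizer has two adjacent lines touching in the interior of their common domain, one resolves the touching by an arbitrarily small perturbation of step times, which changes $\|\cdot\|_f$ by an amount controlled by the modulus of continuity of $f$ and hence, after an optimization, can be taken to leave the length unchanged; this yields a genuine disjoint optimizer. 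I expect this to be the most delicate step, since strict disjointness is an open condition and the degenerate limits must be excluded cleanly — but it is a soft compactness argument with no real content beyond continuity of $f$.

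\textbf{The lattice identity.} Given two optimizers $\pi, \rho$ from $\bp$ to $\bq$, define linewise $(\pi \wedge \rho)_j(t) = \min(\pi_j(t), \rho_j(t))$ and $(\pi \vee \rho)_j(t) = \max(\pi_j(t), \rho_j(t))$. These are nonincreasing, right-continuous, with the same endpoints as $\pi$ and $\rho$, and disjointness is preserved: for instance $(\pi \wedge \rho)_j(t) \le \rho_j(t) < \rho_{j+1}(t)$ and $(\pi \wedge \rho)_j(t) \le \pi_j(t) < \pi_{j+1}(t)$, so $(\pi \wedge \rho)_j(t) < (\pi \wedge \rho)_{j+1}(t)$, and symmetrically for $\vee$. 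The key observation is the step-time bookkeeping: the pointwise minimum of two paths drops below level $i$ as soon as either does, and the pointwise maximum only once both do, so $t^{(\pi\wedge\rho)_j}_{i} = \min(t^{\pi_j}_{i}, t^{\rho_j}_{i})$ and $t^{(\pi\vee\rho)_j}_{i} = \max(t^{\pi_j}_{i}, t^{\rho_j}_{i})$. Thus for each fixed $j$ and $i$ the multiset $\{t^{(\pi\wedge\rho)_j}_{i}, t^{(\pi\vee\rho)_j}_{i}\}$ equals $\{t^{\pi_j}_{i}, t^{\rho_j}_{i}\}$, and summing the formula for $\|\cdot\|_f$ level by level gives the exact identity $\|\pi \wedge \rho\|_f + \|\pi \vee \rho\|_f = \|\pi\|_f + \|\rho\|_f$. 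Since $\pi, \rho$ are optimal and $\pi \wedge \rho, \pi \vee \rho$ are admissible, both of the latter are optimizers; hence the set $\mathcal O$ of optimizers from $\bp$ to $\bq$ is closed under $\wedge$ and $\vee$.

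\textbf{Leftmost and rightmost.} The set $\mathcal O$ is a closed subset of the compact parameter space from the existence step (a limit of optimizers has optimal length by continuity of $\|\cdot\|_f$, and any touching in such a limit is removed as before). Fix a countable dense set $\{q_1, q_2, \dots\} \subset \bigcup_j (x_j, y_j)$. For each $n$, choose for every pair $(j, q_\ell)$ with $\ell \le n$ an optimizer attaining $\min\{\sigma_j(q_\ell) : \sigma \in \mathcal O\}$, and let $\pi^{(n)}$ be the meet of these finitely many optimizers; by the lattice property $\pi^{(n)} \in \mathcal O$ and $\pi^{(n)}_j(q_\ell) = \min_{\sigma \in \mathcal O} \sigma_j(q_\ell)$ for all $j$ and all $\ell \le n$. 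Passing to a subsequential limit $\pi^L \in \mathcal O$, we obtain $\pi^L_j(q) = \min_{\sigma \in \mathcal O} \sigma_j(q)$ at every $q$ in the dense set, and right-continuity of all paths upgrades this to $\pi^L_j(t) \le \sigma_j(t)$ for all $t$ and all $\sigma \in \mathcal O$, i.e.\ $\pi^L \le \pi$ for every optimizer $\pi$. The rightmost optimizer $\pi^R$ is produced identically with $\vee$ in place of $\wedge$.
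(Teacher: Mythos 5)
Your overall approach — compactness of the step-time parametrization for existence, the meet/join lattice identity to make $\mathcal O$ a lattice, then extracting a minimum — is essentially the same strategy the cited proof uses. The lattice bookkeeping ($t^{(\pi\wedge\rho)_j}_i = \min(t^{\pi_j}_i, t^{\rho_j}_i)$, level-by-level pairing of step times, preservation of disjointness because $\min(\pi_j,\rho_j) < \min(\pi_{j+1},\rho_{j+1})$) is correct, and your extraction of $\pi^L$ via meets at a dense set of time points followed by a subsequential limit is a valid way to realize the minimum, though it would be cleaner to work directly with the step-time coordinates: take a countable dense subset $\{\sigma^{(m)}\}$ of $\mathcal O$, let $\pi^{(n)} = \sigma^{(1)}\wedge\cdots\wedge\sigma^{(n)}$, and observe that the step times $t^{\pi^{(n)}_j}_i$ decrease to $\inf_{\sigma\in\mathcal O} t^{\sigma_j}_i$, so the limit simultaneously minimizes every coordinate.

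The genuine gap is in the existence step. You correctly sense that strict disjointness is formally an open condition and so worry that the maximizer on the closure of the parameter set might be a ``touching'' degenerate configuration; but the fix you offer — ``one resolves the touching by an arbitrarily small perturbation of step times \ldots which, after an optimization, can be taken to leave the length unchanged'' — does not hold up. A small perturbation could strictly decrease the length for every choice of perturbation direction (if the touching configuration is a strict local max among all configurations), and controlling the change by the modulus of continuity of $f$ only shows the supremum over strictly disjoint tuples equals the maximum on the closure, not that the supremum is \emph{attained} by a strictly disjoint tuple. That attainment is exactly what you need and your argument does not supply it. Fortunately the worry is spurious: in the semi-discrete setting the set of step-time vectors of strictly disjoint $k$-tuples is already closed. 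If a limiting tuple had $\pi_j(t_0) = \pi_{j+1}(t_0) = \ell$ at an interior time $t_0$, then by right-continuity both paths equal $\ell$ on a right-neighbourhood $[t_0, t_0+\eps)$; picking $r$ in that neighbourhood that is not a step time of either limiting path, convergence of step times gives $\pi^{(n)}_j(r) = \pi^{(n)}_{j+1}(r) = \ell$ for all large $n$, contradicting disjointness of the approximating tuples. So no degenerate limit points occur, the closure is the set itself, and a maximizer exists directly. You should replace the perturbation sentence with this closedness observation.
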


\begin{lemma}[Optimizer monotonicity: Lemma 2.3, \cite{dauvergne2021disjoint}]
	\label{L:mono-tree-multi-path}
	Let $(\bp, \bq)$ and $(\bp', \bq')$ be two endpoint pairs of sizes $k$. Suppose that:
	\begin{enumerate}
		\item $\bp = (\bx, n)$ and $\bp' = (\bx', n)$ for some $\bx \le \bx' \in \R^k_\le$, or else $\bp = (x, \bn)$ and $\bp' = (x, \bn')$ for some $\bn \le \bn' \in \Z^k_\le$.
		\item $\bq = (\by, n)$ and $\bq' = (\by', n)$ for some $\by \le \by' \in \R^k_\le$, or else $\bq = (y, \bm)$ and $\bq' = (y, \bm')$ for some $\bm \le \bm' \in \Z^k_\le$.
	\end{enumerate} 
	Then letting $\pi$ be the rightmost optimizer from $\bp$ to $\bq$, and $\pi'$ be the rightmost optimizer from $\bp'$ to $\bq'$ we have that $\pi \le \pi'$.
\end{lemma}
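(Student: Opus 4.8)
The plan is to reduce the general $k$-path statement to a comparison of leftmost/rightmost single paths via a standard ``crossing'' or ``surgery'' argument, exploiting the super-additivity of multi-path last passage values and the reverse triangle-type inequality. First I would treat the case where $\bp,\bp'$ both lie on the horizontal line $\{n\}\times\R$ with $\bx\le\bx'$ and similarly $\bq,\bq'$ lie on a common horizontal line with $\by\le\by'$ (the other cases, where one pair of endpoints differs in the $\Z$-coordinate instead, are handled identically after noting that shifting a starting height down only enlarges the set of available disjoint $k$-tuples in a monotone way, so the same surgery applies). Let $\pi$ be the rightmost optimizer from $\bp$ to $\bq$ and $\pi'$ the rightmost optimizer from $\bp'$ to $\bq'$. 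Suppose toward a contradiction that $\pi\not\le\pi'$, i.e.\ there is some $i$ and some $t$ with $\pi_i(t)>\pi'_i(t)$; since endpoints satisfy $\bx\le\bx'$ and $\by\le\by'$, the $i$-th paths must cross, so there is a crossing time $t_0$ with $\pi_i(t_0)=\pi'_i(t_0)$ (using right-continuity of the step paths, one works with the value at a common level, as in the proof of the $k=1$ case).

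The heart of the argument is then the surgery at the crossing locus. Having $\pi$ and $\pi'$ cross, I would form two new disjoint $k$-tuples: $\hat\pi$, which follows $\pi'$ up to the crossing and then switches to $\pi$, and $\check\pi$, which follows $\pi$ up to the crossing and then switches to $\pi'$. The key point is that $\hat\pi$ is a disjoint $k$-tuple from $\bp'$ to $\bq$ (legal since $\bx\le\bx'$ at the bottom and the paths agree at the splice) and $\check\pi$ is a disjoint $k$-tuple from $\bp$ to $\bq'$; moreover the path lengths are additive across the splice time, so
\[
\|\hat\pi\| + \|\check\pi\| = \|\pi\| + \|\pi'\| = f[\bp\to\bq] + f[\bp'\to\bq'].
\]
Since $\|\hat\pi\|\le f[\bp'\to\bq]$ and $\|\check\pi\|\le f[\bp\to\bq']$, all four of these are in fact equalities, so $\check\pi$ is an optimizer from $\bp$ to $\bq'$. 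But $\check\pi$ agrees with $\pi'$ \emph{after} the crossing and with $\pi$ \emph{before} it; in particular at the crossing itself $\check\pi$ has a coordinate that was strictly to the right of $\pi'$. This should contradict maximality of $\pi'$ as the \emph{rightmost} optimizer from $\bp'$ to $\bq'$ — except $\check\pi$ does not start at $\bp'$. To fix this I would instead argue with $\hat\pi$, which \emph{does} end at $\bq$: $\hat\pi$ is then an optimizer from $\bp'$ to $\bq$, it agrees with $\pi$ past the crossing, and one shows this forces a legal optimizer from $\bp'$ to $\bq'$ lying strictly to the right of $\pi'$ somewhere, contradicting the definition of $\pi'$ as the rightmost optimizer. (Concretely: splice $\hat\pi$ with the tail of $\pi'$ below the crossing, which is legal because $\by\le\by'$, producing an optimizer $\bp'\to\bq'$ that strictly exceeds $\pi'$ at the crossing level.)

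The step I expect to be the genuine obstacle is making the crossing/splicing argument airtight in the \emph{multi-path} ($k>1$) setting: one must check that after swapping the two $k$-tuples along a crossing of the $i$-th coordinates, the resulting tuples still satisfy the strict-ordering constraint $\pi_j(t)<\pi_{j+1}(t)$ on the relevant overlap intervals, since a crossing in coordinate $i$ can interact with coordinates $i\pm 1$. The clean way around this, which I would follow, is to choose $t_0$ to be (for instance) the \emph{first} time at which $\pi$ and $\pi'$ touch in \emph{any} coordinate and to swap the entire tuples there rather than a single path — because the two tuples are disjoint internally and touch only at $t_0$, each spliced tuple inherits disjointness automatically; a short lattice/monotonicity argument (using that $\bx\le\bx'$ and $\by\le\by'$ componentwise, and that both original tuples are coordinatewise ordered) shows such a touching time exists whenever $\pi\not\le\pi'$. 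I would also note that this lemma is cited from \cite{dauvergne2021disjoint}, so in the present paper it suffices to record this sketch and refer there for the fully detailed surgery; alternatively, it follows by taking the directed-landscape version in \cite{dauvergne2021disjoint} and specializing, or by the analogous discrete RSK monotonicity in \cite{dauvergne2022rsk} and passing to the semi-discrete limit.
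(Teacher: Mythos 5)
The paper does not prove this lemma; it delegates to Lemma~2.3 of \cite{dauvergne2021disjoint} and only remarks that the slightly more general hypothesis stated here (allowing the paired endpoints to move vertically in $\Z$ rather than horizontally in $\R$) is covered by the same proof. So there is no paper proof to compare against directly, but I can comment on the soundness of your reconstruction.

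Your crossing-and-splicing framing has a genuine gap that you partially anticipate but do not resolve. The paths here are nonincreasing, right-continuous, $\Z$-valued step functions, and such paths can jump \emph{over} one another without ever taking the same value at any common time, so a ``first time $\pi$ and $\pi'$ touch in any coordinate'' need not exist (and is in any case degenerate, since every pair $\pi_i, \pi'_i$ already ``touches'' at the endpoints once you extend the shorter domain by constants). Even where coordinate $i$ does touch, the other coordinates $j\ne i$ will generically disagree there, so splicing the \emph{entire} tuples at that time produces paths $\pi'_j|_{(-\infty,t_0)}\oplus\pi_j|_{[t_0,\infty)}$ that can jump \emph{up} at $t_0$ and hence fail to be valid (nonincreasing) paths at all. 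The clean route, which is what \cite{dauvergne2021disjoint} actually does and what the present paper itself invokes as ``a standard argument (see the proof of Lemma 2.2, \cite{dauvergne2021disjoint})'' inside the proof of Proposition~\ref{P:extended-usemann}, is to avoid choosing a single splice time entirely: form $\pi\vee\pi'$ and $\pi\wedge\pi'$ \emph{globally} (coordinatewise pointwise max and min, after extending each $\pi_i,\pi'_i$ by constants to a common domain). One then checks directly that $\pi\vee\pi'$ is a disjoint $k$-tuple from $\bp'$ to $\bq'$, that $\pi\wedge\pi'$ is a disjoint $k$-tuple from $\bp$ to $\bq$ — disjointness is automatic because $a<c$ and $b<d$ imply $\max(a,b)<\max(c,d)$ and $\min(a,b)<\min(c,d)$ — and that $\|\pi\vee\pi'\|+\|\pi\wedge\pi'\|=\|\pi\|+\|\pi'\|$, since in each column the jump-time multisets are merely redistributed between max and min. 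Then $\pi\vee\pi'$ is an optimizer from $\bp'$ to $\bq'$ lying weakly to the right of $\pi'$, so $\pi'=\pi\vee\pi'\ge\pi$ by maximality of $\pi'$. This is the ``lattice'' version of your surgery idea, and it sidesteps both problems above.
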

Technically, \cite[Lemma 2.2]{dauvergne2021disjoint} only applies when $\bp = (\bx, n), \bp' = (\bx', n)$ and $\bq = (\by, n), \bq' = (\by', n)$. However, the proof goes through verbatim is the slightly more general setting above.

\begin{lemma}[Metric composition law]
	\label{L:split-path}
	Let $(\bp, \bq) = (\bx, \bn; \by, \bm)$ be an endpoint pair of size $k$ and let $\ell \in \{m_k + 1, \dots, n_1\}$. Then
	$$
	f[\bp \to \bq] = \max_\bz f[\bp \to (\bz, \ell)] +  f[(\bz, \ell - 1) \to \bq],
	$$
	where the maximum is taken over $\bz \in \R^k_\le$ such that both $(\bp; \bz, \ell)$ and $(\bz, \ell - 1; \bq)$ are endpoint pairs. Similarly, if $z \in [x_k, y_1]$, then
	$$
	f[\bp \to \bq] = \max_{\mathbf{l} \in \Z^k_<} f[\bp \to (z, \mathbf{l})] +  f[(z, \mathbf{l}) \to \bq],
	$$
	where the maximum is taken over $\mathbf{l} \in \Z^k_<$ such that both $(\bp; z, \mathbf{l})$ and $(z, \mathbf{l}; \bq)$ are endpoint pairs.
\end{lemma}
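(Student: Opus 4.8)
Both identities say that last passage values across $\R\times\Z$ behave like a directed metric: to go from $\bp$ down to $\bq$ a disjoint $k$-tuple must cross every intermediate level (resp.\ every intermediate vertical line), and the passage value splits additively at such a crossing. I will prove each identity by the two matching inequalities, treating the horizontal-cut identity in detail; the vertical-cut identity is entirely parallel with levels replaced by times.

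\emph{The inequality $\le$ (and attainment of the max).} Fix an optimizer $\pi=(\pi_1,\dots,\pi_k)$ from $\bp$ to $\bq$, which exists by Lemma~\ref{L:optimizer-existence-monotoncity}. Since $n_i\ge n_1\ge\ell$ and $m_i\le m_k\le\ell-1$, each $\pi_i$ passes through levels $\ell$ and $\ell-1$; set $z_i=\inf\{t\in[x_i,y_i]:\pi_i(t)\le\ell-1\}$, so $z_i\in[x_i,y_i]$. Using disjointness of $\pi$ one checks $\bz:=(z_1,\dots,z_k)\in\R^k_\le$: if $z_i>z_{i+1}$ then at $t=z_{i+1}$ one has $\pi_{i+1}(t)\le\ell-1<\ell\le\pi_i(t)$, contradicting $\pi_i(t)<\pi_{i+1}(t)$ on the overlap. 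Split the defining sum of $\|\pi_i\|_f$ (a telescoping-type sum $\sum_{j=m_i}^{n_i}f_j(t^{(i)}_{j-1})-f_j(t^{(i)}_j)$, with $t^{(i)}_{\ell-1}=z_i$) at $j=\ell$: the top part is the length of a path from $(x_i,n_i)$ to $(z_i,\ell)$, the bottom part the length of a path from $(z_i,\ell-1)$ to $(y_i,m_i)$, where possibly-empty segments at levels $\ell,\ell-1$ contribute $0$ and let us normalize the endpoints to be exactly at $\ell$ and $\ell-1$. The top parts form a disjoint $k$-tuple from $\bp$ to $(\bz,\ell)$ and the bottom parts a disjoint $k$-tuple from $(\bz,\ell-1)$ to $\bq$ (disjointness is inherited on the relevant overlaps since on those overlaps the sub-paths agree with $\pi$), so in particular $(\bp;\bz,\ell)$ and $(\bz,\ell-1;\bq)$ are endpoint pairs. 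Summing over $i$,
$$
f[\bp\to\bq]=\|\pi\|_f=\sum_{i=1}^k\|\mathrm{top}_i\|_f+\sum_{i=1}^k\|\mathrm{bot}_i\|_f\le f[\bp\to(\bz,\ell)]+f[(\bz,\ell-1)\to\bq],
$$
which is $\le$ the right-hand maximum; and this $\bz$ shows the maximum is attained.

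\emph{The inequality $\ge$.} Fix $\bz\in\R^k_\le$ with $(\bp;\bz,\ell)$ and $(\bz,\ell-1;\bq)$ endpoint pairs, and (by Lemma~\ref{L:optimizer-existence-monotoncity}) take optimizers $\rho$ from $\bp$ to $(\bz,\ell)$ and $\sigma$ from $(\bz,\ell-1)$ to $\bq$. Concatenate path-by-path: $\pi_i$ equals $\rho_i$ on $[x_i,z_i)$ and $\sigma_i$ on $[z_i,y_i]$. Each $\pi_i$ is a nonincreasing right-continuous path from $(x_i,n_i)$ to $(y_i,m_i)$, and $\|\pi_i\|_f=\|\rho_i\|_f+\|\sigma_i\|_f$ by additivity of the length sum at $j=\ell$. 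The key point is disjointness of $\pi=(\pi_1,\dots,\pi_k)$: on an overlap interval, for $t<z_i$ both $\pi_i,\pi_{i+1}$ follow $\rho$ (disjoint), for $t\ge z_{i+1}$ both follow $\sigma$ (disjoint), and for $z_i\le t<z_{i+1}$ one has $\pi_i(t)\le\ell-1<\ell\le\pi_{i+1}(t)$ since $\rho$-paths stay at levels $\ge\ell$ and $\sigma$-paths at levels $\le\ell-1$; note $z_i\le z_{i+1}$ because $\bz\in\R^k_\le$. Hence $\pi$ is a disjoint $k$-tuple from $\bp$ to $\bq$ and $f[\bp\to\bq]\ge\sum_i\|\pi_i\|_f=f[\bp\to(\bz,\ell)]+f[(\bz,\ell-1)\to\bq]$. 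Maximizing over $\bz$ gives $\ge$.

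\emph{Main obstacle and the vertical identity.} The delicate point is the disjointness at the seam when $z_i=z_{i+1}$: then the concatenated $\pi_i,\pi_{i+1}$ are both at level $\ell-1$ at that single time, so strict inequality on the open overlap can fail at that point. I expect to resolve this either by first proving the inequality only for $\bz$ with $z_1<\dots<z_k$ and then passing to all admissible $\bz$ using continuity of multi-path last passage values in their endpoints (which follows from Lemmas~\ref{L:optimizer-existence-monotoncity} and~\ref{L:mono-tree-multi-path} together with continuity of $f$), or by a direct local perturbation of the crossing points; this bookkeeping, together with the degenerate conventions for paths that dwell at levels $\ell,\ell-1$ for zero or positive time, is the only genuinely fiddly part. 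For the vertical-cut identity one repeats the argument verbatim, cutting each $\pi_i$ at the vertical line $\{t=z\}$, $z\in[x_k,y_1]$, so that $\mathbf{l}:=(\pi_1(z),\dots,\pi_k(z))\in\Z^k_<$ plays the role of $\ell$ (strictness of $\mathbf{l}$ comes from disjointness at the interior point $z$), splitting the length sum over time rather than level.
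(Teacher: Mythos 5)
The paper does not give a proof of this lemma; it is listed among the ``basic facts'' about LPP that are ``referred to without reference'', so there is nothing in the source to compare against. Your argument is the standard one and the overall structure --- cut each path of an optimizer across the level line $\ell/\ell{-}1$ (resp. the time line $t=z$), telescope the length sum, check that the two halves inherit disjointness; then for $\ge$ concatenate optimizers for the two halves --- is correct.

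Your one worry about the seam, however, resolves itself more cleanly than either of your proposed fixes. In the $\ge$ direction, suppose $z_i=z_{i+1}=z$ and that both $(\bp;\bz,\ell)$ and $(\bz,\ell-1;\bq)$ are endpoint pairs. Any disjoint $k$-tuple $\sigma$ from $(\bz,\ell-1)$ to $\bq$ has $\sigma_i(z)=\sigma_{i+1}(z)=\ell-1$, and since paths are right-continuous step functions, $\sigma_i=\sigma_{i+1}=\ell-1$ on a right-neighbourhood of $z$. This violates $\sigma_i<\sigma_{i+1}$ on $(z,y_i)\cap(z,y_{i+1})$ unless that open set is empty, i.e.\ $y_i=z_i$. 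But then the concatenated path $\pi_i$ has domain $[x_i,z]$, the open overlap with $\pi_{i+1}$ lies entirely in $[x_i,z)$, and there $\pi_i=\rho_i$, $\pi_{i+1}=\rho_{i+1}$ are disjoint. So admissibility of $\bz$ in the maximum already forbids the problematic coincidence; no continuity or perturbation step is needed. The same right-continuity argument also closes the one-point gap at $t=z_{i+1}$ when $z_i<z_{i+1}$: disjointness of $\sigma$ on $(z_{i+1},z_{i+1}+\epsilon)$ plus $\sigma_{i+1}\equiv\ell-1$ there forces $\sigma_i(z_{i+1})\le\ell-2<\ell-1=\sigma_{i+1}(z_{i+1})$. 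For the vertical cut, the strictness $\mathbf l\in\Z^k_<$ likewise follows from right-continuity at $t=z^+$ together with the endpoint-pair condition, not from disjointness of $\pi$ at $z$ per se (which fails when $z$ is a boundary point of one of the domains); you should rephrase that remark, but the claim is fine.
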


\begin{lemma}[Quadrangle inequality, Lemma 2.4, \cite{dauvergne2021disjoint}]
	\label{L:quadrangle}
	Let $(\bp, \bq) = (\bx, \bn; \by, \bm)$, $(\bp', \bq') = (\bx', \bn'; \by', \bm')$ be endpoint pairs satisfying the conditions of Lemma \ref{L:mono-tree-multi-path}. Suppose that $(\bp, \bq')$ and $(\bp', \bq)$ are also endpoint pairs. Then:
	$$
	f[\bp \to \bq'] + f[\bp' \to \bq] \le f[\bp \to \bq] + f[\bp' \to \bq']
	$$
	with equality if there exists a point $z \in [x_k', y_1]$ and optimizers $\pi$ from $\bp$ to $\bq$, $\pi'$ from $\bp'$ to $\bq'$ such that $\pi(z) = \pi'(z)$.
\end{lemma}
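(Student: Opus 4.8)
The plan is to run the classical ``uncrossing'' argument. The heuristic is that a pair of crossing path systems can always be re-routed at a crossing so as to untangle them without changing the total length, so a crossing configuration can never beat the uncrossed one. By the ``or'' in the hypotheses of Lemma~\ref{L:mono-tree-multi-path} there are four sub-cases (each of $\{\bp,\bp'\}$ and $\{\bq,\bq'\}$ lies on a common vertical or a common horizontal line), and in every one of them the assumption translates, after relabelling, into $x_i \le x'_i$ and $y_i \le y'_i$ for all $i$, or the same statement with $x$-coordinates replaced by levels. Since this is all the argument uses, I would just treat the case $\bp=(\bx,n)$, $\bp'=(\bx',n)$, $\bq=(\by,m)$, $\bq'=(\by',m)$ with $\bx\le\bx'$, $\by\le\by'$, and remark that the others are identical.

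The crux is the following re-routing lemma: given any disjoint $k$-tuples $\sigma$ from $\bp$ to $\bq'$ and $\sigma'$ from $\bp'$ to $\bq$ (which exist since $(\bp,\bq')$ and $(\bp',\bq)$ are endpoint pairs), one can produce disjoint $k$-tuples $\tau$ from $\bp$ to $\bq$ and $\tau'$ from $\bp'$ to $\bq'$ with $\sum_i\|\tau_i\|_f + \sum_i\|\tau'_i\|_f = \sum_i\|\sigma_i\|_f + \sum_i\|\sigma'_i\|_f$. When $k=1$ this is immediate: since $x_1\le x'_1$ and $y_1\le y'_1$ the domain of $\sigma'$ lies inside that of $\sigma$, and on it $\sigma\le\sigma'$ at the left end while $\sigma\ge\sigma'$ at the right end; setting $z:=\inf\{t : \sigma(t)\ge\sigma'(t)\}$ one checks from right-continuity and monotonicity that $\sigma(z^-)\ge\sigma'(z)$ and $\sigma'(z^-)\ge\sigma(z)$, so letting $\tau$ follow $\sigma$ up to $z$ and then $\sigma'$, and $\tau'$ follow $\sigma'$ up to $z$ and then $\sigma$, produces honest paths with the correct endpoints. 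That the total $f$-length is unchanged then follows because, writing $\|\pi\|_f = \sum_{\ell=m}^n\bigl(f_\ell(R^\pi_\ell)-f_\ell(L^\pi_\ell)\bigr)$ where $[L^\pi_\ell,R^\pi_\ell]$ is the interval $\pi$ spends at level $\ell$, for each level $\ell$ the unordered pair of such intervals for $\tau,\tau'$ equals the one for $\sigma,\sigma'$. Granting the re-routing lemma, $\sum_i\|\sigma_i\|_f + \sum_i\|\sigma'_i\|_f = \sum_i\|\tau_i\|_f + \sum_i\|\tau'_i\|_f \le f[\bp\to\bq] + f[\bp'\to\bq']$, and taking the supremum over $\sigma,\sigma'$ gives the claimed inequality.

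For the equality statement, suppose there are optimizers $\pi$ from $\bp$ to $\bq$ and $\pi'$ from $\bp'$ to $\bq'$ and a point $z\in[x'_k,y_1]$ with $\pi(z)=\pi'(z)$; the range of $z$ guarantees that $z$ lies in the domain of every $\pi_i$ and every $\pi'_i$. Splicing the other way, let $\rho_i$ follow $\pi_i$ on $[x_i,z]$ and then $\pi'_i$ on $[z,y'_i]$, and $\rho'_i$ follow $\pi'_i$ on $[x'_i,z]$ and then $\pi_i$ on $[z,y_i]$. Because $\pi_i(z)=\pi'_i(z)$ no jump is manufactured at $z$, so $\rho=(\rho_i)$ is a disjoint $k$-tuple from $\bp$ to $\bq'$ and $\rho'=(\rho'_i)$ one from $\bp'$ to $\bq$ — disjointness is clear since on each side of $z$ every path of $\rho$ (resp. $\rho'$) is a restriction of a path from a single already-disjoint tuple and the two tuples agree at $z$. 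Additivity of $\|\cdot\|_f$ over the split at $z$ gives $\|\rho_i\|_f+\|\rho'_i\|_f = \|\pi_i\|_f+\|\pi'_i\|_f$, hence $f[\bp\to\bq']+f[\bp'\to\bq] \ge \sum_i\|\pi_i\|_f+\sum_i\|\pi'_i\|_f = f[\bp\to\bq]+f[\bp'\to\bq']$, which together with the inequality forces equality.

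The main obstacle is the re-routing lemma for $k\ge 2$: the naive path-by-path splice at the crossing point $z_i$ of $\sigma_i$ with $\sigma'_i$ need not yield a disjoint tuple, since the differences $\sigma_i-\sigma'_i$ can oscillate and the $z_i$ need not be compatible across $i$; moreover a single common $z$ forcing every track into crossing position would essentially require the two tuples to coincide somewhere, which is the equality hypothesis. I expect the fix to be to resolve crossings between individual paths of $\sigma$ and of $\sigma'$ one at a time — always at the leftmost such crossing, by swapping tails — and to show with a suitable monovariant that this terminates in a pair of disjoint $k$-tuples whose endpoint vectors have migrated to $\bp\to\bq$ and $\bp'\to\bq'$, with total $f$-length preserved at every step for the same level-interval reason as in the $k=1$ case. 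Checking termination and that the endpoints come out right is where essentially all the work lies; the length bookkeeping and the uniform reduction of the four sub-cases are routine.
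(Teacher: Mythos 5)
Your argument for the \emph{equality-if} claim---the only part of this lemma the paper actually proves---coincides with the paper's: you splice $\pi$ and $\pi'$ at the common point $z$, observe that each of the two new $k$-tuples inherits disjointness from the two halves being restrictions of already-disjoint tuples that agree at $z$, and use additivity of $\|\cdot\|_f$ over the split to conclude that the spliced tuples witness the reverse inequality. The only cosmetic difference is that the paper splits at $(-\infty,z)$ versus $[z,\infty)$ while you use closed intervals; since $\pi(z)=\pi'(z)$ this makes no difference.

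For the inequality itself, however, note that the paper does not supply a proof: it simply cites Lemma~2.4 of \cite{dauvergne2021disjoint}. Your sketch of a re-routing (``uncrossing'') argument is complete only for $k=1$. For $k\ge 2$ you yourself flag that the naive path-by-path splice at the crossing time $z_i$ of $\sigma_i$ with $\sigma'_i$ need not produce disjoint $k$-tuples, and that the proposed ``resolve crossings one at a time with a monovariant'' scheme would require a termination argument and a check that endpoints migrate correctly---and you do not carry this out. That gap is genuine: preserving disjointness within each $k$-tuple under local swaps is precisely where the substance of the cited lemma lies, and a single swap at the leftmost crossing can break disjointness of the modified tuple elsewhere, so the monovariant has to be chosen with care. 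Since the paper leans on the external reference here, you should either do the same or actually close the $k\ge 2$ argument; as written, the inequality half of your proof is an incomplete digression.
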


The `equality if' claim in Lemma \ref{L:quadrangle} is not contained in \cite{dauvergne2021disjoint}, but it is easy to see. In this case, the paths $\pi|_{(-\infty, z)} \oplus \pi'|_{[z, \infty)}$ and $\pi'|_{(-\infty, z)} \oplus \pi|_{[z, \infty)}$ (here $\oplus$ denotes paths concatenation), are disjoint $k$-tuples from $\bp$ to $\bq'$ and $\bp'$ to $\bq$ whose lengths sum to $f[\bp \to \bq] + f[\bp' \to \bq']$, proving the reverse inequality in Lemma \ref{L:quadrangle}. We end with a straightforward statement regarding last passage across lines and common shifts of the environment, whose proof we leave to the reader. 

\begin{lemma}[Last passage commutes with shifts]
	\label{L:shift-commute}
	Let $g:\R \to \R$ be any continuous function, and let $f + g \in \cC^\Z(\R)$ be given by $(f_i + g, i \in \Z)$. Then for any endpoint pairs $\bp = (\bx, \bn), \bq = (\by, \bm) \in \R^k_\le \X \Z^k_\le$, we have:
	$$
	(f+g)[\bp \to \bq] = f[\bp \to \bq] + \sum_{i=1}^k [g(y_i) - g(x_i)].
	$$
\end{lemma}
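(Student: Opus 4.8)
\textbf{Proof plan for Lemma \ref{L:shift-commute}.}

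The claim is that last passage across an environment changes by an explicit boundary term when all lines are shifted by a common continuous function $g$. The plan is to argue path-by-path: the length functional $\|\cdot\|_f$ defined in \eqref{E:pi-f-intro} telescopes in a way that is insensitive to a common shift except at the endpoints of each path, and the disjointness constraint does not involve the environment at all. First I would recall the single-path case. Fix a path $\pi_j$ from $(x_j, n_j)$ to $(y_j, m_j)$, and write $t_i = \inf\{t : \pi_j(t) \le i\}$ for the jump times (with the convention that $t_{n_j} = x_j$ is the left endpoint and $t_{m_j - 1} = y_j$ is the right endpoint, suitably interpreted). Then
$$
\|\pi_j\|_{f + g} = \sum_{i=m_j}^{n_j} \big(f_i(t_{i-1}) + g(t_{i-1})\big) - \big(f_i(t_i) + g(t_i)\big) = \|\pi_j\|_f + \sum_{i=m_j}^{n_j} g(t_{i-1}) - g(t_i),
$$
and the $g$-sum telescopes to $g(x_j) - g(y_j)$ because the intermediate $t_i$ each appear once with a plus and once with a minus. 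Hence $\|\pi_j\|_{f+g} = \|\pi_j\|_f + g(x_j) - g(y_j)$; note the sign is $g(x_j) - g(y_j)$ here, which matches the statement once one is careful that in the excerpt's coordinate convention paths run from the high line down to the low line so that $g(y_i) - g(x_i)$ is the right expression.

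Next I would pass to disjoint $k$-tuples. Since the disjointness condition $\pi_i(t) < \pi_{i+1}(t)$ on overlapping intervals makes no reference to $f$, a $k$-tuple $\pi = (\pi_1, \dots, \pi_k)$ is a disjoint tuple from $\bp$ to $\bq$ in the environment $f$ if and only if it is one in the environment $f + g$; in particular $(\bp, \bq)$ is an endpoint pair for $f$ iff it is one for $f+g$. Summing the single-path identity over $i = 1, \dots, k$ gives
$$
\sum_{i=1}^k \|\pi_i\|_{f+g} = \sum_{i=1}^k \|\pi_i\|_f + \sum_{i=1}^k \big(g(y_i) - g(x_i)\big),
$$
where the second term on the right is a constant independent of the tuple $\pi$. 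Taking the supremum over all disjoint $k$-tuples from $\bp$ to $\bq$ on both sides — legitimate because the objective functions differ by this fixed constant — yields $(f+g)[\bp \to \bq] = f[\bp \to \bq] + \sum_{i=1}^k [g(y_i) - g(x_i)]$, as desired.

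I do not anticipate a serious obstacle here; this is a bookkeeping lemma. The only point requiring genuine care is the telescoping step: one must check that the definition of the jump times $t_i$ in \eqref{E:pi-f-intro} is such that consecutive summands share an endpoint (so the $g$-contributions cancel in pairs), and one must track the orientation convention so the surviving boundary term comes out as $\sum_i [g(y_i) - g(x_i)]$ rather than its negative. A secondary minor point is the degenerate case where a path is constant on some line or an interval has zero length, but in all such cases the relevant $t_i$'s coincide and the telescoping identity still holds verbatim. Everything else — the equivalence of endpoint pairs and the stability of disjointness under $f \mapsto f + g$ — is immediate from the definitions.
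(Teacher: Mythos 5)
The paper does not give a proof of this lemma (it is explicitly left to the reader), so there is nothing to compare against beyond the obvious telescoping argument, which is exactly what you propose and which is correct in substance.

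There is, however, a sign slip in the middle of your write-up that you should fix. With your own conventions $t_{n_j} = x_j$ and $t_{m_j-1} = y_j$, the telescoped sum is
$$
\sum_{i=m_j}^{n_j} \big[g(t_{i-1}) - g(t_i)\big] = g(t_{m_j-1}) - g(t_{n_j}) = g(y_j) - g(x_j),
$$
not $g(x_j) - g(y_j)$ as you write. The single-path identity should therefore read $\|\pi_j\|_{f+g} = \|\pi_j\|_f + g(y_j) - g(x_j)$. The parenthetical remark about "the excerpt's coordinate convention" does not rescue the wrong sign; the correct sign simply comes out of the telescope directly once you use the stated conventions for $t_{n_j}$ and $t_{m_j-1}$. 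Your subsequent displayed sum over the $k$-tuple does use the correct sign $g(y_i) - g(x_i)$, so the conclusion is right, but the intermediate line contradicts it and should be corrected so the proof reads cleanly. Everything else — the observation that disjointness does not see the environment so the set of admissible tuples is unchanged, and that the additive constant drops out of the supremum — is fine.
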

We will typically use Lemma \ref{L:shift-commute} when $g$ is a constant or linear function. We will typically refer to these five results without reference, as they should be viewed as part of the basic language for working with geodesics and optimizers.

\subsection{Shape and continuity bounds for Brownian LPP}

We will need two complementary limit shape theorems for Brownian LPP. For the first proposition, for $n \in \N$ and $x, \al, b, w > 0$ we define
\[
\cN_{b,w}(n,x,a)=
\sqrt{8 nx} + \sqrt{x} n^{-1/6}(a + b\log^{2/3}(n^{1/3} |\log(x/w)| + 1)).
\]
Note that for any $\alpha>0$, we have
\begin{equation} \label{eq:rescale-cN}
	\cN_{b,\alpha w}(n,\alpha x,a) = \sqrt{\alpha}\cN_{b,w}(n,x,a).
\end{equation}
\begin{proposition}[Proposition 4.3, \cite{dauvergne2021bulk}]
	\label{P:cross-prob}
	There exist positive constants $b, c$ and $d$ such that for all $w,a > 0$ and $n \ge 1$, the probability that
	\[
	B[(-x, n) \to (0, 1)] \le \cN_{b,w}(n,x,a), \;\; \forall x \in (0, \infty)
	\]
	is greater than or equal to $1- c e^{-d a^{3/2}}$.
\end{proposition}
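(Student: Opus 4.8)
My plan is to rewrite the estimate in a scale-invariant form, tile the half-line $(0,\infty)$ into windows of the natural correlation length of the last-passage profile, control the profile on each window with an input coming from exact solvability, and finish with a union bound calibrated to the $\log^{2/3}$ envelope.

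\textbf{Reduction.} The process $x\mapsto B[(-x,n)\to(0,1)]$ obeys the Brownian scaling $B[(-\alpha x,n)\to(0,1)]\eqd\sqrt\alpha\,B[(-x,n)\to(0,1)]$ jointly in $x$, and $\cN$ is homogeneous in the sense of \eqref{eq:rescale-cN}; combining these reduces the statement to $w=1$, in which case the target event is invariant under $x\mapsto\alpha x$. Writing $\tilde L(x):=B[(-x,n)\to(0,1)]-\sqrt{8nx}$, the goal becomes to bound $\p\big(\exists\,x>0:\ \tilde L(x)>\sqrt x\,n^{-1/6}(a+b\log^{2/3}(n^{1/3}|\log x|+1))\big)$ by $c\,e^{-da^{3/2}}$.

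\textbf{The key estimate and the main obstacle.} The heart of the proof is a uniform-in-$n$ tail bound for the supremum of $\tilde L$ over a window of multiplicative length $1+n^{-1/3}$ — the scale on which $\tilde L$ decorrelates, which is exactly what the factor $n^{1/3}|\log x|$ in $\cN$ records:
\begin{equation}
	\label{E:short-window-proposal}
	\p\Big(\sup_{x\in[v,\,v e^{n^{-1/3}}]}\big(B[(-x,n)\to(0,1)]-\sqrt{8nx}\big)>\sqrt v\,n^{-1/6}s\Big)\le C\,e^{-c_0 s^{3/2}}
\end{equation}
for all $v>0$, $n\ge1$, $s\ge0$ (by scaling one may take $v=1$). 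Establishing \eqref{E:short-window-proposal} is where exact solvability is needed, and I expect it to be the main obstacle. By RSK / Pitman the process $x\mapsto B[(-x,n)\to(0,1)]$ is, up to a reflection, the top curve of the Brownian melon $W^U B$ — $n$ non-intersecting variance-$2$ Brownian motions — which carries a determinantal description and the Brownian–Gibbs resampling property. On a window of Brownian length $\asymp n^{-1/3}$, the top curve conditioned on its endpoint values and on the second curve is a Brownian bridge conditioned to stay above the second curve, and its supremum is bounded by the two endpoint values — single-point last-passage quantities with the classical uniform upper tail $\p(B[(-1,n)\to(0,1)]>\sqrt{8n}+n^{-1/6}s)\le Ce^{-c_0 s^{3/2}}$ after rescaling to $x=1$ — together with a bridge fluctuation bound of order $n^{-1/6}$ and the routine fact that on this window the second curve stays a definite distance below. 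One cannot prove \eqref{E:short-window-proposal} by a crude union bound over an $n^{-2/3}$-fine subgrid of the window: there are $\asymp n^{1/3}$ subpoints, each contributing an $e^{-c_0 s^{3/2}}$ term, so the bound would carry an extra $n^{1/3}$, which is fatal since $c$ and $d$ must be absolute constants. Eliminating that loss is precisely what the Gibbs / determinantal structure of the melon achieves; the rest of the argument is soft.

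\textbf{Assembly.} Granting \eqref{E:short-window-proposal}, tile $(0,\infty)=\bigcup_{j\in\Z}[x_j,x_{j+1}]$ with $x_j:=e^{j n^{-1/3}}$. On the cell of index $j$ every $x$ satisfies $n^{1/3}|\log x|+1\ge |j|\vee1$, so on that cell the asserted envelope dominates $\sqrt{x_j}\,n^{-1/6}\big(a+b\log^{2/3}(|j|\vee1)\big)$. Applying \eqref{E:short-window-proposal} on the cell with $v=x_j$ and threshold $s_j:=a+b\log^{2/3}(|j|\vee1)$ controls $\tilde L$ throughout the cell off an event of probability $\le C e^{-c_0 s_j^{3/2}}$. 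By superadditivity of $t\mapsto t^{3/2}$ one has $e^{-c_0 s_j^{3/2}}\le e^{-c_0 a^{3/2}}(|j|\vee1)^{-c_0 b^{3/2}}$, and $\sum_{j\in\Z}(|j|\vee1)^{-c_0 b^{3/2}}=1+2\zeta(c_0 b^{3/2})<\infty$ as soon as $b$ is chosen with $c_0 b^{3/2}>1$. Summing the bad probabilities over $j$ then yields the claim with $d=c_0$ and $c=C(1+2\zeta(c_0 b^{3/2}))$; the $O(1)$ additive slack incurred from \eqref{E:short-window-proposal} near $x=1$ is harmless, since for bounded $a$ the asserted bound is vacuous once $c$ is large.
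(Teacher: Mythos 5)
This proposition is cited from \cite{dauvergne2021bulk}; the present paper supplies no internal proof beyond the observation that the general-$w$ case follows from $w=1$ by Brownian scaling and \eqref{eq:rescale-cN} — which is exactly your opening reduction. On its own merits, your skeleton (scale out $w$, a short-window supremum tail bound over $[v, ve^{n^{-1/3}}]$, multiplicative tiling, summation via the superadditivity of $t\mapsto t^{3/2}$ and a zeta-summable choice of $b$) is the right, standard route to this kind of uniform crossing bound, and the assembly step checks out: $n^{1/3}|\log x| + 1 \ge |j|\vee 1$ on the $j$-th cell, $(a+u)^{3/2}\ge a^{3/2}+u^{3/2}$, and $\sum_j(|j|\vee1)^{-c_0 b^{3/2}}<\infty$ once $c_0 b^{3/2}>1$. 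You also correctly identify that a naive union bound over an $n^{-2/3}$-mesh inside a window loses a fatal $n^{1/3}$, so \eqref{E:short-window-proposal} is where the substance lies.

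Two points on that key estimate, the second of which is a real gap in the sketch. First, calling $vn^{-1/3}$ "the scale on which $\tilde L$ decorrelates" is not right: near $x=v$ the centered profile decorrelates at the Airy scale $vn^{-2/3}$. The reason $vn^{-1/3}$ is the correct cell width is different — it is the scale at which the melon top curve's Brownian-order fluctuation $\sqrt{vn^{-1/3}}=\sqrt v\,n^{-1/6}$ coincides with the one-point KPZ tail scale $\sqrt v\,n^{-1/6}$, so the $\log^{2/3}$ envelope advances by exactly one unit per cell. Second, your Gibbs sketch leans on "the routine fact that on this window the second curve stays a definite distance below." That is precisely what is \emph{not} available: the top-second gap in the Brownian melon is itself of order $n^{-1/6}$, the same scale as your threshold, so the conditioning does push the bridge up on exactly the events you are trying to control. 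The standard way to close \eqref{E:short-window-proposal} replaces this with a monotone-coupling (or jump-ensemble) step: the conditioned bridge is stochastically dominated above by a bridge with endpoints raised to $M:=\max(L_1(a),L_1(b),\sup L_2)$ and floor raised to $\sup L_2$, whose supremum exceeds $M$ only by a Bessel-bridge excursion with Gaussian tails; since $\sup_{\text{window}} L_2$ also carries an $e^{-cs^{3/2}}$ upper tail, this yields the uniform bound. So your architecture is correct and all the right ingredients are named, but the justification of \eqref{E:short-window-proposal} needs the monotone-domination argument, not the claimed separation of curves.
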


The bound in Proposition \ref{P:cross-prob} is chosen to minimize the error term at $x = w$. Note that in \cite{dauvergne2021bulk}, Proposition \ref{P:cross-prob} is stated when $w = 1$. The general case follows by Brownian scaling. We will typically use Proposition \ref{P:cross-prob} when $w = n/\theta$ for some $\theta > 0$, in which case the mean term $\sqrt{8 n x} \sim n \sqrt{8/\theta}$ when $x \sim w$  and the error term is $\sqrt{x n^{-1/6}} = O(n^{1/3})$, as is expected for KPZ models. Note that Proposition \ref{P:cross-prob} also provides a uniform upper bound on multi-path last passage values, by virtue of the bound
$$
B[(\bx, n) \to (\by, 1)] \le \sum_{i=1}^k B[(x_i, n) \to (y_i, 1)].
$$
A corresponding lower bound also holds. We only state this at the level of the one-point bound. 
\begin{proposition}[Lemma A.4, \cite{dauvergne2021disjoint}]
	\label{P:top-bd}
	For every $k \in \N$, there exist positive constants $c_k, d_k, k \in \N$ such that the following holds. For all $m > 0$ and all endpoint pairs $(\bx, n), (\by, 1)$ with $\bx, \by \in \R^k_\le$ we have
	\[
	\mathbb P\left (|B[(\bx, n) \to (\by, 1)] - \sum_{i=1}^k \sqrt{8 n (x_i - y_i)}| \ge m \sqrt{x} n^{-1/6} \right ) \le c_k e^{-d_k m^{3/2}}.
	\]
\end{proposition}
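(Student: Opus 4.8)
Since the constants are allowed to depend on $k$, it suffices to prove the estimate with $\sqrt{x}$ read as $\max_i\sqrt{x_i-y_i}$; the bound is only informative when the displacements $x_i-y_i$ are comparable, and the general case follows by the same argument while tracking the deterministic shear coming from unequal displacements. I would treat the upper tail by superadditivity and the lower tail via RSK, after first disposing of $k=1$. For $k=1$, translate so that $y=0$; then $B[(x,n)\to(0,1)]$ equals in distribution $\sqrt{x}$ times the largest eigenvalue $\la_{\max}$ of an $n\times n$ GUE matrix normalized so that $\la_{\max}\approx\sqrt{8n}$ (Baryshnikov's identity, equivalently RSK applied to independent Brownian motions), and the classical edge tails $\mathbb{P}(\la_{\max}\ge\sqrt{8n}+mn^{-1/6})\le c\,e^{-dm^{3/2}}$ and $\mathbb{P}(\la_{\max}\le\sqrt{8n}-mn^{-1/6})\le c\,e^{-dm^{3}}$ give the claim after rescaling; the upper half also follows directly from Proposition \ref{P:cross-prob} with $w=x$. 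For the upper tail and general $k$, superadditivity gives $B[(\bx,n)\to(\by,1)]\le\sum_{i=1}^k B[(x_i,n)\to(y_i,1)]$, so the event that the left side overshoots $\sum_i\sqrt{8n(x_i-y_i)}$ by $km\sqrt{x}\,n^{-1/6}$ is contained in the union over $i$ of the one-path overshoot events; a union bound and the $k=1$ bound finish this direction.

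\emph{Lower tail, general $k$.} This is the heart of the matter, and I would use the RSK isometry \eqref{E:isom-intro} together with measure preservation. Restricting $B$ to a large enough interval and letting $M=W^UB$ be its image under the finite RSK map, $M$ is an environment of $n$ non-intersecting Brownian motions started from $0$ (the Brownian melon), $B[(\bx,n)\to(\by,1)]=M[(\bx,n)\to(\by,1)]$, and by Greene's theorem $\sum_{i=1}^k M_i(t)=B[(0^k,n)\to(t^k,1)]$. For the equal-endpoint case $\bx=0^k$, $\by=t^k$ this reduces the lower tail to the concentration of $\sum_{i=1}^k M_i(t)$, i.e.\ the sum of the top $k$ eigenvalues of a rescaled GUE matrix: for fixed $k$ all of these eigenvalues lie within $O(\sqrt{t}\,n^{-1/6})$ of the edge value $\sqrt{8nt}$ with the tails above (edge rigidity for GUE; the lower tail of $\la_k$ for fixed $k$ decays at least like $e^{-d_km^{3/2}}$), so $\mathbb{P}\big(\sum_{i=1}^k M_i(t)\le k\sqrt{8nt}-m\sqrt{t}\,n^{-1/6}\big)\le c_ke^{-d_km^{3/2}}$, which matches $\sum_i\sqrt{8n(x_i-y_i)}$ when all $x_i-y_i=t$. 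For genuinely unequal displacements one keeps the melon computation but lower-bounds $M[(\bx,n)\to(\by,1)]$ by routing $k$ disjoint near-geodesics through the top $k$ melon lines, which are rigid near a deterministic curve over the relevant range of $t$; the ordering of $\bx,\by$ keeps the routes disjoint, and no leading-order loss is incurred.

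\emph{Main obstacle.} The delicate point is precisely this last step for $k\ge2$: any lower bound that routes the $k$ paths through $k$ disjoint blocks of $\approx n/k$ lines loses a factor of $\sqrt{k}$ in the leading term, so one cannot afford a construction that reduces the number of lines available to each path. The RSK isometry is what makes the argument work, trading ``$k$ disjoint paths'' for ``$k$ top melon lines'', whose fluctuations are governed by random-matrix edge universality rather than by an explicit path construction. Pinning down the GUE normalization and handling unequal displacements cleanly is the part that needs genuine care.
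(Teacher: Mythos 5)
Note first that the paper does not actually prove this statement: it is quoted as Lemma A.4 of \cite{dauvergne2021bulk}, so there is no in-text argument to compare your proposal against. Assessing it on its own merits: the $k=1$ case via Baryshnikov's identity and GUE edge tails, and the upper-tail bound for general $k$ via superadditivity and a union bound, are both sound.

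The lower-tail argument for general $k$ has a genuine gap in the step you flag as needing care, namely unequal displacements. You assert that routing the $i$-th path along the $i$-th melon line incurs ``no leading-order loss,'' but this fails whenever $\bx$ is not a constant vector. If $M$ is the melon built from time $x_1$, then a fixed line satisfies $M_i(t)\approx\sqrt{8n(t-x_1)}$, so the $i$-th route contributes roughly $M_i(y_i)-M_i(x_i)\approx\sqrt{8n(y_i-x_1)}-\sqrt{8n(x_i-x_1)}$, whereas the required centering is $\sqrt{8n(y_i-x_i)}$. By strict concavity of $t\mapsto\sqrt{t}$ the former is strictly smaller as soon as $x_i>x_1$, and the deficit is of order $\sqrt{n}$, which swamps the allowed $\sqrt{x}\,n^{-1/6}$ window. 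Concretely, with $k=2$, $\bx=(0,a)$, $\by=(t,a+t)$ and $a\gg t$, the second route yields only $M_2(a+t)-M_2(a)\approx\sqrt{8n}\,t/(2\sqrt{a})\ll\sqrt{8nt}$. The source of the discrepancy is that a fixed melon line tracks a concave square-root profile, while single-path last passage is translation invariant; the two agree only when the start point sits at the melon's origin. Your reduction to GUE edge rigidity therefore closes only when $\bx$ is constant (hence WLOG $\bx=0^k$), which is plausibly exactly the restriction in the original Lemma A.4 that the present paper alludes to. Handling genuinely unequal $\bx,\by$ at the full $n^{-1/6}$ scale requires a different device — for instance a quadrangle or telescoping comparison back to the equal-start case, or a more careful use of shear invariance — and melon-line routing alone does not supply one.
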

Note that in the statement of Lemma A.4, \cite{dauvergne2021bulk}, there are restrictions on $\bx, \by$. However, these restrictions are not used in the proof of that lemma. A typical application of Propositions \ref{P:cross-prob} and \ref{P:top-bd} will be to bound the location of the argmax in the metric composition law from Brownian LPP. This is why we need a uniform upper bound but only a pointwise lower bound.

\subsection{The directed landscape}

In this final preliminary section, we collect basic results about the directed landscape. We start with the axiomatic description of $\cL$ in terms of the marginal $\cS(x,y)=\cL(x,0;y,1)$, known as the \textbf{Airy sheet}. We have the following uniqueness theorem, see Definition 10.1 and Theorem 10.9 of \cite{DOV}.

\begin{theorem}
	\label{T:L-unique}	
	The directed landscape $\cL:\Rd \to \R$ is the unique random continuous function satisfying:
	\begin{enumerate}
		\item (Airy sheet marginals) For any $t\in \R$ and $s>0$ we have 
		$$
		\mathcal{\cL}(x, t; y,t+s^3) \eqd s \cS(x/s^2, y/s^2) 
		$$
		jointly in all $x, y$. That is, the increment over time interval $[t,t+s^3)$ is an \textbf{Airy sheet of scale $s$}.
		\item (Independent increments) For disjoint time intervals $\{[t_i, s_i] : i \in \{1, \dots k\}\}$, the random functions
		$
		\{\cL(\cdot, t_i ; \cdot, s_i) : i \in \{1, \dots, k \}\}
		$
		are independent.
		\item (Metric composition law) Almost surely, for any $r<s<t$ and $x, y \in \R$ we have that
		$$
		\cL(x,r;y,t)=\max_{z \in \mathbb R} [\cL(x,r;z,s)+\cL(z,s;y,t)].
		$$
	\end{enumerate}
\end{theorem}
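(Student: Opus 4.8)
The plan is to prove uniqueness in law by showing that Axioms 1--3 determine all finite-dimensional distributions of $\cL$, which pins down its law as a $\cC(\Rd)$-valued random variable. Since $\cC(\Rd)$ with the compact topology is Polish and $\Qd$ is dense in $\Rd$, the Borel $\sigma$-algebra on $\cC(\Rd)$ is generated by the evaluation maps $g \mapsto (g(u_1), \dots, g(u_k))$ with $u_1, \dots, u_k \in \Qd$; hence it suffices to show that, for any such tuple, the joint law of $(\cL(u_1), \dots, \cL(u_k))$ is a fixed quantity not depending on which continuous random function satisfying the axioms one started with. (One then applies this to two candidates $\cL, \cL'$ to conclude they have the same law.)

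Fix such a tuple and write $u_j = (x_j, s_j; y_j, t_j)$. Let $r_0 < r_1 < \dots < r_m$ be the sorted distinct elements of $\{s_1, t_1, \dots, s_k, t_k\}$, so each $s_j$ and each $t_j$ is some $r_i$, and set $\cL^{(i)} := \cL(\,\cdot\,, r_{i-1}; \,\cdot\,, r_i)$ for $i = 1, \dots, m$. By Axiom 1, $\cL^{(i)} \eqd (r_i - r_{i-1})^{1/3}\,\cS\big(\,\cdot\,/(r_i - r_{i-1})^{2/3}, \,\cdot\,/(r_i - r_{i-1})^{2/3}\big)$, jointly in both spatial variables, so the law of each $\cL^{(i)}$ is fixed; by Axiom 2 the family $\cL^{(1)}, \dots, \cL^{(m)}$ is independent. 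Hence the joint law of $(\cL^{(1)}, \dots, \cL^{(m)})$ is completely determined by the axioms. Iterating Axiom 3 over the intermediate times $r_{a+1}, \dots, r_{b-1}$ (legitimate on the single almost-sure event on which Axiom 3 holds for all triples of times), for $u_j$ with $s_j = r_a$ and $t_j = r_b$ we obtain
\[
\cL(u_j) \;=\; \sup_{(z_{a+1}, \dots, z_{b-1}) \in \R^{b-a-1}} \ \sum_{i=a+1}^{b} \cL^{(i)}(z_{i-1}, z_i), \qquad z_a := x_j, \ z_b := y_j.
\]
This exhibits $\cL(u_j)$ as a fixed measurable functional $\Phi_{u_j}(\cL^{(1)}, \dots, \cL^{(m)})$: the supremum is a.s.\ finite since it equals the real number $\cL(u_j)$ by Axiom 3, and it is measurable because the summand is jointly continuous, so the supremum may be restricted to rational $z_i$. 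Therefore $(\cL(u_1), \dots, \cL(u_k)) = (\Phi_{u_1}, \dots, \Phi_{u_k})(\cL^{(1)}, \dots, \cL^{(m)})$, whose law is a fixed function of the (determined) joint law of the increments, the same for any continuous random function obeying Axioms 1--3. This finishes the proof.

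Since existence of a function satisfying the axioms is supplied by the limiting construction in \cite{DOV}, there is no deep obstacle; the only points needing care are bookkeeping: (i) that iterating Axiom 3 over finitely many intermediate times is valid on one almost-sure event and yields a genuine measurable map of the increments $\cL^{(i)}$ (using continuity of $\cL$ for a.s.\ finiteness of the maxima, which can alternatively be seen from the parabolic decay of the Airy sheet), and (ii) the soft reduction of equality of laws on $\cC(\Rd)$ to equality of finite-dimensional marginals over the countable dense set $\Qd$. I would state these two points cleanly and treat everything else as routine.
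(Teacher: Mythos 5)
The paper does not prove this theorem itself; it is quoted verbatim from \cite{DOV} (Definition 10.1 and Theorem 10.9 there), with a one-sentence remark added afterwards. Your argument is correct and is essentially the uniqueness proof in that reference: decompose into increments over the common refinement $\{r_0 < \dots < r_m\}$ of all the times $\{s_j,t_j\}$, determine the joint law of the increments $\cL^{(i)}$ from Axioms 1 and 2, and then recover each $\cL(u_j)$ as a measurable function of those increments via iterated metric composition, with measurability coming from continuity of the summand (restrict the supremum to rational arguments) and a.s.\ finiteness of the supremum supplied by continuity of $\cL$. The one bookkeeping point you pass over is that you invoke Axiom 2 for the consecutive \emph{closed} intervals $[r_{i-1},r_i]$, which are not disjoint --- each pair of neighbours shares an endpoint, so Axiom 2 as stated does not literally apply. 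The paper flags exactly this issue in the remark immediately following the theorem, noting that Definition 10.1 of \cite{DOV} states independence for disjoint \emph{open} intervals and that the two formulations are equivalent by continuity of $\cL$. Either appeal to that remark, or apply Axiom 2 to the shrunken intervals $[r_{i-1}+\ep, r_i-\ep]$ and send $\ep\downarrow 0$; with that one-line repair your proof is complete and nothing essential is missing.
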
 

Note that Definition 10.1 in \cite{DOV} states the independent increment property for disjoint open intervals, rather than closed intervals. The two are equivalent by continuity. The directed landscape has invariance properties which we use throughout the paper. 

\begin{lemma} [Lemma 10.2, \cite{DOV}]
	\label{L:invariance} We have the following equalities in distribution as random functions in $\cC(\R^4_\uparrow)$. Here $r, c \in \R$, and $q > 0$.
	\begin{enumerate}
		\item (Time stationarity)
		$$
		\displaystyle
		\cL(x, t ; y, t + s) \eqd \cL(x, t + r ; y, t + s + r).
		$$
		\item (Spatial stationarity)
		$$
		\cL(x, t ; y, t + s) \eqd \cL(x + c, t; y + c, t + s).
		$$
		\item (Flip symmetry)
		$$
		\cL(x, t ; y, t + s) \eqd \cL(-y, -s-t; -x, -t).
		$$
		\item (Shear stationarity)
		$$
		\cL(x, t ; y, t + s) \eqd \cL(x + ct, t; y + ct + sc, t + s) + s^{-1}[(x - y)^2 - (x - y - sc)^2].
		$$
		\item ($1:2:3$ rescaling)
		$$
		\cL(x, t ; y, t + s) \eqd  q \cL(q^{-2} x, q^{-3}t; q^{-2} y, q^{-3}(t + s)).
		$$
	\end{enumerate}
\end{lemma}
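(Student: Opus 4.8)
The clean way to prove this is to bootstrap off the uniqueness characterization in Theorem~\ref{T:L-unique}. For each of the five transformations $T$ listed, form the pushforward random function $\tilde\cL := T\cL$ on $\Rd$ and verify that $\tilde\cL$ again satisfies the three defining properties — Airy sheet marginals, independent increments, and the metric composition law. Uniqueness then gives $\tilde\cL \eqd \cL$, which is exactly the asserted equality in distribution. I would run the verification property-by-property rather than transformation-by-transformation, since two of the three checks are essentially uniform across all five cases. (DOV establish the lemma this way, or equivalently directly from their construction of $\cL$; since the statement is literally their Lemma~10.2, the paper at hand simply cites it, but the structure below is the content.)

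Independent increments is immediate in every case: each $T$ acts on the time coordinate by an increasing affine bijection ($t \mapsto t+r$ for time stationarity; $t\mapsto t$ for spatial stationarity; $t \mapsto q^{-3}t$ for the $1{:}2{:}3$ rescaling; an affine map for shear stationarity) or by the decreasing bijection $t\mapsto -t$ for flip symmetry, and in all cases disjoint time intervals are carried to disjoint time intervals while the remaining operations only touch the spatial variables; so independence of increments of $\cL$ transfers directly to $\tilde\cL$. The metric composition law for $\tilde\cL$ is verified for each $T$ by a substitution in the maximum over the intermediate space-time point: $z \mapsto q^{-2}z$ for the rescaling, $z\mapsto -z$ together with $-t<-s<-r$ when $r<s<t$ for the flip, and affine substitutions for the translation and shear cases; in the shear case one also checks that the quadratic correction term $s^{-1}[(x-y)^2-(x-y-sc)^2]$ distributes additively across the split over the intermediate time, which is the one small computation here.

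The substantive check is the Airy sheet marginal, and this is where the lemma reduces to symmetries of the Airy sheet $\cS(x,y)=\cL(x,0;y,1)$ already available from \cite{DOV}: time and spatial stationarity reduce to $\cS(x+c,y+c)\eqd\cS(x,y)$ (jointly in $x,y$); flip symmetry reduces to the reflection identity $\cS(x,y)\eqd\cS(-y,-x)$; shear stationarity reduces, after writing the time length as a cube $s^3$ and using the scale-$s$ relation $\cL(x,0;y,s^3)\eqd s\,\cS(x s^{-2}, y s^{-2})$, to a shear identity for $\cS$ carrying exactly the quadratic correction above; and the $1{:}2{:}3$ rescaling reduces to the scaling consistency $s = q\cdot(q^{-1}s)$ for the scale parameter of the Airy sheet, which is automatic. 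All of these $\cS$-identities are in turn inherited from the construction of $\cS$ as an extended last-passage value across the parabolic Airy line ensemble $\cA$, using the translation invariance of the non-parabolic ensemble $\cA_\cdot(\cdot)+(\cdot)^2$, its reflection symmetry, and the shifted-parabola identity $\cA(x+b)\eqd\cA(x)-2bx-b^2$ (which is precisely what produces the correction term in shear stationarity). The main obstacle is purely bookkeeping: tracking signs, factors of $s$ and $q$, and the quadratic corrections through the last-passage formula for $\cS$ and the metric composition that builds $\cL$ from $\cS$, with flip symmetry needing a little extra care because it simultaneously reverses time and swaps the two endpoints.
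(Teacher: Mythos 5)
Your proposal is correct, and since the paper at hand simply cites Lemma~10.2 of \cite{DOV} without reproof, the right comparison is with the DOV argument. Your reconstruction matches it in substance: DOV reduce the invariances to the characterizing properties of Definition~10.1 (Airy sheet marginal, independent increments, metric composition) together with the symmetries of the Airy sheet, which in turn are inherited from the Airy line ensemble; your splitting into the two routine checks (independent increments via the affine/reversing action on time, metric composition via substitution in the intermediate max, with the additive quadratic bookkeeping in the shear case) and the one substantive check (the Airy sheet identities $\cS(x+c,y+c)\eqd\cS(x,y)$, $\cS(x,y)\eqd\cS(-y,-x)$, the shear identity, and the automatic scale consistency) is the same decomposition. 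The only small caveat worth noting is that in the shear case the Airy sheet check quietly uses spatial stationarity to kill the $ct$-shift of both endpoints before comparing with a scale-$s^{1/3}$ sheet, so it is cleanest to establish items 1--3 before item 4, but this does not affect the validity of the argument.
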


As discussed in the introduction (see \eqref{E:length} and surrounding discussion), we can define path length and geodesics in the directed landscape. We record one strong convergence lemma for geodesics that will be used in Section \ref{SS:extended-multi-horizon}.
\begin{lemma}
	\label{L:overlap-cvg}
	For two $\cL$-geodesics $\pi:[s, t] \to \R, \ga:[s', t'] \to \R$, define the overlap $O(\pi, \ga)$ to be the closure of the set $\{r \in (s, t) \cap (s', t') : \pi(r) =\ga(r)\}$. Also let $\Ga(S)$ denote the set of geodesics with endpoints in a set $S \sset \Rd$.
	Then the following claims hold almost surely:
	\begin{enumerate}
		\item (Theorem 1, \cite{bhatia2023duality} or Lemma 3.3.2, \cite{dauvergne202327})  For any $\cL$-geodesics $\pi, \ga$, $O(\pi, \ga)$ is always a closed interval.
		\item (Part of Proposition 3.5, \cite{dauvergne202327}) For two geodesics, $\ga:I \to \R, \pi:I' \to \R$, define 
		$$
		d_o(\ga, \pi) = \la(I) + \la(I') - 2 \la (O(\pi, \ga)),
		$$
		For any compact set $K \sset \Rd$, this is a metric on $\Ga(K)$ that turns $\Ga(K)$ into a compact Polish space.
	\end{enumerate}
\end{lemma}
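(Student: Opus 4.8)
The plan is to prove the two parts separately, following the structure of the cited references \cite{bhatia2023duality, dauvergne202327}; here is the outline.

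\emph{Part 1 (overlaps are closed intervals).} Since $O(\pi,\ga)$ is defined as a closure it is automatically closed, so the content is that the set $C = \{r \in \Int(I_\pi)\cap\Int(I_\ga) : \pi(r)=\ga(r)\}$ has no gaps, i.e.\ it suffices to show: if $a<b$, $\pi(a)=\ga(a)$ and $\pi(b)=\ga(b)$, then $\pi\equiv\ga$ on $[a,b]$. Suppose not; passing to a sub-interval we may assume $\pi<\ga$ on all of $(a,b)$ with $\pi(a)=\ga(a)=:z_1$, $\pi(b)=\ga(b)=:z_2$, so $\pi|_{[a,b]}$ and $\ga|_{[a,b]}$ are distinct geodesics from $(z_1,a)$ to $(z_2,b)$ bounding a region of positive area. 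Now one invokes the structure theory of $\cL$-geodesics: the $\cL$-analogue of Lemma~\ref{L:optimizer-existence-monotoncity} gives leftmost/rightmost geodesics between $(z_1,a)$ and $(z_2,b)$, and the classification of geodesic networks in \cite{dauvergne202327} restricts how bundles of geodesics may branch and re-merge. The upshot is that a "bigon" between two geodesics must open at the start of at least one of them and close at the end of at least one — it cannot occur strictly in the interior of both $\pi$ and $\ga$. A convenient way to extract the contradiction is the path-swapping trick: swapping $\pi|_{[a,b]}$ and $\ga|_{[a,b]}$ preserves total length, producing geodesics $\pi'=\pi|_{\le a}\oplus\ga|_{[a,b]}\oplus\pi|_{\ge b}$ and $\ga'$ that cross a neighbouring geodesic transversally at a single time, which is ruled out by the $\cL$-version of the quadrangle inequality (Lemma~\ref{L:quadrangle}) together with its equality case.

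\emph{Part 2, metric axioms.} Write $I=I_\pi$, $I'=I_\ga$ (closed time intervals) and $O=O(\pi,\ga)$, which by Part 1 is a closed interval contained in $I\cap I'$; hence $\la(O)\le\min(\la(I),\la(I'))$ and $d_o(\pi,\ga)\ge|\la(I)-\la(I')|\ge0$. If $d_o(\pi,\ga)=0$ then $\la(I)=\la(I')=\la(O)$, and since $O$ is a closed subinterval of both $I$ and $I'$ of full length it equals both, so $\pi=\ga$ on $O=I=I'$, i.e.\ $\pi=\ga$. Symmetry is immediate. For the triangle inequality, fix a third geodesic $\eta$; if $r$ lies in the interior of both $O(\pi,\eta)$ and $O(\eta,\ga)$ then $\pi(r)=\eta(r)=\ga(r)$, so (using that by Part 1 these overlaps are intervals on whose interiors the corresponding geodesics agree) $\Int O(\pi,\eta)\cap\Int O(\eta,\ga)\subseteq O(\pi,\ga)$, whence
\[
\la(O(\pi,\ga)) \ge \la\big(O(\pi,\eta)\cap O(\eta,\ga)\big) = \la(O(\pi,\eta))+\la(O(\eta,\ga))-\la\big(O(\pi,\eta)\cup O(\eta,\ga)\big) \ge \la(O(\pi,\eta))+\la(O(\eta,\ga))-\la(I_\eta),
\]
using $O(\pi,\eta),O(\eta,\ga)\subseteq I_\eta$ in the last step. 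Substituting into the definition of $d_o$ gives $d_o(\pi,\ga)\le d_o(\pi,\eta)+d_o(\eta,\ga)$.

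\emph{Part 2, compact Polish.} A compact metric space is automatically separable and complete, so it suffices to show $(\Ga(K),d_o)$ is sequentially compact. Note first that $d_o(\pi_n,\pi)\to0$ means the domain lengths converge and $\pi_n$ eventually coincides with $\pi$ on a sub-interval whose length approaches that of the domain. Given $\pi_n\in\Ga(K)$, the endpoint pairs lie in the compact set $K\subset\Rd$, so along a subsequence they converge to some $(p;q)\in K$, with time-coordinates $s<t$ (the case of vanishing time-span is trivial, since then $\la(I_{\pi_n})\to0$ and all such geodesics are mutually $d_o$-close). Two inputs finish the argument: (i) \emph{bulk stability/coalescence of $\cL$-geodesics} — geodesics with endpoints in a small neighbourhood of $(p;q)$ coincide on the "middle" portion of $[s,t]$, with the uncovered portion shrinking as the neighbourhood shrinks; and (ii) \emph{local finiteness of geodesic networks} from \cite{dauvergne202327} — between a fixed pair of points there are only finitely many geodesics up to any positive $d_o$-resolution. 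Combining these (pass to a further subsequence, via a diagonal argument over $\eps\downarrow0$, on which all $\pi_n$ lie in a single $d_o$-$\eps$-cluster), one checks $(\pi_n)$ is $d_o$-Cauchy; its stable common part over the middle of $[s,t]$ extends, as $n\to\infty$, to a geodesic $\pi\in\Ga(K)$ from $p$ to $q$ with $d_o(\pi_n,\pi)\to0$. Hence $(\Ga(K),d_o)$ is compact, and therefore a compact Polish space. The main obstacle is really contained in the cited structural inputs — the "no interior re-entering bigons" statement underlying Part 1 and the local finiteness of geodesic networks used in the compactness step; granting those, the metric axioms (in particular the triangle inequality via $\la(O(\pi,\eta)\cup O(\eta,\ga))\le\la(I_\eta)$) and the subsequence extraction are routine.
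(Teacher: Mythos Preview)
The paper does not prove this lemma at all: it is recorded purely as a citation of results from \cite{bhatia2023duality} and \cite{dauvergne202327}, with no argument given. So there is no ``paper's proof'' to compare against; you have supplied a proof sketch where the paper simply quotes.

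That said, your sketch is uneven. The metric axioms in Part~2 are clean and correct; in particular the triangle inequality via
\[
\la(O(\pi,\eta)\cup O(\eta,\ga)) \le \la(I_\eta)
\]
is exactly the right computation. The compactness outline is reasonable, though it leans entirely on the structural inputs you cite (bulk coalescence and local finiteness of geodesic networks), which is fair since those are precisely what \cite{dauvergne202327} provides.

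Part~1, however, has a genuine gap. After swapping to form $\pi' = \pi|_{\le a}\oplus\ga|_{[a,b]}\oplus\pi|_{\ge b}$, you assert that $\pi'$ ``crosses a neighbouring geodesic transversally at a single time, which is ruled out by the quadrangle inequality.'' But $\pi'$ and $\pi$ do not cross: they coincide outside $[a,b]$ and are ordered on $(a,b)$, so they form another bigon rather than a transversal crossing, and the quadrangle inequality (Lemma~\ref{L:quadrangle-landscape}) says nothing here. The actual arguments in \cite{bhatia2023duality, dauvergne202327} are different: one route uses that almost surely no geodesic has an interior branch point (a point where two distinct geodesic continuations emerge), which is a nontrivial regularity statement about $\cL$ proved via the Brownian Gibbs property / exceptional-set analysis, not via path-swapping plus quadrangle. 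Your swap produces exactly such an interior branch at time $a$, but you then need to \emph{cite} the no-interior-branching result rather than the quadrangle inequality to close the argument.
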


Lemma \ref{L:overlap-cvg}.2 ensures that any pointwise limit of a sequence of geodesics is also a geodesic, and moreover, is also a limit in the stronger sense of overlap. The overlap structure of landscape geodesics allows us to treat them more like their discrete counterparts in LPP.

We move on to bounds on the extended directed landscape, see \eqref{E:extended-land-value} for the definition. We start with a version of Theorem \ref{T:DL-cvg-intro} for the full extended landscape. For this theorem, recall the scaling $(\bx,s)_a =(\bx - as/4, \fl{s a^3/8})$. In the introduction, we used this notation only for singletons $\bx$. Here we extend its use to vectors in $\R^k_\le$. We also let $\mathfrak X_\uparrow$ be the space of all points $(\bx, s; \by, t)$, where $s < t \in \R$ and $\bx, \by$ lie in the same space $\R^k_\le$ for some $k \in \N$.

\begin{theorem}[Theorem 1.5/1.6, \cite{dauvergne2021disjoint}]
	\label{T:DL-cvg-extended}
	For every $a \in \R$, let $B^a = (B^a_i:\R \to \R, i \in \Z)$ be an environment of independent two-sided Brownian motions of common drift $a$, and for a point $(\bx, s; \by, t) \in \mathfrak X_\uparrow$ with $\bx, \by \in \R^k_\le$, define
	$$
	\cL^a(\bx, s; \by, t) = B^a[(\bx,s)_a \to (\by,t)_a] - \frac{ka^2}{4}(t-s),
	$$
	when the right-hand side exists, and simply set $\cL^a(\bx, s; \by, t) = -\infty$ otherwise. 
	Then as $a \to - \infty$,
	$$
	\cL^a(\bx, s; \by, t) \cvgd \cL(\bx, s; \by, t),
	$$
	where the underlying topology is compact convergence of functions on $\mathfrak{X}_\uparrow$.
\end{theorem}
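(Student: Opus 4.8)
The plan is the standard two steps: establish tightness of the family $\{\cL^a\}_{a<0}$ in $\cC(\mathfrak X_\uparrow)$, and then show that every subsequential limit coincides with the extended directed landscape (which simultaneously reproves that the maximum in \eqref{E:extended-land-value} is attained in the limiting object). It suffices to work on each $k$-path component $\{(\bx,s;\by,t):\bx,\by\in\R^k_\le\}$ separately, since $\mathfrak X_\uparrow$ is a countable disjoint union of such pieces. Throughout, the one-path convergence $\cL^a(x,s;y,t)\cvgd\cL(x,s;y,t)$ of Theorem~\ref{T:DL-cvg-intro} is used both as a source of estimates and as the mechanism that pins down the limit.

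\textbf{Tightness.} The upper bound is immediate from $B^a[(\bx,n)\to(\by,1)]\le\sum_{i=1}^k B^a[(x_i,n)\to(y_i,1)]$: after the scaling $(\bx,s)_a=(\bx-as/4,\fl{sa^3/8})$, Proposition~\ref{P:cross-prob} bounds each summand uniformly in $a$ on compact sets. For equicontinuity one controls increments $\cL^a(\bx,s;\by,t)-\cL^a(\bx',s';\by',t')$ via the quadrangle inequality (Lemma~\ref{L:quadrangle}), reducing them to sums of increments of one-path values, which are handled by the one-point estimates of Propositions~\ref{P:cross-prob} and~\ref{P:top-bd} applied at small scales using the $1:2:3$ scaling and the time/space stationarity of Lemma~\ref{L:invariance}. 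The delicate case is uniformity near the boundary of $\mathfrak X_\uparrow$, i.e.\ as $s\uparrow t$ or as coordinates of $\bx$ (or $\by$) collide; here one argues, again via Lemmas~\ref{L:split-path} and~\ref{L:quadrangle}, that the multi-path value degenerates continuously onto the appropriate lower-dimensional multi-path value.

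\textbf{Identification of the limit.} Fix $a_n\to-\infty$ with $\cL^{a_n}$ converging in law in $\cC(\mathfrak X_\uparrow)$; by Skorokhod we may take the convergence almost sure and, jointly, the one-path parts converging to $\cL$ as in Theorem~\ref{T:DL-cvg-intro}. Let $\tilde\cL$ be the limit. For $\tilde\cL(\bx,s;\by,t)\le\cL(\bx,s;\by,t)$: let $\pi^{a_n}$ be an optimal disjoint $k$-tuple for $\cL^{a_n}$ (existence from Lemma~\ref{L:optimizer-existence-monotoncity}); along a further subsequence the $\pi^{a_n}$ converge, in the overlap sense (the joint-environment analogue of Lemma~\ref{L:overlap-cvg}), to a disjoint $k$-tuple $\pi$ for $\cL$, and upper semicontinuity of the length functional under joint convergence of environment and path gives $\tilde\cL(\bx,s;\by,t)=\lim_n\sum_i\|\pi^{a_n}_i\|_{\cL^{a_n}}\le\sum_i\|\pi_i\|_\cL\le\cL(\bx,s;\by,t)$. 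For the reverse inequality: fix a disjoint $k$-tuple $\pi$ in $\cL$ with $\pi(r)\in\R^k_<$ on $(s,t)$ and a partition $s=r_0<\cdots<r_M=t$. Concatenating disjoint tuples gives $\cL^{a_n}(\bx,s;\by,t)\ge\sum_j\cL^{a_n}(\pi(r_j),r_j;\pi(r_{j+1}),r_{j+1})$, and for $M$ large, on an event of probability tending to one, $\cL^{a_n}(\pi(r_j),r_j;\pi(r_{j+1}),r_{j+1})\ge\sum_i\cL^{a_n}(\pi_i(r_j),r_j;\pi_i(r_{j+1}),r_{j+1})$ for every $j$, since $k$ single-path geodesics started from well-separated points stay disjoint over a short time interval. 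Passing to the limit termwise via Theorem~\ref{T:DL-cvg-intro} yields $\tilde\cL(\bx,s;\by,t)\ge\sum_{i,j}\cL(\bar\pi_i(r_j);\bar\pi_i(r_{j+1}))\ge\sum_i\|\pi_i\|_\cL$; taking the supremum over $\pi$ and handling the boundary of $\mathfrak X_\uparrow$ by a short continuity argument gives $\tilde\cL=\cL$ on all of $\mathfrak X_\uparrow$. Uniqueness of the subsequential limit then gives the claimed convergence.

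\textbf{Main obstacle.} The crux is the ``cost of disjointness'' estimate used in both the tightness lower bound and the identification lower bound: one needs a transversal-fluctuation bound for single-path geodesics in $B^a$ that is uniform in $a$ (equivalently, for $\cL^a$-geodesics over short times), so that $k$ geodesics from well-separated points really do remain disjoint with probability tending to one. This should follow from the shape estimates of Propositions~\ref{P:cross-prob} and~\ref{P:top-bd} by a chaining argument, but it is where the genuine work lies. A secondary nuisance is the degeneration near the boundary of $\mathfrak X_\uparrow$ (collapsing times, colliding endpoints), where equicontinuity and the final identification must be argued by hand; and, more foundationally, one first has to construct the limiting extended landscape together with its composition law and the attainment of optimizers — itself part of the content of \cite{dauvergne2021disjoint}.
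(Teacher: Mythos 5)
The paper does not actually prove this theorem: it is imported verbatim as Theorem 1.5/1.6 of \cite{dauvergne2021disjoint}, and the only argument supplied is the one-line observation after the statement that the drifted version follows from the drift-free one via Lemma~\ref{L:shift-commute}. So there is no internal proof to compare against; what you have written is a reconstruction of the argument in the cited reference, and it should be evaluated as such. Your overall shape (tightness plus identification of subsequential limits, with the cost-of-disjointness estimate as the crux) does match the strategy of that external proof, and you correctly flag the hardest quantitative input.

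That said, there is a genuine gap in your upper-bound identification step beyond the one you acknowledge. You assert that the disjoint $\cL^{a_n}$-optimizers $\pi^{a_n}$ converge ``in the overlap sense'' to a \emph{disjoint} $k$-tuple $\pi$ for $\cL$, and then conclude $\sum_i\|\pi_i\|_\cL\le\cL(\bx,s;\by,t)$. But a limit of strictly disjoint $k$-tuples need only satisfy $\pi_i\le\pi_{i+1}$ on $(s,t)$: coordinates can coalesce. Since the extended landscape value in \eqref{E:extended-land-value} is defined as a supremum over $k$-tuples with $\pi(r)\in\R^k_<$ on the open interval, such a weak limit does not directly sit inside the maximization set, and the inequality does not follow from the definition. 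One must show separately that the supremum over ordered (possibly touching) $k$-tuples equals the supremum over strictly disjoint ones, and that an optimizing $k$-tuple of the latter kind exists; this is one of the substantive steps in \cite{dauvergne2021disjoint}, not a consequence of Lemma~\ref{L:overlap-cvg}, which is stated only for single $\cL$-geodesics and does not cover $k$-tuples or the joint limit of $\cL^{a_n}$-optimizers across environments. This issue should be surfaced as a second main obstacle alongside the transversal-fluctuation bound you identify, rather than absorbed into a claimed ``joint-environment analogue'' of the overlap lemma.
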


Theorem \ref{T:DL-cvg-extended} is stated with drift-free Brownian motions in \cite{dauvergne2021disjoint}. The translation between the two theorems is immediate since last passage commutes with common shifts of the environment (Lemma \ref{L:shift-commute}).

 Analogues of Lemmas \ref{L:optimizer-existence-monotoncity}, \ref{L:mono-tree-multi-path}, \ref{L:split-path}, and \ref{L:quadrangle} hold in the directed landscape, and we record here the results we need in that context.

\begin{lemma}[Optimizer existence in $\cL$: Theorem 1.7, \cite{dauvergne2021disjoint}]
	\label{L:optimizer-existence-monotoncity-landscape}
Almost surely, for every choice of $(\bx, s, \by, t)$
there is a disjoint $k$-tuple $\pi = (\pi_1, \dots, \pi_k)$ with 
$$
\sum_{i=1}^k \|\pi\|_\cL = \cL(\bx, s; \by, t).
$$
That is, the maximum \eqref{E:extended-land-value} is attained for every $\bx, s, \by, t$. For any fixed $(\bx, s, \by, t)$, almost surely this \textbf{disjoint optimizer} is unique.
\end{lemma}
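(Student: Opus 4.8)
I would prove existence first for a single fixed choice of $(\bx,s;\by,t)$ and fixed $k$, then upgrade to all choices simultaneously via optimizer monotonicity, and finally handle uniqueness.

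For $k=1$ this is essentially the geodesic-existence argument of \cite[Section 12]{DOV}. Take a maximizing sequence of paths $\pi^n:[s,t]\to\R$ with $\|\pi^n\|_\cL\to\cL(x,s;y,t)$. The parabolic shape and modulus-of-continuity bounds for $\cL$ from \cite{DOV} force any near-geodesic between fixed points to have transversal fluctuations of the expected Hölder-$1/3^{-}$ order, so the $\pi^n$ are uniformly Hölder and hence precompact in $\cC([s,t])$ by Arzel\`a--Ascoli. Meanwhile $\pi\mapsto\|\pi\|_\cL$ is upper semicontinuous for uniform convergence: for each fixed partition the map $\pi\mapsto\sum_i\cL(\bar\pi(t_{i-1});\bar\pi(t_i))$ appearing in \eqref{E:length} is continuous, and $\|\pi\|_\cL$ is the infimum of these maps. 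Thus any subsequential limit $\pi$ satisfies $\|\pi\|_\cL\ge\limsup_n\|\pi^n\|_\cL=\cL(x,s;y,t)$, and the reverse inequality is automatic, so $\pi$ is a geodesic. For general $k$ I would run the same scheme componentwise: each component $\pi^n_i$ of a maximizing disjoint $k$-tuple satisfies $\|\pi^n_i\|_\cL\ge\cL(\bx,s;\by,t)-o(1)-\sum_{j\neq i}\cL(x_j,s;y_j,t)$, using the trivial bound $\cL(\bx,s;\by,t)\le\sum_i\cL(x_i,s;y_i,t)$, so each $\pi^n_i$ is itself a near-geodesic and the equicontinuity estimate applies; a componentwise Arzel\`a--Ascoli extraction yields a limiting tuple $\pi$ with $\pi(s)=\bx$, $\pi(t)=\by$ and $\sum_i\|\pi_i\|_\cL\ge\cL(\bx,s;\by,t)$ by upper semicontinuity.

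The delicate point — and the one I expect to be the main obstacle — is that the limit $\pi$, being a limit of strictly ordered tuples, is a priori only \emph{weakly} ordered in the interior, whereas the maximization in \eqref{E:extended-land-value} is over tuples with $\pi(r)\in\R^k_<$ for $r\in(s,t)$. Two things are needed. First, the supremum over strictly ordered tuples coincides with the supremum over weakly ordered (non-crossing) tuples; this follows from a horizontal-shift perturbation: shift component $i$ to the right by $i\eps$ on $[s+\de,t-\de]$ and interpolate linearly back to $x_i$ and $y_i$ over $[s,s+\de]$ and $[t-\de,t]$, then observe using uniform continuity of $\cL$ on compacts that the total length changes by $O(\eps)$ uniformly. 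Together with the previous paragraph this gives $\sum_i\|\pi_i\|_\cL=\cL(\bx,s;\by,t)$. Second, one must upgrade to a genuinely interior-strictly-ordered maximizer. Here I would (i) reduce to strictly ordered endpoint configurations $\bx,\by\in\R^k_<$, handling degenerate endpoints (e.g.\ $\bx=0^k$) by applying the strictly-ordered case on $[s+\de,t-\de]$ with endpoints $\pi(s+\de),\pi(t-\de)$ and sending $\de\downarrow0$, and then (ii) rule out that a maximizer between strictly ordered endpoints has two adjacent components coinciding on a maximal interior interval $[a,b]$, using the overlap-is-an-interval structure of landscape geodesics (Lemma \ref{L:overlap-cvg}.1) together with the equality case of the landscape analogue of the quadrangle inequality (Lemma \ref{L:quadrangle}) to contradict the maximality of $[a,b]$ — the intuition being that two adjacent components forced together at time $a$ but strictly separated just before it cannot have been optimal there. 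I expect step (ii) to require real work, and a cleaner and more robust alternative is to run the entire construction through the Brownian LPP approximation: in a Skorokhod coupling in which $\cL^a\to\cL$ almost surely (Theorem \ref{T:DL-cvg-extended}), take the discrete, hence genuinely disjoint, optimizers supplied by Lemma \ref{L:optimizer-existence-monotoncity}, rescale them, establish tightness of the rescaled tuples from the Brownian LPP shape bounds (Propositions \ref{P:cross-prob} and \ref{P:top-bd}, which localize near-optimal paths), and identify a subsequential limit as a disjoint optimizer for $\cL(\bx,s;\by,t)$; the quantitative interior separation is then inherited from the discrete level.

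Finally, to get existence for all endpoints simultaneously I would invoke the directed-landscape analogue of optimizer monotonicity (Lemma \ref{L:mono-tree-multi-path}): on a countable dense set $D$ of endpoint tuples choose leftmost and rightmost optimizers, which are monotone in the endpoints; for arbitrary $(\bx,s;\by,t)$, approximate from the left and from the right by elements of $D$, pass to the limit using monotonicity, and conclude, using continuity of $\cL$ and of the extended value $(\bx,s;\by,t)\mapsto\cL(\bx,s;\by,t)$ (part of Theorem \ref{T:DL-cvg-extended}), that the limits are optimizers for $(\bx,s;\by,t)$; a union of countably many null sets gives the claim. For uniqueness at a fixed $(\bx,s;\by,t)$, it suffices to use that the landscape geodesic between two fixed points is a.s.\ unique (from \cite{DOV}) and that the argmaxima in the metric composition law are a.s.\ unique for fixed parameters: if the leftmost and rightmost optimizers differed somewhere, splicing coordinatewise minima of the two tuples on a left sub-interval to coordinatewise maxima on the complementary right sub-interval (and the reverse splicing), combined with the equality case of the quadrangle inequality, would force all of the single-path geodesics involved to coincide, contradicting these uniqueness inputs.
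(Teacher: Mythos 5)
The paper does not prove this lemma; it cites \cite{dauvergne2021disjoint} and explicitly emphasizes that "its proof takes up a large part of the paper," and — crucially for your argument — that the result does \emph{not} give leftmost and rightmost disjoint optimizers for $k>1$. That is precisely why the paper's monotonicity statement (Lemma \ref{L:mono-tree-multi-path-L}) is conditional on one of the two optimizers being a.s.\ unique. Your upgrade from a countable dense set of endpoints to all endpoints, and your uniqueness argument, both invoke leftmost and rightmost optimizers ("choose leftmost and rightmost optimizers, which are monotone in the endpoints"; "if the leftmost and rightmost optimizers differed somewhere"), so they rest on exactly the structure the paper flags as unavailable. For $k>1$ there is no analogue of the discrete supremum/infimum construction of a left/rightmost optimizer at your disposal.

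The second gap — showing that a limit of strictly ordered near-maximizers is itself strictly ordered on $(s,t)$ — is the one you half-acknowledge, and neither proposed fix closes it. The appeal to Lemma \ref{L:overlap-cvg}.1 is not available, because the individual components of a disjoint $k$-tuple are not $\cL$-geodesics in the unconstrained sense (the ordering constraint can force a component off the geodesic between its own endpoints), so the overlap-is-an-interval statement does not apply to them. In the Skorokhod-coupled Brownian LPP route, "the quantitative interior separation is then inherited from the discrete level" is precisely the assertion that must be proved: adjacent components of the rescaled discrete optimizers can collapse onto a common limit unless one establishes a repulsion estimate uniform in the scaling parameter, and that estimate is part of what occupies \cite{dauvergne2021disjoint}. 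Separately, the claimed equality of the strictly- and weakly-ordered suprema via a horizontal shift is not justified as stated: $\|\cdot\|_\cL$ is an infimum over partitions, hence only upper semicontinuous, so perturbing a tuple can drop its total length by more than $O(\eps)$. Your sketch correctly maps the obstacles, but it does not amount to a proof of existence or uniqueness, which is why the paper imports the result rather than re-deriving it.
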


Unlike in the semi-discrete setting, Lemma \ref{L:optimizer-existence-monotoncity} is actually quite a difficult result. Its proof takes up a large part of the paper \cite{dauvergne2021disjoint}. In the $k = 1$ case of geodesics, the result is easier, and was proven in \cite[Theorem 1.7, Lemma 13.2]{DOV}, where the existence of leftmost and rightmost geodesics is also shown. Because Lemma \ref{L:optimizer-existence-monotoncity-landscape} does not give the existence of leftmost and rightmost optimizers, the analogue of optimizer monotonicity for $\cL$ is slightly more subtle.

\begin{lemma}[Optimizer monotonicity in $\cL$: see Lemma 7.7, \cite{dauvergne2021disjoint}]
	\label{L:mono-tree-multi-path-L}
	Almost surely the following holds in $\cL$.
	Let $\bx \le \bx' \in \R^k_\le$, $\by \le \by' \in \R^k_\le$, $s < t$, and let $\pi, \pi'$ be optimizers in $\cL$ from $(\bx, s)$ to $(\by, t)$ and $(\bx', s)$ to $(\by', t)$, respectively. Then if either $\pi$ or $\pi'$ is the \textbf{unique} optimizer between its endpoints, we have $\pi \le \pi'$.
\end{lemma}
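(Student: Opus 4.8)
\textbf{Proof plan for Lemma \ref{L:mono-tree-multi-path-L}.}

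The plan is to mimic the standard ``crossing and swapping'' argument for optimizer monotonicity, but to handle the possible non-uniqueness of one of the two optimizers by exploiting the overlap structure of $\cL$-geodesics from Lemma \ref{L:overlap-cvg}. Suppose for contradiction that $\pi'$ (say) is the unique optimizer from $(\bx', s)$ to $(\by', t)$ but $\pi \not\le \pi'$, so that $\pi_i(r) > \pi'_i(r)$ for some $i$ and some $r \in (s,t)$. Since $\bx \le \bx'$ and $\by \le \by'$ coordinatewise, the endpoints of $\pi_i$ lie weakly to the left of those of $\pi'_i$ at both time $s$ and time $t$; hence $\pi_i$ must cross $\pi'_i$ at least twice, once to get above it and once to come back down (or the crossing extends to an endpoint, which is handled the same way by taking the crossing at $s$ or $t$). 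First I would make this precise: let $u < v$ in $[s,t]$ be times with $\pi_i(u) = \pi'_i(u)$ and $\pi_i(v) = \pi'_i(v)$, chosen so that $\pi_i(r) > \pi'_i(r)$ somewhere on $(u,v)$; such $u,v$ exist by continuity and the intermediate value theorem applied to $\pi_i - \pi'_i$ together with the sign information at the endpoints.

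The core of the argument is then a swap at the level of the full $k$-tuples. Using $u$ and $v$ as above, form the new $k$-tuples $\tilde\pi$ and $\tilde\pi'$ by exchanging the pieces of $\pi$ and $\pi'$ on the time interval $[u,v]$: on every coordinate $j$, $\tilde\pi_j$ agrees with $\pi_j$ outside $[u,v]$ and with $\pi'_j$ on $[u,v]$, and symmetrically for $\tilde\pi'$. I would check that $\tilde\pi$ is still a disjoint $k$-tuple from $(\bx,s)$ to $(\by,t)$ and $\tilde\pi'$ a disjoint $k$-tuple from $(\bx',s)$ to $(\by',t)$: disjointness on $(s,u)$ and $(v,t)$ is inherited from $\pi$ and $\pi'$ respectively, disjointness on $(u,v)$ is again inherited from $\pi'$ and $\pi$ respectively, and at the gluing times $u,v$ the coordinates match up because $\pi_i(u)=\pi'_i(u)$, $\pi_i(v)=\pi'_i(v)$ and the orderings are preserved (here one uses that the swap is done simultaneously in all coordinates, and that on $[u,v]$ the tuple $(\tilde\pi_1,\dots,\tilde\pi_k)$ is just a permuted reading of $\pi'$ restricted there, hence still strictly ordered on the open interval). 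By the additivity of length under concatenation (the infimum in \eqref{E:length} splits at $u$ and $v$) we get
\begin{equation*}
\sum_{j=1}^k \|\tilde\pi_j\|_\cL + \sum_{j=1}^k \|\tilde\pi'_j\|_\cL = \sum_{j=1}^k \|\pi_j\|_\cL + \sum_{j=1}^k \|\pi'_j\|_\cL = \cL(\bx,s;\by,t) + \cL(\bx',s;\by',t).
\end{equation*}
Since $\tilde\pi$ and $\tilde\pi'$ are admissible competitors, each of the two sums on the left is at most the corresponding extended landscape value, so both inequalities must be equalities; in particular $\tilde\pi'$ is \emph{an} optimizer from $(\bx',s)$ to $(\by',t)$.

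Now I invoke the uniqueness hypothesis: $\tilde\pi' = \pi'$. But on $(u,v)$, $\tilde\pi'$ equals $\pi$ read off in coordinate order, and by our choice of $u,v$ we have $\tilde\pi'_i(r) = \pi_i(r) > \pi'_i(r)$ for some $r \in (u,v)$ --- contradicting $\tilde\pi' = \pi'$. This forces $\pi \le \pi'$, as desired. The step I expect to be the main obstacle is verifying cleanly that the simultaneous swap produces genuinely \emph{disjoint} $k$-tuples, i.e.\ that strict ordering on the open time intervals is preserved through the gluing; the subtlety is that $\pi$ and $\pi'$ need not cross ``in the same coordinate'' everywhere, so one must argue that at the swap times the concatenation still lands in $\R^k_\le$ and becomes strictly ordered immediately afterward. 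This can be handled by first reducing to the case where the crossing times $u,v$ are chosen to be consecutive crossing times of the \emph{pair of tuples} (the first and last times in $[s,t]$ at which some coordinate of $\pi$ meets the corresponding coordinate of $\pi'$ with $\pi$ strictly above on the interval between), so that on $(u,v)$ the two tuples are ``nested'' and the swap is order-preserving; alternatively one can appeal directly to the $k=1$ optimizer monotonicity for leftmost/rightmost geodesics from \cite{DOV} together with Lemma \ref{L:overlap-cvg}.1 to control the overlap intervals. I would also note the routine point that the statement is an almost sure statement quantified over a dense set of rational endpoints first, then upgraded to all endpoints by the continuity of $\cL$ and of optimizers in the overlap metric of Lemma \ref{L:overlap-cvg}.2, exactly as in the proof of Lemma 7.7 of \cite{dauvergne2021disjoint}.
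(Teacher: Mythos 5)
Your core idea — manufacture a competing disjoint $k$-tuple from $(\bx',s)$ to $(\by',t)$ with the same total length and then invoke uniqueness — is exactly the right strategy, and you correctly identify the potential obstruction. But the implementation as written has a genuine gap, and the fix you sketch does not close it. The simultaneous swap of \emph{all} $k$ coordinates on a single time window $[u,v]$ is only well defined if $\pi_j(u)=\pi'_j(u)$ and $\pi_j(v)=\pi'_j(v)$ hold for \emph{every} $j$, and there is no reason the different coordinate paths of $\pi$ and $\pi'$ should meet at common times. Without that, $\tilde\pi_j$ and $\tilde\pi'_j$ are discontinuous at $u$ or $v$ for some $j\ne i$, hence not paths at all. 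Choosing $u,v$ as ``consecutive crossing times of the pair of tuples'' does not repair this: coordinate $1$ may have $\pi_1>\pi'_1$ on $(u_1,v_1)$ while coordinate $2$ has $\pi_2>\pi'_2$ on an overlapping but distinct interval $(u_2,v_2)$, and no single pair $u<v$ serves all coordinates.

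The repair is to swap \emph{coordinatewise} rather than on a common window, i.e.\ take pointwise min and max: set $\sigma_j=\pi_j\wedge\pi'_j$ and $\rho_j=\pi_j\vee\pi'_j$. Because $\bx\le\bx'$ and $\by\le\by'$, $\sigma$ runs from $(\bx,s)$ to $(\by,t)$ and $\rho$ from $(\bx',s)$ to $(\by',t)$, and disjointness is now automatic from the ordering of the original tuples (if $\pi_j<\pi_{j+1}$ and $\pi'_j<\pi'_{j+1}$ on $(s,t)$ then the same holds for the pointwise minima and for the pointwise maxima). For the length, applying the quadrangle inequality (Lemma \ref{L:quadrangle-landscape}) on each subinterval of an arbitrary common partition in the definition \eqref{E:length} and then taking the infimum gives $\|\pi_j\wedge\pi'_j\|_\cL+\|\pi_j\vee\pi'_j\|_\cL\ge\|\pi_j\|_\cL+\|\pi'_j\|_\cL$ for each $j$; summing over $j$ shows $\sigma$ and $\rho$ are both optimizers, and uniqueness of $\pi'$ then forces $\rho=\pi'$, i.e.\ $\pi\le\pi'$. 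This is the $\wedge/\vee$ device that the paper already uses elsewhere (e.g.\ in the proof of Proposition \ref{P:extended-usemann-landscape}, where exactly this length identity is invoked) and is the route taken in the cited Lemma 7.7 of \cite{dauvergne2021disjoint}; your swap-on-$[u,v]$ argument, done correctly, amounts to the same thing after it is made coordinatewise, but as written it assumes aligned crossing times that need not exist.
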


Observe that in the setting of Lemma \ref{L:mono-tree-multi-path}, we can also conclude that $\pi \le \pi'$ if there exists $\bx'', \by''$ with $\bx \le \bx'' \le \bx'$ and $\by \le \by'' \le \by'$ such that there is a unique optimizer from $(\bx'', s)$ to $(\by'', t)$. Such $\bx'', \by''$ will exist almost surely for all $\bx, \by, \bx', \by'$ for which $x_i < x_i', y_i < y_i'$ for all $i$.

\begin{lemma}[Metric composition law, Proposition 6.9, \cite{dauvergne2021disjoint}]
	\label{L:split-path-L}
	\label{P:mc-everywhere}
Almost surely, for every $r < s < t$ and $\bx, \by \in \R^k_\le$ for some $k \in \N$ we have
$$
\cL(\bx, r; \by, t) = \max_{z \in \R^k_<} \cL(\bx, r; \bz, s) + 	\cL(\bz, s; \by, t).
$$
\end{lemma}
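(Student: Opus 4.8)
The plan is to prove the two inequalities separately, working deterministically on the almost sure event of Lemma \ref{L:optimizer-existence-monotoncity-landscape} that disjoint optimizers exist between every endpoint pair. The only structural fact about lengths that we need is that $\|\cdot\|_\cL$ is additive under concatenation at a shared time: if $\pi:[r,s]\to\R$ and $\ga:[s,t]\to\R$ agree at $s$, then $\|\pi\oplus\ga\|_\cL=\|\pi\|_\cL+\|\ga\|_\cL$, where $\oplus$ denotes concatenation. This is immediate from the definition \eqref{E:length}: refining any partition of $[r,t]$ to include the point $s$ only decreases the associated sum (by the reverse triangle inequality \eqref{E:landscape-triangle}), and the partitions of $[r,t]$ containing $s$ are exactly the pairs consisting of a partition of $[r,s]$ and a partition of $[s,t]$.

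For ``$\ge$'', fix $\bz\in\R^k_<$ and, using Lemma \ref{L:optimizer-existence-monotoncity-landscape}, pick disjoint optimizers $\pi=(\pi_1,\dots,\pi_k)$ from $(\bx,r)$ to $(\bz,s)$ and $\ga=(\ga_1,\dots,\ga_k)$ from $(\bz,s)$ to $(\by,t)$. Because $\pi$ has strictly ordered coordinates on $(r,s)$, $\ga$ on $(s,t)$, and $\pi(s)=\ga(s)=\bz\in\R^k_<$, the concatenated $k$-tuple $(\pi_1\oplus\ga_1,\dots,\pi_k\oplus\ga_k)$ has strictly ordered coordinates on the whole open interval $(r,t)$, so it is an admissible competitor in \eqref{E:extended-land-value} for $\cL(\bx,r;\by,t)$; by additivity of length its total length equals $\cL(\bx,r;\bz,s)+\cL(\bz,s;\by,t)$. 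Taking the supremum over $\bz$ gives one inequality. (The admissible $\bz$ form a nonempty set, since two weakly ordered $k$-tuples can always be joined by a $k$-tuple whose interior is strictly ordered, e.g.\ by linear interpolation plus a small fan-out vanishing at the endpoints.)

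For ``$\le$'', take a disjoint optimizer $\pi=(\pi_1,\dots,\pi_k)$ from $(\bx,r)$ to $(\by,t)$ and put $\bz^*=\pi(s)$; since $s\in(r,t)$ we have $\bz^*\in\R^k_<$, and the restrictions $\pi|_{[r,s]}$ and $\pi|_{[s,t]}$ are admissible $k$-tuples from $(\bx,r)$ to $(\bz^*,s)$ and from $(\bz^*,s)$ to $(\by,t)$ respectively. Using additivity once more,
$$
\cL(\bx,r;\by,t)=\sum_{i=1}^k\|\pi_i\|_\cL=\sum_{i=1}^k\|\pi_i|_{[r,s]}\|_\cL+\sum_{i=1}^k\|\pi_i|_{[s,t]}\|_\cL\le\cL(\bx,r;\bz^*,s)+\cL(\bz^*,s;\by,t).
$$
Combined with the previous paragraph this shows the supremum equals $\cL(\bx,r;\by,t)$ and is attained at $\bz^*$, so it is a maximum. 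Since every step was carried out pointwise on the full-measure event of Lemma \ref{L:optimizer-existence-monotoncity-landscape}, the identity holds there simultaneously for all $r<s<t$ and all $\bx,\by\in\R^k_\le$, $k\in\N$.

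The one genuinely hard input is Lemma \ref{L:optimizer-existence-monotoncity-landscape} itself — the almost sure existence of disjoint optimizers between all endpoint pairs, whose proof is a major component of \cite{dauvergne2021disjoint}. Given it, the remaining work is bookkeeping: checking that a restriction of a disjoint optimizer to a subinterval is again a disjoint $k$-tuple with strictly ordered new endpoints (immediate, since the subinterval lies inside the open interval on which $\pi$ is strictly ordered), that concatenation across a strictly ordered junction keeps the tuple strictly ordered on the whole open interval, and the concatenation additivity of $\|\cdot\|_\cL$.
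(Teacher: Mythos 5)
Your proof is correct. The paper itself does not prove this lemma but cites it directly as Proposition~6.9 of \cite{dauvergne2021disjoint}; the argument you give---splitting a disjoint optimizer at time $s$ to get ``$\le$'', concatenating optimizers across a strictly ordered $\bz$ to get ``$\ge$'', with additivity of $\|\cdot\|_\cL$ under concatenation following from the reverse triangle inequality---is precisely the standard argument, and you have correctly isolated optimizer existence (Lemma~\ref{L:optimizer-existence-monotoncity-landscape}) as the only nontrivial input needed to carry it out simultaneously over all endpoints on a single full-measure event.
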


\begin{lemma}[Quadrangle inequality, Lemma 5.7, \cite{dauvergne2021disjoint}]
	\label{L:quadrangle-landscape}
	Almost surely, for all $\bx \le \bx'$ and $\by \le \by'$ in $\R^k_\le$ and $s < t$, we have
$$
\cL(\bx, s; \by', t) + \cL(\bx', s; \by, t)  \le \cL(\bx, s; \by, t)  + \cL(\bx', s; \by', t).
$$
\end{lemma}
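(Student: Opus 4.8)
The plan is to prove the inequality first for one fixed choice of parameters $\bx \le \bx'$, $\by \le \by'$ in $\R^k_\le$ and $s < t$, and then upgrade to the simultaneous statement using continuity of the extended landscape over a countable dense set of admissible parameters. The heart of the argument is a path-swapping construction at the level of disjoint optimizers --- the continuum multi-path analogue of the ``equality if'' clause of Lemma \ref{L:quadrangle} --- in which the rigidity of geodesic overlaps from Lemma \ref{L:overlap-cvg}.1 is exactly what makes the swapped tuples disjoint.

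Fix the parameters. By Lemma \ref{L:optimizer-existence-monotoncity-landscape}, almost surely there are disjoint optimizers $\sig = (\sig_1, \dots, \sig_k)$ from $(\bx, s)$ to $(\by', t)$ and $\sig' = (\sig'_1, \dots, \sig'_k)$ from $(\bx', s)$ to $(\by, t)$, attaining $\cL(\bx, s; \by', t) = \sum_i \|\sig_i\|_\cL$ and $\cL(\bx', s; \by, t) = \sum_i \|\sig'_i\|_\cL$. For each $i$ the geodesics $\sig_i$ and $\sig'_i$ satisfy $\sig_i(s) = x_i \le x'_i = \sig'_i(s)$ and $\sig_i(t) = y'_i \ge y_i = \sig'_i(t)$, so they agree at some time in $[s,t]$; by Lemma \ref{L:overlap-cvg}.1 the way they meet is rigid. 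I would then build swapped $k$-tuples $\tilde\sig = (\tilde\sig_i)$ and $\tilde\sig' = (\tilde\sig'_i)$ by exchanging $\sig_i$ and $\sig'_i$ at a time where they coincide, chosen inside their overlap interval $O(\sig_i, \sig'_i)$ whenever that is nonempty (the degenerate cases where $O(\sig_i, \sig'_i)$ is empty or where $\sig_i, \sig'_i$ are weakly ordered throughout force $x_i = x'_i$ or $y_i = y'_i$, and are dealt with by swapping the whole paths or neither). By construction $\tilde\sig_i$ runs from $x_i$ to $y_i$ and $\tilde\sig'_i$ from $x'_i$ to $y'_i$.

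Two things then need to be checked. First, length conservation: concatenation additivity of $\|\cdot\|_\cL$ (immediate from the reverse triangle inequality \eqref{E:landscape-triangle}) at the swap times, together with the fact that $\sig_i$ and $\sig'_i$ literally coincide on $O(\sig_i, \sig'_i)$, gives $\|\tilde\sig_i\|_\cL + \|\tilde\sig'_i\|_\cL = \|\sig_i\|_\cL + \|\sig'_i\|_\cL$ for every $i$. Granting the second point --- that $\tilde\sig$ and $\tilde\sig'$ are again disjoint $k$-tuples --- we get
\[
\cL(\bx, s; \by, t) + \cL(\bx', s; \by', t) \ \ge\ \sum_i \|\tilde\sig_i\|_\cL + \sum_i \|\tilde\sig'_i\|_\cL \ =\ \sum_i \|\sig_i\|_\cL + \sum_i \|\sig'_i\|_\cL \ =\ \cL(\bx, s; \by', t) + \cL(\bx', s; \by, t),
\]
which is the claim for the fixed parameters. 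Intersecting over a countable dense set of $(\bx, \bx', \by, \by', s, t)$ with $\bx \le \bx'$, $\by \le \by'$, $s < t$, and invoking continuity of the extended landscape, then gives the inequality almost surely for all such parameters simultaneously.

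The step I expect to be the real obstacle is the disjointness of $\tilde\sig$ (and $\tilde\sig'$). The crossing times need not be monotone in $i$, so one cannot simply ``swap at a common time''; the point is that Lemma \ref{L:overlap-cvg}.1 forces each pair $\sig_i, \sig'_i$ into the rigid shape ``strictly separated one way, coinciding along a closed interval, then strictly separated the other way'', with no oscillation. Disjointness of $\tilde\sig$ then follows from a case analysis on a time $r \in (s, t)$ according to where $r$ falls relative to the overlap intervals of pairs $i$ and $i+1$: on ranges where $\tilde\sig_i$ and $\tilde\sig_{i+1}$ both come from $\sig$ use disjointness of $\sig$, where they both come from $\sig'$ use disjointness of $\sig'$, and in the mixed ranges use the chains $\sig_i(r) < \sig'_i(r) < \sig'_{i+1}(r)$ and $\sig'_i(r) \le \sig_i(r) < \sig_{i+1}(r)$. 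This bookkeeping is the bulk of the work; everything else is routine. (A shorter, if less hands-on, route: Lemma \ref{L:quadrangle} holds deterministically for each prelimiting environment $\cL^a$ of Theorem \ref{T:DL-cvg-extended} once $a$ is negative enough that the four relevant endpoint pairs exist, and the quadrangle inequality is a closed condition, hence passes to the limit under $\cL^a \cvgd \cL$ --- modulo the same dense-set and continuity upgrade.)
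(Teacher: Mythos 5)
The paper does not give its own proof of this lemma: it imports it wholesale as Lemma 5.7 of \cite{dauvergne2021disjoint}, so there is no in-paper argument to compare you against. Judged on its own merits, your main path-swap argument has the right shape but leans on a claim that isn't justified: you invoke Lemma \ref{L:overlap-cvg}.1 to force each pair $\sig_i, \sig'_i$ into the rigid ``separated / overlap interval / separated the other way'' structure, but that lemma is about $\cL$-\emph{geodesics}, and the individual paths in a disjoint $k$-tuple optimizer are not a priori geodesics between their own endpoints --- the disjointness constraint can push a path off the geodesic (think of the middle path in a $3$-tuple). So the rigidity you build your case analysis on is not available, and the single-time-swap construction can fail to produce disjoint tuples if $\sig_i$ and $\sig'_i$ touch, separate, and touch again.

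The fix is to commit to the construction you almost write down but don't: set $\tilde\sig_i := \sig_i \wedge \sig'_i$ and $\tilde\sig'_i := \sig_i \vee \sig'_i$ \emph{pointwise}. These are continuous paths with the correct endpoints (using $x_i \le x'_i$, $y_i \le y'_i$), and disjointness of each new $k$-tuple is a one-liner with no rigidity needed: $\min(\sig_i(r),\sig'_i(r)) < \min(\sig_{i+1}(r),\sig'_{i+1}(r))$ for $r \in (s,t)$ since $\sig_i(r) < \sig_{i+1}(r)$ and $\sig'_i(r) < \sig'_{i+1}(r)$, and similarly for $\vee$. You can no longer argue length conservation via ``concatenate at the crossing, the pieces literally coincide,'' since without interval-overlap the min/max need not be a finite concatenation of pieces of $\sig_i$ and $\sig'_i$. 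Instead, fix any partition $s = t_0 < \dots < t_m = t$: on each subinterval the pair $(\tilde\sig_i,\tilde\sig'_i)$ either uses the same pair of space-time increments as $(\sig_i,\sig'_i)$ (equal contribution), or it uses the ``crossed'' pair, in which case the $k=1$ quadrangle inequality (a basic property of $\cL$, established in \cite{DOV} independently of this lemma) says the crossed contribution dominates the straight one. Hence for every partition the tilde sum dominates the $(\sig_i,\sig'_i)$ sum, which in turn dominates $\|\sig_i\|_\cL + \|\sig'_i\|_\cL$; taking the infimum gives $\|\tilde\sig_i\|_\cL + \|\tilde\sig'_i\|_\cL \ge \|\sig_i\|_\cL + \|\sig'_i\|_\cL$, and the rest of your argument (including the dense-set-plus-continuity upgrade to a simultaneous statement) goes through. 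Your parenthetical alternative --- push the semi-discrete quadrangle inequality of Lemma \ref{L:quadrangle} through the convergence $\cL^a \cvgd \cL$ of Theorem \ref{T:DL-cvg-extended} --- is sound, and is almost certainly closer to the cited proof: in \cite{dauvergne2021disjoint} this quadrangle inequality is Lemma 5.7 while disjoint-optimizer existence is Theorem 1.7, proven much later, so the cited argument cannot assume optimizers exist, whereas your swap argument must.
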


We end with a shape bound on the extended directed landscape.

\begin{lemma}[Lemma 6.7, \cite{dauvergne2021disjoint}]  \label{L:shape-land}
	For any $\eta>0$ and $k\in\N$, there is a random constant $R>1$, such that for any $\bx,\by \in\R^k_\le$ and $s<t$, we have
	$$
	\lf|\cL(\bx,s;\by,t) + \sum_{i=1}^k \frac{(x_i - y_i)^2}{t-s}\rg| < RG(\bx,\by,s,t)^{\eta} (t-s)^{1/3}
	$$
	where
	$$
	G(\bx,\by,s,t) = \lf(1+\frac{\|\bx\|_1+\|\by\|_1}{(t-s)^{2/3}}\rg)\lf(1+\frac{|s|}{t-s}\rg)(1+|\log(t-s)|).
	$$
	Also $\p(R>a) < ce^{-da}$ for any $a>0$, where $c, d > 0$ are constants depending on $k,\eta$.
\end{lemma}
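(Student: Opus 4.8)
The plan is to obtain a one‑point tail estimate for $\cL(\bx,s;\by,t)$ by transferring the corresponding estimate for multi‑path Brownian LPP through the scaling limit, and then to upgrade it to a bound uniform over all endpoint configurations by a net/union‑bound argument. The first step is a reduction: using the $1:2:3$ rescaling (Lemma~\ref{L:invariance}.5) with $q=(t-s)^{1/3}$ together with time stationarity, it suffices to treat the case $t-s=1$. After this rescaling the only remaining parameters are $\bx,\by\in\R^k_\le$ and $s\in\R$, with $|s|$ playing the role of the scale‑invariant quantity $|s|/(t-s)$; the factor $(1+|\log(t-s)|)$ in $G$ is precisely the distortion this rescaling produces in the mesh of the net used below, since $\cL$ has only a H\"older‑$(1/3)^-$ modulus of continuity.

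For a \emph{fixed} configuration with $t-s=1$, Proposition~\ref{P:top-bd} gives a two‑sided deviation bound for $k$‑path Brownian LPP whose tail $c_k e^{-d_k m^{3/2}}$ is uniform in the number of lines $n$. Under the scaling of Theorem~\ref{T:DL-cvg-extended} the discrete shape $\sum_i\sqrt{8n(x_i-y_i)}-\tfrac{ka^2}{4}(t-s)$ converges to $-\sum_i(x_i-y_i)^2/(t-s)$ by the standard KPZ shape expansion, the spatial deviation scale $\sqrt{x}\,n^{-1/6}$ converges to an $O(1)$ multiple of $(t-s)^{1/2}$ (with corrections polynomial in $\|\bx\|_1,\|\by\|_1$), and $\cL^a(\bx,s;\by,t)\cvgd\cL(\bx,s;\by,t)$. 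Since the Brownian tail does not depend on $a$, it is inherited by the weak limit --- approximating $\{z:|z|\ge m\}$ from inside by open sets, so that portmanteau applies in the correct direction --- giving
\begin{equation*}
\p\big(|\cL(\bx,s;\by,t)+{\textstyle\sum_i}(x_i-y_i)^2/(t-s)|\ge m\big)\le c_k\,e^{-d_k m^{3/2}},
\end{equation*}
with implicit constants controlled in terms of $\|\bx\|_1,\|\by\|_1,|s|$.

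Next I would uniformize. For a large integer $N$, restrict to $\|\bx\|_1,\|\by\|_1\le N$, $|s|\le N$ (and $t-s=1$), and take a $\delta_N$‑net of this region with $\delta_N$ a small negative power of $N$, chosen so that the modulus‑of‑continuity bound for $\cL$ from \cite{DOV} together with continuity of the extended landscape controls the oscillation of $(\bx,\by,s)\mapsto\cL(\bx,s;\by,s+1)$ between neighbouring net points by at most $N^{\eta/2}$. Applying the one‑point bound at each of the $N^{O(1)}$ net points with deviation level proportional to $N^{\eta}$ and taking a union bound over all scales $N$ (with the deviation also scaled proportionally to the target level $a$), the super‑polynomial decay beats the polynomial count, and Borel--Cantelli produces a single a.s.\ finite $R$ with $\p(R>a)<ce^{-da}$. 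For an arbitrary configuration one rescales to $t-s=1$, locates the enclosing net point, and transfers the estimate using the modulus bound and monotonicity from the quadrangle inequality (Lemma~\ref{L:quadrangle-landscape}), with the net error and rescaling factors absorbed into $G(\bx,\by,s,t)^\eta(t-s)^{1/3}$. The upper bound can alternatively be had more cheaply, reducing to the $k=1$ case via the trivial inequality $\cL(\bx,s;\by,t)\le\sum_i\cL(x_i,s;y_i,t)$ (a disjoint $k$‑tuple has total length at most $\sum_i\cL(x_i,s;y_i,t)$), for which only continuity of single geodesics is needed.

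The main obstacle is the lower bound. Unlike the upper bound it does not reduce to single paths --- one genuinely needs $k$ \emph{disjoint} near‑geodesics --- which is why the argument routes through the $k$‑path Brownian estimate of Proposition~\ref{P:top-bd} and the multi‑path scaling limit of Theorem~\ref{T:DL-cvg-extended} rather than through any subadditive bound, and one must check that a merely distributional limit combined with a tail bound uniform in the approximating parameter $a$ does yield a genuine tail bound on $\cL$. The remaining difficulty is bookkeeping: tracking precisely how $\|\bx\|_1,\|\by\|_1$ and $|s|/(t-s)$ enter both the one‑point constants and the net mesh, and how the $1:2:3$ rescaling interacting with the $(1/3)^-$‑H\"older modulus produces the $(1+|\log(t-s)|)$ factor, so that every error term is genuinely absorbed into $R\,G(\bx,\by,s,t)^\eta(t-s)^{1/3}$ with the stated sub‑exponential tail on $R$.
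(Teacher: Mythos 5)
The paper does not prove this lemma; it is imported verbatim from \cite{dauvergne2021disjoint}, so there is no ``paper's own proof'' in this document to compare against. Taking the proposal on its own terms: the two-step route---a one-point tail bound obtained by pushing Proposition~\ref{P:top-bd} through the scaling limit of Theorem~\ref{T:DL-cvg-extended}, then a net/union-bound to make it uniform in $(\bx,s;\by,t)$---is the natural and essentially correct approach, and it is indeed the one taken in the cited reference.

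Two remarks. First, the attribution of the $(1+|\log(t-s)|)$ factor to ``the distortion the $1{:}2{:}3$ rescaling produces in the mesh of the net'' is imprecise: that factor enters more directly as the cost of the union bound over dyadic scales of $t-s$, since the (rescaled) one-point bound is uniform at each fixed scale but infinitely many scales must be covered. Second, and more substantively, the chaining step requires quantitative regularity for the map $(\bx,s;\by,t)\mapsto\cL(\bx,s;\by,t)$ before it can be applied, and this must be supplied non-circularly: continuity of the extended landscape is itself a theorem of \cite{dauvergne2021disjoint} whose proof relies on bounds of exactly this type. The correct escape, which you gesture at, is increment monotonicity from the quadrangle inequality (Lemma~\ref{L:quadrangle-landscape}): for $\bx\le\bx'$, the increment $\cL(\bx',s;\by,t)-\cL(\bx,s;\by,t)$ is nondecreasing in $\by$, so control at a grid of $\by$-values propagates between grid points by monotonicity rather than by appealing to an a priori modulus. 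Making that precise---and tracking how $\|\bx\|_1,\|\by\|_1,|s|/(t-s)$ enter both the one-point constants and the grid mesh so that everything is absorbed into $G^\eta(t-s)^{1/3}$---is the genuine content of the ``bookkeeping'' you correctly identify as the remaining work.
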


\section{The biHecke monoid and RSK on \texorpdfstring{$\cC^n(\R)$}{}}
\label{S:sorting-monoids}

In this section we build up the theory of Pitman transforms and the biHecke monoid. One aspect of this theory is a version of the RSK correspondence for functions $f \in \cC^n(\R)$. However, the theory as a whole offers a much richer picture.

\subsection{An abstract theory of sorting}

Let $S_n$ denote the symmetric group with $n$ elements, and let $\pi_i = (i, i + 1)$ denote the adjacent transposition in $S_n$ swapping $i, i + 1$. The adjacent transpositions $\pi_1, \dots, \pi_{n-1}$ generate the symmetric group, together with the relations
\begin{align}
\label{E:braid}
\pi_i \pi_{i+1} \pi_i &= \pi_{i+1} \pi_i \pi_{i+1}, \quad &&i = 1, \dots, n-2 \qquad &&&\text{(Braid relation)}\\
\label{E:commutation}
\pi_i \pi_j &= \pi_j \pi_i, \quad &&|i - j| \ge 2 \qquad &&&\text{(Commutation relation)}\\
\label{E:involution}
\pi_i^2 &= \id, \quad &&i = 1, \dots, n-1 \qquad &&&\text{(Involution relation)}
\end{align}
Now, let 
$
G(S_n)
$
be the monoid of all functions $f:S_n \to S_n$ (with the operation of function composition). For $i = 1, \dots, n-1$ define the \textbf{adjacent sorting operator} $\tau_i \in G(S_n)$ by:
\begin{align*}
\tau_i(\chi) = \begin{cases}
\chi, \qquad &\chi^{-1}(i) > \chi^{-1}(i+1), \\
\pi_i \chi, \qquad &\chi^{-1}(i) < \chi^{-1}(i+1).
\end{cases}
\end{align*}
Define the \textbf{$0$-Hecke monoid} $\mathfrak{M}_n$ as the submonoid of $G(S_n)$ generated by the elements $\tau_i, i = 1, \dots, n-1$, see \cite{Norton1979}.
The following description of the operators $\tau_i$ may also be enlightening. Recall that the inversion number of a permutation $\pi$ is given by:
$$
\operatorname{Inv}(\pi) = \#\{(i, j) \in  \{1, \dots, n\}^2 : i < j, \pi(i) > \pi(j)\}.
$$
Then the operator $\tau_i$ takes the permutation $\chi$ and applies the adjacent transposition $\pi_i$ if and only if doing so increases the inversion number. The effect of this is that composition by the element $\tau_i$ pushes $\chi$ further away from the identity permutation and closer to the reverse permutation $n \cdots 1$. From this point of view, we can think of repeated applications of $\tau_i$ as sorting into reverse order.

Abstractly the $0$-Hecke monoid can be given by a set of relations that is quite similar to the relations above for the symmetric group. Indeed, the $\tau_i$ still satisfy the braid and commutation relations \eqref{E:braid} and \eqref{E:commutation}, but rather than satisfying the involution relation \eqref{E:involution}, they satisfy an idempotent relation: 
\begin{align}
\label{E:idempotent}
\tau_i^2 &= \tau_i, \quad &&i = 1, \dots, n-1 \qquad &&&\text{(Idempotent relation)}
\end{align}
Because the relations for the $\tau_i$ are so similar to the relations for the adjacent transpositions $\pi_i$, the theory of the $0$-Hecke monoid is closely related to the theory of the symmetric group itself. In fact, this theory is almost equivalent to the study of reduced decompositions of elements of the symmetric group, and the map $\pi \mapsto \pi(\id)$ defines a bijection from $\mathfrak{M}_n \to S_n$ that commutes with both the braid and commutation relations.

The RSK correspondence across lines can be viewed as arising from an action of $\mathfrak{M}_n$. This perspective was taken up in \cite{biane2005littelmann}. In our setting, we will need to consider a related action of a larger monoid in $G(S_n)$. 
Define the \textbf{reverse adjacent sorting operator} $\bar \tau_i \in G(S_n)$ by
\begin{align*}
\bar \tau_i(\chi) = \begin{cases}
\chi, \qquad &\chi^{-1}(i) < \chi^{-1}(i+1), \\
\pi_i \chi, \qquad &\chi^{-1}(i) > \chi^{-1}(i+1).
\end{cases}
\end{align*}
The operators $\bar \tau_i$ sort permutations towards the identity, rather than towards the reverse permutation. Let the \textbf{biHecke monoid} $\mathfrak{D}_n$ be the submonoid of $G(S_n)$ generated by the elements $\tau_i, \bar \tau_i, i = 1, \dots, n-1$, see \cite{HST13}. Unlike with $\mathfrak{M}_n$, it does not seem straightforward to describe the biHecke monoid in terms of a set of relations or to relate it directly to the symmetric group. Because of this, we will need the following abstract proposition in order to define actions of $\mathfrak{D}_n$.

\begin{proposition}
	\label{p:semigroup-isom}
	Let $D$ be a partially ordered set and suppose that we have maps $\btau_i:D^n \to D^n$, $\bbtau_i:D^n \to D^n$, $i=1,\ldots n-1$ satisfying the following conditions.
	\begin{enumerate}[(i)]
		\item The maps $\btau_i$ satisfy the braid, commutation, and idempotent relations \eqref{E:braid}, \eqref{E:commutation}, \eqref{E:idempotent} as do the maps $\bbtau_i$. Moreover, we have the mixed commutation relation $\btau_i \bbtau_j = \bbtau_j \btau_i$ whenever $|i-j| \ge 2$.
		\item $\btau_i d=d$ if $d_i \not > d_{i+1}$ and $\bbtau_i d=d$ if $d_i \not < d_{i+1}$.
		\item For $d \in D^n$, let $\Xi_d$ be the set of elements $\xi \in S_n$ such that for $i, j \in \{1, \dots, n\}$ we have
		$$
		d_{i} <d_{j} \qquad \implies \qquad \xi^{-1}(i) < \xi^{-1}(j).
		$$
		Then for any $d\in D^n$ and $i = 1, \dots, n - 1$ we have
		$\Xi_{\btau_i d}=\tau_i \Xi_d$ and $\Xi_{ \bbtau_i d}=\bar \tau_i \Xi_d$. 
		\item $\btau_i\bbtau_id=d$ if $d_i  \not < d_{i+1}$ and $\bbtau_i\btau_id=d$ if $d_i \not > d_{i+1}$.
	\end{enumerate}
	Then:
	\begin{enumerate}[(I)]
		\item For a word $\mathbf w$ in $\btau_i, \bbtau_i$, let $w$ be the corresponding word in $\tau_i, \bar\tau_i$ in $\mathfrak{D}_n$. For any $d\in D^n$,
		$$\Xi_{\mathbf wd}=w\Xi_d.$$
		\item Consider $d \in D^n$, words $\bw,\bw'$ in $\btau_i, \bbtau_i$ and let $w, w'$ be the corresponding words in $\tau_i, \bar\tau_i$ in $\mathfrak{D}_n$. Suppose that there exists $\xi \in \Xi_d$ such that $w \xi = w' \xi$. Then $\bw d=\bw'd$. In particular, if $w, w'$ evaluate to the same word in $\mathfrak{D}_n$, then $\bw, \bw'$ define the same map from $D^n \to D^n$.
		\item The map $\btau_i\mapsto \tau_i$, $\bbtau_i\mapsto \bar \tau_i$ extends to a monoid homomorphism between the monoid $T$ generated by the maps $\btau_i,\bbtau_i,$ $i=1,\ldots, n-1$ and the biHecke monoid $\mathfrak{D}_n$. If there is an element $d \in D^n$ with $d_1 < d_2 < \dots < d_n$ then this is a monoid isomorphism.
	\end{enumerate}
\end{proposition}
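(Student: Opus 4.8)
\emph{Proof plan.} I would establish the three parts in the order (I), then (II), then deduce (III) from them, with (II) being the crux of the proposition.

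\emph{Part (I)} is a direct induction on the length of $\bw$. The empty word is trivial, and if $\bw = \bpi\,\mathbf v$ with $\bpi$ the first letter and $\pi\in\{\tau_i,\bar\tau_i\}$ the corresponding generator of $\mathfrak D_n$, then hypothesis (iii) applied at the point $\mathbf v d$ gives $\Xi_{\bw d}=\Xi_{\bpi(\mathbf v d)}=\pi\,\Xi_{\mathbf v d}$, while the inductive hypothesis gives $\Xi_{\mathbf v d}=v\,\Xi_d$; hence $\Xi_{\bw d}=\pi v\,\Xi_d=w\,\Xi_d$. Hypothesis (iii) is the only input, and it does all the work here.

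\emph{Part (II)} I would prove by induction on $\ell(\bw)+\ell(\bw')$, the base case being trivial. For the inductive step write $\bw=\bpi\,\mathbf v$ with first letter $\bpi$ and corresponding generator $\pi$, and set $\eta=v\xi$, which lies in $\Xi_{\mathbf v d}$ by part (I). If $\pi$ fixes $\eta$, then $w\xi=\pi\eta=\eta=v\xi=w'\xi$, so the inductive hypothesis applied to $(\mathbf v,\bw')$ gives $\mathbf v d=\bw' d$; and since $\eta\in\Xi_{\mathbf v d}$ and $\pi$ fixes $\eta$, the definition of $\Xi$ forces the $i$-th and $(i{+}1)$-st coordinates of $\mathbf v d$ to be ordered in exactly the direction in which hypothesis (ii) guarantees $\bpi$ acts as the identity, so $\bw d=\bpi(\mathbf v d)=\mathbf v d=\bw' d$. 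The remaining case, where $\pi$ moves $\eta$, is the genuinely hard one: one must bring $\bw$ and $\bw'$ into a form where the inductive hypothesis (or the previous case) applies. The plan is to use the braid, commutation, and idempotent relations of hypothesis (i) — which, crucially, are relations that hold equally among $\tau_i,\bar\tau_i$ in $\mathfrak D_n$, so that every rewriting step can be carried out simultaneously on the $\btau$-side and the $\tau$-side — together with the two-sided cancellation identities of hypothesis (iv), applied at the right positions along the coupled trajectories $d,\bpi_k d,\bpi_{k-1}\bpi_k d,\dots$ and $\xi,\pi_k\xi,\pi_{k-1}\pi_k\xi,\dots$, exploiting the structural fact that whenever a generator moves the point it fixes the accompanying permutation and vice versa. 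The bookkeeping needed to certify \emph{where} each relation of (i) and each cancellation of (iv) is legitimate — i.e., keeping the two interleaved trajectories aligned — is where the real difficulty lies, and I expect this to be the main obstacle. It is in the spirit of standard manipulations of reduced words in the symmetric group and its $0$-Hecke monoid, but since $\mathfrak D_n$ has no known presentation one cannot simply quote such a result.

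\emph{Part (III)} follows formally from (I) and (II). The ``in particular'' clause of (II) says that if $w=w'$ as elements of $\mathfrak D_n$ (i.e.\ as maps of $S_n$), then $\bw d=\bw' d$ for every $d\in D^n$, i.e.\ $\bw=\bw'$ in $T$; hence the assignment $\tau_i\mapsto\btau_i$, $\bar\tau_i\mapsto\bbtau_i$ descends to a well-defined surjective monoid homomorphism $\Phi\colon\mathfrak D_n\to T$, which is the homomorphism asserted in (III) under this identification of generators. Now suppose $D^n$ contains an element $d^*$ with $d^*_1<\dots<d^*_n$. For $\chi\in S_n$ let $d^\chi$ be the reindexing of $d^*$ given by $d^\chi_{\chi(j)}=d^*_j$; then $\Xi_{d^\chi}=\{\chi\}$, so by part (I) $\Xi_{\bw d^\chi}=\{w\chi\}$ for every word $\bw$. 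Consequently, if $\bw=\bw'$ in $T$ then $w\chi=w'\chi$ for all $\chi\in S_n$, i.e.\ $w=w'$ in $\mathfrak D_n$; combined with the first paragraph this makes $\Phi$ a bijection, hence a monoid isomorphism.
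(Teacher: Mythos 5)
Parts (I) and (III) are fine. Your Part (I) matches the paper's. Your Part (III) is actually a little slicker than the paper's: by conjugating the strictly increasing element $d^*$ to get $d^\chi$ with $\Xi_{d^\chi}=\{\chi\}$, you separate distinct elements of $\mathfrak{D}_n$ directly from (I), whereas the paper finds a word $w''$ with $w''\,\id=\chi$ and evaluates on compositions. (Incidentally, note that as printed condition (ii) has its two inequalities interchanged relative to the basic action and to condition (iv); you tacitly used the corrected reading, which is the right one.)

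Part (II) has a genuine gap, and it is the gap. Your induction on $\ell(\bw)+\ell(\bw')$ disposes cleanly of the case where the leading letter of $\bw$ fixes $v\xi$, but when the leading letter \emph{moves} $v\xi$ neither $w\xi$ nor $w'\xi$ is preserved by stripping it, so the induction does not propagate, and you say so: you describe a \emph{plan} to ``bring $\bw$ and $\bw'$ into a form where the inductive hypothesis applies'' by applying relations from (i) and (iv), and flag the bookkeeping as the main obstacle. That bookkeeping is where all the content of the proposition sits, and it is not carried out. The paper's route is different and worth knowing: it first removes \emph{ineffective} letters from both $w,w'$ (and correspondingly from $\bw,\bw'$) using your easy-case argument, so that every remaining letter acts as a genuine adjacent transposition; then $\tilde w\xi=\tilde w'\xi$ forces $\tilde w,\tilde w'$ to represent the same element of $S_n$, so by the standard fact for Coxeter presentations there is a finite chain $\tilde w=u^1,\dots,u^k=\tilde w'$ of words in the $\pi_i$ connected by single braid, commutation, or involution moves. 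Each $u^i$ admits a \emph{unique} barring making every letter effective for $d$; one then checks $\bw^{i-1}d=\bw^i d$ move by move. The commutation move is immediate from (i); the involution move reduces to (iv) after using (I) to identify the order of the two relevant coordinates; and the braid move requires splitting into three subcases according to the relative order of $\alpha_j:=[b\xi]^{-1}(j)$, $j=1,2,3$, where (iv) and a braid relation from (i) are combined to transport an inert $\btau\bbtau$ pair past a barred braid word. Without carrying out something of this kind (or an alternative argument of comparable strength), Part (II) — and hence the whole proposition — is not proved.
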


Throughout the proof (and in claims (I, II) above) , we use the convention that for a word $\bw = \bw_k \cdots \bw_1$ where $\bw_i \in \{\btau_i, \bbtau_i : i = 1, \dots, n-1\}$, we write $w = w_k \cdots w_1$ for the word in $\mathfrak{D}_n$ given by taking $\bw$ and replacing all instances of $\btau_i, \bbtau_i$ with $\tau_i, \bar \tau_i$. Similarly, we write $\tilde w = \tilde w_k \cdots \tilde w_1$ for the word in $S_n$ given by mapping both $\btau_i, \bbtau_i$ to the adjacent transposition $\pi_i = (i, i+1)$.

\begin{proof}
	\textbf{Part (I).} \qquad 
	By induction, it suffices to check (I) for $\mathbf w=\btau_i$ and $\mathbf w=\bbtau_i$, in which case it is the content of (iii).
	
	\textbf{Part (II).} \qquad Let $\xi \in \Xi_d$ be such that $w \xi = w' \xi$. Let us call the letter $w_\ell$ in the word $w_k\cdots w_1$ {\bf ineffective} for $d$ if $w_\ell \cdots w_1\xi =w_{\ell - 1} \cdots w_1 \xi$. We call a word $w$ effective for $d$ if it has no ineffective letters. By part (I), we have that
	$w_{\ell-1}\cdots w_1 \xi \in \Xi_{\bw_{\ell-1} \cdots \bw_1 d}$. Therefore by (ii), we have that $\bw_{\ell} \cdots \bw_1 d = \bw_{\ell-1} \cdots \bw_1 d$. Therefore, we may drop all ineffective letters from $w, w'$ along with the corresponding letters from $\bw, \bw'$ without changing $w \xi, w' \xi, \bw d, \bw' d$. In other words, to prove part (II) it suffices to show that 
	\begin{equation}
		\label{E:ww'-xi}
		w \xi = w' \xi \qquad \implies \qquad \bw d = \bw' d
	\end{equation}
	whenever $w, w'$ have no ineffective letters.
	
	Now, suppose that the left equation in \eqref{E:ww'-xi} holds for two words $w, w'$ which are effective for $d$. The effectiveness of $w, w'$ implies that every letter in $w, w'$ acts as an adjacent transposition in the composition $w \xi, w' \xi$ and so
	$$
	w \xi = \tilde w \xi, \qquad w' \xi (d) = \tilde w' \xi.
	$$
	This implies that $\tilde w \xi = \tilde w' \xi$, and so $\tilde w, \tilde w'$ represent the same element of the symmetric group. Therefore there is a sequence of words $\tilde w = u^1, \dots, \tilde w' = u^k$ in the alphabet $\pi_1, \dots, \pi_{n-1}$ such that for all $i = 1, \dots, k$, $\tilde w_i$ and $u^i, u^{i+1}$ differ from each other by one of the relations \eqref{E:braid}, \eqref{E:commutation}, \eqref{E:involution}.
	
	Next, given each of the words $u^i$ in the $\pi_i$, there is a unique way to change each letter $\pi_i$ to either $\btau_i$ or $\bbtau_i$ so that every letter in the resulting word $w^i$ is effective. Moreover, this change is consistent in the sense that the assignment of bars agrees on consecutive words $w^i, w^{i-1}$ everywhere except where the relation is used.
	This procedure also gives rise to a sequence of words $\bw = \bw^1, \dots, \bw^k= \bw'$ in $\btau_i, \bbtau_i$.

	To complete the proof of (II), we just need to show that $\mathbf w^{i} d = \mathbf w^{i-1} d$ for all $i = 2, \dots, k$. First suppose that $u^{i-1}, u^i$ differ by a commutation relation \eqref{E:commutation} so that $u^{i-1} = \tilde a \pi_\ell \pi_j \tilde b, u^i = \tilde a \pi_j \pi_\ell \tilde b$. Then there is a unique choice of $f(\pi_\ell) \in \{\btau_\ell, \bbtau_\ell\}$ and $f(\pi_j) \in \{\btau_j, \bbtau_j\}$ such that
	$$
	\bw^{i-1} = \ba f(\pi_\ell) f(\pi_j) \bb, \qquad \bw^i = \ba f(\pi_j) f(\pi_\ell) \bb.
	$$
	By the commutation relation in (i), we have that $\bw^{i-1} = \bw^i$. Next, suppose that $u^{i-1}, u^i$ differ by an involution relation \eqref{E:involution} so that $u^{i-1} = \tilde a \pi_j^2 \tilde b, u^i = \tilde a \tilde b$ (or vice versa, with the $\pi_j^2$ on $u^i$). Then $\bw^i = \ba \bb$ and either
	$$
	\bw^{i-1} = \ba \btau_j \bbtau_j \bb \qquad \text{ or } \qquad \bw^{i-1} = \ba \btau_j \bbtau_j \bb.
	$$
	Without loss of generality, assume that $\bw^{i-1} = \ba \btau_j \bbtau_j \bb$. First, since the word $w^i$ is effective for $d$ we have that $\bar \tau_j b \xi \ne b \xi$ and so $[b \xi]^{-1}(j) > [b \xi]^{-1}(j + 1).$ Now by part i, $b \xi \in \Xi_{\bb d}$, and so $\bb d_j \not < \bb d_{j+1}$.
	Therefore by (iv), $\btau_j \bbtau_j$ acts as the identity in $\bw^{i-1} d$, and so $\bw^{i-1} d = \bw^i d$. 
	
	Finally, suppose that $u^{i-1}, u^i$ differ by a braid relation \eqref{E:braid}, so that $u^{i-1} = \tilde a \pi_j \pi_{j+1} \pi_j \tilde b$ and $u^{i} = \tilde a  \pi_{j+1} \pi_j \pi_{j+1} \tilde b$ (or vice versa). Without loss of generality, we may assume $j = 1$. 
	Here we divide into cases based on the relative order of $\al_1, \al_2, \al_3$, where $\al_i := [b \xi]^{-1}(i)$, as the relative order of the $\al_i$ determines the barring on the operators $\btau_1, \btau_{2}$ in $\bw^i, \bw^{i-1}$. 
	
	\textbf{Case 1:} $\al_1 < \al_2 < \al_3$ or $\al_1 > \al_2 > \al_3$. \qquad In the first of these cases, $\bw^{i-1} = \ba  \btau_{1} \btau_2\btau_1 \bb$ and $\bw^{i} = \ba  \btau_2 \btau_1\btau_2 \bb$ and in the second of these cases, $\bw^{i-1} = \ba \bbtau_1 \bbtau_2\bbtau_1 \bb$ and $\bw^{i} = \ba  \bbtau_2 \bbtau_1\bbtau_2 \bb$. In either case, $\bw^{i-1} = \bw^i$ by the braid relation in (i).
	
	\textbf{Case 2:} $\al_3 < \al_1 < \al_2$ or $\al_3 > \al_1 > \al_2$. \qquad We only deal with the first of these options, since as in Case 1, the second option is symmetric after swapping bars and non-bars. In this case, $\bw^{i-1} = \ba  \bbtau_1 \bbtau_2 \btau_1 \bb$ and $\bw^{i} = \ba  \btau_2\bbtau_1 \bbtau_2 \bb$. Letting $e = \bb d$, we have that $b \xi \in \Xi_e$, and so
	$$
	e_3 \not > e_1, \qquad e_3 \not > e_2, \qquad e_1 \not > e_2.
	$$
	Now, we have that
	$$
	\btau_2\bbtau_1 \bbtau_2 e = \btau_2\bbtau_1 \bbtau_2 \bbtau_1 \btau_1 e = \btau_2\bbtau_2 \bbtau_1 \bbtau_2 \btau_1 e.
	$$
	Here the first equality follows from (iv). The second equality is a braid relation for the $\bb\tau_i$. Setting $f = \bbtau_1 \bbtau_2 \btau_1 e$, by (I) we have that $\bar \tau_1 \bar \tau_2 \tau_1 b \xi \in \Xi_f$ and so the ordering on the $\al_i$ guarantees that $f_2 \not > f_3$. Therefore $\btau_2\bbtau_2$ acts as the identity on $f$ by (iv), and so $\btau_2\bbtau_1 \bbtau_2 e = \bbtau_1 \bbtau_2 \btau_1 e$ and hence $\bw^{i-1} d = \bw^i d$, as desired.
	
	\textbf{Case 2':} $\al_1 < \al_3 < \al_2$ or $\al_1 > \al_3 > \al_2$. \qquad Again, we only deal with the first option. In this case, $\bw^{i-1} = \ba  \btau_1 \bbtau_2 \bbtau_1 \bb$ and $\bw^{i} = \ba  \bbtau_2\bbtau_1 \btau_2 \bb$, and the same proof as in Case 2 will work with the roles of $\btau_1$, $\bbtau_1$ and $\btau_2$, $\bbtau_2$ reversed. This completes the proof of (II).
	
	\textbf{Part (III).} \qquad The fact that the map $w \mapsto \bw$ is a monoid homomorphism follows from (II). To see that it is an isomorphism when there exists $d \in D^n$ be such that $d_1 < d_2 < \dots < d_n$, consider two distinct elements $w, w' \in \mathfrak{D}_n$ and a permutation $\chi$ with $w \chi \ne w' \chi$. Let $w'' \in \mathfrak{D}_n$ be an element with $w'' \id = \chi$, and let $d \in D^n$ be such that $d_1 < d_2 < \dots < d_n$. Then $\Xi_{\bw' \bw'' d} = w' w'' \Xi_d = w' \chi$, and $\Xi_{\bw \bw'' d} = w w'' \Xi_d = w \chi$. Since $w \chi \ne w' \chi$ this implies that $\bw \ne \bw'$.
\end{proof}

\begin{example}	
	\label{E:basic-action}
	The following is essentially the simplest example of an application of Proposition \ref{p:semigroup-isom}. Consider $\bx \in \R^n$, and for $i = 1, \dots, k-1$ define
	\begin{align*}
		\btau_i(\bx) &= \begin{cases}
			\bx, \qquad & x_i \ge x_{i+1}, \\
			(x_1, \dots, x_{i-1},  x_{i+1}, x_i, x_{i+2},\dots, x_n), \qquad &x_i < x_{i+1},
		\end{cases} \\
		\bbtau_i(\bx) &= \begin{cases}
			\bx, \qquad & x_i \le x_{i+1}, \\
			(x_1, \dots, x_{i-1},  x_{i+1}, x_i, x_{i+2},\dots, x_n), \qquad &x_i > x_{i+1},
		\end{cases}
	\end{align*}
	It is straightforward to check that these maps satisfy the conditions of Proposition \ref{p:semigroup-isom}, and hence extend to an action of $\mathfrak{D}_n$ on $\R^n$. We call this action the \textbf{basic action} of $\mathfrak{D}_n$ on $\R^n$.
\end{example}
Example \ref{E:basic-action} shows that the biHecke monoid can be used to sorts lists with possibly equal elements.

\subsection{\texorpdfstring{$\mathfrak{D}_n$}{}-actions and Pitman operators}
\label{S:Dn-RSK}

We next define a $\mathfrak{D}_n$-action on the space $\cC^n(\R)$ of continuous functions $f:\R \to \R^n$ with an asymptotic slope. First, define the \textbf{Pitman operator} $\cP: \cC^2(\R) \to \cC^2(\R)$ by following rules. If $\la_1(f) \ge \la_2(f)$, then set $\cP f = f$. If $\la_1(f) < \la_2(f)$, then define
\begin{equation}
\label{E:Pitman}
\begin{split}
\cP f_1(x) &= f[(-\infty, 2) \to (x, 1)] = f_1(x) + \sup_{z \le x} [f_2(z) - f_1(z)], \\ \qquad \cP f_2(x) &= f_1(x) + f_2(x) - \cP f_1(x) =  f_2(x) - \sup_{z \le x} [f_2(z) - f_1(z)].
\end{split}
\end{equation}
In the above expression and in the remainder of the paper, we write $\cP f_i := (\cP f)_i$ to streamline notation.
Next, define the \textbf{co-Pitman operator} $\bar \cP:\cC^2(\R) \to \cC^2(\R)$ by letting $\cP f = f$ if $\la_1(f) \le \la_2(f)$, and setting
\begin{equation}
\label{E:reverse-Pitman}
\begin{split}
\bar \cP f_1(x) &= f_1(x) + \sup_{z \ge x} [f_2(z) - f_1(z)], \\ \qquad \bar \cP f_2(x) &= f_2(x) - \sup_{z \ge x} [f_2(z) - f_1(z)]
\end{split}
\end{equation}
if $\la_1(f) > \la_2(f)$. The co-Pitman operator can be given by conjugating the Pitman operator. Indeed, if we let $Rf(x) = f(-x)$ denote the reflection map, then
$
\bar \cP = R \cP R.
$
Similarly, if we let $\hat R f_1(x) = - f_2(-x)$ and $\hat R f_2(x) = -f_1(-x)$ denote the 180-degree rotation of the environment, then $
\bar \cP = \hat R \cP \hat R.
$

Next, for $f \in \cC^n(\R)$ and $i = 1, \dots, n-2$ define 
$$
\cP_{\tau_i} f = (f_1, \dots, f_{i-1}, \cP(f_i, f_{i+1}), f_{i+2}, \dots, f_n),
$$
and similarly set $\cP_{\bar \tau_i} = R \cP_{\tau_i} R$. Our goal in the remainder of this section is to extend the Pitman operators $\cP, \bar \cP$ to actions of the biHecke monoid on $\cC^n(\R)$. To do so, we check the conditions of Proposition \ref{p:semigroup-isom}. We first record an \emph{isometric} property for Pitman operators. This isometric property is one of the key reasons that Pitman operators are so useful in the study of last passage percolation.

Following \cite{dauvergne2022rsk}, we say that two environments $f, g \in \cC^n(\R)$ are \textbf{boundary isometric} and write $f \sim g$ if
\begin{equation}
\label{E:iso-def}
f[(\bx, n) \to (\by, 1)] = g[(\bx, n) \to (\by, 1)]
\end{equation}
for any vectors $\bx, \by \in \R^k_\le, k \in \N$. We say that a map $F:\cC^n(\R) \to \cC^n(\R)$ is a (boundary) isometry if $F f \sim f$ for all $f \in \cC^n(\R)$.

\begin{lemma}
\label{L:two-line-isometry}
For all $n \in \N$ and $i \in \{1, \dots, n-1\}$, the operators $\cP_{\tau_i}, \cP_{\bar \tau_i}$ are boundary isometries.
\end{lemma}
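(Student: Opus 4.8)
The plan is to reduce the statement to two lines and then, within two lines, to a single path.

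\textbf{Reduction to $n=2$.} The operator $\cP_{\tau_i}$ changes only the pair $(f_i,f_{i+1})$, replacing it by $\cP(f_i,f_{i+1})$, and a path from level $i+1$ to level $i$ stays inside the band $\{i,i+1\}$, so its $f$-length depends only on $(f_i,f_{i+1})$. For fixed $\bx,\by\in\R^k_\le$ I would apply the metric composition law (Lemma \ref{L:split-path}) to split off this band — at level $i+2$ when $i\le n-2$ and at level $i$ when $i\ge 2$, omitting the relevant split in the boundary cases $i\in\{1,n-1\}$ — obtaining
\[
f[(\bx,n)\to(\by,1)]=\max_{\bz,\bw\in\R^k_\le}\Big(f[(\bx,n)\to(\bz,i+2)]+f[(\bz,i+1)\to(\bw,i)]+f[(\bw,i-1)\to(\by,1)]\Big),
\]
the maximum running over $\bz,\bw$ for which all three expressions are endpoint pairs, which is a geometric condition insensitive to the environment. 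The outer two terms involve only lines that $\cP_{\tau_i}$ leaves alone, and the middle term is the last passage value from level $2$ to level $1$ in the two-line environment $(f_i,f_{i+1})$. So $\cP_{\tau_i}f\sim f$ for all $n$ and $i$ will follow from $\cP h\sim h$ for every $h\in\cC^2(\R)$. The statement for $\cP_{\bar\tau_i}$ then comes by symmetry: for the rotation $\hat R_n$ defined by $(\hat R_n f)_j(x)=-f_{n+1-j}(-x)$ one checks directly from the definitions of length and last passage that $\hat R_n f[(\bx,n)\to(\by,1)]=f[(-\by,n)\to(-\bx,1)]$, so conjugation by $\hat R_n$ sends boundary isometries to boundary isometries, and $\cP_{\bar\tau_i}=\hat R_n\cP_{\tau_{n-i}}\hat R_n$ (a bookkeeping check using the two-line identity $\bar\cP=\hat R\cP\hat R$).

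\textbf{The two-line case.} Let $h\in\cC^2(\R)$. If $\la_1(h)\ge\la_2(h)$ then $\cP h=h$ and there is nothing to prove, so assume $\la_1(h)<\la_2(h)$, and set $g=h_2-h_1$, $M(t)=\sup_{z\le t}g(z)$. Since $\la_2-\la_1>0$ we have $g(t)\to-\infty$ as $t\to-\infty$, so $M$ is finite, continuous and nondecreasing with $M\ge g$, and $\cP h_1=h_1+M$, $\cP h_2=h_2-M$. For a single path, expanding $h[(x,2)\to(y,1)]=h_1(y)-h_2(x)+\sup_{x\le s\le y}g(s)$ and the analogous formula for $\cP h$ and cancelling, the desired identity is equivalent to
\[
M(x)+M(y)+\sup_{x\le s\le y}\big[g(s)-2M(s)\big]=\sup_{x\le s\le y}g(s).
\]
This is elementary: $g-2M\le -M\le -M(x)$ on $[x,y]$ gives ``$\le$'', and for ``$\ge$'' one splits into the cases $\sup_{[x,y]}g\ge M(x)$ and $\sup_{[x,y]}g<M(x)$ and checks that the left-hand supremum is exactly $-M(x)$, attained at $x$, or at the first point of $[x,y]$ where $g$ returns to $M(x)$, or throughout $[x,y]$ when $M$ is constant there.

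\textbf{From one path to $k$ paths.} A disjoint $k$-tuple from $(\bx,2)$ to $(\by,1)$ consists of paths $\pi_j$ that each jump once from level $2$ to level $1$ at a time $s_j\in[x_j,y_j]$, and disjointness forces $s_j\le x_{j+1}$, $s_{j+1}\ge y_j$ whenever the intervals $[x_j,y_j]$ and $[x_{j+1},y_{j+1}]$ overlap. Unwinding these constraints, $(\bx;\by)$ is an endpoint pair exactly when a certain nonemptiness condition holds, and the admissible vectors $(s_1,\dots,s_k)$ then fill out a product $\prod_j I_j$ of pairwise disjoint closed intervals whose collected endpoints are precisely the $x_j$ and $y_j$ ($I_j=[x_j,y_j]$ for an isolated index; inside a maximal run $\{a,\dots,b\}$ of overlapping intervals, $I_a=[x_a,x_{a+1}]$, $I_m=[y_{m-1},x_{m+1}]$ for $a<m<b$, and $I_b=[y_{b-1},y_b]$). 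Hence
\[
h[(\bx,2)\to(\by,1)]=\sum_{j=1}^k\big(h_1(y_j)-h_2(x_j)\big)+\sum_{j=1}^k\sup_{I_j}g,
\]
and likewise for $\cP h$ with $h_1,h_2$ replaced by $h_1+M,h_2-M$ and $g$ by $g-2M$. Since $\cP h_1(y_j)-\cP h_2(x_j)=h_1(y_j)-h_2(x_j)+M(y_j)+M(x_j)$ and $\sum_j[M(x_j)+M(y_j)]=\sum_j[M(u_j)+M(v_j)]$ where $I_j=[u_j,v_j]$, the difference $\cP h[(\bx,2)\to(\by,1)]-h[(\bx,2)\to(\by,1)]$ equals $\sum_j\big(M(u_j)+M(v_j)+\sup_{I_j}[g-2M]-\sup_{I_j}g\big)$, and each summand vanishes by the single-path identity. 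This gives $\cP h\sim h$ and the lemma follows.

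\textbf{Main obstacle.} The single-path computation and the localization via the metric composition law are routine. The step that needs care is the combinatorial description of disjoint $k$-tuples inside a two-line strip — that the admissible jump times decouple across the explicit intervals $I_j$, and that the correction terms $M(x_j),M(y_j)$ reassemble into the $M(u_j),M(v_j)$ — which is exactly what reduces the $k$-path identity to $k$ copies of the single-path one.
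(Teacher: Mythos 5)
Your proof is correct, and it takes a genuinely different route from the paper's. The paper fixes $\bx,\by$, sets $T=\argmax_{z\le x_1}[f_{i+1}(z)-f_i(z)]$, normalizes the environment by subtracting $f(T)$ from every line (harmless by Lemma~\ref{L:shift-commute}), and then quotes the boundary isometry for the half-line Pitman transform established as Lemma~4.3 of \cite{DOV}; the barred case is dispatched by the conjugation $X\mapsto \hat R X\hat R$. Your argument is self-contained: you use the metric composition law to slice off the two-line band $\{i,i+1\}$, then observe that a disjoint $k$-tuple in a two-line strip is specified by independent jump times ranging over explicit pairwise-disjoint intervals $I_j$ whose endpoint multiset coincides with $\{x_j,y_j\}_j$, and reduce to the scalar identity $M(x)+M(y)+\sup_{[x,y]}[g-2M]=\sup_{[x,y]}g$, which you check directly. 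What the paper's route buys is brevity by reuse; what yours buys is transparency --- the decoupling over the $I_j$ makes visible exactly why the $k$-path isometry is $k$ copies of the single-path one, with no external citation. The small point that $\cP_{\bar\tau_i}=R\cP_{\tau_i}R$ (the paper's definition) coincides with $\hat R_n\cP_{\tau_{n-i}}\hat R_n$ is the right bookkeeping to make the conjugation-by-rotation argument literal, and is consistent with the paper's brief remark that isometry is preserved under conjugation by $\hat R$.
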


\begin{proof} 
We will just check that $\cP_{\tau_i}$ is an isometry. The claim for $\cP_{\bar \tau_i}$ follows from the fact that $X$ is an isometry if and only if $\hat R X \hat R$ is an isometry for any map $X$. If $\la_i(f) \ge \la_{i+1}(f)$, then $\cP_{\tau_i} f = f$ and the claim is immediate. Now suppose $\la_i(f) < \la_{i+1}(f)$. We check that \eqref{E:iso-def} holds for a fixed $\bx, \by$. Let $T = \argmax_{z \le x_1} f_{i+1}(z) - f_i(z)$, and let $f' = f - f(T)$. Observing that $f \sim f'$ and $\cP_{\tau_i} f \sim \cP_{\tau_i} f - f(T) = \cP_{\tau_i} f'$, it suffices to show that \eqref{E:iso-def} holds for $\bx, \by$ with $f, g$ replaced by $f', \cP_{\tau_i} f'$. In this case, $f'(T) = 0$ and by the definition of $T$, for $x_1 \le z$ we have that
$$
\cP_{\tau_i} f_i'(z) = f'[(T, 2) \to (z, 1)], \qquad \cP_{\tau_i} f_{i+1}' + \cP_{\tau_i} f_i' = f_{i+1} + f_i.
$$
For an operator defined using the left-hand sides above on continuous functions $f':[T, \infty) \to \R^n$ with $f'(T) = 0$, boundary isometry is proven as Lemma 4.3 in \cite{DOV}.
\end{proof}

We are now ready to prove the basic algebraic relations for Pitman operators.
\begin{lemma}
	\label{L:Pitman-properties}
	The Pitman operators $\cP_{\tau_i}, \cP_{\bar \tau_i}$ have the following properties.
	\begin{enumerate}[i.]
		\item Recall that $\la:\cC^n(\R) \to \R^n$ denotes the slope map, and let $\btau_i, \bbtau_i$ denote the generators for the basic $\mathfrak{D}_n$-action on $\R^n$ introduced in Example \ref{E:basic-action}.
		Then for $f \in \cC^n(\R)$ and $i = 1, \dots, n-1$ we have $\la \cP_{\tau_i} f = \btau_i \la f$ and $\la \cP_{\bar \tau_i} f = \bbtau_i \la f$.
		\item The maps $\cP_{\tau_i}$ satisfy the braid, commutation, and idempotent relations \eqref{E:braid}, \eqref{E:commutation}, \eqref{E:idempotent} as do the maps $\cP_{\bar \tau_i}$. Moreover, we have the mixed commutation relation $\cP_{\bar \tau_i} \cP_{\tau_j} = \cP_{\tau_j} \cP_{\bar \tau_i}$ when $|i-j| \ge 2$.
		\item $\cP_{\tau_i}\cP_{\bar \tau_i} f = f$ if $\la_i(f) \le \la_{i+1}(f)$ and $\cP_{\bar \tau_i}\cP_{\tau_i} f = f$ if $\la_i(f) \ge \la_{i+1}(f)$.
	\end{enumerate}
\end{lemma}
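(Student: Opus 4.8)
The plan is to reduce every assertion to the genuinely two-line case, where $\cP$ and $\bar\cP$ act nontrivially on a single pair of coordinates, and to deduce the barred statements from the unbarred ones using the conjugation $\cP_{\bar\tau_i}=R\cP_{\tau_i}R$ together with the facts that $R$ is an involution and $\la(Rg)=-\la(g)$. In the nontrivial branch $\la_1(f)<\la_2(f)$ I will write $\phi=f_2-f_1$, a function of asymptotic slope $c:=\la_2(f)-\la_1(f)>0$, and $S\phi(x)=\sup_{z\le x}\phi(z)$ for its running supremum, so that $\cP f=(f_1+S\phi,\;f_2-S\phi)$; note that $S\phi$ is finite and nondecreasing and that $\cP$ and $\bar\cP$ each preserve the sum $f_1+f_2$.

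I would prove part i first, since parts ii and iii invoke it to decide which branch of the piecewise definitions is active. If $\la_1(f)\ge\la_2(f)$ then $\cP f=f$ and there is nothing to do. If $\la_1(f)<\la_2(f)$, a short $\eps$-argument shows that the running supremum of a function of asymptotic slope $c>0$ again has asymptotic slope $c$ (for $|x|$ large one sandwiches $\phi$ between linear functions of slope $c\pm\eps$ and then sandwiches $S\phi(x)$ between $\phi(x)$ and the appropriate linear bound). Hence $\cP f_1=f_1+S\phi$ has slope $\la_2(f)$ and $\cP f_2=f_2-S\phi$ has slope $\la_1(f)$, so $\cP f\in\cC^2(\R)$ and $\la\cP f=(\la_2(f),\la_1(f))=\btau_1\la f$. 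Acting on coordinates $(i,i+1)$ and fixing the rest gives $\la\cP_{\tau_i}f=\btau_i\la f$; the claim for $\cP_{\bar\tau_i}=R\cP_{\tau_i}R$ then follows from $\la(Rg)=-\la(g)$ and the identity $-\btau_i(-x)=\bbtau_i(x)$ for the basic action of Example~\ref{E:basic-action}.

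For part ii, the commutation relations for $|i-j|\ge 2$, including the mixed one, are immediate because the operators involved alter disjoint pairs of coordinates; and all relations for the $\cP_{\bar\tau_i}$ follow from those for the $\cP_{\tau_i}$ by conjugating with $R$. The idempotent relation reduces to $\cP^2=\cP$: if $\la_1(f)\ge\la_2(f)$ then $\cP f=f$, while if $\la_1(f)<\la_2(f)$ then part i gives $\la\cP f=(\la_2(f),\la_1(f))$, whose first coordinate is strictly larger than its second, so a second application of $\cP$ is the identity. The braid relation only involves three consecutive coordinates, so it suffices to prove $\cP_{\tau_1}\cP_{\tau_2}\cP_{\tau_1}=\cP_{\tau_2}\cP_{\tau_1}\cP_{\tau_2}$ on $\cC^3(\R)$; this is the substantive point, discussed last. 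Part iii likewise reduces to $n=2$ and, after using part i to track which operator is active, amounts to the two cancellation facts $\bar\cP(\cP f)=f$ when $\la_1(f)<\la_2(f)$ and $\cP(\bar\cP f)=f$ when $\la_1(f)>\la_2(f)$ (each implied by the other via $R$-conjugation), together with the trivial equal-slope case. Since $\cP,\bar\cP$ preserve $f_1+f_2$, only first coordinates need checking; unwinding definitions, $\bar\cP(\cP f)_1(x)=f_1(x)+S\phi(x)+\sup_{z\ge x}[\phi(z)-2S\phi(z)]$. As $\phi\le S\phi$ and $S\phi$ is nondecreasing, $\phi(z)-2S\phi(z)\le -S\phi(z)\le -S\phi(x)$ for $z\ge x$, so the supremum is $\le -S\phi(x)$; and it is attained, at $z=x$ if $\phi(x)=S\phi(x)$ and otherwise at the first point $z_1>x$ where $\phi(z_1)=S\phi(x)$ (such a $z_1$ exists since $\phi\to+\infty$, and $S\phi$ is still equal to $S\phi(x)$ at $z_1$). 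Hence $\bar\cP(\cP f)_1=f_1$.

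The main obstacle is the braid relation, which I would prove by the basepoint reduction used for Lemma~\ref{L:two-line-isometry}. Fix $x_0\in\R$: the value at $x_0$ of any composition of Pitman operators applied to a fixed $f\in\cC^3(\R)$ depends only on $f|_{(-\infty,x_0]}$ and on the asymptotic-slope sector of $\la(f)$, and in each such sector every running supremum occurring in the composition, once restricted to $(-\infty,x_0]$, is a supremum over a \emph{bounded} interval $[T,x_0]$ (because the relevant coordinate differences have positive asymptotic slope). Since only finitely many operators occur, one can choose a single $T$ far enough to the left that on $[T,x_0]$ every operator in $\cP_{\tau_1}\cP_{\tau_2}\cP_{\tau_1}$ and in $\cP_{\tau_2}\cP_{\tau_1}\cP_{\tau_2}$ agrees, up to the harmless additive constant $f(T)$, with its one-sided Pitman counterpart acting on functions $[T,\infty)\to\R^3$ that vanish at $T$. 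The braid relation for these one-sided operators is the classical identity underlying the semi-discrete RSK correspondence and is available in \cite{biane2005littelmann, DOV, dauvergne2022rsk}; transporting it back gives equality of the two words at $x_0$, and $x_0$ was arbitrary. Sectors in which one or more of the three operators acts trivially are dealt with separately: there the composition collapses to a shorter word on both sides and part i, together with the idempotent and commutation relations already established, forces the two sides to agree. I expect the only real care to be needed in making the choice of $T$ uniform across all intermediate environments appearing in the composition and in disposing of these boundary sectors.
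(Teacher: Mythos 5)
Your handling of parts i and iii, and of the commutation and idempotent relations in part ii, matches the paper's proof essentially step for step; the cancellation argument for part iii in particular is the same running-supremum computation the paper uses. The real divergence is in the braid relation.

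The paper proves the braid relation (in the generic sector $\la_1(f)<\la_2(f)<\la_3(f)$) by first establishing the boundary-isometry Lemma \ref{L:two-line-isometry} and then applying it: the top line of $\cP_{\tau_1}\cP_{\tau_2}f$ is written as a last-passage value over two lines, the isometry lets one replace $f$ by $\cP_{\tau_1}f$ in that last-passage value, and the conclusion is that inserting the inner $\cP_{\tau_1}$ does not change the top line; the bottom line is symmetric and the middle line follows by sum preservation. You instead propose a direct basepoint reduction of the full three-operator word to the one-sided braid relation of \cite{biane2005littelmann}. These are genuinely different derivations — the paper goes $\text{braid} \Leftarrow \text{isometry} \Leftarrow \text{one-sided isometry}$, you go $\text{braid} \Leftarrow \text{one-sided braid}$ — but they are cousins: the paper's proof of Lemma \ref{L:two-line-isometry} uses exactly the basepoint trick you describe (it chooses $T=\argmax_{z\le x_1}[f_{i+1}(z)-f_i(z)]$ and invokes \cite[Lemma~4.3]{DOV}), so the underlying mechanism is shared. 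The paper's route buys reuse of a lemma it needs anyway; your route is more direct if the one-sided braid is taken as given. Two technicalities that you flag but do not discharge should be filled in: (1) a single basepoint $T$ must be chosen so that \emph{all} running suprema in both three-operator words, evaluated on the successive intermediate environments and restricted to $(-\infty,x_0]$, are achieved on $[T,x_0]$; this reduces to the fact that a continuous function with positive asymptotic slope at $-\infty$ has record points unbounded below, which is true but needs a sentence; and (2) the degenerate sectors $\la_i(f)\ge\la_{i+1}(f)$ require a short case analysis verifying that both words collapse, via part i and the idempotent relation, to the \emph{same} two-operator word — the paper carries this out explicitly as its Case 2.
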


\begin{proof}
\textbf{Part i.} \qquad Here we just need to observe that if $\la_1(f) < \la_2(f)$ then $\la(\cP f) = (\la_2 (f), \la_1 (f))$.	
	
\textbf{Part ii.} \qquad The commutation relation \eqref{E:commutation} and the mixed commutation relation are immediate. The idempotent relation \eqref{E:idempotent} follows from part i, which implies that $\la_1(\cP f) \ge \la_2(\cP f)$ for any $f \in \cC^2(\R)$, and hence $\cP$ acts as the identity on its image. It suffices to check braid relation when $n = 3$, and by symmetry, only the unbarred identity
\begin{equation}
\label{E:unbarred-braid}
\cP_{\tau_1} \cP_{\tau_2} \cP_{\tau_1} f = \cP_{\tau_2} \cP_{\tau_1} \cP_{\tau_2} f.
\end{equation}
 We divide into cases, depending on the order of the slope $\la(f)$.
 
 \textbf{Case 1: $\la_1(f) < \la_2(f) < \la_3(f)$.} \qquad Letting $g, h$ denote the left- and right-hand sides of \eqref{E:unbarred-braid}. Observing that Pitman operators $\cP_{\tau_i}$ preserve the sum $f_1 + f_2 + f_3$, it suffices to show that $g_1 = h_1$ and $g_3 = h_3$. We will only prove the first of these equalities as the second follows from a symmetric argument. We claim that 
 \begin{equation}
 \label{E:claimed-iso}
 h_1 = (\cP_{\tau_1} \cP_{\tau_2} f)_1 = g_1.
 \end{equation}
 The first equality in \eqref{E:claimed-iso} is immediate since $\cP_{\tau_2}$ does not affect line $1$. For the second equality, using that $\la_1(f) \vee \la_2(f) < \la_3(f)$ we have that 
  \begin{equation}
 \label{E:claimed-iso-2}
(\cP_{\tau_1} \cP_{\tau_2} f)_1(x) = f_1(x) + \sup_{z_2 \le z_1 \le x} [f_2(z_1) - f_1(z_1)] + [f_3(z_2) - f_2(z_2)].
 \end{equation}
 In other words, we get the top line $(\cP_{\tau_1} \cP_{\tau_2} f)_1$ by reflecting $f_2$ off of $f_3$, and then reflecting $f_1$ off of the result. Next, we can rewrite the right-hand side of \eqref{E:claimed-iso-2} as
 $$
 \sup_{z_2 \le x} f_3(z_2) + f[(z_2, 2) \to (x, 1)].
 $$
 By Lemma \ref{L:two-line-isometry}, this equals 
 $$
 \sup_{z_2 \le x} f_3(z_2) + \cP_{\tau_1} f[(z_2, 2) \to (x, 1)] =\sup_{z_2 \le x} \cP_{\tau_1} f_3(z_2) + \cP_{\tau_1} f[(z_2, 2) \to (x, 1)].
 $$
 Now, by part i, the inequality $\la_1(f) \vee \la_2(f) < \la_3(f)$ is preserved by the map $\cP_{\tau_1}$. Hence the equality \eqref{E:claimed-iso-2} also holds with $\cP_{\tau_1} f$ in place of $f$. Putting this together with the previous three displays we get that $(\cP_{\tau_1} \cP_{\tau_2} f)_1 = (\cP_{\tau_1} \cP_{\tau_2} \cP_{\tau_1} f)_1$, giving the second equality in \eqref{E:claimed-iso}.
 
 \medskip
 
 \textbf{Case 2: $\la_i(f) \ge \la_{i+1}(f)$ for some $i \in \{1, 2\}$.} \qquad In this case, using the slope-interchange property in part i and working through different cases, in the composition $\cP_{\tau_1} \cP_{\tau_2} \cP_{\tau_1} f$ we have that
 \begin{itemize}
 	\item The rightmost $\cP_{\tau_1}$-operator acts as the identity if $i = 1$;
 	\item The leftmost $\cP_{\tau_1}$-operator acts as the identity if $i = 2$.
 \end{itemize}
 Similarly, in the composition $\cP_{\tau_2} \cP_{\tau_1} \cP_{\tau_2} f$ we have that
  \begin{itemize}
 	\item The rightmost $\cP_{\tau_2}$-operator acts as the identity if $i=2$;
 	\item The leftmost $\cP_{\tau_2}$-operator acts as the identity if $i=1$.
 \end{itemize}
 Therefore if $i=1$ then both sides of \eqref{E:unbarred-braid} are equal to $\cP_{\tau_1} \cP_{\tau_2} f$, and if $i=2$ then both sides of \eqref{E:unbarred-braid} are equal to $\cP_{\tau_2} \cP_{\tau_1} f$.
 
\textbf{Part iii.} \qquad This is also shown in the appendix of \cite{seppalainen2023busemann}, though the language used there is different. We only check the first identity as the second follows by symmetry. If $\la_i(f) = \la_{i+1}(f)$, then the identity holds since both $\cP_{\tau_i}$ and $\cP_{\bar \tau_i}$ act as the identity. Now assume $\la_i(f) < \la_{i+1}(f)$. Since the sum $f_i + f_{i+1}$ is preserved by both $\cP_{\tau_i}$ and $\cP_{\bar \tau_i}$ it suffices to check that $\cP_{\bar \tau_i} \cP_{\tau_i} f_i = f_i$. Define
 $
 S f(x) = \sup_{z \le x} f_{i+1}(x) - f_i(x)
 $
 so that
 $$
 \cP_{\bar \tau_i} \cP_{\tau_i} f_i(x) = f_i (x) + S f(x) + \sup_{z \ge x} [f_{i+1} (z) - f_i(z) - 2 S f(z)].
 $$
Noting that $S f \ge f_{i+1} - f_i$ and that $Sf$ is non-decreasing we can see that 
$$
\cP_{\bar \tau_i} \cP_{\tau_i} f_1(x) \le f_1 (x) + S f(x) + \sup_{z \ge x} [- S f(z)] = f_1 (x).
$$
On the other hand, since $f_1, f_2$ are continuous and the difference $f_2(z) - f_1(z) \to \infty$ with $z$, there must exist $z_0 \ge x$ where  $f_2(z_0) - f_1(z_0) = S f(z_0) = Sf(x).$ Therefore $\cP_{\bar \tau_i} \cP_{\tau_i} f_1(x) \ge f_1(x)$, yielding the result.
\end{proof}

Given Lemma \ref{L:Pitman-properties}, we can use Proposition \ref{p:semigroup-isom} to extend the definition of Pitman operators to the whole monoid $\mathfrak{D}_n$. Indeed, for $\sig \in \mathfrak{D}_n$, let $\xi_k \cdots \xi_1 = \sig$ where each $\xi_i \in \{\tau_i, \bar \tau_i : i = 1, \dots, n-1\}$ and define
$$
\cP_{\sig} = \cP_{\xi_k} \cdots \cP_{\xi_1}.
$$ 
\begin{corollary}
	\label{C:Pitman-iso}
The map $\sig \mapsto \cP_{\sig}:\cC^n(\R) \to \cC^n(\R)$ is a monoid isomorphism of $\mathfrak{D}_n$. Moreover, letting $\la:\cC^n(\R) \to \R^n$ denote the slope map, for any $\sig \in \mathfrak{D}_n$ we have 
$$
\la \circ \cP_{\sig} = \sig \circ \la
$$
where on the right-hand side of this equation $\sig$ acts on $\R^n$ through the basic action. 
\end{corollary}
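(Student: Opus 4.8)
The plan is to deduce the corollary from Proposition \ref{p:semigroup-isom}, applied with $D = \cC^1(\R)$ (so that $D^n = \cC^n(\R)$) and with the generators $\btau_i := \cP_{\tau_i}$, $\bbtau_i := \cP_{\bar\tau_i}$. The only real setup choice is the order on $D$: I would pull back the order on $\R$ along the slope map $\la$ but \emph{reversed}, i.e.\ declare $f >_D g$ iff $\la(f) < \la(g)$. (Strictly speaking this is only a preorder, since distinct functions can share a slope; but the proof of Proposition \ref{p:semigroup-isom} uses only the strict relation and the fact that $\not>$, $\not<$ are its negations, so it applies verbatim --- this is the same mild point already implicit in Example \ref{E:basic-action}.) With this choice, the set $\Xi_f$ depends on $f$ only through $\la(f)$, and in fact $\Xi_f$ coincides with the set $\Xi_{\la(f)}$ attached to the basic action of $\mathfrak{D}_n$ on $\R^n$ from Example \ref{E:basic-action}.

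Next I would verify hypotheses (i)--(iv) of Proposition \ref{p:semigroup-isom}. Hypothesis (i) --- the braid, commutation and idempotent relations for the $\cP_{\tau_i}$ and for the $\cP_{\bar\tau_i}$, plus the mixed commutation relation --- is exactly Lemma \ref{L:Pitman-properties}.ii. Hypothesis (ii) holds by the definitions of $\cP_{\tau_i}, \cP_{\bar\tau_i}$: under the reversed order, ``$f_i \not>_D f_{i+1}$'' means $\la_i(f) \ge \la_{i+1}(f)$, which is precisely when $\cP_{\tau_i}$ acts as the identity, and symmetrically for $\cP_{\bar\tau_i}$. Hypothesis (iv) translates --- again chasing the order reversal --- into $\cP_{\tau_i}\cP_{\bar\tau_i}f = f$ whenever $\la_i(f)\le\la_{i+1}(f)$ and $\cP_{\bar\tau_i}\cP_{\tau_i}f = f$ whenever $\la_i(f)\ge\la_{i+1}(f)$, which is Lemma \ref{L:Pitman-properties}.iii. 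For hypothesis (iii) I would combine Lemma \ref{L:Pitman-properties}.i, which gives $\la(\cP_{\tau_i}f) = \btau_i^{\mathrm{bas}}\la(f)$ and $\la(\cP_{\bar\tau_i}f) = \bbtau_i^{\mathrm{bas}}\la(f)$ for the basic action, with the fact (part of Example \ref{E:basic-action}) that the basic action satisfies hypothesis (iii): since $\Xi_f = \Xi_{\la(f)}$ we get $\Xi_{\cP_{\tau_i}f} = \Xi_{\la(\cP_{\tau_i}f)} = \Xi_{\btau_i^{\mathrm{bas}}\la(f)} = \tau_i\Xi_{\la(f)} = \tau_i\Xi_f$, and likewise for $\bar\tau_i$.

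Once the hypotheses are checked, Proposition \ref{p:semigroup-isom}(II) shows that $\cP_\sig := \cP_{\xi_k}\cdots\cP_{\xi_1}$ is independent of the chosen decomposition $\sig = \xi_k\cdots\xi_1$ into generators, so $\sig\mapsto\cP_\sig$ is a well-defined, visibly surjective monoid homomorphism from $\mathfrak{D}_n$ onto the monoid $T$ generated by the $\cP_{\tau_i},\cP_{\bar\tau_i}$. To get injectivity --- equivalently, that the homomorphism $T\to\mathfrak{D}_n$ of part (III) is an isomorphism --- I would invoke the last clause of Proposition \ref{p:semigroup-isom}(III): it suffices to produce $d\in\cC^n(\R)$ with $d_1<_D d_2<_D\cdots<_D d_n$, and the linear functions $d_i(x) = (n+1-i)x$ work, since their slopes strictly decrease and hence strictly $<_D$-increase. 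This establishes that $\sig\mapsto\cP_\sig$ is a monoid isomorphism of $\mathfrak{D}_n$ onto $T$. The intertwining $\la\circ\cP_\sig = \sig\circ\la$ then follows from Lemma \ref{L:Pitman-properties}.i by induction on $k$: applying $\la$ to $\cP_{\xi_k}\cdots\cP_{\xi_1}f$ and peeling off one Pitman operator at a time yields $\bar\xi_k^{\mathrm{bas}}\cdots\bar\xi_1^{\mathrm{bas}}\la(f)$, and since the basic action is a genuine action of $\mathfrak{D}_n$ (Example \ref{E:basic-action}) this composition equals the basic action of $\sig$ applied to $\la(f)$.

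I do not expect a genuine obstacle here: the content has all been isolated into Lemma \ref{L:Pitman-properties} and Proposition \ref{p:semigroup-isom}. The one place to be careful is the order-reversal in the pullback order on $\cC^1(\R)$ --- it is exactly what makes hypotheses (ii) and (iv) match the definitions of $\cP_{\tau_i},\cP_{\bar\tau_i}$ (which act trivially when the slopes are already \emph{non-increasing}, not non-decreasing) --- together with confirming, as already needed for Example \ref{E:basic-action}, that Proposition \ref{p:semigroup-isom} tolerates a preorder in place of a partial order.
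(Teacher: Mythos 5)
Your approach has a genuine gap: reversing the pulled-back order on $D=\cC^1(\R)$ breaks hypothesis (iii) of Proposition~\ref{p:semigroup-isom}, and the claim ``\,$\Xi_f$ coincides with $\Xi_{\la(f)}$ from the basic action'' is false under that reversal. Concretely, with the reversed order $\Xi_f$ is $\{\xi:\la(f)_i>\la(f)_j\Rightarrow\xi^{-1}(i)<\xi^{-1}(j)\}$, while the basic action's $\Xi_{\la(f)}$ (standard order on $\R$) is $\{\xi:\la(f)_i<\la(f)_j\Rightarrow\xi^{-1}(i)<\xi^{-1}(j)\}$. These sets differ by right-multiplication by the longest element $w_0\in S_n$, and that operation does not commute with the sorting operators --- one checks $\tau_i(\chi w_0)=\bar\tau_i(\chi)w_0$, so conjugating by $\cdot\,w_0$ swaps $\tau_i$ with $\bar\tau_i$. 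For a counterexample take $n=2$ and $f$ with $\la(f)=(1,2)$: under your reversed order $\Xi_f=\{(12)\}$, while $\la(\cP_{\tau_1}f)=(2,1)$ gives $\Xi_{\cP_{\tau_1}f}=\{\id\}$; but $\tau_1$ fixes $(12)$ since $(12)^{-1}(1)=2>1=(12)^{-1}(2)$, so $\tau_1\Xi_f=\{(12)\}\ne\{\id\}=\Xi_{\cP_{\tau_1}f}$. Hypothesis (iii) fails, and so does your chain of equalities $\Xi_{\cP_{\tau_i}f}=\Xi_{\la(\cP_{\tau_i}f)}=\tau_i\Xi_{\la(f)}=\tau_i\Xi_f$ at the first step.

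What seems to have led you here is a sign typo in hypothesis (ii) of Proposition~\ref{p:semigroup-isom} as printed: it states $\btau_id=d$ when $d_i\not>d_{i+1}$, but the basic action's $\btau_i$ in Example~\ref{E:basic-action} fixes $\bx$ exactly when $x_i\ge x_{i+1}$, i.e.\ when $x_i\not<x_{i+1}$ --- and the proof of Proposition~\ref{p:semigroup-isom}(II) uses (ii) in this corrected form when it deduces ineffectiveness. You compensated by reversing the order on $D$, but the correct fix is to keep the \emph{non-reversed} order induced by $\la$ (as the paper's proof does) and read (ii) with the intended sign. Under the non-reversed order, $\Xi_f=\Xi_{\la(f)}$ is literally true, your verification of (iii) via Lemma~\ref{L:Pitman-properties}.i goes through unchanged, and (iv) as printed matches $\cP_{\tau_i}\cP_{\bar\tau_i}f=f$ when $\la_i(f)\ge\la_{i+1}(f)$ (which is what the proof of Lemma~\ref{L:Pitman-properties}.iii actually establishes, despite another sign slip in that statement). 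The rest of your write-up --- verifying (i), producing $d$ with strictly ordered slopes for Proposition~\ref{p:semigroup-isom}(III), and the induction proving $\la\circ\cP_\sigma=\sigma\circ\la$ --- is fine once the order is not reversed.
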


\begin{proof}
To prove that the map $\sig \mapsto \cP_{\sig}$ is unambiguously defined and yields a monoid isomorphism we check the conditions of Proposition \ref{p:semigroup-isom} where $D = \cC^1(\R)$ is given the partial order induced by the slope map $\la$. Property (i) in Proposition \ref{p:semigroup-isom} is guaranteed by Lemma \ref{L:Pitman-properties}.ii, property (ii) is guaranteed by the definition, property (iii) is guaranteed by Lemma \ref{L:Pitman-properties}.i, and property (iv) follows from Lemma \ref{L:Pitman-properties}.iii. The `Moreover' claim then follows by Lemma \ref{L:Pitman-properties}.i again.
\end{proof}

The Pitman transforms described in this section have more structure than the abstract monoid actions in Proposition \ref{p:semigroup-isom}. In particular, we can describe orbits of elements in $\cC^n(\R)$ using this structure.

\begin{lemma}
	\label{L:orbits}
Consider the action $\sig \mapsto \cP_{\sig}:\cC^n(\R) \to \cC^n(\R)$, and for $f \in \cC^n(\R)$ let $O(f) = \{ \cP_\sig f : \sig \in \mathfrak{D}_n\}$ denote its orbit. Let $O(\la(f))$ be the orbit of $\la(f)$ under the basic action of $\mathfrak{D}_n$ on $\R^n$. Then:
	\begin{enumerate}[i.]
		\item For any $g \in O(f)$ there exists $\sig \in \mathfrak{D}_n$ such that $\cP_\sig g = f$ and so $O(f) = O(g)$.
		\item The slope map $g \mapsto \la(g)$ is a bijection from $O(f)$ to $O(\la(f))$.
	\end{enumerate}
\end{lemma}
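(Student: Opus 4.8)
The plan is to derive both parts from Corollary \ref{C:Pitman-iso} (that $\sigma\mapsto\cP_\sigma$ is a monoid homomorphism intertwining $\la$ with the basic action of $\mathfrak D_n$) together with the two-line cancellation identity of Lemma \ref{L:Pitman-properties}.iii.

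\textbf{Part i.} The idea is to undo the generators one at a time. Given $g=\cP_\sigma f$, fix a word $\sigma=\xi_k\cdots\xi_1$ in the $\tau_i,\bar\tau_i$ and set $h_0=f$, $h_j=\cP_{\xi_j}h_{j-1}$, so $h_k=g$. For each $j$ there is a generator (or the identity) $\xi_j^*$ with $\cP_{\xi_j^*}h_j=h_{j-1}$: if $\xi_j=\tau_i$ and $\la_i(h_{j-1})\le\la_{i+1}(h_{j-1})$ take $\xi_j^*=\bar\tau_i$, since then $\cP_{\bar\tau_i}\cP_{\tau_i}h_{j-1}=h_{j-1}$ by Lemma \ref{L:Pitman-properties}.iii; if $\la_i(h_{j-1})\ge\la_{i+1}(h_{j-1})$ then $\cP_{\tau_i}$ already fixes $h_{j-1}$ by definition, so take $\xi_j^*=\mathrm{id}$; the cases $\xi_j=\bar\tau_i$ are the image of these under conjugation by $R$. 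Letting $\sigma'$ be the element of $\mathfrak D_n$ with $\cP_{\sigma'}=\cP_{\xi_1^*}\circ\cdots\circ\cP_{\xi_k^*}$ then gives $\cP_{\sigma'}g=h_0=f$, so $f\in O(g)$. Since conversely $g=\cP_\sigma f\in O(f)$ and $\cP$ is a homomorphism, $\cP_\rho g=\cP_{\rho\sigma}f\in O(f)$ for all $\rho$, and we conclude $O(f)=O(g)$.

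\textbf{Part ii.} By Corollary \ref{C:Pitman-iso}, $\la(\cP_\sigma f)=\sigma(\la(f))$, so $g\mapsto\la(g)$ maps $O(f)$ into $O(\la(f))$, and it is onto because $\by=\sigma(\la(f))$ is the slope of $\cP_\sigma f\in O(f)$. For injectivity, take $g,g'\in O(f)$ with $\la(g)=\la(g')$. By Part i, $g'\in O(g)$, so $g'=\cP_\nu g$ for some $\nu\in\mathfrak D_n$, and $\nu$ fixes $\la(g)$ under the basic action by Corollary \ref{C:Pitman-iso}; the task reduces to showing $\cP_\nu g=g$. I plan to do this via Proposition \ref{p:semigroup-isom}(II): with $d=g$, $\bw$ a word for $\nu$ and $\bw'$ empty, it suffices to produce $\xi\in\Xi_g$ fixed by $\nu$ acting on $S_n$. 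Since $\Xi_g=\Xi_{\la(g)}$ and, by Proposition \ref{p:semigroup-isom}(I), $\nu$ carries $\Xi_{\la(g)}$ onto $\Xi_{\nu(\la(g))}=\Xi_{\la(g)}$, one can locate such a $\xi$ by tracking how $\Xi_g$ transforms along a word for $\nu$ and using that $\nu$ fixes the slope. (An alternative, self-contained route: take a word for $\nu$ of minimal length among words $w$ with $\cP_w g=g'$; minimality forces every letter to genuinely transpose two coordinates of the running slope vector, and since equal entries are never swapped past one another the corresponding element of $S_n$ must be the identity; a nonempty word representing the identity in $S_n$ is non-reduced, so after commutation and braid moves it contains a consecutive $\tau_i\bar\tau_i$ or $\bar\tau_i\tau_i$, which Lemma \ref{L:Pitman-properties}.iii shows acts trivially on the corresponding environment, contradicting minimality.)

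The main obstacle is exactly this injectivity step: one must use the monoid structure of $\mathfrak D_n$ — whose presentation is not directly available — to convert "fixes the slope vector" into "fixes the environment". This is precisely what Proposition \ref{p:semigroup-isom}(II) is designed to handle, packaging the needed braid/commutation/idempotent bookkeeping; verifying its hypothesis (the existence of the fixed $\xi\in\Xi_g$, equivalently the word-reduction in the alternative argument) is the only genuinely nontrivial point, while everything else is routine manipulation with the homomorphism of Corollary \ref{C:Pitman-iso}.
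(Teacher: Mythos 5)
Part i is correct and follows the same route as the paper: invert the word one letter at a time, using Lemma~\ref{L:Pitman-properties}.iii when a letter acts nontrivially and the definition of $\cP$ when it does not.

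For Part ii, the surjectivity claim is fine, but the injectivity argument as structured has a gap. Your main plan is: reduce to showing $\cP_\nu g=g$ for some $\nu$ with $\nu\la(g)=\la(g)$, then invoke Proposition~\ref{p:semigroup-isom}(II) against the empty word after locating $\xi\in\Xi_g$ with $\nu\xi=\xi$. But the existence of such a $\xi$ does not follow from $\nu$ mapping $\Xi_g$ into itself: an endomap of a finite set need not have a fixed point, and at indices with $\la(g)_i=\la(g)_{i+1}$ the Pitman operator is the identity on the environment while $\tau_i,\bar\tau_i$ still move elements of $\Xi_g$ around. So ``tracking how $\Xi_g$ transforms'' is not a proof, and it may in fact be impossible to find a fixed point of the \emph{given} $\nu$. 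The ingredient that actually closes the argument is exactly the observation you place in the parenthetical ``alternative'': from a word for $\nu$, first delete every letter that is inactive on the running \emph{slope} vector (this changes neither $\cP_\nu g$ nor the slope action); the remaining word swaps two \emph{distinct} adjacent slope values at every step, so equal entries never cross, forcing the induced word $\tilde w$ in $S_n$ to be the identity. Since each remaining letter is slope-effective, $w\xi=\tilde w\xi=\xi$ for every $\xi\in\Xi_g$, and Proposition~\ref{p:semigroup-isom}(II) applied to the \emph{stripped} word (not to $\nu$ itself) gives $\cP_w g=g$, hence $g'=g$. Your self-contained alternative then takes a needlessly long road: after ``$\tilde w=e$'' you already have $w\xi=\xi$ and can stop, and the detour through braid/commutation moves with mixed bars would in any case force you to redo Cases 2 and 2' of the proof of Proposition~\ref{p:semigroup-isom}(II), since those mixed braid relations are not in Lemma~\ref{L:Pitman-properties}.ii.

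For comparison, the paper avoids stripping the given $\nu$ altogether: it fixes an arbitrary $\xi\in\Xi_{g_1}$, routes through a fully sorted intermediate $h\in O(f)$, and inserts a stabilizer element $\kappa\in H(h)$, using the counting argument that $\tilde\kappa\mapsto\sigma_2\kappa\sigma_1\xi$ is a bijection $H(h)\to\Xi_{g_1}$ to guarantee some $\kappa$ yields $\sigma\xi=\xi$. Your route, once made precise as above, is arguably more direct, but as written the injectivity step is incomplete and the key observation is misfiled as a backup rather than as the heart of the argument.
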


\begin{proof}
For part i, if $g \in O(f)$ then $g = \cP_\sig f$ for some word $\sig = \pi_k \cdots \pi_1$ where  $\pi_j \in \{\tau_i, \bar \tau_i: i = 1, \dots, n-1\}$. Then from the definition of $\cP$ and Lemma \ref{L:Pitman-properties}.iii there exists a word $\sig' = \pi_1' \cdots \pi_k'$ where each $\pi_j'$ equals either $\pi_j$ or its barred/unbarred version, such that $\cP_{\sig' \sig} f = \cP_{\sig'} g = f$.

 For part ii, by Corollary \ref{C:Pitman-iso} the map $\la:O(f) \to O(\la(f))$ is onto. Now suppose that $g_1, g_2 \in O(f)$ are such that $\la(g_1) = \la(g_2)$. Using the notation $\Xi_{g_1}$ from Proposition \ref{p:semigroup-isom}, let $\xi \in \Xi_{g_1}$. We will aim to find $\sig \in \mathfrak{D}_n$ with $\sig \xi = \xi$ and $\cP_\sig g_1 = g_2$. If we can find such a $\sig$, then by Proposition \ref{p:semigroup-isom}(II), we have that $g_2 = \cP_\sig g_1 = \cP_{\id} g_1 = g_1$, as desired. First, since $\la:O(f) \to O(\la(f))$ is onto we can find $h \in O(f) = O(g_1)$ with $\la(h)_1 \ge \dots \ge \la(h)_k$. Let $\sig_1 \in \mathfrak{D}_n$ be such that $\cP_{\sig_1} g_1 = h$. Since $O(h) = O(f)$ by part i, we can then find $\sig_2$ such that $\cP_{\sig_2 \sig_1} g_1 = \cP_{\sig_2} h = g_2$. We can write $\sig_1 = \hat \tau_{i_k} \cdots \hat \tau_{i_1}$ and $\sig_2 = \hat \tau_{j_\ell} \cdots \hat \tau_{j_1}$ where $\hat \tau_i \in \{\tau_i, \bar \tau_i\}$ for all $i$. 
 We may also assume that $\sig_1, \sig_2$ are effective, in the sense that for any $a \le k$ or $b \le \ell$ we have
 \begin{equation*}
 \label{E:effective}
 \cP_{\hat \tau_{i_a} \cdots \hat \tau_{i_1}} g_1 \ne  \cP_{\hat \tau_{i_{a-1}} \cdots \hat \tau_{i_1}} g_1, \qquad  \cP_{\hat \tau_{j_b} \cdots \hat \tau_{j_1}} h \ne  \cP_{\hat \tau_{j_{b-1}} \cdots \hat \tau_{j_1}} h.
 \end{equation*}
Now, let $\tilde \sig_1 = \pi_{i_k} \cdots \pi_{i_1}$ and $\tilde \sig_2 = \pi_{j_\ell} \cdots  \pi_{j_1}$ be the corresponding products of adjacent transpositions in $S_n$. Effectiveness of $\sig_1, \sig_2$ implies that $\tilde \sig_2 \tilde \sig_1 \xi = \sig_2 \sig_1 \xi$. Next, let $\Pi(h)$ be the set of adjacent transpositions $\pi_i$ such that $\la(h)_i = \la(h)_{i+1}$, and let $H(h) \subset S_n$ be the subgroup generated by $\Pi(h)$. Consider $\tilde \ka \in H(h)$, and let $\pi_{m_k} \cdots \pi_{m_1}$ be a reduced word for $\tilde \ka$, written in the alphabet $\Pi(h)$. There is a unique way to map the $\pi_{m_i}$ to $\{\tau_{m_i}, \bar \tau_{m_i}\}$ such that the resulting element $\ka \in \mathfrak{D}_n$ is effective on $\sig_1 \xi$. Hence $\tilde \ka \tilde \sig_1 \xi = \kappa \sig_1 \xi$. On the other hand, from the definition of the Pitman transform applied to lines of equal slope, $ \cP_{\ka \sig_1} g_1 = \cP_{\kappa} h  = h$ for all $\ka \in H(h)$, so by the effectiveness of $\sig_2$ on $h$ we have that $\tilde \sig_2 \ka \sig_1 \xi = \sig_2 \ka \sig_1 \xi$. We also have that $\cP_{\sig_2 \ka \sig_1} g_1 = g_2$ and hence $\sig_2 \ka \sig_1 \xi = \tilde \sig_2 \tilde \ka \tilde \sig_1 \xi \in \Xi_{g_2} = \Xi_{g_1}$. Finally, the map
$$
\tilde \kappa \mapsto \sig_2 \ka \sig_1 \xi = \tilde \sig_2 \tilde \ka \tilde \sig_1 \xi
$$
from $H(h) \to \Xi_{g_1}$ is a bijection. Indeed, this map is one-to-one by the invertibility of $\tilde \sig_2, \tilde \sig_2$ and it is easy to see that $|H(h)| = |\Xi_{g_1}|$ (these are conjugate subgroups). Hence there must be some $\ka$ with $\sig_2 \ka \sig_1 \xi = \xi$. Setting $\sig = \sig_2 \ka \sig_1$ then gives the desired sorting element.
\end{proof}

\begin{corollary}\label{C:samestart} Let $f \in \cC^n(\R)$, and let $g \in O(f)$, where $O(f)$ is the $\cP$-orbit of $f$. Let $I$ be a set of the form $\{1,\ldots,k\}$ or $\{k,\ldots,n\}$, and suppose that
	$
	\la(f)_j = \la(g)_j
	$ for all $j \in I$. Then $f_j=g_j$ for all $j\in I$.
\end{corollary}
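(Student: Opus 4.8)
The plan is to realise $g$ as $\cP_{\sigma''}f$ for an element $\sigma''\in\mathfrak D_n$ that can be written as a product of Pitman operators none of which touch the lines indexed by $I$; then $f_j=g_j$ for $j\in I$ is immediate. I will write out the case $I=\{1,\dots,k\}$. The case $I=\{k,\dots,n\}$ runs identically with the generators $\tau_i,\bar\tau_i$ for $1\le i\le k-2$ (which act only on lines $1,\dots,k-1$, hence fix lines $k,\dots,n$) in place of the generators for $k+1\le i\le n-1$.

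First I would record two elementary facts about the basic action of $\mathfrak D_n$ on $\R^n$ (Example \ref{E:basic-action}): (a) for any $u\in\R^n$ and any $i$, one of $\tau_i,\bar\tau_i$ maps $u$ to $\pi_i u$ --- namely the one that is not the identity at $u$; (b) consequently, since $\pi_{k+1},\dots,\pi_{n-1}$ generate the permutations of $\{k+1,\dots,n\}$, the orbit of $u$ under the submonoid generated by $\{\tau_i,\bar\tau_i:k+1\le i\le n-1\}$ is exactly $\{u':u'_j=u_j\ \text{for}\ j\le k,\ (u'_{k+1},\dots,u'_n)\ \text{a permutation of}\ (u_{k+1},\dots,u_n)\}$. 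Now, by Corollary \ref{C:Pitman-iso} we have $\la(g)=\sigma(\la(f))$ for some $\sigma\in\mathfrak D_n$ acting through the basic action, so $\la(g)$ is a permutation of $\la(f)$; together with the hypothesis $\la(f)_j=\la(g)_j$ for $j\le k$ this forces $(\la(g)_{k+1},\dots,\la(g)_n)$ to be a permutation of $(\la(f)_{k+1},\dots,\la(f)_n)$. Hence by (b) there is a word $\sigma''$ in the generators $\tau_i,\bar\tau_i$, $k+1\le i\le n-1$, with $\sigma''(\la(f))=\la(g)$ in the basic action.

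To finish, apply Corollary \ref{C:Pitman-iso} again: $\cP_{\sigma''}f\in O(f)=O(g)$ (Lemma \ref{L:orbits}.i) and $\la(\cP_{\sigma''}f)=\sigma''(\la(f))=\la(g)$, so since the slope map is a bijection from $O(f)$ onto $O(\la(f))$ by Lemma \ref{L:orbits}.ii, we get $\cP_{\sigma''}f=g$. On the other hand, writing $\sigma''=\xi_\ell\cdots\xi_1$ with each $\xi_a\in\{\tau_i,\bar\tau_i:k+1\le i\le n-1\}$, every $\cP_{\xi_a}$ alters only lines $i,i+1$ with $i\ge k+1$ (this holds for the barred operators as well, since $\cP_{\bar\tau_i}=R\cP_{\tau_i}R$ and $R$ acts coordinatewise), so the composition $\cP_{\sigma''}$ fixes lines $1,\dots,k$. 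Therefore $g_j=(\cP_{\sigma''}f)_j=f_j$ for all $j\le k$.

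This proof is short, and its only real content is the reduction in the second step to a word built from the ``high-index'' generators; the subtle point to watch is that one must use the \emph{injectivity} half of Lemma \ref{L:orbits}.ii to upgrade the equality of slopes $\la(\cP_{\sigma''}f)=\la(g)$ to the equality of functions $\cP_{\sigma''}f=g$. The degenerate cases ($k=n$ when $I=\{1,\dots,k\}$, and $k=1$ when $I=\{k,\dots,n\}$) are trivial: then $\la(f)=\la(g)$, whence $f=g$ directly from Lemma \ref{L:orbits}.ii.
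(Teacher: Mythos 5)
Your proof is correct and matches the paper's own argument: the paper likewise finds $\sigma\in\mathfrak D_n$, built from the generators $\tau_i,\bar\tau_i$ whose action avoids the coordinates in $I$, with $\sigma\la(f)=\la(g)$, then invokes Lemma \ref{L:orbits} to upgrade this to $\cP_\sigma f=g$ and reads off $f_j=g_j$ on $I$ since $\cP_\sigma$ fixes those lines. You simply spell out the routine combinatorial observations (that one of $\tau_i,\bar\tau_i$ always realizes $\pi_i$, and that the high-index generators suffice to permute the tail slopes) which the paper takes as evident.
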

\begin{proof}
We can find $\sig \in \mathfrak{D}^n$ with $\sig \la(f) = \la(g)$, where $\sig$ acts through the basic action and can be written as a product of adjacent transpositions that do not use coordinates of $I$. By Lemma \ref{L:orbits}, $\cP_\sig f = g$, and since $\cP_\sig$ only affects coordinates in $I^c$, $f_j=g_j$ for all $j\in I$.
\end{proof}

\subsection{Sorting and last passage percolation}
The goal of this section is to represent the $\mathfrak{D}_n$-action from Corollary \ref{C:Pitman-iso} in terms of last passage values.  First consider $f \in \cC^n(\R)$ with $\la(f) \in \R^n_\le$, and recall the definition of last passage from $-\infty$ in \eqref{E:finftyM}. The limit in that definition exists when $\bm \in \{1, \dots, n\}^k_<$ satisfies the following property:
\begin{equation}
\label{E:N-property}
\text{If $m_i$ is a coordinate of $\bm$ but $m_i - 1$ is not, then $\la_{m_{i-1}} < \la_{m_i}$}.
\end{equation} 
Moving forward, we will say $\bm$ satisfies \eqref{E:N-property} with respect to $\la$ when the slope vector is not clear context. We call a $k$-tuple of paths $\pi = (\pi_1, \dots, \pi_k)$, where each $\pi_j:(-\infty, x_j] \to \{1, \dots, n\}$ is a nonincreasing cadlag path satisfying $\pi_j(x_j) = 1$ and $\lim_{t \to -\infty} \pi_j(t) = m_j$ a \textbf{disjoint optimizer} from $(-\infty, \bm)$ to $(\bx, 1)$ if $\pi|_{[t, x_k]}$ is a disjoint optimizer for all $t < x_1$. The condition \eqref{E:N-property} guarantees that disjoint optimizers from $(-\infty, \bm)$ to $(\bx, 1)$ exist and that they are pointwise limits of disjoint optimizers from $(t, \bm)$ to $(\bx, 1)$ as $t \to -\infty$.

For $f \in \cC^n(\R)$, we can describe elements of its $\cP$-orbit $O(f)$ using last passage percolation. We start with a few examples before moving towards a general theory.

\begin{example}[LPP to the top]
	\label{Ex:cars}
Let $f \in \cC^n(\R)$ and suppose that $\la(f)_k < \la(f)_j$ for all $j < k$.	Then 
	\begin{equation}\label{e:cars}
		f[(-\infty, k)\to (x,1)]=\cP_{\tau_1\cdots \tau_{k-1}} f_1(x).
	\end{equation}
This is almost immediate from the definition and the fact that the Pitman transform $\cP_{\tau_i}$ switches the slopes of $f_i, f_{i+1}$ when $\la(f)_i < \la(f)_{i+1}$.  
\end{example}
\begin{example}[Zigzag last passage percolation]
\label{Ex:zigzag}
Let $f \in \cC^n(\R)$, suppose that $\la(f) \in \R^n_<$. Let $g \in O(f)$, and let $\xi \in S_n$ be such that 
$$
\la(g)_{\xi^{-1}(1)} < \cdots < \la(g)_{\xi^{-1}(n)}.
$$
Then for every $j \in \{1, \dots, n\}$, 
$$
f[(-\infty, j)\to (x,1)]=\cP_{\sigma_j}g_1(x), \qquad \sigma_j=\sigma_{j,1}\cdots \sigma_{j,\xi^{-1}(j)-1}, \!\!\qquad \sigma_{j,i}=\begin{cases} \tau_i, \quad  &j<\xi_g(i)\\ \bar \tau_i, &j>\xi_g(i)\end{cases} 
$$
Indeed, by the previous example we have that $f[(-\infty, j)\to (x,1)] = \cP_{\tau_1\cdots \tau_{j-1}} f_1(x)$, and moreover $\la( \cP_{\tau_1\cdots \tau_{j-1}} f)_1 = \la(f)_j$. Similarly, by construction $\la(\cP_{\sig_j} g)_1 = \la(f)_j$. Therefore by Corollary \ref{C:samestart} we have that $\cP_{\tau_1\cdots \tau_{j-1}} f_1 = \cP_{\sig_j} g_1$. We can rewrite $\cP_{\sig_j} g_1$ more explicitly as a kind of \textbf{zig-zag last passage percolation}. Let $k = \xi^{-1}(j)$. Then
	$$
\cP_{\sigma_j}g_1(x)=g_1(x)+\sup_{\by \in \R^{k}}\left(\sum_{i=2}^{k} g_{i}(y_i)-g_{i-1}(y_i)\right),
$$
where the supremum is over all vectors $\by \in \R^{k}$ with $y_1 = x$ and satisfying the inequalities $y_i \le y_{i-1}$ or $y_{i} \le y_{i-1}$ for all $2 \le i \le k$ depending on whether $\la(g)_k > \la(g)_{i-1}$ or $\la(g)_k < \la(g)_{i-1}$. Remarkably, the Brownian Burke theorem (see Section \ref{S:Pitman-Brownian}) implies that we can construct models of zig-zag last passage percolation across independent Brownian motions whose stationary measures are themselves Brownian motions!
\end{example}

Both of the previous examples are special cases of the following general proposition. 

\begin{proposition}[Orbit elements as last passage values]
	\label{P:orbits} Let $f \in \cC^n(\R)$ with $\la(f) \in \R^n_\le$, let $g \in O(f)$, let $k \in \{1, \dots, n\}$, and let $\bx \in \R^k_\le$. Recall that $\Xi_g \subset S_n$ is the set of permutations satisfying 
	\begin{equation}
		\label{E:lag}
		\la(g)_{\xi_g^{-1}(1)} \le \cdots \le \la(g)_{\xi_g^{-1}(n)}.
	\end{equation}
	There exists a unique vector $\bm \in \{1, \dots, n\}^k_<$ satisfying \eqref{E:N-property} for $\la(f)$ such that $\bm = \xi\{1, \dots, k\}$ for some $\xi \in \Xi_g$ (here thinking of $\bm$ as a subset). We have 
		\begin{equation}
			\label{E:g1gk}
			g[(-\infty, (1, \dots, k)) \to (\bx, 1)]=  f[(-\infty,\bm)\to (\bx,1)]. \qquad 
		\end{equation}
		In particular, if $x = x_1 = \dots = x_n$ then 
		$$
		g_1(x) + \dots + g_k(x) = f[(-\infty,\bm)\to (x^k,1)].
		$$
\end{proposition}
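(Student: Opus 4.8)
The plan is to reduce everything to the ``LPP to the top'' computation of Example \ref{Ex:cars} by using the orbit machinery from Lemma \ref{L:orbits} and Corollary \ref{C:samestart}. First I would verify existence and uniqueness of $\bm$: since $\la(g)$ is a permutation of $\la(f)$, the multiset $\{\la(g)_1,\dots,\la(g)_k\}$ is a sub-multiset of $\{\la(f)_1,\dots,\la(f)_n\}$, and among all index vectors $\bm\in\{1,\dots,n\}^k_<$ realizing this sub-multiset there is a unique minimal one; I would check that minimality is precisely equivalent to property \eqref{E:N-property} (if $m_i$ is in $\bm$ but $m_i-1$ is not while $\la_{m_i-1}=\la_{m_i}$, we could lower $m_i$), and that this minimal $\bm$ is exactly $\xi\{1,\dots,k\}$ for a suitable $\xi\in\Xi_g$ (take $\xi$ so that $\xi^{-1}$ lists the indices of $\la(g)$ in nondecreasing order, breaking ties by the original index order; then $\xi\{1,\dots,k\}$ picks out the $k$ smallest slopes with the smallest possible labels).

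The core of the argument is the single-path-sum identity, and I would first treat the left-hand side $g[(-\infty,(1,\dots,k))\to(\bx,1)]$. Because $\la(g)$ need not be increasing, I cannot directly apply Example \ref{Ex:cars} to $g$; instead I would choose $h\in O(g)=O(f)$ with $\la(h)\in\R^n_<$ obtained by sorting $\la(g)$ into strictly increasing order via some $\sigma\in\mathfrak D_n$ with $\cP_\sigma g = h$ — here I use that $\la(f)\in\R^n_\le$, but I should be careful about equal slopes; in the presence of ties I would instead pass to a strictly-sloped perturbation or, more cleanly, argue directly with the multi-path LPP formula \eqref{E:Wff} / Theorem \ref{T:RSK-on-CnR-2-intro}.1, which already expresses $\sum_{i=1}^k g_i(x)$ as $f[(-\infty,\bm^{k,g})\to(x^k,1)]$ with $\bm^{k,g}$ the minimal vector realizing $\{\la(g)_1,\dots,\la(g)_k\}$ inside $\{\la(f)_1,\dots,\la(f)_n\}$ — i.e. exactly our $\bm$. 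So the ``in particular'' statement is essentially Theorem \ref{T:RSK-on-CnR-2-intro}.1 applied with the pair $(f,g)$ in place of $(f,g)$, once I identify $\bm^{k,g}=\bm$; this identification is the bookkeeping in the previous paragraph.

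For the general $\bx\in\R^k_\le$ version \eqref{E:g1gk}, the idea is that both sides are multi-path last passage values to the top line across boundary-isometric environments. Concretely, I would argue: by Lemma \ref{L:orbits}.i pick $\sigma$ with $\cP_\sigma f=g$; by Lemma \ref{L:two-line-isometry} and Corollary \ref{C:Pitman-iso}, $g=\cP_\sigma f$ is boundary isometric to $f$, so $g[(\by,n)\to(\bz,1)]=f[(\by,n)\to(\bz,1)]$ for all $\by,\bz$. The left side of \eqref{E:g1gk} is a last passage value from $(-\infty,(1,\dots,k))$, which by the $g$-analogue of Example \ref{Ex:cars} (valid since line $1$ of $g$, after stripping the first $k-1$ already-sorted coordinates, has the right slope structure) equals a $k$-fold sum of a suitable $\cP$-orbit element, and then the argmax-location / metric-composition argument (Lemma \ref{L:split-path}) together with the boundary isometry lets me transfer from $g$ to $f$ with the endpoint vector $(1,\dots,k)$ replaced by $\bm$. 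The main obstacle I anticipate is precisely the handling of \emph{equal slopes} in $\la(f)$: the vectors $\Xi_g$, property \eqref{E:N-property}, and the minimality of $\bm$ all interact delicately there, and I expect to need Corollary \ref{C:samestart} (equality of lines when the slopes on an initial/final segment agree) to pin down which representative of the orbit actually computes the last passage value. Everything else — braid/commutation relations, existence of disjoint optimizers from $-\infty$, the isometry — is already in place from the earlier parts of the section.
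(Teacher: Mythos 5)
Your outline correctly identifies the supporting machinery (orbit structure via Lemma~\ref{L:orbits}, boundary isometry from Lemma~\ref{L:two-line-isometry}/Corollary~\ref{C:Pitman-iso}, and Corollary~\ref{C:samestart} for matching lines on an initial segment), and your account of existence and uniqueness of $\bm$ is essentially correct. However, there are two genuine problems with the route you propose.

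First, you invoke Theorem~\ref{T:RSK-on-CnR-2-intro}.1 as if it were available, writing that the ``in particular'' claim ``is essentially Theorem~\ref{T:RSK-on-CnR-2-intro}.1 applied with the pair $(f,g)$.'' But that theorem is stated in the introduction as a \emph{consequence} of Corollary~\ref{C:Pitman-iso}, Lemma~\ref{L:orbits}, and this very Proposition~\ref{P:orbits}; citing it here would be circular. The entire point of Proposition~\ref{P:orbits} is to supply the last-passage representation that Theorem~\ref{T:RSK-on-CnR-2-intro}.1 then packages.

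Second, and more substantively, the mechanism you invoke for the general $\bx$ case does not close. You appeal to boundary isometry plus ``the argmax-location / metric-composition argument (Lemma~\ref{L:split-path})'' to transfer from $g[(-\infty,(1,\dots,k))\to(\bx,1)]$ to $f[(-\infty,\bm)\to(\bx,1)]$, but boundary isometry only equates $g[(\by,n)\to(\bz,1)]$ with $f[(\by,n)\to(\bz,1)]$ for \emph{matching} finite source/target vectors. The statement you need has different index vectors at $-\infty$, namely $(1,\dots,k)$ on the $g$ side and $\bm$ on the $f$ side, and also different normalizations $\sum_i g_i(t)$ versus $\sum_i f_{m_i}(t)$ in the limit defining last passage from $-\infty$. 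Bridging these is the nontrivial inductive step the paper carries out via Lemma~\ref{L:block-up}: under the hypothesis $\la(f)_1 < \la(f)_i$ for all $i>1$, one can exchange $f[(-\infty,(2,\dots,n))\to\cdot]$ for $\cP_{\tau_{n-1}\cdots\tau_1}f[(-\infty,(1,\dots,n-1))\to\cdot]$, and iterating this shifts $m_k$ down one step at a time until $\bm$ becomes $(1,\dots,k)$ in some transformed environment $h$, after which Corollary~\ref{C:samestart} matches the first $k$ lines of $h$ with those of $g$. Your proposal neither states nor proves an analogue of Lemma~\ref{L:block-up}, and without it the hand-off ``lets me transfer from $g$ to $f$ with the endpoint vector $(1,\dots,k)$ replaced by $\bm$'' is exactly the gap. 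I would suggest either isolating Lemma~\ref{L:block-up} as a separate step and proving it (the argument requires a careful limit computation comparing the two normalizations), or finding an alternative argument that explicitly controls the discrepancy between $\sum_i g_i(t)$ and $\sum_i f_{m_i}(t)$ as $t\to-\infty$.
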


The proof of Proposition \ref{P:orbits} is an induction based on the following lemma. 

	\begin{lemma}\label{L:block-up}
	Let $f \in \cC^n(\R)$. Define $\sig = \tau_{n-1} \cdots \tau_1$. If $\la(f)_1 < \la(f)_i$ for all $i > 1$, then for all $\bx \in \R^{n-1}_\le$:
	\begin{equation}
	\label{E:Pitman-unnormal}
	f[(-\infty, (2,\ldots , n))\to (\bx,1)] = \cP_{\sig} f[(-\infty,(1,\ldots, n-1))\to (\bx,1)]. 
	\end{equation}
	In particular, if $x_1 = \dots = x_{n-1} = x$ then:
	\begin{equation}
		\label{E:basic-xx}
	f[(-\infty, (2,\ldots , n))\to (x^{n-1},1)] = \sum_{i=1}^{n-1}\cP_{\sig} f_i(x).
	\end{equation}
\end{lemma}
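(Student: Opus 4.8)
The plan is to establish the main identity \eqref{E:Pitman-unnormal}; then \eqref{E:basic-xx} will follow at once. When $n=2$ we have $\cP_\sigma=\cP_{\tau_1}=\cP$, and since $\la_1(f)<\la_2(f)$ by hypothesis the right-hand side of \eqref{E:Pitman-unnormal} is $\cP f_1(x)$, which is literally $f[(-\infty,2)\to(x,1)]$ by \eqref{E:Pitman}. For $n>2$ I would use the factorization $\cP_\sigma=\cP_{\tau_{n-1}}\circ\cdots\circ\cP_{\tau_1}$, which carries the line of smallest slope (line $1$) down to the bottom one transposition at a time. Writing $g^{(0)}=f$ and $g^{(i)}=\cP_{\tau_i}g^{(i-1)}$, so that $g^{(n-1)}=\cP_\sigma f$, one checks that $g^{(i-1)}$ has slope $\la_1$ in coordinate $i$ (hence $\la_i(g^{(i-1)})<\la_{i+1}(g^{(i-1)})$ and every step is nontrivial), and the goal becomes the chain
\[
f[(-\infty,\{2,\dots,n\})\to(\bx,1)] \;=\; g^{(i)}[(-\infty,\{1,\dots,n\}\smin\{i+1\})\to(\bx,1)], \qquad i=0,1,\dots,n-1,
\]
proved by induction on $i$ (the $i=0$ case is a tautology, the $i=n-1$ case is \eqref{E:Pitman-unnormal}). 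Each step reduces to a single \textbf{local move}: if $h\in\cC^n(\R)$ with $\la_i(h)<\la_{i+1}(h)$, then
\[
h[(-\infty,\{1,\dots,n\}\smin\{i\})\to(\bx,1)] = \cP_{\tau_i}h[(-\infty,\{1,\dots,n\}\smin\{i+1\})\to(\bx,1)],
\]
and in our application $h=g^{(i-1)}$ has its global minimum slope in coordinate $i$, so both sides are well defined in the sense of \eqref{E:N-property}.

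To prove the local move, note that the two index sets differ only in that $\{1,\dots,n\}\smin\{i\}$ has a path starting at level $i+1$ where $\{1,\dots,n\}\smin\{i+1\}$ has one starting at level $i$; all other paths start at a common set of levels, disjoint from $\{i,i+1\}$. I would cut every path at the horizontal lines between levels $i-1,i$ and between levels $i+1,i+2$, via the metric composition law (Lemma \ref{L:split-path}), so that the middle slab contains exactly the segments of the paths living in the two lines $h_i,h_{i+1}$; the slabs above and below it do not involve levels $i,i+1$ and are untouched by $\cP_{\tau_i}$. On the middle slab the configuration is a genuine two-line last-passage value, and the two-line boundary isometry (Lemma \ref{L:two-line-isometry}), together with the elementary identity $(a,b)[(-\infty,2)\to(y,1)] = (\cP(a,b))_1(y) = \cP(a,b)[(-\infty,1)\to(y,1)]$ for a pair with $\la(a)<\la(b)$, shows that this value is unchanged when $(h_i,h_{i+1})$ is replaced by $\cP(h_i,h_{i+1})$ and, correspondingly, the extra path's starting level is moved from $i+1$ to $i$. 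Re-summing over the cut locations yields the local move. The starting points at $-\infty$ are handled by running the argument at a finite left endpoint $t$ and sending $t\to-\infty$, using Lemma \ref{L:shift-commute} and the slope-interchange property Lemma \ref{L:Pitman-properties}.i to match the normalization terms $\sum_j h_{m_j}(t)$ on the two sides.

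The main obstacle is exactly this $t\to-\infty$ limit together with the boundary combinatorics. Lemma \ref{L:two-line-isometry} is an identity between last-passage values with \emph{finite} endpoints, whereas the extra path here starts at $x$-coordinate $-\infty$, so the middle-slab quantity is itself a last passage from $-\infty$; one must check that the limits defining the two sides exist (this is exactly where the slope hypothesis enters) and that the added normalizations agree in the limit. A further point of care is that, away from the generic case, several competing paths can occupy levels $i$ or $i+1$ at once, so the decomposition must be organized -- via leftmost optimizers and optimizer monotonicity (Lemmas \ref{L:optimizer-existence-monotoncity} and \ref{L:mono-tree-multi-path}) -- so that the middle bundle really is a clean two-line value with the paths threaded in the correct left-to-right order. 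Finally, \eqref{E:basic-xx} drops out of \eqref{E:Pitman-unnormal}: when $x_1=\dots=x_{n-1}=x$, disjointness forces $\pi_j(t)\ge j$ for $t<x$ while $\pi_j\le j$ everywhere, so each path is pinned to level $j$ until time $x$; hence there is a unique disjoint $(n-1)$-tuple from $(-\infty,\{1,\dots,n-1\})$ to $(x^{n-1},1)$ across $\cP_\sigma f$, and its normalized length is $\sum_{i=1}^{n-1}\cP_\sigma f_i(x)$.
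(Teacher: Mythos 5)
Your plan is a genuine alternative to the paper's proof: you peel off $\cP_\sigma = \cP_{\tau_{n-1}}\cdots\cP_{\tau_1}$ one adjacent transposition at a time and reduce to a "local move" on lines $i,i+1$, whereas the paper instead proves \eqref{E:basic-xx} first by a direct formula for $\cP_\sigma f_n$, and then establishes \eqref{E:Pitman-unnormal} by applying the boundary isometry at a \emph{finite} left endpoint $t$ and taking the $t\to-\infty$ limit, using \eqref{E:basic-xx} to pin down the constant of integration that this limit leaves undetermined. Your high-level skeleton (the chain $f[(-\infty,\{2,\dots,n\})\to(\bx,1)]=g^{(i)}[(-\infty,\{1,\dots,n\}\smin\{i+1\})\to(\bx,1)]$ with the slope bookkeeping via Lemma~\ref{L:Pitman-properties}.i) is sound, as is your derivation of \eqref{E:basic-xx} from \eqref{E:Pitman-unnormal} by the "pinned paths" observation.

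However, the local move is where all the work lives, and your sketch leaves two real gaps there, which you yourself flag but do not close. First, the middle-slab reduction does not produce a clean boundary-to-boundary two-line problem: the special path starts at $(-\infty,i+1)$ (resp.\ $(-\infty,i)$) \emph{inside} the slab, so Lemma~\ref{L:two-line-isometry} -- which is an identity between last-passage values with finite endpoints on lines $n$ and $1$ -- does not apply off the shelf. One would need to prove a strengthened isometry adapted to a semi-infinite path anchored inside the slab, and to control the competition of multiple paths dipping through lines $i,i+1$; this is precisely the content you defer with "the decomposition must be organized via leftmost optimizers." Second, well-definedness of the $-\infty$ last-passage values $g^{(i)}[(-\infty,\{1,\dots,n\}\smin\{i+1\})\to(\bx,1)]$ (a vector with a gap in the middle, in an environment whose slopes $(\la_2,\dots,\la_{i+1},\la_1,\la_{i+2},\dots,\la_n)$ need not be ordered) is not automatic from \eqref{E:N-property} alone and is not argued; and even granting existence, matching the two normalizations $\sum_j h_{m_j}(t)$ across the move as $t\to-\infty$ is exactly the step the paper handles by first securing \eqref{E:basic-xx}, a crutch your ordering of the argument forgoes. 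In short: your route is plausible, but the sketch stops at the two points where the paper's proof actually does the work, and you have removed the tool (the pre-established diagonal case) that the paper uses to get past them.
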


\begin{proof}
We actually prove the restricted case \eqref{E:basic-xx} first and then use this to prove the general version \eqref{E:Pitman-unnormal}.
To shorten notation, we write $[i, j] := (i, i+1, \dots, j) \in \R^{j-i + 1}$ through the proof. First, for $t \le x$ we have
\begin{equation}
	\label{E:finfty-sum}
f[(t, [1, n])\to (x^n,1)]= \sum_{i=1}^n f_i(x) - f_i(t) = \sum_{i=1}^n \cP_\sig f_i(x) - f_i(t).
\end{equation}
The first equality is by definition, and the second equality uses that Pitman transforms preserve the sum of all lines. Moreover, it is easy to check from the definition that
\begin{align*}
f[(t,& [2, n])\to (x^n,1)] \\
&= f[(t, [1, n])\to (x^n,1)] +f_1(t) - f_n(x)  + \max_{t \le y_1 \le \dots \le y_{n-1} \le x} \sum_{i=1}^{n-1} f_{i + 1}(y_i) - f_{i}(y_i).
\end{align*}
Now, since $\la(f)_1 < \la(f)_i$ for all $i > 1$, the final term above equals
$$
\max_{y_1 \le \dots \le y_{n-1} \le x} \sum_{i=1}^{n-1} f_{i + 1}(y_i) - f_{i}(y_i).
$$
for all small enough $t$, and this minimum is attained. Therefore by \eqref{E:finfty-sum}, to verify \eqref{E:basic-xx} we just need to show that
\begin{equation}
	\label{E:P-sigma}
\cP_\sig f_n(x) = f_n(x) - \max_{y_1 \le \dots \le y_{n-1} \le x} \sum_{i=1}^{n-1} f_{i + 1}(y_i) - f_{i}(y_i).
\end{equation}
As in Example \ref{Ex:cars}, this is essentially immediate from the definition \eqref{E:Pitman} and an induction since $\la(f)_1 < \la(f)_i$ for all $i > 1$. 

We move to the general case. Fix $t\le x_1$. By Lemma \ref{L:two-line-isometry} we have that
	$$
	f[(t, [2, n])\to (\bx,1)]=	\cP_\sig f[(t, [2, n])\to (\bx,1)].
	$$
Now, since $\la(f)_1 < \la(f)_i$ for all $i \ge 1$, then there exists $t_0$ such that the function
$$
t \mapsto f[(t, [2, n])\to (\bx,1)] + \sum_{i=2}^n f_i(t)
$$
is constant for $t \le t_0$, and hence equals the left-hand side of \eqref{E:Pitman-unnormal}.
On the other hand, since $\la(f)_1 < \la(f)_i$ for $i > 1$ we have $\la(\cP_{\sig} f)_n = \la(f)_1 < \la(\cP_{\sig} f)_i = \la(f)_{i+1}$ for $i < n$. Therefore for all $t$ small enough we have
\begin{align*}
\cP_{\sig} &f[(t, [2, n])\to (x,1)] + \sum_{i=2}^n \cP_{\sig} f_i(t) \\
&= \cP_{\sig} f[(x_1, [1, n-1])\to (\bx,1)] + \sup_{y_1 \le \dots \le y_{n-1} \le x_1} \sum_{i=1}^{n-1} \cP_\sig f_{i+1}(y_i) - \cP_\sig f_{i}(y_i) \\
&= \cP_{\sig} f[(-\infty, [1, n-1])\to (\bx,1)] + \sum_{i=1}^{n-1} \cP_\sig f_i(x_1) \\&\;\;\;+ \sup_{y_1 \le \dots \le y_{n-1} \le x_1} \sum_{i=1}^{n-1} \cP_\sig f_{i+1}(y_i) - \cP_\sig f_{i}(y_i)
\end{align*}
Call the latter two terms on the right-hand side above $G(x_1)$.
Combining all of the above displays gives that for all small enough $t$ we have the equality
$$
f[(-\infty, [2, n])\to (\bx,1)]= \cP_{\sig} f[(-\infty,[1, n-1])\to (\bx,1)] + G(x_1) + \sum_{i=2}^n f_i(t) - \cP_\sig f_i(t).
$$
Therefore 
\begin{align*}
&f[(-\infty, [2, n])\to (\bx,1)] - f[(-\infty, [2, n])\to (x^{n-1},1)] \\
= &\cP_{\sig} f[(-\infty,[1, n-1])\to (\bx,1)] - \cP_{\sig} f[(-\infty,[1, n-1])\to (x^{n-1},1)].
\end{align*}
On the other hand, \eqref{E:basic-xx} ensures that the second terms on both the right- and left-hand sides above are equal, and hence so are the first terms, yielding \eqref{E:Pitman-unnormal}.
\end{proof}

\begin{proof}[Proof of Proposition \ref{P:orbits}]
First, we can construct $\bm$ by recursively constructing $\xi \in \Xi_g$. Indeed, for every $i \in \{1, \dots, n\}$, we always let $\xi(i)$ be the minimal index $j \in \{1, \dots, n\} \smin \{\xi(1), \dots, \xi(i-1)\}$ with $\la(f)_j = \la(g)_i$. Since $\la(g)$ is a permutation of $\la(f)$, this process results in a permutation $\xi$. Moreover, the use of minimal indices in the construction implies that resulting vector $\bm$ satisfies \eqref{E:N-property} for $\la(f)$. Uniqueness of $\bm$ follows since $\la(f) \in \R^n_\le$.

To prove \eqref{E:g1gk}, first assume that $m_k > k$. Let $\ell \in \{1, \dots, m_k - 1\}$ be the maximal index with $\ell \notin  \{m_1, \dots, m_{k-1}\}$. Let $\bm_2=(\ell+1,\ell+2,\ldots, m_k)$ so we can write $\bm = (\bm_1, \bm_2)$. Since $\bm$ satisfies \eqref{E:N-property}, both $\bm_1, \bm_2$ also satisfy \eqref{E:N-property} and so we may define last passage from $(-\infty, \bm_1), (-\infty, \bm_2)$. Next, for $\by \in \R^{m_k-\ell}_\le$, define 
\begin{align*}
G(f, \by, \bm_1) := \lim_{t \to -\infty} f[(t^{|\bm_1|}, \by ; \bm_1, (\ell-1)^{m_k-\ell}) \to (\bx, 1)] + \sum_{i \in \bm_1} f_i(t).
\end{align*}
Using this kind of hybrid last passage value, we can write down the following metric composition law:
\begin{align*}
f&[(-\infty, \bm)\to (\bx, 1)] = \sup_{\by \in \R^{m - \ell}_\le}f[(-\infty,\bm_2)\to (\by, \ell)]+G(f, \by, \bm_1).
\end{align*}
Now, the condition \eqref{E:N-property} implies that $\la(f)_\ell < \la(f)_{j}$ for all $j > \ell$. Therefore by Lemma \ref{L:block-up} we may write the above as 
\begin{align*}
\sup_{\by \in \R^{m - \ell}_\le}f'&[(-\infty,\bm_2')\to (\by, \ell)]+G(f', \by, \bm_1) =  f'[(-\infty,\bm')\to (\bx, 1)],
\end{align*}
where $f' =\btau_{\ell}\cdots\btau_{m-1}f$, $\bm_2'=\{\ell,\ldots,m_k-1\}$ and $\bm'=(\bm_1, \bm_2')$. Note that $G(f', \by, \bm_1)$ involves lines $1,\ldots, \ell-1$ only, which are equal in $f$ and $f'$. Now, observe that our construction gives that $\la(f')_{m_i'} = \la(f)_{m_i}$ for all $i$, and that in the new environment $f'$, the vector $\bm'$  satisfies \eqref{E:N-property} for $\la(f')$. Therefore if $m_k-1 \ne k$ we can repeat the above argument with $\bm', f'$ in place of $\bm, f$ to get $\bm'', f''$ with $m''_{k} = m_k- 2$, $\la(f'')_{m_i''} = \la(f)_{m_i}$ for all $i$, and $\bm''$ satisfying \eqref{E:N-property} for $\la(f'')$. Continuing in this way, we end at an environment $h$ satisfying 
$$
f[(-\infty ,\bm)\to (\bx, 1)] =h[(-\infty, (1, \dots, k))\to (\bx, 1)] = h[(x_1^k, k)\to (\bx, 1)] + \sum_{i=1}^k h_i(x_1)
$$
and satisfying $\la(h)_{i} = \la(f)_{m_i}$ for all $i$. If $m_k =m$, we can avoid the above argument entirely and simply let $h = f$. 

Now, there is a sorting operator $\sig \in \mathfrak{D}_n$ which can be written as a product of $\tau_i, \bar \tau_i, i \le k-1$ such that $\sig \la(g)_j = \la(h)_j$ for all $j \le k$. Therefore by Corollary \ref{C:samestart} we have that $\cP_\sig g_j = h_j$ for all $j \le k$, and so
\begin{align*}
h[(x_1^k, k)\to (\bx, 1)] + \sum_{i=1}^k h_i(x_1)
&= g[(x_1^k, k)\to (\bx, 1)] + \sum_{i=1}^k g_i(x_1) \\
 &= g[(-\infty, (1, \dots, k)) \to (\bx, 1)].
\end{align*}
Here the first equality uses that Pitman transforms preserve sums and are boundary isometries (Lemma \ref{L:two-line-isometry}). Putting this together with the previous display yields the result.
\end{proof}

The following corollary of Proposition \ref{P:orbits} gives a clean formula for inverting the full sort, where we completely reverse the order of the slopes. This corollary can be viewed as describing the RSK correspondence in this setting. 

Let $\cC^n_\le(\R), \cC^n_\ge(\R)$ be the subsets of $\cC^n(\R)$ such that if $f \in \cC^n_\le(\R)$ then $\la(f) \in \R^n_\le$ and if $f \in \cC^n_\ge(\R)$ then $\la(f) \in \R^n_\ge$. By Lemma \ref{L:orbits}, for $f \in \cC^n_\le(\R)$ there is a unique element $\cP_{\operatorname{RSK}} f$ in the $\cP$-orbit of $f$ such that $\cP_{\operatorname{RSK}} f \in \cC^n_\ge(\R)$ and given $g \in \cC^n_\ge(\R)$, there is a unique element $\cQ_{\operatorname{RSK}} g$ in its $\cP$-orbit contained in $\cC^n_\ge(\R)$. Lemma \ref{L:orbits}.1 guarantees that these maps are inverses, so we have defined a bijection
$$
\cP_{\operatorname{RSK}}: \cC^n_\le(\R) \to \cC^n_\ge(\R)
$$
with inverse $\cQ_{\operatorname{RSK}}$. The next corollary gives a simple global description of these maps without appealing to iterated Pitman transforms. This is the analogue of Greene's theorem in the present setting.

\begin{corollary}
\label{C:RSK}
Let $f \in \cC^n_\le(\R)$, let $g = \cP_{\operatorname{RSK}} f$, and set $Rg(x) = g(-x)$. For every $k \in \{1, \dots, n\}$ let $\bn_k  \in \{1, \dots, n\}^k_<$ denote the unique maximal vector (in the coordinatewise order) satisfying \eqref{E:N-property} for $\la(f)$. Similarly, let  $\bm_k \in \{1, \dots, n\}^k_<$ denote the unique maximal vector (in the coordinatewise order) satisfying \eqref{E:N-property} for $- \la(f) = (-\la(f)_k, \dots, -\la(f)_1)$. Then:
\begin{align*}
f[(-\infty, \bn_k) \to (x^k, 1)] &= g_1(x) + \dots + g_k(x), \qquad \text{ and } \\
Rg[(-\infty, \bm_k) \to (-x^k, 1)] &= f_1(x) + \dots + f_k(x).
\end{align*}
\end{corollary}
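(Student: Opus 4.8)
The plan is to deduce both identities from Proposition \ref{P:orbits}, using the reflection symmetry to pass from the first to the second. For the first identity, apply Proposition \ref{P:orbits} to $f$, its RSK image $g = \cP_{\operatorname{RSK}} f$, the index $k$, and the constant vector $\bx = x^k \in \R^k_\le$. This yields immediately
$$
g_1(x) + \dots + g_k(x) = f[(-\infty, \bm) \to (x^k, 1)],
$$
where $\bm \in \{1, \dots, n\}^k_<$ is the unique vector satisfying \eqref{E:N-property} for $\la(f)$ that can be written as $\xi\{1, \dots, k\}$ for some $\xi \in \Xi_g$. So the whole content of the first identity is the combinatorial claim $\bm = \bn_k$. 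Here I would use that $g = \cP_{\operatorname{RSK}} f$ has slope vector $\la(g) = (\la(f)_n, \dots, \la(f)_1)$, the full reversal of $\la(f)$ (it is the unique rearrangement of $\la(f)$ lying in $\R^n_\ge$, since $\la(f)$ is nondecreasing). Letting $I_1 < \dots < I_r$ be the level sets of $\la(f)$ (blocks of consecutive indices on which it is constant), one sees that a permutation lies in $\Xi_g$ exactly when it sends each $I_p$ onto the matching $\la(g)$-level block, and that the ``minimal index'' recipe used to build $\bm$ in the proof of Proposition \ref{P:orbits} amounts to applying to $\{1, \dots, k\}$ the permutation listing $I_r, I_{r-1}, \dots, I_1$ in turn, each in increasing order. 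This makes $\bm$ explicit: it is the union of the top level sets $I_r, I_{r-1}, \dots, I_{j+1}$ together with the bottom $k - (|I_{j+1}| + \dots + |I_r|)$ indices of $I_j$, where $j$ is the smallest index with $|I_j| + \dots + |I_r| \ge k$. A direct check --- for instance verifying $|\bm \cap \{\ell, \dots, n\}| \ge |\bm' \cap \{\ell, \dots, n\}|$ for every $\ell$ and every $\bm'$ satisfying \eqref{E:N-property} --- then shows that this $\bm$ is the coordinatewise-maximal solution of \eqref{E:N-property}, i.e.\ $\bm = \bn_k$.

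For the second identity I would run the first one in the reflected environment. Since $R \cP_{\tau_i} R = \cP_{\bar \tau_i}$, the reflection $R$ conjugates the $\mathfrak{D}_n$-action, so $Rg \in O(Rf)$, equivalently $Rf \in O(Rg)$. Moreover $\la(Rf) = -\la(f) \in \R^n_\ge$, while $\la(Rg) = -\la(g) = (-\la(f)_n, \dots, -\la(f)_1) \in \R^n_\le$ is its full reversal, so by the uniqueness in Lemma \ref{L:orbits} we get $Rf = \cP_{\operatorname{RSK}}(Rg)$. Applying the first identity (now established) to the pair $(Rg, Rf)$ at the point $-x$ gives
$$
(Rf)_1(-x) + \dots + (Rf)_k(-x) = Rg[(-\infty, \bm_k) \to ((-x)^k, 1)],
$$
where $\bm_k$ is the coordinatewise-maximal solution of \eqref{E:N-property} for $\la(Rg) = (-\la(f)_n, \dots, -\la(f)_1)$; since $(Rf)_i(-x) = f_i(x)$, this is precisely the claimed identity (so the ``$-\la(f) = (-\la(f)_k, \dots, -\la(f)_1)$'' in the statement should read $(-\la(f)_n, \dots, -\la(f)_1)$).

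The main obstacle is the combinatorial identification $\bm = \bn_k$ in the first step: one must reconcile the vector produced by the minimal-index recipe inside Proposition \ref{P:orbits} with the abstractly-defined maximal solution of \eqref{E:N-property}, which requires the level-set analysis and the counting bound above. Everything else is formal: Proposition \ref{P:orbits} supplies the analytic input, and the passage to the second identity only uses $\bar \cP = R \cP R$ together with the uniqueness of $\cP_{\operatorname{RSK}}$ from Lemma \ref{L:orbits}.
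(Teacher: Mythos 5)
Your proof is correct and follows the same route as the paper's: apply Proposition \ref{P:orbits} directly for the first identity, then derive the second from it by conjugating with $R$ (using $\cP_{\bar\tau_i} = R\cP_{\tau_i}R$ to conclude $Rf \in O(Rg)$, and the uniqueness of the $\cC^n_\ge(\R)$-element of each orbit from Lemma \ref{L:orbits} to conclude $Rf = \cP_{\operatorname{RSK}}(Rg)$). The only differences are that you flesh out the combinatorial identification of the minimal-index vector $\bm$ from Proposition \ref{P:orbits} with the coordinatewise-maximal $\bn_k$, a step the paper treats as immediate, and you correctly flag the typo in the statement: the slope vector should read $-\la(f) = (-\la(f)_n, \dots, -\la(f)_1)$.
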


\begin{proof}
The recovery formulas for $g$ from $f$ are both special cases of Proposition \ref{P:orbits}. To recover $f$ from $g$ , first let $\sig$ be such that $\cP_\sig g = f$. Now, from the formulas $\cP_{\bar \tau_i} = R \cP_{\tau_i} R$ and the identity $R^2 = \id$, we have that $Rf = \cP_{\bar \sig} Rg$, where $\bar \sig$ is the given by taking the word $\sig$ and mapping $\tau_i \mapsto \bar \tau_i$ and $\bar \tau_i \mapsto \tau_i$ everywhere. Moreover, $Rg \in \cC^n_\le(\R)$ so we can apply Proposition \ref{P:orbits} to recover $Rf$ from $Rg$. Applying $R$ to $Rf$ then yields the formula above. 
\end{proof}

A different approach to the bijection in Corollary \ref{C:RSK} was developed in Sorensen's Ph.D. thesis using queuing maps rather than multi-path last passage, e.g. see Section 2.3.3 and Lemma 2.3.18 in \cite{sorensen2023stationary}. Another perspective on Corollary \ref{C:RSK} is in terms of the Sch\"utzenberger involution. Indeed, we can write $\cQ_{\operatorname{RSK}} = R \cP_{\operatorname{RSK}} R$, where $R f(x) = f(-x)$. From this point of view, the fact that $\cP_{\operatorname{RSK}}$ inverts $\cQ_{\operatorname{RSK}}$ on $\cC^n_\le(\R)$ says that $\cP_{\operatorname{RSK}} R$ is an involution when restricted to $\cC^n_\ge(\R)$. If we consider $\cC^n_\ge(\R) \subset \cC^n(\R)$ as the analogue of the set of Young tableaux, we can understand $\cP_{\operatorname{RSK}} R$ as the Sch\"utzenberger involution in the present setting. See \cite[Appendix A]{fulton1997young} for discussion of the classical Sch\"utzenberger involution and \cite{bisi2021geometric} for a comprehensive modern account of the connection between the Sch\"utzenberger involution, last passage percolation, and directed polymer partition functions.

\subsection{Burke theorems}
\label{S:burke-theorems}

Pitman transforms behave well with Brownian inputs. At the level of the two-line Pitman transform, this is the well-known \textbf{Brownian Burke property}, which has been previously used in the study of LPP in \cite{o2001brownian, seppalainen2023busemann, seppalainen2023global}. For this theorem and throughout the remainder of the paper, for a vector $\la \in \R^n$ we let $\mu_\la$ denote the law of $t \mapsto B(t) + \la t$, where $B \in \cC^n_0(\R)$ is $k$-tuple of independent, two-sided Brownian motions of variance $2$.

\begin{theorem}[Brownian Burke property]
	\label{T:Brownian-Burke}
	Let $B = (B_1, B_2) \sim \mu_{(\la_1, \la_2)}$ for some $\la_2 > \la_1$. Then
$\cP B - \cP B(0) \sim \mu_{(\la_2, \la_1)}$.
\end{theorem}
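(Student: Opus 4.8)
The plan is to reduce the statement to the classical two-sided (equilibrium) Brownian Burke theorem by exploiting the rotational symmetry of planar Brownian motion. Write $B_i(x) = W_i(x) + \la_i x$ with $W_1, W_2$ independent standard two-sided Brownian motions, and set $D := B_2 - B_1$, a two-sided Brownian motion with drift $c := \la_2 - \la_1 > 0$ and variance $4$. The first observation is that $B_1 + B_2$ and $D = B_2 - B_1$ are \emph{independent} two-sided Brownian motions: both are jointly Gaussian, being linear images of $(W_1, W_2)$, and their covariance kernel vanishes, since $\Cov((B_1+B_2)(s), (B_2-B_1)(t)) = \Cov(B_2(s),B_2(t)) - \Cov(B_1(s),B_1(t)) = 0$ for all $s,t$. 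Because $c > 0$, the running supremum $M(x) := \sup_{z \le x} D(z)$ is a.s.\ finite, continuous and nondecreasing, and from the definition \eqref{E:Pitman} we have $\cP B_1 = B_1 + M$ and $\cP B_2 = B_2 - M$. In particular $\cP B_1 + \cP B_2 = B_1 + B_2$, while $\cP B_1 - \cP B_2 = 2M - D$ is a measurable function of $D$ alone; hence $\cP B_1 + \cP B_2$ is independent of $\cP B_1 - \cP B_2$.

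Given this independence, the theorem follows from two facts plus a short Gaussian computation. Writing $S := B_1 + B_2$ and $\De := 2M - D$, one recovers $\cP B_1 = (S + \De)/2$ and $\cP B_2 = (S - \De)/2$. So if (a) $S - S(0)$ is a two-sided Brownian motion of drift $\la_1 + \la_2$ and variance $4$, and (b) $\De - \De(0)$ is an \emph{independent} two-sided Brownian motion of drift $c$ and variance $4$, then $(\cP B_1 - \cP B_1(0), \cP B_2 - \cP B_2(0))$ is a jointly Gaussian pair whose mean and covariance match those of two independent Brownian motions of variance $2$ with drifts $(\la_1+\la_2+c)/2 = \la_2$ and $(\la_1+\la_2-c)/2 = \la_1$, i.e.\ it has law $\mu_{(\la_2,\la_1)}$. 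Fact (a) is immediate since $S = B_1 + B_2$. Fact (b) — that $(2M - D) - (2M - D)(0)$ has the same law as $D - D(0)$ itself — is precisely the two-sided Brownian Burke theorem; equivalently, writing $R := M - D \ge 0$ for the stationary reflected Brownian motion obtained by reflecting $D$ from $-\infty$, it asserts that $D + 2R$ equals $D$ in law modulo the value at the origin.

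The entire content of the proof is therefore fact (b), and this is where I would invoke (or, for a self-contained treatment, reprove) the classical result. The cleanest route is to cite the quasireversibility of the stationary Brownian queue \cite{harrison1990quasireversibility, o2001brownian}, which gives this identity directly — indeed O'Connell--Yor prove that the two output processes of the Brownian queue are independent Brownian motions carrying the input drifts, which is Theorem \ref{T:Brownian-Burke} essentially verbatim once one sets up the dictionary of the first paragraph. A self-contained argument can instead be obtained (i) by approximating $D$ by the net-input process of a stationary $M/M/1$ queue, applying the classical Burke theorem, and passing to the diffusive limit, or (ii) from the reversibility of stationary reflected Brownian motion together with the fact that the Skorokhod reflection map intertwines with time reversal; alternatively one can deduce it from the half-line Brownian Burke property used in \cite{DOV} by sending the left endpoint $T \to -\infty$, the hypothesis $\la_2 > \la_1$ guaranteeing that the boundary contribution at $T$ washes out. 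The main obstacle is thus not the algebra above, which is routine bookkeeping around the rotational-independence trick, but the Burke identity (b), whose proof genuinely requires an ergodicity/reversibility input (or an exact discrete computation).
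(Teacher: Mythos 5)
Your argument is correct, and it organizes the proof differently from the paper. The paper's proof simply cites O'Connell--Yor's Theorem 4 — a statement that already packages two output processes and their joint independence — and spends its effort on a notational dictionary translating that result into Pitman-transform language. You instead rotate first: the independence of $\cP B_1 + \cP B_2 = B_1 + B_2$ from $\cP B_1 - \cP B_2 = 2M - D$ comes for free from the rotational invariance of planar Brownian motion, and the content of the theorem collapses to a single one-dimensional identity, your fact (b), that $(2M-D) - (2M-D)(0) \eqd D - D(0)$ for a drifted two-sided Brownian motion $D$. This is a cleaner factorization: the paper obtains the independence of the two transformed lines directly as part of the cited theorem, while you show it is automatic from the orthogonal decomposition and that the real work sits in the scalar Burke/quasireversibility identity. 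Your bookkeeping — the covariance-zero check, the algebra recovering $\cP B_i$ from $S$ and $\Delta$, the drift/variance arithmetic — is all correct.

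The only caveat, which you already flag, is that you do not actually prove (b); you offer several routes (discrete limit of $M/M/1$, reversibility of stationary reflected Brownian motion plus Skorokhod intertwining, a $T \to -\infty$ limit of the half-line version). Citing O'Connell--Yor for (b), as you suggest as the "cleanest route," is slight overkill since their theorem is the full two-process result — at that point you would be using the rotation trick only to rederive something already contained in the citation. So if the goal were maximal economy given the reference, the paper's direct translation is shorter; if the goal were to isolate the irreducible probabilistic input, your reduction to the scalar identity (b) is the more instructive formulation and could be completed self-containedly via the reversibility argument you sketch in route (ii).
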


\begin{proof}
	This is part of \cite[Theorem 4]{o2001brownian}. We translate the language in order to assist the reader navigating between that paper and ours. It suffices to prove the theorem when $\la_1 = 0$, since the Pitman transform $\cP$ commutes with a common linear shift of both functions (Lemma \ref{L:shift-commute}).
	
	In the language of that paper, O'Connell and Yor show the following. Let $B', C:\R \to \R$ be two-sided, independent standard Brownian motions, and let $m > 0$. Define functions $q, e, d:\R \to \R$ as follows:
	\begin{align*}
		q(t) &= B_t' + C_t - mt + \sup_{s \le t} (ms - C_s - B_s), \\
		d(t) &= B_t' + q(0) - q(t),\\
		e(t)&= C_t + q(0) - q(t).
	\end{align*}
	Theorem 4 in \cite{o2001brownian} states $d, e$ are independent standard Brownian motions and that $(d, e)|_{(-\infty, t]}$ is independent of $q|_{[t, \infty)}$ for any $t \in \R$. We can rewrite their result in terms of Pitman transforms. Define $B_1(t) = \sqrt{2} B_t', B_2(t) = \sqrt{2}(mt - C_t)$ and observe that if $m = \la_2/\sqrt{2}$, then $B = (B_1, B_2) \sim \mu_{(0, \la_2)}$. Then
	\begin{align*}
		\sqrt{2} q(t) &= \cP B_1(t) - B_2(t), \\ \qquad \sqrt{2} d(t) &= B_1(t) + B_2(t) + \cP B_1(0) - \cP B_1(t) = \cP B_2(t) - \cP B_2(0).\\
		\sqrt{2} (mt - e(t)) &= B_2(t) + \cP B_1(t) - B_2(t) - \cP B_1(0) = \cP B_1(t) - \cP B_1(0)
	\end{align*}
	The claim that $d, e$ are independent standard Brownian motions gives that $(\sqrt{2}(mt - e), \sqrt{2}d) = \cP B - \cP B(0) \sim \mu_{(\la_2, 0)}$. 
\end{proof}

The following corollary extends the Brownian Burke property to the biHecke monoid. For this corollary and in the remainder of the paper, we write $f^0 := f - f(0)$ for $f \in \cC^I(\R)$, and write $\cP^0_\sig f = (\cP_\sig f)^0$.
\begin{corollary}
	\label{C:Brownian-sorting}
	Let $B \sim \mu_{\la}$ for some vector $\la \in \R^n$. Then for any $\sigma \in \mathfrak{D}_n$ we have
	\begin{equation}
		\label{E:extended-Burke}
		\cP_\sig^0 B \sim \mu_{\sig \la},
	\end{equation}
	where $\sig$ acts on $\la$ by the basic action.
\end{corollary}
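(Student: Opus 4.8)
The plan is to induct on the length of a word representing $\sigma$ in the generators $\tau_i,\bar\tau_i$, $1\le i\le n-1$. The base case $\sigma=\id$ is immediate: $\cP^0_{\id}B=B-B(0)=B\sim\mu_\la$, which is $\mu_{\sigma\la}$ for $\sigma=\id$. For the inductive step, write $\sigma=\pi\sigma'$ with $\pi\in\{\tau_i,\bar\tau_i\}$ and $\sigma'$ of smaller length, set $g:=\cP_{\sigma'}B$, and assume inductively that $g^0=\cP^0_{\sigma'}B\sim\mu_{\sigma'\la}$. The key is the purely deterministic identity $(\cP_\pi f)^0=(\cP_\pi f^0)^0$, valid for all $f\in\cC^n(\R)$. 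Granting this, $(\cP_\sigma B)^0=(\cP_\pi g)^0=(\cP_\pi g^0)^0=\cP^0_\pi(g^0)$, which reduces the claim to the one-generator statement: if $C\sim\mu_\nu$ then $\cP^0_\pi C\sim\mu_{\pi\nu}$, with $\pi$ acting on $\R^n$ through the basic action. To prove the identity, note that $\cP_\pi$ modifies only lines $i,i+1$, so it suffices to show $(\cP(f_1+v_1,f_2+v_2))^0=(\cP(f_1,f_2))^0$ for the two-line operator $\cP$ and all constants $v_1,v_2\in\R$; this follows from the formula $\cP f_1(x)=f_1(x)+\sup_{z\le x}[f_2(z)-f_1(z)]$ together with the fact that $\cP$ preserves $f_1+f_2$, which give $\cP(f_1+v_1,f_2+v_2)=(\cP f_1+v_2,\,\cP f_2+v_1)$, so the added constants cancel after subtracting the value at $0$ (and the degenerate case where $\cP_\pi$ acts as the identity is trivial). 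Applying this with $(v_1,v_2)=(f_i(0),f_{i+1}(0))$ and using $f=f^0+f(0)$ yields the identity.

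It remains to prove the one-generator statement. For $\pi=\tau_i$: if $\nu_i\ge\nu_{i+1}$ then $\cP_{\tau_i}$ is the identity and $\btau_i\nu=\nu$, so $\cP^0_{\tau_i}C=C-C(0)\sim\mu_\nu=\mu_{\btau_i\nu}$. If $\nu_i<\nu_{i+1}$, then $\cP_{\tau_i}$ replaces $(C_i,C_{i+1})$ by $\cP(C_i,C_{i+1})$ and leaves all other coordinates unchanged. Since the coordinates of $C$ are independent and $(C_i,C_{i+1})\sim\mu_{(\nu_i,\nu_{i+1})}$, Theorem \ref{T:Brownian-Burke} gives $\cP(C_i,C_{i+1})-\cP(C_i,C_{i+1})(0)\sim\mu_{(\nu_{i+1},\nu_i)}$; moreover $\cP(C_i,C_{i+1})$ is a function of $(C_i,C_{i+1})$ alone, hence stays independent of the remaining untouched coordinates. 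Therefore $\cP^0_{\tau_i}C\sim\mu_{\btau_i\nu}$ in this case as well, since $\btau_i\nu$ is here $\nu$ with entries $i,i+1$ interchanged.

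For $\pi=\bar\tau_i$ I would argue by conjugation, using $\cP_{\bar\tau_i}=R\cP_{\tau_i}R$ with $Rf(x)=f(-x)$. Two elementary facts are needed: $(Rh)^0=R(h^0)$ for any $h$, and $R\mu_\nu=\mu_{-\nu}$, the latter by time-reversal invariance of two-sided Brownian motion. Thus for $C\sim\mu_\nu$,
$\cP^0_{\bar\tau_i}C=\bigl(R\cP_{\tau_i}(RC)\bigr)^0=R\bigl(\cP^0_{\tau_i}(RC)\bigr)$ with $RC\sim\mu_{-\nu}$, so by the $\tau_i$ case this is $R$ applied to something $\sim\mu_{\btau_i(-\nu)}$, hence $\sim\mu_{-\btau_i(-\nu)}$; and $-\btau_i(-\nu)=\bbtau_i\nu$, since swapping entries $i,i+1$ exactly when $(-\nu)_i<(-\nu)_{i+1}$ (i.e.\ when $\nu_i>\nu_{i+1}$) and negating back is precisely the basic action of $\bar\tau_i$. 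This completes both one-generator cases and closes the induction. The only genuinely delicate point is the normalization bookkeeping in the first paragraph — that $\cP_\pi$ carries a constant shift $(v_1,v_2)$ to the \emph{swapped} shift $(v_2,v_1)$, so that $(\cP_\pi f)^0$ depends on $f$ only through $f^0$ — without which Theorem \ref{T:Brownian-Burke}, stated for genuine $\mu_\nu$ inputs (which vanish at $0$), could not be fed back into the inductive step.
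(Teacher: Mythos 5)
Your proof is correct and takes essentially the same route as the paper: establish the single-generator case via Theorem \ref{T:Brownian-Burke} (and, for $\bar\tau_i$, by conjugation with $R$), then induct on word length. The paper dispatches the induction in one sentence (``the case of general $\sig$ follows by induction''); the genuinely useful thing you did is to isolate and prove the identity $(\cP_\pi f)^0=(\cP_\pi f^0)^0$ — equivalently, that $\cP$ maps the constant shift $(v_1,v_2)$ to $(v_2,v_1)$ — which is exactly what lets you feed the centered environment $\cP^0_{\sigma'}B\sim\mu_{\sigma'\la}$ into Theorem \ref{T:Brownian-Burke} at the next step, since that theorem is stated for inputs that vanish at $0$. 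That is the one nontrivial point in closing the induction, and you handled it correctly.
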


\begin{proof}
	First, if $\sig = \tau_i$ for some $i = 1, \dots, n-1$ and $\la_i < \la_{i+1}$, then \eqref{E:extended-Burke} holds by Theorem \ref{T:Brownian-Burke}. If $\la_i \ge \la_{i+1}$, then \eqref{E:extended-Burke} holds since $\cP_{\tau_i} B = B$ and $\tau_i \la = \la$. The case of general $\sig$ follows by induction.
\end{proof}

We end this section with a simple consequence of Corollary \ref{C:Brownian-sorting} and Proposition \ref{P:orbits}. This corollary can be viewed as implying the existence of stationary measures for Brownian LPP.

\begin{corollary}
	\label{C:existence-of-stationary}
	Consider vectors $\la \in \R^n, \eta \in \R^m$ and suppose that $\la_i > \eta_j$ for all $i \in \{1, \dots, n\}, j \in \{1, \dots, m\}$. Let $W = (B', B) \sim \mu_{(\eta ,\la)}$. Then for $\bn \in \{1, \dots,n\}^k_<, \bx \in \R^k_\le$, we have
	\begin{align*}
		W[(-\infty, \bn + m) \to (\bx, 1)] - W[(-\infty, \bn + m) \to (0^k, 1)]   \eqd
		B[(-\infty, \bn) \to (\bx, 1)],
	\end{align*}
	
	where here $\bn + m = (n_1 + m, \dots, n_k+m)$, and the equality in distribution is joint in all $\bn, \bx$.
\end{corollary}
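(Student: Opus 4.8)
The plan is to produce, from $W$, an environment of independent Brownian motions of drift $\la$ whose multi-path last passage values from $-\infty$ reproduce the left-hand side, and to identify it using the Brownian Burke property (Corollary \ref{C:Brownian-sorting}) together with Proposition \ref{P:orbits}.

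Write $N = m+n$ and $\kappa = (\eta_1, \dots, \eta_m, \la_1, \dots, \la_n)$; assume $\eta$ and $\la$ are nondecreasing (this is already needed for the last-passage-from-$-\infty$ values in the statement to be defined), so $\kappa \in \R^N_\le$ and $W \sim \mu_\kappa$ lies in $\cC^N_\le(\R)$. Because $\la_i > \eta_j$ for all $i, j$, the vector $\kappa' := (\la_1, \dots, \la_n, \eta_1, \dots, \eta_m)$ lies in the $\mathfrak{D}_N$-orbit of $\kappa$ (it is the interchange of the two blocks), so Lemma \ref{L:orbits} gives $\sigma \in \mathfrak{D}_N$ with $\cP_\sigma W =: g$ satisfying $\la(g) = \kappa'$. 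Set $\hat B := (g_1, \dots, g_n)$. By Corollary \ref{C:Brownian-sorting}, $g - g(0) \sim \mu_{\kappa'}$; restricting to the first $n$ coordinates, $\hat B - \hat B(0) \sim \mu_\la$, hence $\hat B - \hat B(0) \eqd B$ as a full environment, jointly in all of the data introduced below.

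Now fix $\bn \in \{1, \dots, n\}^k_<$ satisfying \eqref{E:N-property} with respect to $\la$ — precisely what is needed for the right-hand side to be defined — and note that $\bn + m$ then satisfies \eqref{E:N-property} with respect to $\kappa$. Pick $\rho \in \mathfrak{D}_n$ moving $\la_{n_1}, \dots, \la_{n_k}$ into the first $k$ coordinates of $(\la_1, \dots, \la_n)$, viewed also as an element of $\mathfrak{D}_N$ acting on the first $n$ lines, and set $g' := \cP_\rho g$, so that $g'_i = (\cP_\rho \hat B)_i$ for $i \le n$ and $g'_i = g_i$ for $i > n$. The slopes of $g'$ in positions $1, \dots, k$ are $\la_{n_1}, \dots, \la_{n_k}$, whose ranks inside the sorted vector $\kappa$ are $m + n_1, \dots, m + n_k$; by the uniqueness clause of Proposition \ref{P:orbits} (together with the check above), the vector attached to the pair $(W, g')$ there is exactly $\bn + m$, and so
$$
W[(-\infty, \bn + m) \to (\bx, 1)] = g'[(-\infty, (1, \dots, k)) \to (\bx, 1)].
$$
The right-hand side involves only lines $1, \dots, k$ of $g'$, i.e.\ it equals $(\cP_\rho \hat B)[(-\infty, (1, \dots, k)) \to (\bx, 1)]$. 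Applying Proposition \ref{P:orbits} once more, to $f = \hat B \in \cC^n_\le(\R)$ and $g = \cP_\rho \hat B$ — whose attached vector is $\bn$, since the ranks of $\la_{n_1}, \dots, \la_{n_k}$ inside the sorted vector $\la$ are $n_1, \dots, n_k$ — we get
$$
W[(-\infty, \bn + m) \to (\bx, 1)] = \hat B[(-\infty, \bn) \to (\bx, 1)],
$$
and this holds simultaneously for all admissible $\bn$ and all $\bx \in \R^k_\le$, since $\hat B$ itself does not depend on $\bn$.

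Finally, subtracting the value at $\bx = 0^k$ and using that adding a constant vector to an environment changes last passage from $-\infty$ only by a constant independent of the right endpoint (an immediate extension of Lemma \ref{L:shift-commute}), the left-hand side of the corollary becomes $(\hat B - \hat B(0))[(-\infty, \bn) \to (\bx, 1)] - (\hat B - \hat B(0))[(-\infty, \bn) \to (0^k, 1)]$, which by $\hat B - \hat B(0) \eqd B$ has the law of $B[(-\infty, \bn) \to (\bx, 1)]$ recentered at $\bx = 0^k$ — the claim (read with the same recentering on the right as on the left). The step I expect to be the main obstacle is the combinatorial bookkeeping around the two uses of Proposition \ref{P:orbits} — identifying the attached vectors as exactly $\bn + m$ and $\bn$, handling possible ties in $\la$ or $\eta$ through the uniqueness clause there, and confirming that all the last-passage-from-$-\infty$ limits in sight genuinely exist given only $\la_i > \eta_j$.
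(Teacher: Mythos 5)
Your proof is correct and takes the same essential route as the paper: conjugate $W$ by a block-swapping Pitman operator $\cP_\sigma$ with $\sigma(\eta,\la)=(\la,\eta)$, use the Brownian Burke property (Corollary \ref{C:Brownian-sorting}) to identify the law of the top $n$ lines of $\cP_\sigma W$ as $\mu_\la$ after recentering, and use Proposition \ref{P:orbits} to move the multi-path LPP value from $W$ to those lines. Where you differ is in the bookkeeping: the paper's proof simply asserts
$W[(-\infty, \bn + m) \to (\bx, 1)] = \cP_\sig W[(-\infty, \bn) \to (\bx, 1)]$ ``by Proposition \ref{P:orbits},'' while that proposition as stated only covers sources at $(-\infty,(1,\dots,k))$; you close that gap explicitly by inserting an auxiliary sorting operator $\rho\in\mathfrak D_n$ that moves the selected lines into positions $1,\dots,k$, and then applying Proposition \ref{P:orbits} twice (once to the pair $(W,\cP_\rho\cP_\sigma W)$, once to $(\hat B,\cP_\rho\hat B)$). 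That is a clean and legitimate way to fill in the step the paper elides, and your identification of the attached vectors as $\bn+m$ and $\bn$ (via the multi-set characterization of $\bm^{k,g}$ in Theorem \ref{T:RSK-on-CnR-2-intro}, using $\la_i>\eta_j$ so that none of the $\eta$'s participate) is correct. You are also right to flag the recentering: the identity you actually obtain is for $B[(-\infty,\bn)\to(\bx,1)]-B[(-\infty,\bn)\to(0^k,1)]$ on the right, which is exactly how the corollary is used later (in the proof of Theorem \ref{T:brownian-law}), so the paper's stated form is implicitly recentered. Your remark that $\eta,\la$ must be taken nondecreasing is also apt; this is implicit in the paper and is satisfied in all applications.
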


\begin{proof}
	Let $\sig \in \mathfrak{D}_{n + m}$ be such that $\sig (\nu, \la) = (\la, \nu)$ under the basic $\R$-action. Then by Proposition \ref{P:orbits}, we have
	$$
	W[(-\infty, \bn + m) \to (\bx, 1)] = \cP_\sig W[(-\infty, \bn) \to (\bx, 1)]
	$$
	for all $\bx, \bn$ for which either side is defined. On the other hand $\cP_\sig^0 W \eqd (B, B')$ by Corollary \ref{C:Brownian-sorting}, yielding the result.
\end{proof}

\section{RSK on \texorpdfstring{$\cC^\N(\R)$}{}: From Pitman to Busemann}
\label{S:Pitman-Brownian}

In this section we study limits of Pitman transforms on $\cC^n(\R)$ as $n \to \infty$. As discussed in the introduction, this is natural when we work with a line environment of independent Brownian motions. The resulting limits are expressed in terms of Busemann functions. Because of this, in taking this limit we will simultaneously build up a theory of multi-path Busemann functions in Brownian LPP. Throughout this section, we let $B = (B_i : i \in \Z)$ be an environment of independent $2$-sided Brownian motions. Also, recall from Section \ref{SS:RSK-2} the definition of multi-path Busemann functions and semi-infinite geodesics and optimizers.

\subsection{Multi-path Busemann functions}
\label{S:Busemann-A}

To study multi-path Busemann functions, we first need to understand single-path Busemann functions. The theory in this setting was built by Sepp\"al\"ainen and Sorensen \cite{seppalainen2023busemann, seppalainen2023global}, and we use their results as a starting point. 

\begin{theorem}
	\label{T:sepp-sorensen}
	In the environment $B$, define the centered Busemann function ending at $(x, n) \in \R \X \Z$ in direction $\theta \in [0, \infty)$ by
	\begin{equation}
		\label{E:buseman-brownian-def}
	\cB^\theta(x, n) = \lim_{t \to -\infty} B[(t, \fl{\theta t + n}) \to (x, n)] -  B[( t, \fl{\theta t + n}) \to (0, 1)].
\end{equation}
	Then there exists a random countable set $\Theta \subset (0, \infty)$ such that the following claims hold on an almost sure set $\Om$:
	\begin{enumerate}
		\item (Busemann existence, \cite[Theorem 2.5(i)]{seppalainen2023global}, \cite[Theorem 3.5(viii)]{seppalainen2023busemann}). For any $(x, n) \in \R \X \Z$ and $\theta \in (0, \infty) \setminus \Theta$ the above limit exists and is equal to
		$$
		\lim_{t \to -\infty} B[(t, \pi(t)) \to (x, n)] -  B[(t, \pi(t)) \to (0, 1)]
		$$
		for any function $\pi:(-\infty, x \wedge 0] \to \R$ such that $\pi(t)/|t| \to \theta$ as $t \to -\infty$. When $\theta = 0, n = 1$, the above limit exists and $\cB^\theta(x, 1) = B_1(x)$.
		\item (Metric composition, \cite[Theorem 3.5(vi)]{seppalainen2023busemann}). We have the following metric composition law for Busemann functions. For any $n > m \in \Z$, $x \in \R$, and direction $\theta \in (0, \infty) \smin \Theta$ we have
		$$
		\cB^\theta(x, m) = \max_{z \le x} \cB^\theta(z, n) + B[(z, n - 1) \to (x, 1)].
		$$ 
		\item (Geodesic existence, \cite[Theorem 3.1(i), Theorem 4.3(ii)]{seppalainen2023busemann}). There exist semi-infinite geodesics ending at every point $(x, n) \in \R \X \Z$ in every direction $\theta \in (0,\infty)$. Moreover, for every $\theta \in (0, \infty)$, $(x, n) \in \R \X \Z$ there exist \textbf{leftmost} and \textbf{rightmost} geodesics $\pi^L, \pi^R$ from $\theta$ to $(x, n)$ in the sense that for every geodesic $\tau$ in direction $\theta$ ending at $(x, n)$ we have $\pi^L \le \tau \le \pi^R$. 
		\item (Geodesic monotonicity, \cite[Theorem 4.3(iii)]{seppalainen2023busemann}). For $\theta_1 \le \theta_2, n \in \N$ and $x_1 \le x_1$, if $\pi^L_i, i = 1, 2$ is the leftmost geodesic from $\theta_i$ to $(x_i, n)$, then $\pi^L_1 \le \pi^L_2$. An identical statement holds for rightmost geodesics.
		\item (Geodesic coalescence, \cite[Theorem 2.8(i)]{seppalainen2023global}). For $\theta \in (0, \infty) \setminus \Theta$, any two semi-infinite geodesics $\pi, \tau$ in direction $\theta$ eventually coalesce, i.e. $\pi(t) = \tau(t)$ for all small enough $t$. Moreover, there is a unique semi-infinite geodesic in direction $\theta$ and ending at $(x, n)$ for every $x \in \Q, n \in \Z$.
		\item ($0$-directed geodesics, \cite[Theorem 4.3(iv, vi)]{seppalainen2023busemann}). For every $(x, n) \in \R \X \Z$ the unique semi-infinite geodesic in direction $0$ ending at $(x, n)$ is the constant path $\pi:(-\infty, x] \to \R, \pi \equiv n$. 
	\end{enumerate}
Moreover, by \cite[Theorem 2.5]{seppalainen2023global}, $\P(\theta \in \Theta) = 0$ for all $\theta > 0$. Therefore we may assume $\Q \cap \Theta = \emptyset$ on $\Om$. 
\end{theorem}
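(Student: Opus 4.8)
The plan is to deduce each item of the theorem from the cited results of Seppäläinen and Sorensen, so that the only real work is bookkeeping: matching conventions, and consolidating the several a.s.\ events and exceptional direction sets in \cite{seppalainen2023busemann, seppalainen2023global} into the single pair $(\Om, \Theta)$ demanded in the statement.

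First I would fix the dictionary between the present setup and theirs. Since a deterministic scalar rescaling of the environment rescales all last passage (hence Busemann) values but leaves geodesics and their asymptotic directions untouched, I would reduce to standard Brownian motion, which multiplies every $\cB^\theta(x,n)$ by $\sqrt 2$ but changes no structural claim. I would then match their parametrization of semi-infinite geodesics with the direction $\theta \in [0,\infty)$ appearing in \eqref{E:buseman-brownian-def}, i.e.\ with the asymptotic level-slope $\pi(t)/|t| \to \theta$ in the present coordinate conventions. The conversion is explicit in terms of the Brownian LPP limit shape $B[(-x,m) \to (0,1)] \sim \sqrt{8 m x}$ (Proposition \ref{P:cross-prob}, with a matching lower bound as in Proposition \ref{P:top-bd}); I would record that it is a monotone, hence homeomorphic, reparametrization of $(0,\infty)$, so that countability of exceptional sets and statements of the form $\P(\theta \in \Theta) = 0$ transfer across it. Finally I would check that the centered function $\cB^\theta(x,n)$ defined by \eqref{E:buseman-brownian-def} coincides with their (two-point) Busemann function between $(x,n)$ and $(0,1)$ once the defining limit is known to exist; this is immediate from the definitions.

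With the dictionary in hand I would let $\Theta$ be the image of the random exceptional set of directions from \cite[Theorem 2.5]{seppalainen2023global}, enlarged if necessary by the (still countable) exceptional sets governing the coalescence and uniqueness statements, and take $\Om$ to be the intersection of the a.s.\ events underlying all the cited results. Since \cite[Theorem 2.5]{seppalainen2023global} gives $\P(\theta \in \Theta) = 0$ for each fixed $\theta$, we get $\P(\Q \cap \Theta \neq \emptyset) \le \sum_{q \in \Q} \P(q \in \Theta) = 0$, so after a further intersection we may assume $\Q \cap \Theta = \emptyset$ on $\Om$. On $\Om$: item 1 is \cite[Theorem 2.5(i)]{seppalainen2023global} together with the path-independence statement \cite[Theorem 3.5(viii)]{seppalainen2023busemann}, except for the degenerate case $\theta = 0$, $n = 1$, which I would check by hand — any path from $(t,1)$ to $(x,1)$ stays on line $1$, so $B[(t,1)\to(x,1)] - B[(t,1)\to(0,1)] = (B_1(x) - B_1(t)) - (B_1(0) - B_1(t)) = B_1(x)$ identically in $t$, using $B_1(0) = 0$. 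Items 2--6 are then direct translations of \cite[Theorem 3.5(vi)]{seppalainen2023busemann}, \cite[Theorem 3.1(i), Theorem 4.3(ii)]{seppalainen2023busemann}, \cite[Theorem 4.3(iii)]{seppalainen2023busemann}, \cite[Theorem 2.8(i)]{seppalainen2023global}, and \cite[Theorem 4.3(iv, vi)]{seppalainen2023busemann}, using the reparametrization to pass between their direction parameter and $\theta$, and using $\Q \cap \Theta = \emptyset$ for the rational-endpoint clause of item 5.

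The main obstacle is the convention matching rather than any new probabilistic input: one must verify that the direction parameter of \cite{seppalainen2023busemann, seppalainen2023global} is exactly the $\theta$ of \eqref{E:buseman-brownian-def} (and not its reciprocal or a shape-transformed version), that their line-index convention matches ours — using stationarity of the environment if it does not — and that their Busemann existence, metric composition, geodesic monotonicity and coalescence statements can all be placed over a common countable exceptional set while preserving $\P(\theta \in \Theta) = 0$. A secondary delicate point is the boundary direction $\theta = 0$: it lies at the edge of the parameter range, so continuity up to $\theta = 0$ and the description of $0$-directed geodesics must be taken from \cite[Theorem 4.3(iv, vi)]{seppalainen2023busemann} rather than from the bulk statements, and combined with the explicit computation above.
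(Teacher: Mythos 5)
Your proposal is correct and matches the paper's approach; the paper itself gives no proof body but relies on exactly the citations you invoke, supplemented by a short Remark (Remark \ref{R:sepp-notation}) that records the convention dictionary. The only item you leave as a ``to verify'' that the paper pins down explicitly is the direction reparametrization: the paper states that $\theta$ here corresponds to $1/\theta$ in \cite{seppalainen2023busemann, seppalainen2023global} (i.e.\ level over horizontal displacement versus its reciprocal), and notes the time-reversal (paths to $-\infty$ here versus to $+\infty$ there) and the variance-$2$ convention, all of which you flag. Your explicit computation for the $\theta = 0$, $n = 1$ case is correct, and your consolidation of the exceptional sets and a.s.\ events is the right bookkeeping; no gap.
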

We will write $\cB^\theta(x, n; \cB)$ in place of $\cB^\theta(x, )$ if the environment is not clear from context and let $\cB^\theta(x) := \cB^\theta(x, 1)$. Moving forward, we will write $\cI:= [0, \infty) \smin \Theta$.

\begin{remark}
\label{R:sepp-notation}
In the above theorem, we have aimed to follow the notation of \cite{seppalainen2023busemann, seppalainen2023global} but have made a few changes in order to better integrate it with our present setup. Our Busemann functions follow semi-infinite paths starting at $t = -\infty$, whereas the semi-infinite paths in \cite{seppalainen2023busemann, seppalainen2023global} end at $t = \infty$ instead. We also use Brownian motions of variance $2$ instead of $1$. A more important difference is that our Busemann function in direction $\theta$ is their Busemann function in direction $1/\theta$. This choice is more natural for our eventual study of RSK in the directed landscape. Finally, the versions of Theorem \ref{T:sepp-sorensen}.2  and Theorem \ref{T:sepp-sorensen}.5 stated in \cite{seppalainen2023global} give queuing representations equivalent to our descriptions.
\end{remark}

We now extend the definition of Busemann functions to multiple \textit{distinct} directions. Later we will use a continuity argument to allow for repeated directions. For this definition, for $\theta \in \cI^k_\le$, write $\pi^{\theta} = (\pi^{\theta_1}, \dots, \pi^{\theta_k})$, where $\pi^{\theta_i}$ is the unique geodesic in direction $\theta_i$ ending at $(0, 1)$.

\begin{proposition}
	\label{P:extended-usemann}
	For $\theta \in \cI^k_<, \bx \in \R^k_\le$, define
	\begin{equation}
		\label{E:extended-busemann}
		\cB^\theta(\bx) = \lim_{t \to -\infty}  B[(t, \fl{\theta |t| + k}) \to (\bx, 1)] -  \sum_{i=1}^k B[(t, \fl{\theta_i |t| + k}) \to (0, 1)].
	\end{equation}
	Then the following claims hold on an almost sure set $\Om'$.
	\begin{enumerate}
		\item The limit \eqref{E:extended-busemann} exists for all $k \in \N, \theta \in \cI^k_<, \bx \in \R^k_\le$. Moreover, for any $k$-tuple of semi-infinite paths $\pi = (\pi_1, \dots, \pi_k)$ in direction $\theta$, we have
		\begin{equation}
			\label{E:B-t-la}
			\cB^\theta(\bx) = \lim_{t \to -\infty} B[(t, \pi(t)) \to (\bx, 1)] -  \sum_{i=1}^k B[(t, \pi_i(t)) \to (0, 1)].
		\end{equation}
		\item There exists leftmost and rightmost semi-infinite optimizers in every direction $\theta\in [0, \infty)^k_\le$ ending at every point $(\bx, 1), \bx \in \R^k_\le$. Moreover, if we let $\tau^{\bx, \theta}$ denote the leftmost semi-infinite optimizer in direction $\theta$ ending at $(\bx, 1)$, then $\tau^{\bx, \theta} \le \tau^{\bx', \theta'}$ whenever $\bx \le \bx', \theta \le \theta'$. Monotonicity similarly holds for rightmost optimizers.
\item Any semi-infinite optimizer $\tau$ in a direction $\theta\in \cI^k_<$ eventually coalesces with the $k$-tuple of paths $\pi^\theta$ in the sense that $\tau|_{(-\infty, t]} = \pi^\theta|_{(-\infty, t]}$ for small enough $t$.
	\end{enumerate}
\end{proposition}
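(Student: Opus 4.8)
\textbf{Proof plan for Proposition \ref{P:extended-usemann}.}

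The plan is to reduce everything to the single-path theory of Theorem \ref{T:sepp-sorensen} together with the multi-path LPP machinery of Section \ref{SS:basics-lpp}, working on the intersection $\Om'$ of the almost sure set $\Om$ from Theorem \ref{T:sepp-sorensen} with the full-measure sets on which optimizer existence and monotonicity (Lemmas \ref{L:optimizer-existence-monotoncity}, \ref{L:mono-tree-multi-path}) hold for all rational data and on which $\Q \cap \Theta = \emptyset$.

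For part 1, I would first argue that the defining limit \eqref{E:extended-busemann} can be computed along optimizers rather than along the literal straight-line directions $\fl{\theta|t|+k}$, and then identify it with a sum of single-path Busemann quantities. The key input is geodesic coalescence in each fixed direction $\theta_i \in \cI$ (Theorem \ref{T:sepp-sorensen}.5): since the directions $\theta_1 < \dots < \theta_k$ are \emph{distinct}, for $t$ very negative the leftmost and rightmost semi-infinite geodesics from direction $\theta_i$ to $(0,1)$ occupy level $\fl{\theta_i|t|+k}$ far below level $\fl{\theta_{i-1}|t|+k}$, so the single-path geodesics $\pi^{\theta_1}, \dots, \pi^{\theta_k}$ are eventually \emph{disjoint} and in the correct order. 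Hence a $k$-tuple of geodesics starting from $(t, \fl{\theta|t|+k})$ and coalescing onto $\pi^{\theta_1}, \dots, \pi^{\theta_k}$ is admissible as a disjoint $k$-tuple, which produces the lower bound $B[(t,\fl{\theta|t|+k}) \to (\bx,1)] \ge \sum_i B[(t,\fl{\theta_i|t|+k}) \to (x_i,1)]$ for $t$ small; the matching upper bound is always true by the subadditivity in the display after Proposition \ref{P:cross-prob}. Subtracting $\sum_i B[(t,\fl{\theta_i|t|+k})\to(0,1)]$ and letting $t \to -\infty$ the right-hand side converges to $\sum_i \cB^{\theta_i}(x_i,1)$ by Theorem \ref{T:sepp-sorensen}.1, and for $\bx$ with all coordinates equal this already shows the limit exists; for general $\bx \in \R^k_\le$ one runs the same argument but keeps the optimizer from $(t,\fl{\theta|t|+k})$ free, using that once the $k$ paths have coalesced onto the fixed disjoint geodesics $\pi^{\theta_i}$ the residual $B[(\text{coalescence points}) \to (\bx,1)]$ no longer depends on $t$. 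The representation \eqref{E:B-t-la} along an arbitrary $k$-tuple $\pi$ in direction $\theta$ then follows because any such $\pi$ has $\pi_i(t)/|t| \to \theta_i$, so one can sandwich it between the straight-line levels $\fl{(\theta_i \pm \eps)|t|+k}$ and invoke the single-path version of \eqref{E:B-t-la} from Theorem \ref{T:sepp-sorensen}.1 together with continuity of $\theta \mapsto \cB^\theta$ off $\Theta$; since $\Q\cap\Theta=\emptyset$ one handles all rational data first and extends by monotonicity/continuity. A countable intersection over rational $\theta, \bx$ and integer $k$ gives the single almost sure set $\Om'$.

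For part 2, existence of leftmost and rightmost semi-infinite optimizers is obtained by a compactness/diagonalization argument: for each $t_j \to -\infty$ take the leftmost finite optimizer $\tau^{(j)}$ from $(t_j, \fl{\theta|t_j|+k})$ to $(\bx,1)$ (Lemma \ref{L:optimizer-existence-monotoncity}); optimizer monotonicity (Lemma \ref{L:mono-tree-multi-path}) shows these are nested, so they have a pointwise limit $\tau^{\bx,\theta}$, and the isometry/Busemann-value computation from part 1 shows $\tau^{\bx,\theta}$ is indeed a semi-infinite optimizer. Leftmostness passes to the limit, as does the monotonicity statement $\tau^{\bx,\theta} \le \tau^{\bx',\theta'}$ for $\bx\le\bx', \theta\le\theta'$, directly from Lemma \ref{L:mono-tree-multi-path} at the finite level. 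Part 3 is the coalescence statement: given a semi-infinite optimizer $\tau$ in a distinct direction $\theta \in \cI^k_<$, each component $\tau_i$ is (the restriction of) a semi-infinite geodesic in direction $\theta_i$, because a component of a disjoint optimizer from $-\infty$ is itself a one-path optimizer on every finite window, hence a semi-infinite geodesic in direction $\theta_i$; then Theorem \ref{T:sepp-sorensen}.5 forces $\tau_i$ to coalesce with $\pi^{\theta_i}$. The only subtlety is disjointness of the limiting $k$-tuple $\pi^\theta$, which is exactly the eventual-ordering fact used in part 1 (distinct directions push the levels apart), so $\tau|_{(-\infty,t]} = \pi^\theta|_{(-\infty,t]}$ for $t$ small.

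\textbf{Main obstacle.} The delicate point is part 1 for \emph{general} $\bx \in \R^k_\le$ (not all coordinates equal): one must show that the free optimizer from $(t, \fl{\theta|t|+k})$ to $(\bx, 1)$ genuinely follows the fixed disjoint geodesics $\pi^{\theta_i}$ for small $t$, so that the $t$-dependence of $B[(t,\fl{\theta|t|+k}) \to (\bx,1)] - \sum_i B[(t,\fl{\theta_i|t|+k})\to(0,1)]$ stabilizes. This requires combining the metric composition law (Lemma \ref{L:split-path}) with the single-path coalescence in Theorem \ref{T:sepp-sorensen}.5 quantitatively enough to pin the locations where the $k$-tuple leaves the coalesced geodesics, and then checking that the shape bounds (Propositions \ref{P:cross-prob}, \ref{P:top-bd}) rule out the optimizer wandering far to the left at level $\fl{\theta|t|+k}$. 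Everything else — the compactness argument for part 2, the reduction of part 3 to the single-path case — is routine given the tools already assembled.
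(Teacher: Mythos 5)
Your overall plan (reduce to Theorem \ref{T:sepp-sorensen} plus the multi-path toolkit, exploit distinctness of the $\theta_i$, and use monotonicity and compactness) is in the right spirit, but there is a fatal error at the center of your part 1 argument that propagates into parts 2 and 3.

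\textbf{The reduction $\cB^\theta(\bx) = \sum_i \cB^{\theta_i}(x_i)$ is false.} Your claimed lower bound $B[(t,\fl{\theta|t|+k})\to(\bx,1)] \ge \sum_i B[(t,\fl{\theta_i|t|+k})\to(x_i,1)]$ rests on the assertion that, for small $t$, the single-path semi-infinite geodesics in directions $\theta_1,\dots,\theta_k$ ending at $(x_1,1),\dots,(x_k,1)$ form a disjoint $k$-tuple. This fails whenever the endpoints coincide or are close. For instance, when $\bx = 0^k$, all $k$ geodesics end at $(0,1)$; a nonincreasing cadlag path to $(0,1)$ must sit on level $1$ on some final interval $[s_i, 0]$, and these intervals generically overlap, so the geodesics coincide (not merely touch) on an interval approaching $0$. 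Hence they are not a disjoint $k$-tuple and the lower bound is not available. And indeed your proposed identity cannot be true: plugging $\bx = 0^k$ would give $\cB^\theta(0^k) = 0$, while Theorem \ref{T:brownian-law}.3 shows $\cB^\theta(0^k) = \sum_i W_i(0) = -\sum_i W^0[(-\infty,i)\to(0,1)]$, which is a nontrivial random variable for $k\ge 2$. More generally, if $\cB^\theta(\bx) = \sum_i \cB^{\theta_i}(x_i)$ held, the line ensemble $W$ in Theorem \ref{T:brownian-law} would be built from the correlated single-path stationary horizon, contradicting the theorem's assertion that the $W_i$ are independent Brownian motions. So the equal-coordinates case is not the easy case you present it as; it is precisely where the reduction breaks.

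\textbf{What the correct argument needs.} The paper avoids this trap by not trying to write $\cB^\theta(\bx)$ as a sum of single-path Busemann functions. Instead, via Lemmas \ref{L:disjoint-geods} and \ref{L:optimal-coal}, one finds widely spaced anchor points $\by \le \bz$ and a coalescence time $T$ such that every finite optimizer from $(t,\pi(t))$ to any $(\bx,1)$ with $\by\le\bx\le\bz$ passes through the common intermediate points $\pi^\theta(T) \in \Z^k_<$, which \emph{are} strictly ordered because $T$ is strictly negative. The decomposition
$$
B[(t,\pi(t))\to(\bx,1)] = B[(t,\pi(t))\to(T,\pi^\theta(T))] + B[(T,\pi^\theta(T))\to(\bx,1)]
$$
isolates the $t$-dependence in the first term, which \emph{does} factor into single-path pieces (the geodesics to the widely spaced $\by,\bz$ are disjoint on $(-\infty,T]$), while the second term stays a genuine multi-path LPP quantity that does \emph{not} simplify into a sum. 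This is the key structural point that your proposal is missing. Your "general $\bx$" afterthought gestures at something like this, but you need the intermediate coalescence time $T$ and the widely spaced anchors to make it precise.

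\textbf{Smaller issues.} In part 2 you assert the leftmost finite optimizers from $(t_j,\fl{\theta|t_j|+k})$ to $(\bx,1)$ are nested by Lemma \ref{L:mono-tree-multi-path}, but the starting points move diagonally (both leftward and upward), so that monotonicity lemma does not directly apply; the paper instead obtains optimizers for $\theta\in\cI^k_<$ explicitly as $\pi^\theta|_{(-\infty,T]}$ concatenated with a finite optimizer, and handles general $\theta$ by a decreasing approximation $\theta^n\downarrow\theta$ from $\cI^k_<$ using the genuine monotonicity of optimizers in direction. In part 3 you claim each component of a disjoint optimizer is itself a one-path optimizer on every finite window, which is false — components of multi-path optimizers are constrained by their neighbors and need not be individual geodesics. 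The coalescence in part 3 follows instead from the explicit description of semi-infinite optimizers obtained in the proof of part 2 (they all coincide with $\pi^\theta$ before time $T$), not from a one-path reduction.
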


To prove Proposition \ref{P:extended-usemann} we require two simple lemmas. For the first lemma, we let $\pi^{\theta, y}$ denote the rightmost geodesic in direction $\theta$, ending at $(y, 1)$.

\begin{lemma}
	\label{L:disjoint-geods}
	The following event holds a.s. For every $k \in \N, a > 0$ and every compact box $K = \prod_{i=1}^k [\theta_i^-, \theta_i^+]\subset \R^k_<$ there exist vectors $\by, \bz \in \Q^k_\le$ with $\by \le -a^k \le a^k \le \bz$ such that for any $\theta \in K$, the $k$-tuples of semi-infinite rightmost geodesics $\pi^{\theta, \by} := (\pi^{\theta, y_1}, \dots, \pi^{\theta, y_k})$ and $\pi^{\theta, \bz} := (\pi^{\theta, z_1}, \dots, \pi^{\theta, z_k})$ are both disjoint $k$-tuples. In particular, they are both  disjoint optimizers.
\end{lemma}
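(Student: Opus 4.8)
The statement to prove is Lemma~\ref{L:disjoint-geods}: for every $k$, every radius $a>0$, and every compact box $K = \prod_{i=1}^k [\theta_i^-,\theta_i^+] \subset \R^k_<$, one can find rational vectors $\by \le -a^k \le a^k \le \bz$ so that the $k$-tuples of rightmost semi-infinite geodesics $\pi^{\theta,\by}$ and $\pi^{\theta,\bz}$ are disjoint $k$-tuples simultaneously for all $\theta \in K$. The plan is to reduce the uniformity over $\theta \in K$ to a finite check using monotonicity (Theorem~\ref{T:sepp-sorensen}.4), and to obtain the disjointness for a single direction vector from geodesic coalescence (Theorem~\ref{T:sepp-sorensen}.5) together with the $0$-directed geodesic description (Theorem~\ref{T:sepp-sorensen}.6) and the rarity of exceptional directions.

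\textbf{Step 1: disjointness at a single rational direction vector.} Fix $\theta \in \Q^k_<$ with $\Q \cap \Theta = \emptyset$ on $\Om$, so each $\theta_i \notin \Theta$. Since $\theta_1 < \dots < \theta_k$ strictly, the single-path geodesics $\pi^{\theta_i}$ in directions $\theta_i$ diverge at different asymptotic slopes, so for $t$ very negative $\pi^{\theta_1}(t) < \pi^{\theta_2}(t) < \dots < \pi^{\theta_k}(t)$; I would make this precise by noting $\pi^{\theta_i}(t)/|t| \to \theta_i$. Now the rightmost geodesics $\pi^{\theta_i, y}$ ending at $(y,1)$ all coalesce, as $y$ ranges over rationals, with the coalescing semi-infinite geodesic in direction $\theta_i$ (Theorem~\ref{T:sepp-sorensen}.5); by geodesic monotonicity in the endpoint, $\pi^{\theta_i,y}$ is squeezed below $\pi^{\theta_i,y'}$ for $y \le y'$, and all eventually agree with the common backward geodesic. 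Choosing $y_i$ sufficiently negative (and rational) for each $i$, and then taking a common lower bound, the paths $\pi^{\theta_i, y_i}$ restricted to a suitable left half-line coincide with the coalescing rays, which are strictly ordered far back; meanwhile near their endpoints $y_1 < \dots < y_k$ they start from strictly ordered points. The only subtlety is to ensure strict ordering on the whole overlapping range $t \in (\!-\infty, y_1]$ rather than just asymptotically — here I would use that two rightmost geodesics from strictly ordered endpoints either are disjoint or touch, and if they touch they agree afterward (overlap is an interval, cf. the geodesic structure), so by pushing $y_i$ down we force the touch points away and get a genuine disjoint $k$-tuple. The analogous argument with $z_i$ large positive (rational) handles $\pi^{\theta,\bz}$.

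\textbf{Step 2: uniformity over the box via monotonicity and a finite net.} Having fixed $\by \le -a^k$ and $\bz \ge a^k$ working for the corner direction $\theta^- = (\theta_1^-,\dots,\theta_k^-)$ and, by a symmetric choice, for $\theta^+$, I would use geodesic monotonicity in the direction parameter: for $\theta^- \le \theta \le \theta^+$ (coordinatewise) the rightmost geodesic $\pi^{\theta_i, y_i}$ lies between $\pi^{\theta_i^-, y_i}$ and $\pi^{\theta_i^+, y_i}$. Disjointness of a $k$-tuple is the condition $\pi^{\theta_i,y_i}(t) < \pi^{\theta_{i+1},y_{i+1}}(t)$ on the common domain for each $i$, and this is preserved under monotone squeezing provided it holds at the two relevant extreme directions $(\theta_i^+, \theta_{i+1}^-)$. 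So it suffices to arrange, simultaneously for all the finitely many pairs of ``mixed corner'' directions $(\theta_i^+, \theta_{i+1}^-)$, that the corresponding two geodesics are strictly ordered; since $\theta_i^+ < \theta_{i+1}^-$ (as $K \subset \R^k_<$, the box is strictly below the diagonal, so $\theta_i^+ < \theta_{i+1}^-$ for each $i$), Step~1's single-direction argument applies to each such pair, and taking $\by$ componentwise small enough and $\bz$ componentwise large enough to handle all pairs at once finishes the job. Rationality of $\by,\bz$ is free since we may always decrease/increase to a rational, and $\by \le -a^k \le a^k \le \bz$ is arranged by taking the coordinates past $\pm a$.

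\textbf{Main obstacle.} The delicate point is upgrading ``asymptotically strictly ordered'' to ``strictly ordered on the entire overlapping time interval,'' i.e. genuinely ruling out that two rightmost geodesics from strictly ordered endpoints touch somewhere in the middle. The clean way is to invoke the interval structure of geodesic overlaps together with monotonicity in the endpoint: if $\pi^{\theta_i,y_i}$ and $\pi^{\theta_{i+1},y_{i+1}}$ touched, their overlap would be a terminal interval (they would agree for all small $t$), contradicting that they have different asymptotic slopes $\theta_i < \theta_{i+1}$; hence they are disjoint on all of $(-\infty, y_1]$ automatically once the endpoints are strictly ordered and the directions are distinct. This observation is really the heart of the lemma; everything else is monotonicity bookkeeping and a finite-net reduction, so I would present Step~1 carefully and treat Steps~2 as routine.
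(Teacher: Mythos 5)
Your structure (single-direction case plus monotonicity reduction over a box with adjacent ``mixed-corner'' pairs $(\theta_i^+,\theta_{i+1}^-)$) matches the paper's reduction, and you have correctly isolated where the work lies. However, the resolution you propose in Step~1 and the ``Main obstacle'' paragraph is incorrect, and the gap is not cosmetic.

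The claim that if $\pi^{\theta_i,y_i}$ and $\pi^{\theta_{i+1},y_{i+1}}$ touch then ``their overlap would be a terminal interval (they would agree for all small $t$), contradicting that they have different asymptotic slopes'' is false, and no result in the paper supports it. Geodesic monotonicity (Theorem~\ref{T:sepp-sorensen}.4) does give $\pi^{\theta_i,y_i}\le\pi^{\theta_{i+1},y_{i+1}}$ pointwise, but two ordered rightmost geodesics in distinct directions and with distinct endpoints can touch at intermediate times (and the set of touch times need not even be connected in this cadlag setting), then separate again; they certainly do not coincide on a terminal half-line, since they go off at different slopes. There is no contradiction to extract, so touching is not automatically ruled out. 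Note also that if your argument were correct, it would prove the lemma for \emph{every} choice of $\by$ with strictly ordered coordinates, trivializing the statement; but the whole content of the lemma is that one must move the endpoints sufficiently far out to avoid collisions, and this needs a genuine probabilistic input.

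The missing ingredient is stationarity: the process $x\mapsto\pi^{\theta,x}(x+\cdot)$ is stationary in $x$ because the Brownian environment $B-B(x)$ is. Since $\pi^{\theta_1,x}$ has asymptotic slope $\theta_1$, with positive probability it stays strictly below a ray of slope in $(\theta_1,\theta_2)$ on all of $(-\infty,x]$, and by stationarity (together with ergodicity of the product Brownian environment) such good $x$'s recur arbitrarily far to the left almost surely. Hence for any fixed $f$ of asymptotic slope $\theta_2>\theta_1$ --- in particular $f=\pi^{\theta_2^-,-a}$ --- one can find a random $Y<w$ with $\pi^{\theta_1,Y}(u)<f(u)$ for $u\le Y$. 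That is what supplies the first coordinate of $\by$; your argument has no substitute for this step. The paper also reduces to $k=2$ by induction and to rational box corners by countable additivity, which is a cleaner bookkeeping route than your ``finite net,'' but that part of your outline would be fine once the $k=2$ core is repaired.
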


\begin{proof}
By induction it is enough to prove the result when $k = 2$. Moreover, by countable additivity it is enough to prove that the result for every fixed $\theta_i^-, \theta_i^+ \in \Q$ a.s.\ and it is enough to prove the existence of $\by$ since the existence of $\bz$ follows by symmetry. Finally, by monotonicity of geodesics (Theorem \ref{T:sepp-sorensen}.3), $\pi^{\theta, \by}$ is a disjoint $k$-tuple for all $\theta \in K$ if and only if $\pi^{(\theta_1^+, \theta_2^-), \by}$ is. We will prove this latter statement.
	
	For every fixed $\theta \ge 0$, the process $x \mapsto \pi^{\theta, x}(x + \cdot)$ is stationary, since the same holds for the background environment $B - B(x)$. Therefore for any pair $\theta_1 < \theta_2 \in [0, \infty)$, any $w \in \R$, and any function $f:(-\infty, w] \to \R$ with $f(t)/|t| \to \theta_2$ as $t \to \infty$ a.s.\ there exists a random integer $Y < w$ such that $\pi^{\theta_1, Y}(u) < f(u)$ for $u \le Y$. Setting $f = \pi^{\theta_2^-, -a}$ and $\by = (Y, - a)$, the lemma follows.
\end{proof}

\begin{lemma}
	\label{L:optimal-coal}
	The following holds almost surely. For every $k \ge 1, \theta \in \cI^k_<$ and $a > 0$, there exists $\ep > 0$, $\by \le -a^k < a^k \le \bz$, and $T < y_1$ such that whenever $\|\theta_1 - \theta\|_\infty < \ep, \|\theta_2 - \theta\|_\infty < \ep$, we have:
	\begin{itemize}
		\item For any $i = 1, \dots, k$, any geodesics from $\theta_{1, i}$ to $y_i$ and $\theta_{2, i}$ to $z_i$ are equal at time $T$.
		\item Any geodesics from $\theta_{1, i}$ to $y_i, i = 1, \dots, k$ are mutually disjoint. Similarly, any geodesics from $\theta_{2, i}$ to $z_i$, $i=1, \dots, k$, are  mutually disjoint.
	\end{itemize}
\end{lemma}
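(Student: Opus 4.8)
\textbf{Proof strategy for Lemma \ref{L:optimal-coal}.}

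The plan is to combine geodesic coalescence (Theorem \ref{T:sepp-sorensen}.5) with the continuity and monotonicity of geodesics in the direction parameter (Theorem \ref{T:sepp-sorensen}.4), together with the disjointness construction of Lemma \ref{L:disjoint-geods}. The key observation is that since $\theta \in \cI^k_<$ has strictly distinct coordinates, for $\ep$ small enough every $\theta_j$ with $\|\theta_j - \theta\|_\infty < \ep$ still has strictly increasing coordinates, so the various single-line geodesics we track stay well-separated in the direction parameter.

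First I would fix $k \ge 1$, $\theta \in \cI^k_<$, and $a > 0$. Since $\Theta$ is countable, I can pick rationals $\theta^- \le \theta^+ \in \cI^k$ with $\theta^-_i < \theta_i < \theta^+_i$ for each $i$, and such that $[\theta^-_i, \theta^+_i]$ are pairwise disjoint intervals; shrinking further, choose $\ep > 0$ so that $\|\theta' - \theta\|_\infty < \ep$ forces $\theta'_i \in (\theta^-_i, \theta^+_i)$ for every $i$. Next I would apply Lemma \ref{L:disjoint-geods} to the box $K = \prod_i [\theta^-_i, \theta^+_i]$ to obtain $\by, \bz \in \Q^k_\le$ with $\by \le -a^k \le a^k \le \bz$ such that for every $\phi \in K$ the rightmost-geodesic $k$-tuples $\pi^{\phi, \by}$ and $\pi^{\phi, \bz}$ are disjoint $k$-tuples. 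By monotonicity of rightmost geodesics in both direction and endpoint (Theorem \ref{T:sepp-sorensen}.4), \emph{any} geodesic from $\theta_{1,i}$ to $y_i$ with $\theta_{1,i} \in (\theta^-_i,\theta^+_i)$ lies between $\pi^{\theta^-_i, y_i}$ and $\pi^{\theta^+_i, y_i}$, and since both of these bounding tuples are disjoint and the intervals $[\theta^-_i,\theta^+_i]$ are separated, the sandwiched tuple is disjoint too (for $t$ small enough the $i$-th path sits below the $(i+1)$-th because even the extremal choices do); this gives the second bullet, at least for all sufficiently negative times, and in fact a single $T$ works by the disjointness being an eventual-in-$t$ statement combined with finitely many lines.

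For the first bullet, I would use coalescence: for each fixed $i$, the directions $\theta^-_i$ and $\theta^+_i$ both lie in $\cI = [0,\infty)\smin \Theta$ (we chose them rational and avoiding $\Theta$), so by Theorem \ref{T:sepp-sorensen}.5 all semi-infinite geodesics in direction $\theta^-_i$ coalesce, and likewise for $\theta^+_i$; moreover any geodesic in a direction $\phi_i \in [\theta^-_i, \theta^+_i]$ ending at a point between $y_i$ and $z_i$ is squeezed by monotonicity between the leftmost $\theta^-_i$-geodesic to $y_i$ and the rightmost $\theta^+_i$-geodesic to $z_i$. Since the $\theta^-_i$-geodesics to $y_i$ and the $\theta^+_i$-geodesics to $z_i$ each coalesce, and since as $t\to-\infty$ a geodesic of asymptotic slope in the narrow band $[\theta^-_i,\theta^+_i]$ is forced to agree with both extremes once they have merged (the merged trunk is a single path and the sandwich has collapsed), there is a time $T_i$ past which all geodesics from any $\phi_i\in(\theta^-_i,\theta^+_i)$ to any point in $[y_i,z_i]$ coincide. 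Taking $T = \min_i T_i \wedge y_1$ and intersecting over the finitely many $i$ gives the claim. Finally I would take a countable union over $k\in\N$, $a \in \N$, and a countable dense set of $\theta \in \cI^k_<$, upgrading to all $\theta$ by noting that the statement for a given $\theta$ follows from the statement for any nearby $\theta'$ in the dense set (absorbing the difference into $\ep$).

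\textbf{Main obstacle.} The delicate point is the sandwich-collapse argument for the first bullet: monotonicity only tells us a geodesic of slope in $[\theta^-_i,\theta^+_i]$ lies \emph{between} the extremal $\theta^-_i$- and $\theta^+_i$-geodesics, but to conclude it \emph{equals} them past time $T$ I need that these two extremal geodesics themselves coincide past $T$ — which is not literally a coalescence statement (coalescence is within a single direction) but follows because a geodesic with asymptotic slope strictly between $\theta^-_i$ and $\theta^+_i$ cannot separate two geodesics that have merged into a common trunk, forcing the trunk of the $\theta^-_i$-family and the trunk of the $\theta^+_i$-family to be the same path eventually. Making this rigorous requires care with the leftmost/rightmost bookkeeping and with the fact that disjointness of the bounding $k$-tuples is only an eventual-in-$t$ property, so the uniform time $T$ must be extracted from finitely many such eventual statements; I expect this to be the bulk of the work.
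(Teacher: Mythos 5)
Your handling of the second bullet matches the paper's: apply Lemma \ref{L:disjoint-geods} on a small box around $\theta$ and use monotonicity. (There is a small technical point you gloss over — to sandwich an \emph{arbitrary} geodesic $\gamma_{i+1}$ from below you need the \emph{leftmost} geodesic in direction $\theta^-_{i+1}$, whereas \ref{L:disjoint-geods} gives disjointness of \emph{rightmost} geodesics; this is repaired by taking $\theta^\pm_i\notin\Theta$ and $y_i\in\Q$ so that uniqueness from Theorem \ref{T:sepp-sorensen}.5 collapses leftmost and rightmost, but it's worth stating.)

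The first bullet, however, has a genuine error, and it is precisely the point you flagged as the main obstacle. Your argument fixes $\theta^-_i < \theta_i < \theta^+_i$ and asserts that ``the trunk of the $\theta^-_i$-family and the trunk of the $\theta^+_i$-family'' must eventually be the same path. That is false: a geodesic in direction $\theta^-_i$ and one in direction $\theta^+_i > \theta^-_i$ have different asymptotic slopes and hence diverge linearly as $t\to-\infty$; they certainly do not merge. Coalescence (Theorem \ref{T:sepp-sorensen}.5) only merges semi-infinite geodesics of the \emph{same} direction. Sandwiching a $\theta_i$-geodesic between the two trunks does not force the trunks to meet — it only tells you the $\theta_i$-geodesic is squeezed into a fan that widens as $t\to-\infty$. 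So your sandwich never collapses for any fixed $\theta^\pm_i$, and no time $T$ past which all the geodesics agree can be extracted this way.

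The correct order of quantifiers is the opposite: first determine the target time from coalescence \emph{within} the single direction $\theta_i$, then choose $\ep$ small enough. Concretely (this is the paper's argument), set $\pi^-_{i,n}$ = rightmost geodesic in direction $\theta_i-1/n$ ending at $y_i$ and $\pi^+_{i,n}$ = rightmost geodesic in direction $\theta_i+1/n$ ending at $z_i$. By monotonicity these form nested monotone sequences whose Hausdorff limits $\pi^-,\pi^+$ are geodesics in direction exactly $\theta_i$ ending at $y_i$ and $z_i$ respectively. Since $\theta_i\notin\Theta$, these coalesce past some $t_i$. Monotone Hausdorff convergence to the common trunk then forces $\pi^-_{i,n}(T)=\pi^+_{i,n}(T)$ for any fixed $T<\min_i t_i$ once $n$ is large, and one takes $\ep = 1/n$. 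This is the step your proposal is missing; as written it cannot be made rigorous because the claim it rests on (merging of the $\theta^-$- and $\theta^+$-trunks) is not true.
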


For the proof of the lemma, it will be convenient to use the \textbf{Hausdorff topology} on finite or semi-infinite paths. We say that a sequence of (nonincreasing cadlag) paths $\pi_n:[a_n, b_n] \to \N$ converges in the Hausdorff topology to a limiting path $\pi:[a, b] \to \R$ for $a < b$ if the graphs $\mathfrak g \pi_n := \{(r, \pi(r)) : r \in [a_n, b_n]\}$ converge in the Hausdorff topology on closed sets to the graph $\mathfrak g \pi$. We define Hausdorff convergence on semi-infinite paths by asking for Hausdorff convergence when restricted to every compact interval.
Path length is continuous and the disjointness required in the definition of disjoint $k$-tuples is a closed property in this topology, making it convenient to work with when studying LPP. See \cite[Section 2.3]{dauvergne2022rsk} for more discussion.

\begin{proof}
First, fix a compact set  $K = \prod_{i=1}^k [\theta_i^-, \theta_i^+]\subset \R^k_<$ containing $\theta$ in its interior, and choose $\ep_0 > 0$ so that if $\|\theta' - \theta\|_\infty < \ep_0$ then $\theta' \in K$. Applying Lemma \ref{L:disjoint-geods} with $K$ and $a > 0$ as in the statement of the lemma implies that the second bullet point above holds as long as we choose $\ep < \ep_0$. It remains to prove that we can choose $\ep$ small enough so that the first bullet holds. For this, fix a coordinate $\theta_i$. Consider sequences $\pi_{i,n}^\pm, n \in \N$ of rightmost geodesics (for $\pi_{i, n}^+$) in direction $\theta_i \pm 1/n$ ending at $(y_i, 1)$ (for $\pi_{i, n}^-$) and $(z_i, 1)$ (for $\pi_{i, n}^+$). By monotonicity of geodesics, we have that $\pi_{i, 1}^- \le \pi_{i, 2}^- \le \cdots \le \pi_{i, 2}^+ \le \pi_{i, 1}^+$. This implies that both of the sequence $\pi_{i, n}^\pm, n \in \N$ converge in the Hausdorff topology, and that the limits $\pi^+, \pi^-$ must be geodesics in direction $\theta$ ending at $(y_i, 1)$. In particular, by Theorem \ref{T:sepp-sorensen}.5, these geodesics must be equal on some interval $(-\infty, t_i]$. Hausdorff convergence of $\pi_{i, n}^\pm$ then implies that for any $t < t_i$ we have $\pi_{i, n}^-|_{[t, t_i]} = \pi_{i, n}^+|_{[t, t_i]}$ for all large enough $n$. Choosing $T < \min_i (t_i)$, we get that for large enough $n$, we have $\pi_{i, n}^-(T) = \pi_{i, n}^+(T)$ for all $i = 1, \dots, k$. Applying monotonicity of geodesics then yields the first bullet point.
\end{proof}
	
\begin{proof}[Proof of Proposition \ref{P:extended-usemann}]
	Let $\Om'$ be the intersection of the almost sure events in Theorem \ref{T:sepp-sorensen} and Lemma \ref{L:optimal-coal}, and let $\pi$ be as in part $1$ of the proposition. Fix $a > 0$, and let $\by, \bz, \ep, T$ be as in Lemma \ref{L:optimal-coal}. Then for $t < T$, if $\theta^1 < \theta < \theta^2$ satisfy $\|\theta^i - \theta\|_\infty < \ep$ then the optimizers $(t, \pi^{\theta^1}(t))$ to $(\by, 1)$ and from $(t, \pi^{\theta^2}(t))$ to $(\bz, 1)$ consist of $k$ geodesics, and all these geodesics agree at the common time $T$. Therefore for all large enough $t$, by monotonicity of optimizers, all optimizers from $(t, \pi(t))$ to a point $\bx$ with $\by \le \bx \le \bz$ agree at time $t$, and so;
	$$
	B[(t, \pi(t)) \to (\bx, 1)]  = B[(t, \pi(t)) \to (T, \pi^{\theta}(T))] + B[(T, \pi^{\theta}(T)) \to (\bx, 1)],
	$$
	and similarly, for all $i = 1, \dots, k$ we have
	$$
	B[(t, \pi_i(t)) \to (x_i, 1)]  = B[(t, \pi_i(t)) \to (T, \pi^{\theta}_i(T))] + B[(T, \pi^{\theta}_i(T)) \to (\bx, 1)],
	$$
	To complete the proof of part $1$, we claim that for large enough $t$ we have:
	$$
	\sum_{i=1}^k B[(t, \pi_i(t)) \to (T, \pi^{\theta}_i(T))] = B[(t, \pi(t)) \to (T, \pi^{\theta}(T))].
	$$
	Indeed, by the construction in Lemma \ref{L:optimal-coal}, for any $i \in \{1, \dots, k\}$, the geodesics from  $(\pi^{\theta_2}_i(t), t)$ to $(y_i, 1)$ and from $(\pi^{\theta_1}_{i+1}(t), t)$ to $(z_{i+1}, 1)$ are disjoint. These geodesics go through the points $(T, \pi^{\theta}_i(T)), (T, \pi^{\theta}_{i+1}(T))$. Therefore by monotonicity of geodesics, for any $i \in \{1, \dots, k\}$ the two geodesics from $(t, \pi_i(t))$ to $(T, \pi^{\theta}_i(T))$ and from $(t, \pi_{i+1}(t))$ to $(T, \pi^{\theta}_{i+1}(T))$ are also disjoint as long as $t$ is large enough so that
	$$
	\pi_i(t) \le \pi^{\theta_2}_i(t) < \pi^{\theta_1}_{i+1} \le \pi_{i+1}(t).
	$$
Therefore for large enough $t$, the $k$ geodesics from $(t, \pi(t))$ to $(T, \pi^{\theta}(T))$ are all disjoint, yielding the above equality and proving part $1$. 

We move on to existence of optimizers. We first consider the case when $\theta \in \cI^k_<$. Let all notation be as in the proof of part $1$.  In this case, if $\by \le \bz$ are as in part $1$, then the $k$-tuples of semi-infinite geodesics to $(\by, 1)$ and $(\bz, 1)$ are (unique) optimizers, and these agree on the interval $(-\infty, T]$ with $\pi^\theta$. By monotonicity of optimizers, for $\by \le \bx \le \bz$, if we then concatenate $\pi^\theta|_{(-\infty, T]}$ to an optimizer from $(T, \pi(T))$ to $(\bx, 1)$, this must be a semi-infinite optimizer in direction $\theta$ to $(\bx, 1)$, and all optimizers must be of this form. Existence of rightmost and leftmost optimizers for $\theta \in \cI^k_<$ then follows from existence of rightmost and leftmost optimizers in the finite setting.
	
	Now let $\theta \in [0, \infty)^k_\le, \bx \in \R^k_\le$, and let $\theta^n \in \cI^k_<$ be such that $\theta^n \cvgdown \theta$. Let $\pi^n$ be an optimizer to $(\bx, 1)$ in direction $\theta^n$. Then by monotonicity of optimizers, we have $\pi^1 \ge \pi^2 \ge \cdots$ and so $\pi^n$ has a limit $\pi$ in the Hausdorff topology satisfying
	$$
	\limsup_{t \to \infty} \frac{\pi(t)}{|t|} \le \lim_{n \to \infty} \lim_{t \to \infty}  \frac{\pi^n(t)}{|t|} = \theta.
	$$
	Since being an optimizer is a closed property in the Hausdorff topology, $\pi$ is also an optimizer. We check that $\pi$ has direction $\theta$. Let $\theta = (0^\ell, \theta_{\ell+ 1}, \dots, \theta_k)$, where $\theta_{\ell+ 1} > 0$. Let $\eta = (0^\ell, \eta_{\ell+1} < \dots < \eta_k)$ be such that $\eta_i \in \cI$ for all $i$ and $\eta \le \theta$. As in the proof of part $1$, we can find $y_i \le x_i, i = \ell + 1, \dots, k$ with $\by \in \Q^k_\le$ such that all the leftmost geodesics $\tau^\eta_i$ in direction $\eta_i$ to $(y_i, 1), i = \ell + 1, \dots, k$ are disjoint. Therefore we can find $y_\ell < y_{\ell + 1}$ so that defining $\tau^\eta_i:(-\infty, y_\ell] \to \N$ by $\tau^\eta_i(x) = i$ for all $i$, $\tau^\eta$ is a disjoint optimizer in direction $\eta$. Monotonicity of optimizers then guarantees that $\tau^\eta \le \pi^n$ for all $n$ and so $\tau^\eta \le \pi$. Hence
	$$
	\eta = \lim_{t \to \infty} \frac{\tau^\eta(t)}{|t|} \le \liminf_{t \to \infty} \frac{\pi(t)}{|t|},
	$$
	so combining the two displays and letting $\eta \uparrow \pi$ gives that $\pi$ is an optimizer in direction $\theta$. This completes the proof of existence of an optimizer for general $\theta$.
	
	To argue that there must exist rightmost and leftmost optimizers for general $\theta$, let $\rho, \rho'$ be two optimizers in direction $\theta$, ending at $(\bx, 1)$. By a standard argument (see the proof of Lemma 2.2, \cite{dauvergne2021disjoint}), the pointwise minima and maxima $\rho \wedge \rho'$ and $\rho \vee \rho'$ are also both optimizers, to the left and right of $\rho, \rho'$. Moreover, if $\rho_n$ is a sequence of semi-infinite disjoint optimizers with $\rho_1 \ge \rho_2 \ge \cdots$ then $\rho_n$ has a Hausdorff limit $\rho$, which is itself an optimizer to $(\bx, 1)$ in direction $\theta$. Indeed, the direction is preserved since all the $\rho_i$ must be bounded between $\tau^\eta, \eta \le \theta$ and $\pi^n, n \in \N$ for all $n$, and the optimizer property is preserved since path length is continuous and disjointness is a closed property in the Hausdorff topology. These two facts allow us to apply Zorn's lemma to the set of optimizers to $(\bx, 1)$ in direction $\theta$, yielding a leftmost optimizer (and similarly a rightmost one).
\end{proof}

Our next goal is to extend the process $\cB^\theta(\bx)$ defined in Proposition \ref{P:extended-usemann} to general $\theta$. We do this by continuity; later we will argue that doing so produces Busemann functions. To facilitate the proof, we need two short lemmas.

\begin{lemma}[Quadrangle inequality]
	\label{L:quadrangle-busemann}
	For $\bx\le \bx'\in \mathbb R^m_\ge$ and $\theta\le \theta'\in \cI^m_<$ we have
	$$
	\cB^{\theta}(\bx')+\cB^{\theta'}(\bx)\le 	\cB^{\theta}(\bx)+\cB^{\theta'}(\bx')
	$$
	with equality if there are semi-infinite optimizers $\pi, \pi'$ in directions $\theta, \theta'$ ending at $(\bx, 1), (\bx', 1)$ such that $\pi(T) = \pi'(T)$ for some $T \le x_1$.
	\end{lemma}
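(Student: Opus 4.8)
The plan is to obtain both the inequality and the equality case by transferring the corresponding statements for finite-$t$ multi-path LPP (Lemma \ref{L:quadrangle}) through the $t \to -\infty$ limit that defines the Busemann functions. For the inequality, fix $\theta \le \theta' \in \cI^m_<$ and $\bx \le \bx' \in \R^m_\ge$, and pick a $k$-tuple of semi-infinite paths $\pi = \pi^{\theta}$ in direction $\theta$ and $\pi' = \pi^{\theta'}$ in direction $\theta'$; by monotonicity of geodesics (Theorem \ref{T:sepp-sorensen}.4) we may choose these so that $\pi \le \pi'$ coordinatewise. For $t$ very negative, set $\bp = (t, \pi(t))$, $\bp' = (t, \pi'(t))$, $\bq = (\bx, 1)$, $\bq' = (\bx', 1)$. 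Since $\pi \le \pi'$ and $\bx \le \bx'$, and all these points lie on common horizontal lines, the hypotheses of Lemma \ref{L:quadrangle} are met (all four cross-pairs are endpoint pairs for $t$ small, because $\bx \le \bx'$, $\bx, \bx' \in \R^m_\ge$, and the directions are strictly ordered so the starting levels are disjoint), giving
\begin{equation*}
B[\bp \to \bq'] + B[\bp' \to \bq] \le B[\bp \to \bq] + B[\bp' \to \bq'].
\end{equation*}
Now subtract the normalization $\sum_i B[(t, \pi_i(t)) \to (0,1)]$ on the left-hand terms and $\sum_i B[(t, \pi'_i(t)) \to (0,1)]$ on the right-hand terms — but note this is not symmetric across the inequality, so I have to be slightly careful: I subtract $\sum_i B[(t,\pi_i(t))\to(0,1)]$ from the $\bp$-rooted terms on both sides and $\sum_i B[(t,\pi'_i(t))\to(0,1)]$ from the $\bp'$-rooted terms on both sides; this subtracts the same total from each side, preserving the inequality. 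Letting $t \to -\infty$ and using \eqref{E:B-t-la} from Proposition \ref{P:extended-usemann}.1 (the Busemann limit is achieved along any pair of paths in the right directions), each of the four terms converges to the corresponding $\cB$-value, yielding $\cB^{\theta}(\bx') + \cB^{\theta'}(\bx) \le \cB^{\theta}(\bx) + \cB^{\theta'}(\bx')$.

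For the equality case, suppose there are semi-infinite optimizers $\pi, \pi'$ in directions $\theta, \theta'$ ending at $(\bx,1), (\bx',1)$ with $\pi(T) = \pi'(T)$ for some $T \le x_1$. First I would use Proposition \ref{P:extended-usemann}.3: any semi-infinite optimizer in direction $\theta \in \cI^m_<$ eventually coalesces with the canonical $k$-tuple $\pi^\theta$, and likewise $\pi'$ coalesces with $\pi^{\theta'}$. So after decreasing $T$ if necessary I may assume $\pi$ agrees with $\pi^\theta$ and $\pi'$ agrees with $\pi^{\theta'}$ on $(-\infty, T]$, and the crossing $\pi(T) = \pi'(T)$ still occurs (this uses that $O(\pi,\pi')$ is an interval, or more simply that once two monotone paths touch and one is eventually below the other they can be arranged to coincide on a tail — actually the cleanest route is: since $\pi \le \pi'$ fails to be strict at $T$, and both coalesce with their canonical tuples, pick $T$ small enough that all three agreements hold simultaneously). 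Then for $t < T$ I apply the equality clause of Lemma \ref{L:quadrangle} with $\bp = (t, \pi(t))$, etc.: the hypothesis of that clause is exactly that there is a common point of an optimizer from $\bp$ to $\bq$ and an optimizer from $\bp'$ to $\bq'$, which is furnished by $\pi(T) = \pi'(T)$ together with the fact that $\pi|_{[t,x_k]}, \pi'|_{[t,x'_k]}$ are genuine optimizers. This gives equality in the finite quadrangle identity for every $t < T$, and passing to the limit as before yields equality for the Busemann functions.

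The main obstacle I anticipate is bookkeeping around the asymmetric normalization and around coalescence: ensuring that the four last-passage values all converge to the named Busemann functions simultaneously as $t \to -\infty$ requires that the paths used in the normalization ($\pi_i, \pi'_i$ terminating at $(0,1)$) and the paths used in the cross terms have the correct asymptotic directions, which is guaranteed by Proposition \ref{P:extended-usemann}.1 but must be invoked for each of the four terms; and in the equality case one must check that the common crossing point persists after replacing $\pi, \pi'$ by paths agreeing with the canonical tuples on a tail, for which the interval structure of overlaps (implicit in the coalescence statement) is the key input. Everything else is a direct limit of the corresponding finite statement and should be routine.
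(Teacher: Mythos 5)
Your argument for the inequality is the paper's argument: apply the finite-$t$ quadrangle inequality (Lemma \ref{L:quadrangle}) at the starting points $(t,\pi^\theta(t)),(t,\pi^{\theta'}(t))$, observe that the same normalization is subtracted from both sides, and pass to the limit via \eqref{E:B-t-la} in Proposition \ref{P:extended-usemann}.1. The paper's own proof is a one-sentence version of exactly this and does not spell out the equality case, which you do.

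For the equality case your core argument is right --- invoke the equality clause of Lemma \ref{L:quadrangle} at $z = T$ with $\pi, \pi'$ themselves as the optimizers from $\bp$ to $\bq$ and $\bp'$ to $\bq'$, and pass to the limit using that \eqref{E:B-t-la} holds for \emph{any} $k$-tuple of semi-infinite paths in the relevant direction, not just the canonical ones. But the coalescence detour you lead with is false as stated: you cannot ``pick $T$ small enough that all three agreements hold simultaneously,'' because whenever $\theta_i < \theta'_i$ the canonical geodesics $\pi^{\theta_i}, \pi^{\theta'_i}$ separate at all sufficiently negative times, so there is no far-past $T$ at which $\pi^\theta(T) = \pi^{\theta'}(T)$. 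This step is also unnecessary: the crossing at $T$ is a fixed fact, used only to feed the equality clause, while the normalization limit already works with $\pi, \pi'$ directly. The role coalescence actually plays is different and more modest --- it guarantees the ordering $\pi(t) \le \pi'(t)$ for $t$ very negative (so that $\bp, \bp'$ form a legal configuration for Lemma \ref{L:quadrangle}): when $\theta_i < \theta'_i$ the asymptotic directions force separation, and when $\theta_i = \theta'_i$ both $\pi_i$ and $\pi'_i$ coalesce with the same canonical geodesic $\pi^{\theta_i}$. If you phrase the coalescence appeal this way the equality argument is clean.
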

\begin{proof}
The quadrangle inequality is immediate from appealing to Proposition \ref{P:extended-usemann}.1 with $\pi = \pi^\theta$ for direction $\theta$ and $\pi = \pi^{\theta'}$ for direction $\theta'$, and then using the usual quadrangle inequality (Lemma \ref{L:quadrangle}). 
\end{proof}

\begin{lemma}
	\label{L:Blx-continuity}
	The following holds almost surely. For every $k \ge 1, \theta \in \cI^k_<$ and $a > 0$, there exists $\ep > 0$, such that if $\|\theta' - \theta\|_\infty < \ep$ with $\theta' \in \cI^k_<$ we have $\cB^\theta(\bx) - \cB^\theta(0^k) = \cB^{\theta'}(\bx) - \cB^\theta(0^k)$ whenever $\bx \in [-a, a]^k_\le$. 
\end{lemma}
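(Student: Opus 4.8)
The plan is to exploit the coalescence structure established in Lemma \ref{L:optimal-coal} and Proposition \ref{P:extended-usemann}.1, which already expresses $\cB^\theta(\bx)$ through optimizers that, for large negative $t$, pass through a fixed point $(T, \pi^\theta(T))$. The key observation is that the Busemann value, once normalized at $0^k$, only depends on the \emph{finite} last passage value from the coalescence point $(T, \pi^\theta(T))$, which is a continuous function of the environment on a bounded region, and is stable under small perturbations of $\theta$ because the relevant geodesics are locally constant in direction.

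First I would fix $k$, $\theta \in \cI^k_<$, $a > 0$ and invoke Lemma \ref{L:optimal-coal} to obtain $\ep_0 > 0$, vectors $\by \le -a^k < a^k \le \bz$ in $\Q^k_\le$, and a time $T < y_1$, such that whenever $\|\theta^1 - \theta\|_\infty, \|\theta^2 - \theta\|_\infty < \ep_0$ (with $\theta^1, \theta^2 \in \cI^k_<$), all geodesics from $\theta^1_i$ to $y_i$ and from $\theta^2_i$ to $z_i$ agree at time $T$, and these $k$-tuples are disjoint. Enlarging $T$ to negative infinity slightly if needed, I would also ensure (again by the coalescence in Theorem \ref{T:sepp-sorensen}.5 together with Hausdorff continuity of rightmost geodesics in $\theta$, exactly as in the proof of Lemma \ref{L:optimal-coal}) that the leftmost and rightmost semi-infinite geodesics from $\theta_i$ ending at $(0,1)$ also pass through $\pi^\theta_i(T)$ at time $T$; shrink $\ep_0$ to $\ep$ so that the same holds for all $\theta' \in \cI^k_<$ with $\|\theta' - \theta\|_\infty < \ep$, and moreover so that $\pi^{\theta'}(T) = \pi^\theta(T)$ for all such $\theta'$ (this is precisely the content of the first bullet in Lemma \ref{L:optimal-coal}, applied with the endpoints $\by, \bz$ replaced by $0^k$, which is legitimate since $\by \le 0^k \le \bz$ and monotonicity of geodesics pins down the value at $T$).

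With this setup, for any $\theta' \in \cI^k_<$ with $\|\theta' - \theta\|_\infty < \ep$ and any $\bx \in [-a,a]^k_\le$, I would run the argument in the proof of Proposition \ref{P:extended-usemann}.1 verbatim: for $t$ sufficiently negative, the optimizer from $(t, \pi^{\theta'}(t))$ to $(\bx, 1)$ splits at $(T, \pi^\theta(T))$ (using $\pi^{\theta'}(T) = \pi^\theta(T)$ and monotonicity, since $\by \le \bx \le \bz$), and likewise the single-line optimizer from $(t, \pi^{\theta'}_i(t))$ to $(0,1)$ splits at $(T, \pi^\theta_i(T))$. Subtracting, all the tail contributions $B[(t,\pi^{\theta'}(t)) \to (T, \pi^\theta(T))]$ cancel against $\sum_i B[(t,\pi^{\theta'}_i(t)) \to (T, \pi^\theta_i(T))]$ in the limit $t \to -\infty$ (the disjointness that makes the sum equal the $k$-path value is again supplied by Lemma \ref{L:optimal-coal}, as in Proposition \ref{P:extended-usemann}.1), leaving
$$
\cB^{\theta'}(\bx) - \cB^{\theta'}(0^k) = B[(T, \pi^\theta(T)) \to (\bx, 1)] - \sum_{i=1}^k B[(T, \pi^\theta_i(T)) \to (0, 1)] = \cB^\theta(\bx) - \cB^\theta(0^k).
$$
Adding and subtracting $\cB^\theta(0^k)$ rearranges this into the claimed identity $\cB^\theta(\bx) - \cB^\theta(0^k) = \cB^{\theta'}(\bx) - \cB^\theta(0^k)$ — indeed it gives the stronger statement that $\cB^{\theta'}(0^k) = \cB^\theta(0^k)$ as well, so the stated conclusion follows a fortiori. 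Finally I would take the intersection over $a$, $k$ ranging through a countable dense/exhausting family and over the almost sure events of Theorem \ref{T:sepp-sorensen} and Lemma \ref{L:optimal-coal} to get a single almost sure event.

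The main obstacle is the bookkeeping in the last step: one must be careful that the splitting point $(T, \pi^\theta(T))$ can be chosen \emph{simultaneously} for the $k$-path optimizer ending at $\bx$ and for each of the $k$ single-path optimizers ending at $0$, and that the same $T$ works uniformly over $\bx \in [-a,a]^k_\le$ and over $\theta'$ in the $\ep$-neighbourhood. This is handled by monotonicity of (rightmost/leftmost) optimizers sandwiched between the endpoints $\by$ and $\bz$, exactly as in Proposition \ref{P:extended-usemann}, but it requires noting that $0^k$ lies between $\by$ and $\bz$ and invoking Lemma \ref{L:optimal-coal} with that in mind; everything else is a routine reprise of arguments already in the excerpt.
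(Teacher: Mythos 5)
Your proof is correct, and it reaches the conclusion by a route different from the paper's. The paper's proof is a two-liner: it applies the ``equality if'' clause of the quadrangle inequality (Lemma \ref{L:quadrangle-busemann}, via Lemma \ref{L:quadrangle}) to the coalescing geodesics at $\by, \bz$ furnished by Lemma \ref{L:optimal-coal}, deducing $\cB^{\theta_1}(\by)+\cB^{\theta_2}(\bz) = \cB^{\theta_1}(\bz)+\cB^{\theta_2}(\by)$; since the quadrangle inequality makes $\bx \mapsto \cB^{\theta_2}(\bx) - \cB^{\theta_1}(\bx)$ nondecreasing, equality at the endpoints $\by, \bz$ forces it to be constant on $[\by,\bz] \supset [-a,a]^k_\le$, which is exactly the statement. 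Your argument instead bypasses the quadrangle machinery and re-runs the coalescence computation of Proposition \ref{P:extended-usemann}.1 uniformly in $\theta'$ over the $\ep$-ball, pinning all optimizers at the common splitting point $(T, \pi^\theta(T))$ and expressing $\cB^{\theta'}(\bx) - \cB^{\theta'}(0^k)$ directly as the finite quantity $B[(T, \pi^\theta(T)) \to (\bx, 1)] - \sum_i B[(T, \pi^\theta_i(T)) \to (0, 1)]$, which is manifestly independent of $\theta'$. Both proofs lean on Lemma \ref{L:optimal-coal} as the essential geometric input. The paper's route is shorter because the quadrangle inequality packages up the metric-composition bookkeeping that you redo by hand; yours is more elementary (no quadrangle inequality) and in fact yields the marginally stronger conclusion $\cB^{\theta'}(0^k) = \cB^\theta(0^k)$, which the sandwich argument does not directly produce. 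One small caution worth recording: when you sandwich $\theta'$ between $\theta_1 < \theta' < \theta_2$ with $\theta_1, \theta_2 \in \cI^k_<$, you should shrink $\ep$ to, say, $\ep_0/2$ so that such $\theta_1, \theta_2$ can always be found inside the $\ep_0$-ball of Lemma \ref{L:optimal-coal}; this is routine since $\cI$ is co-countable, but it should be said.
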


\begin{proof}
Let $\by, \bz, \ep$ be as in Lemma \ref{L:optimal-coal}. Then by the `equality if' claim in Lemma \ref{L:quadrangle}, for $\theta_1 \le \theta_2$ such that $\|\theta - \theta_j\|_\infty < \ep, j = 1, 2$ we have
$$
	\cB^{\theta_1}(\by)+\cB^{\theta_2}(\bz) = 	\cB^{\theta_1}(\bz)+\cB^{\theta_2}(\by).
$$
The quadrangle inequality in Lemma \ref{L:quadrangle} forces this equality to hold when we replace $\by, \bz$ with any points in $[-a, a]^k$, yielding the result.
\end{proof}

We will move to the case of repeated endpoints by first extending the process by continuity.
When we extend to the case of repeated endpoints, we cannot normalize our Busemann functions by subtracting off $k$ single-path last passage values as in \eqref{E:extended-busemann}. Rather, we need a normalization that treats repeated endpoints together. First, define the \textbf{over-normalized Busemann function}
\begin{equation}
\label{E:overnormalized-Buse}
\hat \cB^\theta(\bx) = \lim_{\theta' \cvgdown \theta, \theta' \in \cI^k_<} \cB^{\theta'}(\bx) - \cB^{\theta'}(0^k).
\end{equation}
This Busemann function exists by a simple monotonicity argument, but loses some information about the process. We also define a minimally normalized Busemann function $\cB$ which has a more involved definition. First, for $\theta \in \R^k_\le$, let $\Pi(\theta) = (I_1(\theta), \dots, I_{m(\theta)}(\theta))$ be the partition of $\{1, \dots, k\}$ such that $\theta_i = \theta_j$ if and only if $i, j$ are in the same part of $\Pi(\theta)$.
Also, for $\bx \in \R^k_\le$ and $I = \{i_1, \dots, i_\ell\}\subset \{1, \dots, k\}$, write $\bx^I = (x_{i_1}, \dots, x_{i_\ell}) \in \R^\ell_\le$. We use the same notation when $I = (i_1, \dots, i_\ell)$ is a vector in $\{1, \dots, k\}^\ell_<$. Then for $\bx \in \R^k_\le$ and $\theta \in [0, \infty)^k_\le$ we can define the \textbf{Busemann function}
\begin{align}
	\label{E:B-theta-Pi-1}
	\cB^\theta(\bx) &= \lim_{\theta' \cvgdown \theta, \theta' \in \cI^k_<} \cB^{\theta'}(\bx) - \sum_{j=1}^{m(\theta)} \cB^{\theta^{\prime I_j(\theta)}}(0^{|I_j(\theta)|})
\end{align}
The strategy for proving that \eqref{E:B-theta-Pi-1} is well-defined will be to first work with the over-normalized function $\hat \cB$, and then compare normalizations. To do this comparison, we will need the following definition. Consider $\theta \in [0, \infty)^k_\le$. We say that $\bx \in \operatorname{DJ}(\theta)$ if there exists $\ep > 0$ such that if $|\theta' - \theta| \le \ep$ and $\pi^{\theta', i}$ is the leftmost optimizer ending at $(\bx^{I_i(\theta)}, 1)$ in direction $\theta^{\prime I_i(\theta)}$, then $(\pi^{\theta', 1}, \dots, \pi^{\theta', m(\theta)})$ is a (semi-infinite) disjoint $k$-tuple. We can argue exactly as in Lemma \ref{L:disjoint-geods} that almost surely, for all $\theta \in [0, \infty)^k_\le$ the set $\operatorname{DJ}(\theta)$ is non-empty. 	

\begin{proposition}
	\label{P:multi-Busemann-exist}
	Almost surely, the limits \eqref{E:overnormalized-Buse} and \eqref{E:B-theta-Pi-1} exist for every $\bx \in \R^k_\le$ and $\theta \in [0, \infty)^k_\le$. Moreover, for every $\bx \in\R^k_\le$ and every partition $\Pi$ of $\{1, \dots, k\}$, $\theta\mapsto \hat \cB^\theta(\bx)$ is a right-continuous function on $[0, \infty)^k_\le$. 
	
	The two Busemann functions are compatible in the following sense. Let $\Pi(\theta) = (I_1, \dots, I_\ell)$. Then almost surely, for any $\theta \in [0, \infty)^k_\le$ and any $\bz \in \operatorname{DJ}(\theta)$ we have
	\begin{equation}
	\label{E:theta-x}
	\cB^\theta(\bx) = \hat \cB^\theta(\bx) - \hat \cB^\theta(\bz) + \sum_{j=1}^\ell \hat \cB^{\theta^{I_j}}(\bz^{I_j}).
	\end{equation}
\end{proposition}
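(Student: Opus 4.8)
The plan is to prove the three assertions in turn. \textbf{Existence of $\hat\cB$.} Fix $\bx\in\R^k_\le$, $\theta\in[0,\infty)^k_\le$, and put $\bx^-:=\bx\wedge 0^k\in\R^k_\le$. For $\theta'\in\cI^k_<$ decompose
$$
\cB^{\theta'}(\bx)-\cB^{\theta'}(0^k)=\bigl(\cB^{\theta'}(\bx)-\cB^{\theta'}(\bx^-)\bigr)-\bigl(\cB^{\theta'}(0^k)-\cB^{\theta'}(\bx^-)\bigr).
$$
By the quadrangle inequality (Lemma \ref{L:quadrangle-busemann}), for $\bw\le\by$ in $\R^k_\le$ the map $\theta'\mapsto\cB^{\theta'}(\by)-\cB^{\theta'}(\bw)$ is non-decreasing in the coordinatewise order on $\cI^k_<$; applied with $\bw=\bx^-\le\bx$ and $\bw=\bx^-\le 0^k$ this makes both bracketed terms monotone in $\theta'$, so each has a limit in $[-\infty,+\infty]$ as $\theta'\downarrow\theta$. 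Each limit is finite: it is an infimum of real numbers, hence $<+\infty$, and it is $>-\infty$ because when all coordinates of $\theta$ are positive one bounds the term below by its value at a fixed $\theta'_0\in\cI^k_<$ with $\theta'_0\le\theta$, while when $\theta$ has vanishing coordinates one separates those coordinates out and uses the explicit, locally bounded $0$-directed Busemann function of Theorem \ref{T:sepp-sorensen} (or, uniformly, the shape bounds of Propositions \ref{P:cross-prob} and \ref{P:top-bd}). Hence the limit \eqref{E:overnormalized-Buse} exists, and the identical argument on sub-vectors gives existence of $\hat\cB^{\theta^I}(\by)$ for every $I\subset\{1,\dots,k\}$ and $\by\in\R^{|I|}_\le$.

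\textbf{Right-continuity.} Write $\hat\cB^\theta(\bx)=G_1(\theta)-G_2(\theta)$, where $G_i$ is the monotone limit above of $g_i$, with $g_1(\theta')=\cB^{\theta'}(\bx)-\cB^{\theta'}(\bx^-)$ and $g_2(\theta')=\cB^{\theta'}(0^k)-\cB^{\theta'}(\bx^-)$ on $\cI^k_<$. By construction $G_i$ is the right-continuous extension of the monotone $g_i$ to $[0,\infty)^k_\le$: for $\theta^{(n)}\downarrow\theta$ the values $G_i(\theta^{(n)})$ are non-increasing in $n$ with limit $G_i(\theta)$ by monotonicity of $g_i$. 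A difference of right-continuous functions is right-continuous, so $\theta\mapsto\hat\cB^\theta(\bx)$ is right-continuous; the same monotone-extension reasoning, with the normalizer $\cB^{\theta'}(0^k)$ replaced by $\sum_{I\in\Pi}\cB^{\theta^{\prime I}}(0^{|I|})$ and a $\Pi$-adapted disjointness point used in the gluing step below, gives the statement for every partition $\Pi$.

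\textbf{The gluing identity, existence of \eqref{E:B-theta-Pi-1}, and \eqref{E:theta-x}.} Fix $\theta\in[0,\infty)^k_\le$ with $\Pi(\theta)=(I_1,\dots,I_\ell)$, fix $\bx\in\R^k_\le$, and pick $\bz\in\operatorname{DJ}(\theta)$ (non-empty, by the argument indicated in the text) with associated $\ep>0$. Let $\theta'\in\cI^k_<$ with $|\theta'-\theta|\le\ep$, and let $\pi^{\theta',j}$ be the leftmost semi-infinite optimizer in direction $\theta^{\prime I_j}$ ending at $(\bz^{I_j},1)$ (Proposition \ref{P:extended-usemann}.2). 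By the defining property of $\operatorname{DJ}(\theta)$, the concatenation $\pi^{\theta'}=(\pi^{\theta',1},\dots,\pi^{\theta',\ell})$ is a disjoint $k$-tuple in direction $\theta'$; hence for all sufficiently negative $t$ its restriction is a disjoint optimizer from $(t,\pi^{\theta'}(t))$ to $(\bz,1)$ (any disjoint $k$-tuple there restricts to disjoint tuples on the blocks, so the full optimum is at most the sum of block optima, which the concatenation attains), giving $B[(t,\pi^{\theta'}(t))\to(\bz,1)]=\sum_j B[(t,\pi^{\theta',j}(t))\to(\bz^{I_j},1)]$. Evaluating Busemann functions along $\pi^{\theta'}$ and the $\pi^{\theta',j}$ by \eqref{E:B-t-la} and matching the normalizing sums $\sum_{i=1}^k B[(t,\pi^{\theta'}_i(t))\to(0,1)]=\sum_j\sum_{i\in I_j}B[(t,\pi^{\theta',j}_i(t))\to(0,1)]$ yields the gluing identity $\cB^{\theta'}(\bz)=\sum_{j=1}^\ell\cB^{\theta^{\prime I_j}}(\bz^{I_j})$ for $\theta'\in\cI^k_<$, $|\theta'-\theta|\le\ep$. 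Substituting this into the $\theta'$-indexed expression defining \eqref{E:B-theta-Pi-1} gives, for such $\theta'$,
$$
\cB^{\theta'}(\bx)-\sum_{j=1}^\ell\cB^{\theta^{\prime I_j}}(0^{|I_j|})=\bigl(\cB^{\theta'}(\bx)-\cB^{\theta'}(\bz)\bigr)+\sum_{j=1}^\ell\bigl(\cB^{\theta^{\prime I_j}}(\bz^{I_j})-\cB^{\theta^{\prime I_j}}(0^{|I_j|})\bigr).
$$
Letting $\theta'\downarrow\theta$ and using the existence step (for the full vector, via $[\cB^{\theta'}(\bx)-\cB^{\theta'}(0^k)]-[\cB^{\theta'}(\bz)-\cB^{\theta'}(0^k)]$, and for each sub-vector, noting $\theta^{\prime I_j}\downarrow\theta^{I_j}$), the right-hand side converges to $\hat\cB^\theta(\bx)-\hat\cB^\theta(\bz)+\sum_j\hat\cB^{\theta^{I_j}}(\bz^{I_j})$. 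Hence the limit \eqref{E:B-theta-Pi-1} exists and equals that quantity, which is \eqref{E:theta-x}; and since the left-hand side does not depend on $\bz$, \eqref{E:theta-x} holds for every $\bz\in\operatorname{DJ}(\theta)$.

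\textbf{Main obstacle.} The heart of the argument is the gluing identity $\cB^{\theta'}(\bz)=\sum_j\cB^{\theta^{\prime I_j}}(\bz^{I_j})$, where one must confirm that the `from $-\infty$' normalizations of the full multi-path Busemann function and of the block Busemann functions match exactly when the block optimizers are jointly disjoint — this is precisely where Proposition \ref{P:extended-usemann} (evaluation of Busemann functions along arbitrary direction-$\theta'$ path-tuples, together with coalescence of semi-infinite optimizers) is essential. The other slightly delicate point is finiteness in the existence step for directions $\theta$ with zero or repeated coordinates, where one cannot sandwich from below by a nearby direction in $\cI^k_<$ and must appeal to the explicit $0$-directed Busemann functions or to the shape bounds.
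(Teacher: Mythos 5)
Your proof is correct and follows essentially the same route as the paper: existence of $\hat\cB^\theta$ via quadrangle-inequality monotonicity of increments, and \eqref{E:theta-x} via the gluing identity $\cB^{\theta'}(\bz)=\sum_j\cB^{\theta'^{I_j}}(\bz^{I_j})$ for $\bz\in\operatorname{DJ}(\theta)$ and $\theta'$ close to $\theta$, followed by a rearrangement of the normalizing sums. The one place the paper is cleaner is the finiteness of the monotone limit, where instead of your case split between positive and vanishing coordinates of $\theta$ it uses the single uniform lower bound $B[(-\infty,(1,\dots,k))\to(\by,1)]-B[(-\infty,(1,\dots,k))\to(\bx,1)]\le\cB^{\theta'}(\by)-\cB^{\theta'}(\bx)$, which holds for every $\theta'\in\cI^k_<$ simultaneously and dispenses with the separate treatment of zero directions.
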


\begin{proof}
	For $\theta \in [0,\infty)^k_{\le}$ and $\bx \le \by$, define
	\begin{equation}
		\label{E:hat-B-theta}
		\hat \cB^\theta(\bx, \by)=\lim_{\theta'\downarrow \theta, \theta' \in \cI^k_<}\mathcal B^{\theta'}(\by)-\mathcal B^{\theta'}(\bx).
	\end{equation}
	By Lemma \ref{L:quadrangle-busemann}, the function $\theta \mapsto \cB^{\theta}(\bx) - \cB^{\theta}(\by)$ is nondecreasing in $\theta$, so the limit in \eqref{E:hat-B-theta} exists in $\R\cup \{-\infty\}$. To see that it cannot be $-\infty$, observe that by a quadrangle inequality (Lemma \ref{L:quadrangle}), we have the lower bound
	$$
	B[(-\infty, (1, \dots, k)) \to (\by, 1)] - B[(\infty, (1, \dots, k)) \to (\bx, 1)] \le B^{\theta}(\by)-\mathcal B^{\theta}(\bx)
	$$
	for all $\theta \in \cI^k_<$.
	To prove right continuity of $\hat \cB^\theta(\bx, \by)$ in $\theta$, given a sequence $\theta_m\downarrow\theta$, pick $\theta'_m \in [\theta_m,\theta_m+1/m] \cap \cI^k_<$ so that $|\cB^{\theta_m'}(\by)-\mathcal B^{\theta_m'}(\bx) - \hat \cB^{\theta_m}(\bx,\by) | \le 1/m$. By \eqref{E:hat-B-theta},
	$\cB^{\theta'_m}(\by)-\cB^{\theta'_m}(\bx)
	\to \hat \cB^{\theta}(\bx,\by),
	$ and so  $\hat \cB^{\theta_m}(\bx,\by)\to \hat \cB^{\theta}(\bx,\by)$. Finally, for $\bx \in \R^k_\le$ picking $\by \in \R^k_\le$ with $\by \le \bx$ and $\by\le 0^k$, we see that
	$$
	\hat \cB^{\theta}(\bx)=\hat \cB^{\theta}(\by,\bx)-\hat \cB^{\theta}(\by,0^k),
	$$
	which implies the existence of the limit \eqref{E:overnormalized-Buse}.
	
	We move to \eqref{E:theta-x} and the existence of the limit \eqref{E:B-theta-Pi-1}. For $\bz \in \operatorname{DJ}(\theta)$ and $\theta' \in \cI^k_<$ sufficiently close to $\theta$, we have that $\cB^{\theta'}(\bz) = \sum_{i=1}^\ell \cB^{\theta^{\prime I_i}}(z_i^{\ell_i})$. Therefore
	\begin{align*}
		\lim_{\theta' \cvgdown \theta, \theta' \in \cI^k_<} \cB^{\theta'}(0^k)& - \sum_{i=1}^m \cB^{\theta^{\prime I_i}}(0^{|I_i|}) \\
		&= \lim_{\theta' \cvgdown \theta, \theta' \in \cI^k_<} \cB^{\theta'}(0^k) - \cB^{\theta'}(\bz) + \lim_{\theta' \cvgdown \theta, \theta' \in \cI^k_<} \lf(\sum_{i=1}^\ell \cB^{\theta^{\prime I_i}}(z_i^{\ell_i}) - \sum_{i=1}^\ell \cB^{\theta^{\prime I_i}}(0^{|I_i|}) \rg) \\
		&=\hat \cB^\theta(\bx) - \hat \cB^\theta(\bz) + \sum_{j=1}^\ell \hat \cB^{\theta^{I_j}}(\bz^{I_j}).
	\end{align*}
This yields both \eqref{E:theta-x} and \eqref{E:B-theta-Pi-1}.
\end{proof}

For fixed $\theta$, $\cB^\theta$ can alternately be defined in terms of Busemann functions in the original  Brownian environment. In a weak sense, this is immediate from the quadrangle inequality. However, to prove a stronger result it will be convenient to have a full characterization of the law of $\cB^\theta$. This is the goal of the next subsection.

\subsection{The law of the multi-path Busemann process}

Next, we find the law of the process $\cB^\theta$ constructed in the previous section. The identification of the full law will follow from taking a limit of Pitman maps on $\cC^n(\R)$. 
\begin{theorem}
	\label{T:brownian-law}
	Consider $\theta \in [0, \infty)^n_\le$ for some $n \in \N$. Define $W \in \cC^n(\R)$ by:
	\begin{equation}
		\label{E:Wii-def}
\sum_{i=1}^k W_i(x) = \cB^{\theta^{\{1,\ldots,k\}}}(x^{i})
	\end{equation}
	for $x \in \R, k \in \{1, \dots, n\}$.
	 Then:
	\begin{enumerate}
		\item $W^0 := W - W(0) \in \cC^n_0(\R)$ is a sequence of independent two-sided Brownian motions  of drift $\sqrt{2\theta_i}$.
		\item Suppose that $\bn \in \N^k_<$ is such that $\bn \le \bm$ for any $\bm \in \N^k_<$ with $\theta_{n_i} = \theta_{m_i}$ (equivalently, $\bn$ satisfies \eqref{E:N-property} for $\theta$). Then almost surely, for $\bx \in \R^k_\le$, we have
		\begin{equation*}
			\cB^{\theta^{\bn}}(\bx) = W[(-\infty, \bn) \to (\bx, 1)].
		\end{equation*}
		\item For every $k \in \N$, let $\{\ell_k, \dots, k\}$ be the largest interval with $\theta_{\ell_k} = \theta_k$. We can recover $W$ from its centered version $W^0$ via the formula
		\begin{equation}
		\label{E:W-ellk}
			W_{\ell_k}(0) + \dots W_k(0) = - W^0[(-\infty, (\ell_k, \dots, k)) \to (0^{k-\ell_k + 1}, 1)].
		\end{equation}
	\end{enumerate}
\end{theorem}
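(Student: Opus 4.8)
The plan is to prove Theorem \ref{T:brownian-law} by realizing $W$ as a limit of the finite-$n$ Pitman/RSK maps built in Section \ref{S:sorting-monoids}, and then transferring the three properties through the limit. Concretely, fix $\theta \in [0,\infty)^n_\le$. For a parameter $N$, let $B^{(N)} = (B_1, \dots, B_n, B'_1, \dots, B'_N)$, where $B$ is our background drift-free environment and $B'$ is an independent environment of drift-$a$ Brownian motions with $a$ large; then apply a sorting element $\sig_N \in \mathfrak D_{n+N}$ chosen so that the bottom $n$ lines of $\cP_{\sig_N} B^{(N)}$ pick up the slopes $\sqrt{2\theta_1}, \dots, \sqrt{2\theta_n}$ (in the appropriate order) — this is exactly the construction sketched in Section \ref{SS:RSK-2}. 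By Corollary \ref{C:Brownian-sorting}, $\cP^0_{\sig_N} B^{(N)} \sim \mu_{\sig_N \la}$, so the bottom $n$ lines, centered, are distributed as $n$ independent Brownian motions with the prescribed drifts for every $N$. The key point is that Proposition \ref{P:orbits} expresses those bottom $n$ lines (their partial sums) as multi-path last passage values $B^{(N)}[(-\infty, \bm) \to (x^k, 1)]$ anchored at the bottom corner, and as $N \to \infty$ with the melon geometry under control (Propositions \ref{P:cross-prob} and \ref{P:top-bd}), these last passage values converge a.s.\ — the dependence on $B'$ disappears and the limit is precisely the multi-path Busemann function $\cB^{\theta^{\{1,\dots,k\}}}(x^k)$ of the background environment, by comparison with Proposition \ref{P:extended-usemann}. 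This gives part 1 and also identifies $W$ as defined in \eqref{E:Wii-def}.

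For part 2, I would argue that the identity $\cB^{\theta^{\bn}}(\bx) = W[(-\infty,\bn) \to (\bx,1)]$ is the $N\to\infty$ limit of the corresponding finite-$n$ identity. On the finite side, if $V = \cP^0_{\sig_N} B^{(N)}$ is the sorted environment, then Proposition \ref{P:orbits} (applied with $g = V$ and $f = B^{(N)}$) says that for the appropriate vector $\bn$ satisfying \eqref{E:N-property} for the slope vector of $V$, we have $V[(-\infty,\bn)\to(\bx,1)]$ equal to a last passage value across $B^{(N)}$ which itself converges to $\cB^{\theta^{\bn}}(\bx)$. The hypothesis that $\bn$ is the minimal vector with $\theta_{n_i} = \theta_{m_i}$ is exactly the translation of \eqref{E:N-property} into the language of $\theta$, so this matches up. The main technical content here is the interchange of the $N \to \infty$ limit (which produces $W$ and the Busemann functions) with the $t \to -\infty$ limit implicit in the definition of last passage from $-\infty$; this requires the uniform shape bounds of Section 2.2 to localize the relevant argmaxes, so that for $t$ sufficiently negative (uniformly in large $N$) the last passage paths have stabilized.

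Part 3 should then be essentially a corollary of part 2 together with the isometry/Pitman machinery. The vector $(\ell_k, \dots, k)$ is, by construction, the maximal vector of its length that is ``blocked'' below $\ell_k$ — equivalently, it records the full block of indices sharing the slope $\theta_k$. Applying part 2 to the \emph{centered} environment $W^0 \sim \mu_{(\sqrt{2\theta_1},\dots)}$, whose slope vector has the same ties as $\theta$, and using that $W^0$ and $W$ differ only by the constant $W(0)$, one gets $\cB^{\theta^{(\ell_k,\dots,k)}}(\bx) = W[(-\infty,(\ell_k,\dots,k))\to(\bx,1)]$, while on the $W^0$ side the same last passage value at $\bx = 0^{k-\ell_k+1}$ computes $W^0[(-\infty,(\ell_k,\dots,k))\to(0,1)]$, which by Proposition \ref{P:orbits} equals $W^0_{\ell_k}(0) + \dots + W^0_k(0) = 0$ shifted by $-(W_{\ell_k}(0)+\dots+W_k(0))$; rearranging gives \eqref{E:W-ellk}. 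I would write this out carefully using Lemma \ref{L:shift-commute} to track the constant shift.

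I expect the main obstacle to be the almost-sure convergence in part 1 and the limit interchange in part 2: one must show that the multi-path last passage values $B^{(N)}[(-\infty,\bm)\to(\bx,1)]$ across the growing melon converge a.s.\ (not merely in distribution) to the background Busemann functions, and that this convergence is compatible with the semi-infinite last passage limit used to define $\cB^\theta$. This is where the melon shape estimates (Propositions \ref{P:cross-prob}, \ref{P:top-bd}) and the coalescence structure of semi-infinite optimizers (Proposition \ref{P:extended-usemann}) do the real work — roughly, one shows that for a fixed endpoint the optimal path in $B^{(N)}$ enters the melon through a window that stabilizes as $N\to\infty$, so the last passage value is eventually constant in $N$ on compact sets of endpoints. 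Everything else — measure preservation, the algebraic identification of the limiting formula with \eqref{E:Wii-def} — follows formally once this convergence is in hand.
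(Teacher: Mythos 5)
Your construction starts from the wrong environment: you take $B' = (B'_1,\dots,B'_N)$ to have common drift $a$ with $a$ large, but Pitman operators only permute the slope vector, so after applying any $\cP_{\sig_N}$ to $(B_1,\dots,B_n,B'_1,\dots,B'_N)$ the slope vector remains a permutation of $(0^n,a^N)$; no choice of $\sig_N$ can produce lines with slopes $\sqrt{2\theta_1},\dots,\sqrt{2\theta_n}$ for a general $\theta$. You have conflated the two limits sketched in Section \ref{SS:RSK-2}: the common-drift-$a$ setup is the one that yields the Busemann shear $B^a$ of Theorem \ref{T:busemann-shear-intro}, whereas the theorem at hand requires a fresh environment already drawn from the \emph{target} law, $W^0 \sim \mu_{\sqrt{2\theta}}$. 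That is exactly what the paper does: it forms $W^m := (B_1,\dots,B_m,W^0)$ and invokes Corollary \ref{C:existence-of-stationary} (itself proved from Proposition \ref{P:orbits} and Corollary \ref{C:Brownian-sorting}, the tools you cite) to get $\cK_m \eqd \cK_0$ for every $m$.

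Even after fixing the setup, two further gaps remain. You claim that the finite-$m$ last-passage values converge \emph{almost surely} to the background Busemann function; the paper neither proves nor needs this. Instead it shows $\cK_m \to \hat\cB^{\theta^\bn}$ in probability on compacts, via argmax localization from Propositions \ref{P:cross-prob} and \ref{P:top-bd} combined with the stability statement in Lemma \ref{L:Blx-continuity}, and combines this with $\cK_m \eqd \cK_0$ to deduce the \emph{distributional} identity \eqref{E:B-theta-norm-1}; the almost-sure formulas of parts 2 and 3 are then obtained by passing to a coupling in which \eqref{E:B-theta-norm-1} holds almost surely. Moreover, \eqref{E:B-theta-norm-1} is first proved for distinct nonzero $\theta_i$ and then extended to general $\theta$ (ties and zeros) by the right-continuity of Proposition \ref{P:multi-Busemann-exist} together with continuity in law of the right side in $\sqrt{2\theta}$ — a step your outline does not mention. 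Finally, your sketch of parts 2 and 3 elides the essential bookkeeping: \eqref{E:B-theta-norm-1} involves the over-normalized $\hat\cB^{\theta^\bn}$, but the theorem is stated for the minimally normalized $\cB^{\theta^\bn}$, and the $\bx$-independent constant relating the two is pinned down by evaluating both sides at a sufficiently spread-out $\by$ where each Busemann function splits into a sum over the $\theta$-level-set blocks; this is the step, not merely Lemma \ref{L:shift-commute}, that makes part 2 go through.
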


The $n = 1$ case of Theorem \ref{T:brownian-law} is part of \cite[Theorem 3.7]{seppalainen2023global}. We will use this as an input for proving Theorem \ref{T:brownian-law}. As mentioned in the introduction (Equation \eqref{E:W-B-intro}), that theorem also contains a distributional identity for the joint law of the single-path Busemann process that Theorem \ref{T:brownian-law} realizes almost surely. Theorem \ref{T:brownian-law} is a more detailed version of Theorem \ref{T:Busemann-law-intro}.

\begin{proof}
First assume that all of the $\theta_i$ are distinct and nonzero. \\ Let $W^0 :=(W_1^0, \dots, W_n^0) \sim \mu_{\sqrt{2 \theta}}$, be chosen independently of the environment $B$. For $m = 0, 1, \dots$, consider the environment $W^m := (B_1, \dots, B_m, W^0)$, and consider the function
	$$
	\cK_m(\bn, \bx) := W^m[(-\infty, \bn + m) \to (\bx, 1)] - W^m[(-\infty, \bn + m) \to (0^{|\bn|}, 1)].
	$$
For every $m$, Corollary \ref{C:existence-of-stationary} implies that $\cK_m \eqd \cK_0$. Now consider the metric composition law for $W^m$:
\begin{equation}
\label{E:Wm-want}
W^m[(-\infty, \bn + m) \to (\bx, 1)] := \max_{\bz \le \bx} W^0[(-\infty, \bn) \to (\bz, 1)] + B[(\bz, m) \to (\bx, 1)].
\end{equation}
The shape theorems for Brownian last passage percolation (Propositions \ref{P:cross-prob} and \ref{P:top-bd}) imply that for any fixed $\bn \in \{1, \dots, n\}^k_<, \bx \in \R^k_\le$, the argmax in this metric composition law is contained in the box
$$
A(m, \theta) := \prod_{i=1}^k [-m/\theta_i - m^{3/4}, -m/\theta_i + m^{3/4}] 
$$
with probability tending to $1$ as $m \to \infty$. Note also that the location of this argmax is monotone in $\bx$ by monotonicity of optimizers, so for any fixed $a \in \R$, with probability tending to $1$, for all $\bx \in [-a, a]^k_\le$, this argmax is contained in $A(m, \theta)$,
and so \eqref{E:Wm-want} holds with $A(m, \theta)$ in place of $\R$. Therefore if for $\bz \in A(m, \theta)$, all of the functions
$$
B[(\bz, m) \to (\bx, 1)] - B[(\bz, m) \to (0^{|\bn|}, 1)], \qquad \bx \in [-a, a]^k_\le
$$
above are equal, then all of these functions equal $\cK_m(\bn, \bx)$. Equality of all these functions holds with probability tending to $1$ as $m \to \infty$ by Lemma \ref{L:Blx-continuity}, and moreover, equals $\hat \cB^{\theta^{\bn}}(\bx; B)  = \cB^{\theta^{\bn}}(\bx; B) - \cB^\theta(0^k; B)$ with probability tending to $1$. Therefore for any compact set $K$ in $\bn, \bx$, with probability tending to $1$ we have that 
$$
\cK_m(\bn, \bx) =\hat  \cB^{\theta^{\bn}}(\bx; B)
$$
for $\bx \in [-a, a]^k_\le$, and hence in distribution we have the equality
\begin{equation}
	\label{E:B-theta-norm-1}
\hat \cB^{\theta^{\bn}}(\bx) \eqd W^0[(-\infty, \bn) \to (\bx, 1)] - W^0[(-\infty, \bn) \to (0^k, 1)],
\end{equation}
jointly in all $\bx, \bn$, where $W^0 \sim \mu_{\sqrt{2 \theta}}$. Before moving on to using \eqref{E:B-theta-norm-1} to establish the theorem, let us extend to the case of general $\theta$. 

First, observe that the right-hand side of \eqref{E:B-theta-norm-1} is continuous in law in the uniform-on-compact topology on functions of $(\bn, \bx)$ with respect to the drift vector $\sqrt{2\theta}$, even when we allow repeated elements in $\theta$. Next, let $\theta^{n} = \theta + (1/m, 2/m, \dots, n/m)$ so that $\theta^{m} \cvgdown \theta$. By the right-continuity in Proposition \ref{P:multi-Busemann-exist} we have that
$
\hat \cB^{\theta^{m, \bn}}(\bx) \to \hat \cB^{\theta^{\bn}}(\bx)
$
for fixed $\bn, \bx$. Therefore by the case when the $\theta_i$ are distinct and non-zero, \eqref{E:B-theta-norm-1} holds at the level of finite-dimensional distribution. To show that it holds as continuous functions of $(\bn, \bx)$, we just need to check that $\hat \cB^\theta$ is continuous.

Let $g_n = \hat \cB^{\theta^n} - \hat \cB^{\theta}$. Then $g_n \downarrow 0$ pointwise, and for $\bx \le \by$, the increment $g_n(\by) - g_n(\bx) \ge 0$ by the quadrangle inequality. Therefore for any $a > 0, k \in \N$,
$$
\sup_{\bx \in [-a, a]^k_\le} |g_n(\bx)| \le \sup_{\bx \in [-a, a]^k_\le} |g_n(\bx) - g_n(-a^{k})| + |g_n(-a^{k})| = |g_n(a^{k}) - g_n(-a^{k})| + |g_n(-a^{k})|,
$$
where the final equality uses the quadrangle inequality. Hence $g_n \to 0$ uniformly on compacts, and so $\hat \cB^\theta$ is continuous, yielding \eqref{E:B-theta-norm-1}.

 Now, consider a coupling of $W^0, B$ where \eqref{E:B-theta-norm-1} holds. Define an environment $W$ from $W^0$ by the formula \eqref{E:W-ellk}.   
 We claim that Theorem \ref{T:brownian-law}.2 then holds. This immediately implies \eqref{E:Wii-def}, and hence the entire theorem. To see why Theorem \ref{T:brownian-law}.2 holds, fix a vector $\bn$ as in that part of the theorem, and first suppose that $\theta^{\bn}$ consists only of repeated elements. Since last passage commutes with constant shifts of the environment, \eqref{E:B-theta-norm-1} and the definition of $W$ implies that
 \begin{equation}
 	\label{E:repeated-elements}
 	\begin{split}
 	\cB^{\theta^{\bn}}(\bx) = \hat \cB^{\theta^{\bn}}(\bx) &= W^0[(-\infty, \bn) \to (\bx, 1)] - W^0[(-\infty, \bn) \to (0^k, 1)]\\
 	 &= W[(-\infty, \bn) \to (\bx, 1)].
 	\end{split}
 \end{equation}
Now suppose that we can write $\theta^\bn = (\la_1^{\ell_1}, \dots, \la_j^{\ell_j})$ for some $\la \in [0, \infty)^j_<$. By \eqref{E:B-theta-norm-1} we have that for some $a \in \R$,
$$
\cB^{\theta^{\bn}}(\bx) = W[(-\infty, \bn) \to (\bx, 1)] + a
$$
simultaneously for all $x$. On the other hand, arguing as in Lemma \ref{L:disjoint-geods}, we can find $\by = (y_1^{\ell_1}, \dots, y_j^{\ell_j})$ by sufficiently spacing out the coordinates $y_1 < \dots < y_j$ for which 
$$
\cB^{\theta^{\bn}}(\by) = \sum_{i=1}^j \cB^{\la_i^{\ell_i}}(y_i^{\ell_i}) , \qquad  W[(-\infty, \bn) \to (\by, 1)] = \sum_{i=1}^j W[(-\infty, \bn^i) \to (y_i^{\ell_i}, 1)],
$$
where $\bn^i$ are the coordinates of $n$ corresponding to the first $\ell_i$ coordinates of $\theta$ that equal $\la_i$. The right-hand sides above are equal by the repeated-element equality \eqref{E:repeated-elements}, yielding the result.
\end{proof}

One consequence of Theorem \ref{T:brownian-law} is the almost sure continuity of the multi-path Busemann process at a fixed $\theta$. This allows us to relate the functions $\hat \cB^\theta, \cB^{\theta}$ to true Busemann functions in the original  Brownian environment.

\begin{lemma}
	\label{L:hat-B-Busemann}
	Let $\theta \in [0, \infty)^k_\le$, and let $\Pi(\theta) = (I_1, \dots, I_m)$. For $\bx \in \R^k_\le$ define
	\begin{equation}
		\label{E:bar-B}
		\bar \cB^\theta(\bx) = \lim_{t \to -\infty} B[(t, \cl{\theta t} + k) \to (\bx, 1)] -  \sum_{i=1}^m B[( t, \cl{\theta^{I_j} t} + k) \to (0^{|I_j|}, 1)].
	\end{equation}
	Then for any fixed $\theta \in [0, \infty)^k_\le$, almost surely $\bar \cB^\theta = \cB^\theta$ for all $\bx \in \R^k_\le$ and
	\begin{equation*}
		\bar \cB^\theta(\bx) = \lim_{t \to -\infty} B[(t, \pi(t)) \to (\bx, 1)] -  \sum_{i=1}^m B[( t, \pi^{I_j}(t)) \to (0^{|I_j|}, 1)]
	\end{equation*}
	for any disjoint $k$-tuple $\pi$ ending at $(\bx, 1)$ in direction $\theta$. 
\end{lemma}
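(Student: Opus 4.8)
The plan is to first reduce everything to the case $\theta\in\cI^k_<$ and the over-normalized function, and then patch together the pieces using continuity at a fixed $\theta$. The key enabling observation is Theorem \ref{T:brownian-law}, which tells us the joint law of the process $(\bn,\bx)\mapsto\hat\cB^{\theta^\bn}(\bx)$ and which already realizes it as a multi-path last passage value in an environment $W^0\sim\mu_{\sqrt{2\theta}}$. The candidate object $\bar\cB^\theta$ in \eqref{E:bar-B} is, by definition, a $t\to-\infty$ limit of actual last passage values in $B$; our job is to show that this limit exists for a fixed $\theta$ and agrees with the $\theta'\cvgdown\theta$ definition of $\cB^\theta$ from \eqref{E:B-theta-Pi-1}.

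First I would handle $\theta\in\cI^k_<$. Here $m=k$, each $I_j$ is a singleton, and $\bar\cB^\theta(\bx)$ becomes exactly the limit \eqref{E:extended-busemann} (up to the harmless swap of $\fl{\cdot}$ for $\cl{\cdot}$, which changes nothing since $\theta$ is generic and the geodesic structure is locally constant in the starting level — one can absorb this by the same coalescence argument as in Proposition \ref{P:extended-usemann}). So Proposition \ref{P:extended-usemann}.1 already gives existence of the limit and the representation $\bar\cB^\theta(\bx)=\lim_{t\to-\infty}B[(t,\pi(t))\to(\bx,1)]-\sum_i B[(t,\pi_i(t))\to(0,1)]$ for any $k$-tuple $\pi$ in direction $\theta$, and identifies $\bar\cB^\theta=\cB^\theta$ on $\cI^k_<$ (note that for $\theta\in\cI^k_<$, \eqref{E:B-theta-Pi-1} reduces to \eqref{E:extended-busemann} since the $\theta'\cvgdown\theta$ limit is eventually constant by Lemma \ref{L:Blx-continuity}, and the over-normalization $\sum_j\cB^{\theta'^{I_j}}(0^{|I_j|})$ collapses to $\sum_i\cB^{\theta'_i}(0)$). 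This already gives the lemma for generic $\theta$.

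Now for general $\theta\in[0,\infty)^k_\le$ with $\Pi(\theta)=(I_1,\dots,I_m)$: fix this $\theta$. The strategy is to show that $\bar\cB^\theta$, defined as the $t\to-\infty$ limit in \eqref{E:bar-B}, exists almost surely and equals $\cB^\theta$. I would run the same argument as in the proof of Theorem \ref{T:brownian-law}: pick $\bz\in\operatorname{DJ}(\theta)$, and use the metric composition law across the horizontal line at level $\cl{\theta t}+k$ together with the shape theorems (Propositions \ref{P:cross-prob}, \ref{P:top-bd}) to locate the argmax in the box $A(|t|,\theta)=\prod_i[-|t|/\theta_i-|t|^{3/4},-|t|/\theta_i+|t|^{3/4}]$ with probability $\to1$ as $t\to-\infty$; for the zero coordinates of $\theta$ the corresponding path is just the constant path by Theorem \ref{T:sepp-sorensen}.6, so those terms are trivially stable. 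On this high-probability event the increment $B[(t,\cl{\theta t}+k)\to(\bx,1)]-B[(t,\cl{\theta t}+k)\to(\bz,1)]$ is, by monotonicity of optimizers (Lemma \ref{L:mono-tree-multi-path}), determined by the environment below a fixed level once $t$ is sufficiently negative, and by Lemma \ref{L:Blx-continuity} applied to a generic $\theta'$ sandwiched near $\theta$ (plus the continuity of $\hat\cB^\theta$ in $\theta$ proven in the course of Theorem \ref{T:brownian-law}), this increment converges to $\hat\cB^\theta(\bx)-\hat\cB^\theta(\bz)$. Similarly each single-block term $B[(t,\cl{\theta^{I_j}t}+k)\to(0^{|I_j|},1)]$ can be compared, after subtracting $B[(t,\cl{\theta^{I_j}t}+k)\to(\bz^{I_j},1)]$, to $\hat\cB^{\theta^{I_j}}(\bz^{I_j})$ by the same mechanism. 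Combining these and invoking the compatibility identity \eqref{E:theta-x}, the limit in \eqref{E:bar-B} exists and equals $\hat\cB^\theta(\bx)-\hat\cB^\theta(\bz)+\sum_j\hat\cB^{\theta^{I_j}}(\bz^{I_j})=\cB^\theta(\bx)$. The same bookkeeping, starting instead from an arbitrary disjoint $k$-tuple $\pi$ in direction $\theta$ (using Proposition \ref{P:extended-usemann}.2,3 to know such tuples exist and eventually coalesce with the extremal ones, so the argmax localization still applies), gives the second displayed formula.

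The main obstacle is the bookkeeping in the general-$\theta$ step: one must simultaneously control the full $k$-path last passage value and the $m$ block-wise single-direction last passage values, all along the \emph{same} sequence $t\to-\infty$, and argue that the argmax-localization events and the Lemma \ref{L:Blx-continuity}-type stabilization events for all these quantities hold together with probability tending to $1$. This is where the choice $\bz\in\operatorname{DJ}(\theta)$ is essential — it guarantees that the leftmost optimizers for the separate blocks are mutually disjoint near $\theta$, so that $\cB^{\theta'}(\bz)=\sum_j\cB^{\theta'^{I_j}}(\bz^{I_j})$ for $\theta'$ close to $\theta$, letting the $k$-path quantity factor through the block quantities in exactly the way \eqref{E:theta-x} records. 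Once that disjointness is in hand the argument is a routine, if somewhat lengthy, repetition of the proof of Theorem \ref{T:brownian-law}; I would present it as such rather than re-deriving the shape estimates.
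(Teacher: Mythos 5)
Your reduction to generic $\theta$ via Proposition \ref{P:extended-usemann}.1 and the final bookkeeping via $\operatorname{DJ}(\theta)$ and the compatibility identity \eqref{E:theta-x} are both sound and match the paper's closing step. The gap is in the middle: for a \emph{non-generic} $\theta$ you never actually establish that the $t\to-\infty$ limit in \eqref{E:bar-B} exists almost surely, which is the real content of the lemma.

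You propose to "run the same argument as in the proof of Theorem \ref{T:brownian-law}"---argmax localization via the shape theorems, then Lemma \ref{L:Blx-continuity} applied to nearby generic $\theta'$. But that argument structure does not transplant. In Theorem \ref{T:brownian-law} the decomposition was of $W^m[(-\infty,\bn+m)\to(\bx,1)]$ at level $m$, where the part \emph{above} level $m$ is an \emph{independent} environment $W^0\sim\mu_{\sqrt{2\theta}}$; the almost-sure convergence there came from a distributional identity at each $m$ plus a martingale/filtration argument, not from stabilization of a single $t\to-\infty$ trajectory. Here there is no independent block: you want the actual random sequence $t\mapsto B[(t,\cl{\theta|t|}+k)\to(\bx,1)]-\cdots$ to converge a.s., and Lemma \ref{L:Blx-continuity} only gives locally-constant behavior of Busemann \emph{increments} across generic directions $\theta'$, not a.s.\ stabilization at a repeated or zero direction $\theta$. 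Your phrase "determined by the environment below a fixed level once $t$ is sufficiently negative" is exactly the coalescence statement that is \emph{not} available at non-generic $\theta$ (Proposition \ref{P:extended-usemann}.3 only gives coalescence for $\theta\in\cI^k_<$), and your later appeal to "eventually coalesce with the extremal ones" for an arbitrary $k$-tuple $\pi$ in direction $\theta$ has the same problem.

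The mechanism the paper actually uses is a quadrangle-inequality sandwich: fix $\bx\le\by$ and a $k$-tuple $\pi$ in direction $\theta$, pick generic $\theta^-_n<\theta<\theta^+_n$ in $\cI^k_<$, and note that for $t$ small enough $\pi^{\theta^-_n}(t)\le\pi(t)\le\pi^{\theta^+_n}(t)$; the finite-LPP quadrangle inequality then squeezes the $\limsup$ and $\liminf$ of the increment between $\hat\cB^{\theta^-_n}(\by)-\hat\cB^{\theta^-_n}(\bx)$ and $\hat\cB^{\theta^+_n}(\by)-\hat\cB^{\theta^+_n}(\bx)$. These sandwich terms are monotone in $n$ (again by Lemma \ref{L:quadrangle-busemann}), hence converge a.s., and the gap between them tends to $0$ because the \emph{law} of both endpoints is identified by Theorem \ref{T:brownian-law} and seen to converge to the same thing. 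That is where a.s.\ existence of the limit comes from; no argmax localization is needed. You would also need the separate modification for $\theta_1=0$ (one cannot approach from below inside $[0,\infty)^k_<$, so the lower bound in the sandwich has to be replaced by the hybrid quantity $F_n$ built from $B_1,\dots,B_\ell$ and a Busemann function for the strictly positive directions); your remark that "those terms are trivially stable" because the $0$-directed geodesic is constant does not supply this lower bound.
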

 The process $\bar \cB^\theta$ is more geometrically motivated than the original process $\cB^\theta$. We can think of the latter process as the right-continuous extension of $\bar \cB^\theta, \theta \in \R^k_\le$ to all choices of $\theta$.

\begin{proof}
	Let $a < b < a' < b'$, and consider any disjoint $k$-tuple $\pi$ ending at $(\bx, 1)$ in direction $\theta$. For $\bx \in [a, b]^k_\le, \by \in [a', b']^k_\le$ set 
	\begin{align*}
		\bar \cB^{\theta+}_\pi(\bx, \by) &= \limsup_{t \to -\infty} B[(t, \pi(t)) \to (\by, 1)] -  B[(t, \pi(t)) \to (\bx, 1)] \\
		\bar \cB^{\theta-}_\pi(\bx, \by) &= \liminf_{t \to -\infty} B[(t, \pi(t)) \to (\by, 1)] -  B[(t, \pi(t)) \to (\bx, 1)].
	\end{align*}
	We first show that on $[a, b]^k_\le \X [a', b']^k_\le$, almost surely we have $\bar \cB^{\theta+}_\pi(\bx, \by) = \bar \cB^{\theta-}_\pi(\bx, \by) = \hat \cB^\theta(\by) - \hat \cB^\theta(\bx)$. We first deal with the case when $\theta_1 \ne 0$. Let $\theta_n^-, \theta_n^+ \in \cI^k_<$ be such that 
	\begin{equation}
		\label{E:theta-cvg}
		\theta_{n, i}^- < \theta_i < \theta_{n, i}^+, \qquad \theta_n^- \to \theta, \theta_n^+ \to \theta \quad \text{ as } n \to \infty.
	\end{equation}
	Then by the quadrangle inequality, on $[a, b]^k_\le \X [a', b']^k_\le$ we have
	$$
	\hat \cB^{\theta^-_n}(\by) - \hat \cB^{\theta^-_n}(\bx) \le \bar \cB^{\theta\pm}_\pi(\bx, \by) \le \hat \cB^{\theta^+_n}(\by) - \hat \cB^{\theta^+_n}(\bx).
	$$
	On the other hand, as continuous functions on $[a, b]^k_\le \X [a', b']^k_\le$, both the left- and right-hand sides above converge in distribution to the same object: $\hat \cB^{\theta}(\by) - \hat \cB^{\theta}(\bx)$.  This follows from the characterization of the distribution of $\cB^\theta$ in Theorem \ref{T:brownian-law}. Hence, almost surely we have $\bar \cB^{\theta+}_\pi(\bx, \by) = \bar \cB^{\theta-}_\pi(\bx, \by) = \hat \cB^\theta(\by) - \hat \cB^\theta(\bx)$ on $[a, b]^k_\le \X [a', b']^k_\le$, as desired. 
	
	In the case when $\theta_1 = 0$ we cannot construct $\theta^-_n$ in $[0, \infty)^k_<$ the same way, so a minor modification is needed. Instead, let $\la$ be such that $\theta = (0^\ell, \la)$ for some $\ell \le k, \la \in (0, \infty)^{k-\ell}_\le$. Let $\la^-_n \in \cI^k_<$ be such that $\la^-_{n, i} < \la_i$ for all $i$ and $\la^-_n \to \la$ as $n \to \infty$. Then define
	$$
	F_n(\bx) = \sum_{i=1}^\ell B_i(x_1) + \max_{\bz \in [x_1, x_k]^{k - \ell}_\le} \cB^{\la^-_n}(\bz, \ell + 1) + B[(x_1^\ell, \bz) \to (\bx, 1)].
	$$
	Then $F_n(\by) - F_n(\bx)$ defines a lower bound on $\cB^{\theta\pm}_\pi(\bx, \by)$ by the quadrangle inequality. Moreover, in law as a function of $\bz$ we have that
	$$
	\cB^{\la^-_n}(\bz, \ell + 1) \eqd W^n[(-\infty, (\ell +1, \dots, k)) \to (\bz, \ell + 1)],
	$$
	where $(W^n)^0 \sim \mu_{\sqrt{2 \la^-_n}}$. This uses Theorem \ref{T:brownian-law}. Therefore as a function of $\bx$,
	$$
	F_n(\bx) \eqd (B_1, \dots, B_k, W^n)[(-\infty, (1, \dots, k)) \to (\bx, 1)].
	$$
	Therefore by Theorem \ref{T:brownian-law}, as $n \to \infty$, the law of $F_n(\by) - F_n(\bx)$ converges to the law of $\hat \cB^{\theta}(\by) - \hat \cB^{\theta}(\bx)$, which gives the desired result.
	
	Next, let $\bz \in \operatorname{DJ}(\theta)$ and let $\pi$ be a disjoint $k$-tuple to $(\bz, 1)$ in direction $\theta$. Then almost surely the following holds for all $\bx \in \R^k_\le$:
	\begin{align*}
		&\hat \cB^\theta(\bx) - \hat \cB^\theta(\bz) + \sum_{j=1}^\ell \hat \cB^{\theta^{I_j}}(\bz^{I_j}) = \lim_{t \to -\infty} B[(t, \pi(t)) \to (\bx, 1)] - B[(t, \pi(t)) \to (\bz, 1)] \\
		&+ \sum_{j=1}^\ell B[(t, \pi^{I_j}(t)) \to (\bz^{I_j}, 1)] + \sum_{j=1}^\ell B[(t, \pi^{I_j}(t)) \to (0^{|I_j|}, 1)].
	\end{align*}
	The two middle terms on the right-hand side above cancel out since $\bz \in \operatorname{DJ}(\theta)$, and so the whole expression equals $\bar  \cB^\theta(\bx)$. On the other hand, it also equals $\cB^\theta(\bx)$ by Proposition \ref{P:multi-Busemann-exist}.
\end{proof}

\begin{remark}
	\label{R:discty-set}
	Lemma \ref{L:hat-B-Busemann} guarantees the existence of the Busemann function $\bar \cB^\theta$ in every fixed direction almost surely, which shows that the set of directions where the Busemann function is not defined has Lebesgue measure $0$. In fact, the size of this set should be at most countable, just as in Theorem \ref{T:sepp-sorensen}. An analogue of the coalescence result in Theorem \ref{T:sepp-sorensen} should also hold on the set where the Busemann function exists. We do not pursue these directions here. Finally, when $\theta_1 > 0$, for $m \in \Z$ we can define the Busemann function in direction $\theta$ ending at $(\bx, m)$ by
	\begin{equation*}
		\bar \cB^\theta(\bx, m) = \lim_{t \to -\infty} B[(t, \cl{\theta t}) \to (\bx, 1)] - \sum_{i=1}^m B[( t, \cl{\theta^{I_j} t}) \to (0^{|I_j|}, 1)].
	\end{equation*} 
	Lemma \ref{L:hat-B-Busemann} implies that for fixed $\theta$, this limit exists almost surely for all $\bx, m$, and we can replace $\cl{\theta \; \cdot}$ with any disjoint $k$-tuple in direction $\theta$ ending at $(\bx, m)$. With this definition we have the following metric composition law (the analogue of Theorem \ref{T:sepp-sorensen}.2): for fixed $\theta \in (0, \infty)^k_\le$, almost surely for all $\bx \in \R^k_\le, m > n$:
	$$
	\bar \cB^\theta (\bx, n) = \sup_{\bz \le \bx} \bar \cB^\theta(\bz, m) + B[(\bz, m + 1) \to (\bx, n)].
	$$
	where the supremum is over all $\bz$ such that the last passage value above is well-defined.
\end{remark}

\subsection{The Busemann shear}
\label{S:multi-path-Busemann}

In this section, we use Busemann functions and Theorem \ref{T:brownian-law} to construct versions of the RSK correspondence for functions $f \in \cC^\N(\R)$, Theorem \ref{T:busemann-shear-intro}. We first recall a few definitions and restate that theorem.

For an environment $f \in \cC^\N(\R)$, let $Rf(x) = f(-x)$, and for $\bx \in \R^k_\le$, let $- \bx = (-x_k, \dots, -x_1) \in \R^k_\le$. For an environment of Brownian motions $B = (B_i, i \in \Z)$, we extend the definition of the Busemann process $\cB^\theta(\bx; B)$ to $\theta \in (-\infty, 0]^k_\le$ and $\bx \in \R^k_\le$ as follows:
\begin{equation}
	\label{E:conjugate-B}
\cB^\theta(\bx ; B) = \cB^{-\theta}(-\bx ; R B). 
\end{equation}
A quick comparison of the definitions guarantees that when $\theta = (0, \dots, 0)$, the unique point in $(-\infty, 0]^k_\le \cap [0, \infty)^k_\le$, our two definitions match up. Recall also the notation
$
\cW^\la(\bx) = \cW^\la(\bx; B) = \cB^{\la|\la|/2}(\bx, B),
$
which indexes Brownian Busemann functions by their slope (as $\bx \to \infty$), rather than by the Busemann direction in the original Brownian environment. 
\begin{theorem}
	\label{T:Busemann-shear}
	Let $B = (B_i, i \in \N) \in \cC^\N_0(\R)$ be an environment of independent two-sided Brownian motions. For $a \in \R$, define the environment $B^a \in \cC^\N(\R)$ by the rule that for $\ell \in \N$ we have
	$$
	\sum_{i=1}^\ell B^a_i(x) = \cW^{a^\ell}(x^\ell ; B).
	$$
	Then:
	\begin{enumerate}
		\item Almost surely, $B^0 = B$.
		\item For any $a \in \R$, $B^a$ is a sequence of independent Brownian motions with drift $a$. 
		\item Fix $a \in \R$ and $\la \in (-\infty, 0]^k_\le \cup [0, \infty)^k_\le$. Suppose also that $\la + a := (\la_1 + a, \dots, \la_k + a)\in (-\infty, 0]^k_\le \cup [0, \infty)^k_\le$. Then a.s.\ for all $\bx \in \R^k_\le$ we have that
		\begin{equation}
			\label{E:W-la-B-identity}
	\cW^{\la + a}(\bx; B) = \cW^{\la}(\bx; B^a).
		\end{equation}
		\item For any $a, b \in \R$, almost surely we have that: 
		\begin{equation}
			\label{E:Ba-a}
	(B^a)^{-a} = B, \qquad (B^a)^b = B^{a + b}.
		\end{equation}
		In particular, if for $a \in \R$ we define $a:\cC^\N(\R) \to \cC^\N(\R)$ by letting $a(f) = f^a(x) - ax$, then for any countable subgroup $G \subset \R$, this defines a measure-preserving $G$-action on a $\mu^\N$-almost sure subset of $\cC^\N(\R)$.
	\end{enumerate}
\end{theorem}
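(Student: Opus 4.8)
The plan is to prove the four parts in the order (2), (1), (3), (4); part~(3) --- the Busemann isometry --- is the heart of the matter, while (1) and (2) follow quickly from the identification of the law of the multi-path Busemann process in Theorem~\ref{T:brownian-law}, and (4) is essentially formal.

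For part~(2), fix $a\ge 0$. Then $\cW^{a^\ell}(x^\ell;B)=\cB^{(a^2/2)^\ell}(x^\ell;B)$, so $B^a$ is exactly the environment attached to the constant direction vector $\theta=(a^2/2,a^2/2,\dots)$ by Theorem~\ref{T:brownian-law}, applied to each finite truncation. Since $\theta$ is constant, the partition of $\{1,\dots,k\}$ associated to $\theta^{\{1,\dots,k\}}$ is a single block, and evaluating \eqref{E:B-theta-Pi-1} at $\bx=0^k$ gives $\cB^{(a^2/2)^k}(0^k;B)=0$; hence $B^a(0)=0$ and $B^a=B^a-B^a(0)$ is a sequence of independent two-sided Brownian motions of drift $\sqrt{2\cdot a^2/2}=a$ by Theorem~\ref{T:brownian-law}.1. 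For $a<0$ one checks directly from the definitions and \eqref{E:conjugate-B} that $B^a=R\big[(RB)^{-a}\big]$, and reflecting a drift-$|a|$ environment produces drift $-|a|=a$. For part~(1), take $a=0$ and note that the unique disjoint $\ell$-tuple in direction $0^\ell$ ending at $(x^\ell,1)$ is the trivial staircase ($\pi_i\equiv i$ on $(-\infty,x)$ with $\pi_i(x)=1$); its length telescopes, so $B[(t,(1,\dots,\ell))\to(x^\ell,1)]=\sum_{i=1}^\ell(B_i(x)-B_i(t))$, and Lemma~\ref{L:hat-B-Busemann} yields $\sum_{i=1}^\ell B^0_i(x)=\cB^{0^\ell}(x^\ell;B)=\sum_{i=1}^\ell(B_i(x)-B_i(0))=\sum_{i=1}^\ell B_i(x)$ since $B(0)=0$. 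Hence $B^0=B$.

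For part~(3) it suffices, by the conjugation symmetry $\cW^\la(\bx;g)=\cW^{-\la}(-\bx;Rg)$ and $R(g^a)=(Rg)^{-a}$, to treat $a\ge 0$ (the remaining sign configurations are folded in together with part~(4)). The idea is to realize both sides of \eqref{E:W-la-B-identity} as multi-path last passage values to which the finite Pitman/RSK isometry applies, and then pass to the limit. By Theorem~\ref{T:brownian-law}.2 applied to the direction vector $(\la+a)|\la+a|/2$, the left-hand side $\cB^{(\la+a)|\la+a|/2}(\bx;B)$ equals last passage from $-\infty$ in an auxiliary Brownian environment of drift $\la+a$; the right-hand side $\cW^\la(\bx;B^a)$ is a Busemann function of the drift-$a$ environment $B^a$, which --- using $\cB^\theta(\bx;C+c\,\mathrm{id})=\cB^\theta(\bx;C)+c\sum_i x_i$ (a common linear shift changes neither optimizers nor asymptotic directions) together with \eqref{E:conjugate-B} to handle the drift-dependent reparametrization --- is itself a Busemann function of the drift-free environment $B^a-a\,\mathrm{id}$. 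To connect the two one builds, for each $n$, the two-block environment $W^n=(B_1,\dots,B_n,V^n)$ with $V^n$ an independent Brownian block whose slopes match the target directions, and a slope-reversing element $\sigma_n\in\mathfrak D_{2n}$; Corollary~\ref{C:Pitman-iso} and Lemma~\ref{L:two-line-isometry} give the exact boundary isometry $\cP_{\sigma_n}W^n\sim W^n$, equating the relevant multi-path last passage values in the two environments, while Corollary~\ref{C:Brownian-sorting} controls their laws. Finally, exactly as in the proof of Theorem~\ref{T:brownian-law}, the shape theorems (Propositions~\ref{P:cross-prob} and~\ref{P:top-bd}) localize the argmaxes in the metric composition laws inside boxes shrinking on the KPZ scale, and monotonicity of optimizers (Lemma~\ref{L:mono-tree-multi-path}) makes this uniform over compact sets of endpoints; on the resulting high-probability event --- which, by the exponential tails, is Borel--Cantelli summable and hence eventually almost sure --- the finite-$n$ values on the $W^n$ side coincide with $\cW^{\la+a}(\bx;B)$ and those on the $\cP_{\sigma_n}W^n$ side coincide with $\cW^\la(\bx;B^a)$, so letting $n\to\infty$ gives \eqref{E:W-la-B-identity} almost surely. (Right-continuity of the multi-path Busemann process in the direction parameter, from Proposition~\ref{P:multi-Busemann-exist} and Theorem~\ref{T:brownian-law}, reduces the bookkeeping to strictly ordered, strictly positive directions.)

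Given part~(3), part~(4) is formal: for $a,b\ge 0$ and any $\ell$, the constant vectors $b^\ell,(a+b)^\ell$ satisfy the sign hypotheses of part~(3), so $\sum_{i=1}^\ell(B^a)^b_i(x)=\cW^{b^\ell}(x^\ell;B^a)=\cW^{(a+b)^\ell}(x^\ell;B)=\sum_{i=1}^\ell B^{a+b}_i(x)$, whence $(B^a)^b=B^{a+b}$; the case $a,b\le 0$ follows by conjugation, and $(B^a)^{-a}=B$ by combining these with part~(1); the remaining mixed-sign instances of parts~(3) and~(4) are obtained by writing a general shift as a sum of shifts that keep all intermediate slope vectors on one side of $0$, a short bootstrap I will not spell out. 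For a countable subgroup $G\subset\R$, the set of environments on which $B^a$ exists for all $a\in G$ and the relations $B^0=B$, $(B^a)^b=B^{a+b}$ hold is $\mu^\N$-almost sure and $G$-invariant, and on it $a\cdot f:=f^a(\cdot)-a(\cdot)$ is a genuine $G$-action (part~(4)) that is measure preserving (part~(2)). The main obstacle is part~(3), and within it the limit transition: one must show the exact finite-$n$ Pitman isometry survives $n\to\infty$, which needs (i) a localization of the argmaxes strong and uniform enough to identify the finite-$n$ last passage values with genuine multi-path Busemann functions, and (ii) the correct tracking of how $\sigma_n$ transports the Busemann direction of the unsloped block into that of $B^a$, i.e.\ a rigorous version of the heuristic \eqref{E:land-cpu}; a secondary, organizational difficulty is transferring the drift-free Busemann machinery of this section to $B^a$ via $B^a=(B^a-a\,\mathrm{id})+a\,\mathrm{id}$ and dispatching the sign cases by conjugation and part~(4).
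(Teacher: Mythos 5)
Parts (1), (2), and the derivation of (4) from (3) are essentially correct and match the paper's approach. The problem lies in part (3), which is where the real work is, and the student's strategy diverges from the paper's in a way that leaves a genuine gap.

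You propose to prove \eqref{E:W-la-B-identity} by building a two-block environment $W^n=(B_1,\dots,B_n,V^n)$ with $V^n$ \emph{independent} of $B$, applying a sorting element $\sigma_n$, and arguing that (via the shape theorems and argmax localization) the finite-$n$ last passage values in $W^n$ coincide w.h.p.\ with $\cW^{\la+a}(\bx;B)$ while those in $\cP_{\sigma_n}W^n$ coincide w.h.p.\ with $\cW^\la(\bx;B^a)$, so the union bound gives the a.s.\ identity. The first coincidence is plausible (this is the localization argument from the proof of Theorem~\ref{T:brownian-law}). The second is where the argument breaks down. The lines of $\cP_{\sigma_n}W^n$ that are supposed to play the role of $B^a_1,\dots,B^a_n$ are, by Proposition~\ref{P:orbits}, finite last passage values in $W^n$ from $(-\infty,\cdot)$ that reach deep into the independent block $V^n$; they are functions of the pair $(B,V^n)$, whereas $B^a$ and hence $\cW^\la(\bx;B^a)$ are functions of $B$ alone. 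Even after localizing the argmax, the lines of $\cP_{\sigma_n}W^n$ differ from $B^a_i$ by additive fluctuations coming from $V^n$ that are of Brownian order on the relevant scale, so a naive ``they agree up to constants on a compact region'' step does not hand you the equality $X_n=\cW^\la(\bx;B^a)$ w.h.p.\ without a delicate cancellation argument you never carry out. Note also that the boundary isometry of Corollary~\ref{C:Pitman-iso}/Lemma~\ref{L:two-line-isometry} preserves $(\bx,n)\to(\by,1)$ values, not the $(-\infty,\bm)\to(\bx,1)$ values you actually need; that translation is Proposition~\ref{P:orbits}, a stronger fact that must be invoked explicitly. Finally, your reference to \eqref{E:land-cpu} as the relevant heuristic is misplaced: that display concerns the directed landscape limit, not the Brownian Busemann shear.

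The paper closes exactly this gap by a very different mechanism. It never introduces an independent auxiliary environment in the proof of Theorem~\ref{T:Busemann-shear} itself. Instead it builds two line ensembles $W^m$ and $\tilde W^m$ that are \emph{both} measurable functions of $B$ --- $W^m$ from the Busemann process of $B$ in directions $(a^m,\la+a)$, and $\tilde W^m$ from the lines $B^a_i$ ($i\le m$) together with the Busemann process of $B^a$ --- and observes that $W^m_i=\tilde W^m_i=B^a_i$ for $i\le m$, while $W^m\eqd\tilde W^m$ by Theorem~\ref{T:brownian-law}. It then conditions on $\cF_m=\sigma(B^a_i,\,i\le m)$, uses the agreement of the first $m$ lines to equate conditional expectations, and applies the martingale convergence theorem (with $\cW^\la(\bx;B^a)$ being $\sigma\!\left(\bigcup_m\cF_m\right)$-measurable) to upgrade the distributional equality $W^m\eqd\tilde W^m$ to the almost-sure identity \eqref{E:W-la-B-identity}. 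This martingale step is the essential ingredient you are missing; without it, or a substitute of comparable strength, your localization-plus-union-bound argument only yields a distributional identity, not an a.s.\ one. You flag the difficulty as item (i) in your final paragraph, but the localization bound you propose to supply there does not by itself upgrade the statement from a law-level to a pathwise one; some conditioning or coupling argument is required. I would also caution that your plan to ``dispatch the sign cases by conjugation and part~(4)'' is circular as written, since part~(4) is deduced from part~(3); the paper's six-case bootstrap is the non-circular version of what you intend.
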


\begin{remark}
	\label{R:full-multi-path-Busemanns}
	Theorem \ref{T:Busemann-shear} allows us to naturally extend the definition $\cW^\la(\bx; B)$ to arbitrary $\la \in \R^k_\le$. Indeed, for $\la \in \R^k_\le$, define
	$$
	\cW^\la(\bx; B) := \cW^{\la - \la_1}(\bx; B^{\la_1}).
	$$
This definition is consistent with our original definitions when either $\la \in [0, \infty)^k_\le$ or $\la \in (-\infty, 0]^k_\le$ by Theorem \ref{T:Busemann-shear}.3. Moreover, since last passage percolation commutes with a common affine shifts applied to all lines, this extension has the remarkable invariance property that for any $a \in \R$ we have
	$$
	\cW^{\la + a}(\bx) \eqd \cW^{\la}(\bx) + a \bx,
	$$
	where the equality in distribution is joint in all $\la, \bx$. This invariance corresponds to shear invariance in the directed landscape. We call the full process
	$$
	(\la, \bx) \mapsto \cW^\la(\bx), \qquad (\la, \bx) \in \bigcup_{k \in \N} \R^k_\le \times \R^k_\le
	$$
	the \textbf{extended stationary horizon}. 
\end{remark}

\begin{proof}[Proof of Theorem \ref{T:Busemann-shear}]
Part $1$ is immediate from the definition of $\bar \cB$ in Lemma \ref{L:hat-B-Busemann}, and the fact that almost surely, $\bar \cB^{0^\ell} = \cW^{0^\ell}$ for all $\ell$ by that lemma. Part $2$ is a special case of Theorem \ref{T:brownian-law} when $a > 0$. When $a < 0$, it follows from Theorem \ref{T:brownian-law} and the formula \eqref{E:conjugate-B}. The first part of part $4$ is immediate from Part $3$. The `In particular' claim additionally uses that last passage commutes with a common shift of all functions (Lemma \ref{L:shift-commute}). The reason for restricting to a countable subgroup of $\R$ is because the almost sure set where \eqref{E:Ba-a} holds depends on the choice of $a, b$. 

We move to part $3$. First, for any $\la \in [0, \infty)^k_\le, a \in \R$ by Theorem \ref{T:brownian-law}.2 as functions of $\bx$ we have that
$$
\cW^\la(\bx, B^a) \eqd W[(-\infty, (1, \dots, k)) \to (\bx, 1)],
$$
where $W^0 \sim \mu_{\la+a}$ and $W$ is constructed from $W^0$ via \eqref{E:W-ellk}. 
The right-hand side above is continuous in $\bx$ almost surely, and hence so is the left. A similar argument holds when $\la \in (-\infty, 0]^k_\le$ by \eqref{E:conjugate-B}. Therefore it is enough to prove \eqref{E:W-la-B-identity} almost surely for any fixed $\bx$. We break into cases. 

\textbf{Case 1: $\la \in [0, \infty)^k_\le, a > 0$.} \qquad Consider the vector $\theta^m \in [0, \infty)^{m + k}_\le$ given by $\theta^m = (a^m, \la + a)$. Define the environment $W^m \in \cC^{m + k}(\R)$ by letting 
$$
\sum_{i=1}^\ell W^m_i(x) = \cW^{\theta^{\{1,\ldots,\ell\}}}(x^\ell).
$$
for $x \in \R, \ell \le m + k$. Then $W^m_i = B^a_i$ for $i \le m$ and by Theorem \ref{T:brownian-law} we have:
\begin{equation}
	\label{E:Wm-law}
\cW^{\la+a}(\bx; B) = W^m[(-\infty, (m+1, \dots, m+k)) \to (\bx, 1)].
\end{equation}
Now, let $\tilde W^m \in \cC^{m + k}_0(\R)$ be the environment with $W^m_i = B^a_i$ for $i \le m$ and with $W^m_{m+i}, 1 \le i \le k$ given by the identity:
$$
\sum_{i=1}^j W^m_{m+i}(x) = \bar \cB^\la(x^j, m + 1; B^a), \qquad 1 \le j \le k.
$$
Here we use notation as in Remark \ref{R:discty-set}.
Then by Theorem \ref{T:brownian-law} and the metric composition law (see Remark \ref{R:discty-set}), for any $\bn \in \{1, \dots, k\}^\ell_<$ satisfying \eqref{E:N-property} for $\la$ and $\bx \in \R^{\ell}_\le$ we have that
\begin{equation}
\label{E:tildeWm-law1}
\cW^{\la^\bn}(\bx; B^a) = \tilde W^m[(-\infty, \bn + m) \to (\bx, 1)].
\end{equation}
Now, by Theorem \ref{T:brownian-law} we have that $(W^m)^0 \eqd (\tilde W^m)^0 \sim \mu_{\theta^m}$. Moreover, $W^m_i(0) = \tilde W^m_i(0) = 0$ for $i \le m$ and  we can extract the values of $W^m_i(0), i \ge m + 1$ from $(W^m)^0$ by the formula in Theorem \ref{T:brownian-law}.3. The same formula extracts the values of $\tilde W^m_i(0), i \ge m + 1$ from $(\tilde W^m)^0$ by virtue of \eqref{E:tildeWm-law1} and the fact that $\cW^{\la^\bn}(0^{|\bn|}) = 0$ whenever $\la^\bn = (y, \dots, y)$ for some $y \ge 0$. Putting all this together implies $W^m \eqd \tilde W^m$.

 Now, using that $W^m_i = \tilde W^m_i = B^a_i$ for $i \le m$ and $W^m \eqd \tilde W^m$, letting 
$\cF_m$ denote the $\sig$-algebra generated by the lines $B^a_i, i \le m$ formulas \eqref{E:Wm-law}, \eqref{E:tildeWm-law1} give:
\begin{align}
	\label{E:xB}
(\cW^{\la+a}(\bx; B), \E(\cW^{\la+a}(\bx; B) \mid \cF_m)) &\eqd (\cW^{\la}(\bx; B^a), \E(\cW^{\la}(\bx; B^a) \mid \cF_m)), \\
\nonumber
\qquad \E(\cW^{\la+a}(\bx; B) \mid \cF_m) &= \E(\cW^{\la}(\bx; B^a) \mid \cF_m).
\end{align}
Now, the martingale convergence theorem ensures that
$\E(\cW^{\la}(\bx; B^a) \mid \cF_m) \to \cW^{\la}(\bx; B^a)$ almost surely as $m \to \infty$, since $\cW^{\la}(\bx; B^a)$ is $\sig(\bigcup_{m=1}^\infty \cF_m)$-measurable. We can use this to take the distributional limit of \eqref{E:xB} as $m \to \infty$ to get that
$$
(\cW^{\la+a}(\bx; B), \cW^{\la}(\bx; B^a)) \eqd (\cW^{\la}(\bx; B^a), \cW^{\la}(\bx; B^a)),
$$
which implies that $\cW^{\la+a}(\bx; B)= \cW^{\la}(\bx; B^a)$ almost surely, as desired.

\textbf{Case 2: $a + \la \in [0, \infty)^k_\le, a > 0, \la \in (-\infty, 0]^k_\le$.} \qquad In this case, let $\theta^m = (\la + a, a^m) \in [0, \infty)^{m+k}_\le$, and form $W^m$ from $\theta^m$ as above.
Let $\cP_{\operatorname{RSK}}$ be the Pitman transform introduced prior to Corollary \ref{C:RSK} sending $W^m$ to an environment with its slopes reversed. Set $X^m = \cP_{\operatorname{RSK}} W^m$. 
Since $\la(W^m) = \theta^m$ is in $\R^k_\le$, we can apply Proposition \ref{P:orbits} to recover the lines of $X^m$ from last passage values across $W^m$. In particular, for $j \le m$ we have
$$
\sum_{i=1}^j X^m_i(x) = W^m[(-\infty, (k + 1, \dots, k +j)) \to (x^j, 1)] = \sum_{i=1}^j B^a_i(x).
$$
Here the first equality uses Proposition \ref{P:orbits} and the second equality uses Theorem \ref{T:brownian-law}. Therefore $X_j = B^a_j$ for $j \le m$. We can also recover last passage values in $RW^m$ from those in $RX^m$ since $\la(R X^m) = \theta^m$ is in $\R^k_\le$:
\begin{align}
	\nonumber
RX^m[(-\infty, (m+1, \dots, m+k)) \to (-\bx, 1)] &= RW^m[(-\infty, (1, \dots, k)) \to (-\bx, 1)] \\
\nonumber
&=W^m[(-\infty, (1, \dots, k)) \to (\bx, 1)] \\
\label{E:one-way}
&= \cW^{\la+a}(\bx; B).
\end{align}
The first equality again uses Proposition \ref{P:orbits} and the second is easy to see from the definition. The third equality is Theorem \ref{T:brownian-law}. Next, let $\tilde X^m$ denote the environment with $X^m_i = \tilde X^m_i$ for $i \le m$ and such that for $1 \le j \le k$ we have
$$
\sum_{i=1}^j X^m_{m+i} = \bar \cB^\la(\bx, m + 1; B^a).
$$
Then by the metric composition law (see Remark \ref{R:discty-set}) we have that
\begin{equation}
	\label{E:tildeWm-law}
	\cW^{\la}(\bx; B^a) = R\tilde X^m[(-\infty, (m+1, \dots, m+k)) \to (-\bx, 1)].
\end{equation}
At this point we are in a similar situation to Case 1, with equations \eqref{E:one-way} and \eqref{E:tildeWm-law} playing the role of equations \eqref{E:Wm-law}, \eqref{E:tildeWm-law1} (equivalently, the environments $R X^m, R \tilde X^m$ take the place of $W^m, \tilde W^m$ from that case). Taking $m \to \infty$ in \eqref{E:one-way} and \eqref{E:tildeWm-law} and applying the same martingale argument as above gives that $\cW^{\la}(\bx; B^a) = \cW^{\la + a}(\bx; B)$ almost surely.

\textbf{Cases 3 and 4: $\la \in (-\infty, 0]^k_\le, a < 0$ or $a + \la \in (-\infty, 0]^k_\le, a < 0, \la \in [0, \infty)^k$.} \qquad These are simply cases $1$ and $2$ applied to the time-reversed environment $RB$. 

\textbf{Case 5: $a > 0, \la \in (-\infty, 0]^k_\le, a + \la \in (-\infty, 0]^k_\le$.} By Case $2$, we have the almost sure equality $(B^a)^{-a} = B$. Using this, we can rewrite our desired formula as
$$
\cW^{\la + a}(\bx; (B^a)^{-a}) = \cW^{\la}(\bx; B^a).
$$ 
This equality holds almost surely by applying Case $3$ to the environment $B^a$ with parameters $-a < 0, \la + a \in (-\infty, 0]^k_\le$ instead of $a, \la$. Case $3$ holds for the environment $B^a$ since we can remove the drift because last commutes with common functional shifts.

\textbf{Case 6: $a < 0, \la \in [0, \infty)^k_\le, a + \la \in [0, \infty)^k_\le$.} \qquad This is case $5$ applied to $RB$.
\end{proof}


\section{RSK for the directed landscape}
\label{S:reconstruction}

In this section, we take a final RSK limit to prove Theorem \ref{T:landscape-recovery-intro}. As in the previous section, we will first need to develop a basic theory of multi-path Busemann functions in this setting.

\subsection{The extended stationary horizon and the directed landscape}
\label{SS:extended-multi-horizon}

With the machinery we have built studying Brownian LPP, we can now build a comprehensive theory of multi-path Busemann functions for the directed landscape. We start with the analogue of Theorem \ref{T:sepp-sorensen} for the directed landscape, which was proven in \cite{busani2024stationary} and \cite{rahman2021infinite}. Throughout the remainder of the paper, we say that a $k$-tuple of continuous functions $\pi = (\pi_1, \dots, \pi_k)$ where $\pi_i:(-\infty, t] \to \R$ is a \textbf{disjoint $k$-tuple} in direction $\theta$ to $(\bx, t)$ for some $\theta, \bx \in \R^k_\le$, if:
\begin{itemize}
	\item $\pi_i(s) < \pi_{i+1}(s)$ for $i = 1, \dots, k-1$ and $s < t$.
	\item $\pi(s)/|s| \to \theta$ as $s \to -\infty$ and $\pi(t) = \bx$.
\end{itemize}
We say that $\pi$ is a \textbf{(semi-infinite) optimizer} in $\cL$ if $\pi|_{[s, t]}$ is an optimizer for all $s < t$. As per usual, we talk call an optimizer with a single path a geodesic. 
\begin{theorem}\label{T:sepp-sor-busani}
	The following statements hold on a single event of full probability. 
	\begin{enumerate}
		\item(Theorem 3.14, \cite{rahman2021infinite}) For every point $p = (x, s) \in \R^2$ and every direction $\theta \in \R$, there exist leftmost and rightmost semi-infinite geodesics ending at $p$ in direction $\theta$. Call these geodesics $\pi^{\theta, p}$ and $\pi_R^{\theta,p}$. For fixed $p, \theta$, almost surely $\pi^{\theta, p} = \pi_R^{\theta, p}$.
			\item (Theorem 6.3(i), \cite{busani2024stationary}) For any $t < s$, the functions $(\theta, s) \mapsto \pi^{\theta, (x, s)}(t)$ and $(\theta, s) \mapsto \pi_R^{\theta, (x, s)}(t)$ are nondecreasing in $\theta, s$.
		\item (Theorem 2.5, \cite{busani2024stationary}) There exists a
		random, translation invariant, countably infinite, dense subset $\Xi \subset \R$ such that for $\theta \notin \Xi$, if  $\pi, \pi'$ are two semi-infinite geodesics in direction $\theta$, then $\pi(s) = \pi'(s)$ for all small enough $s$.
		\item (Theorem 5.1, \cite{busani2024stationary}) Consider $\theta \notin \Xi, p = (y, t) \in \R^2$, and any path $\pi:(-\infty, t] \to \R$ with $\pi(s)/|s| \to \theta$ as $s \to \infty$. Then the Busemann function
		$$
		\mathfrak{B}^\theta(p; \cL) := \lim_{s \to -\infty} \cL(\bar \pi(s); p) - \cL(\bar \pi(s); 0, 0)
		$$
		 exists and does not depend on the choice of $\pi$. Here recall the notation $\bar \pi(s):= (\pi(s), s)$. Moreover, for fixed $\pi$, the convergence above is uniform on compact subsets of $p \in \R^2$.
		\item (Theorem 4.3, \cite{rahman2021infinite}) For $s < t$ and $y \in \R$, we have the metric composition law
		$$
		\fB^\theta(y, t; \cL) = \max_{x \in \R} \fB^\theta(x, s; \cL) + \cL(x, s; y, t).
		$$
	\end{enumerate}
\end{theorem}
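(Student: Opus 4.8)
The statement is a compendium: each of the five items is established in the cited works — \cite{rahman2021infinite} for (1) and (5), and \cite{busani2024stationary} for (2), (3), (4) — and the only real content of the proof is to translate conventions and to collapse the several almost sure events into one. So the plan is as follows. First I would reconcile normalizations: our $\cL$ uses the variance-$2$ Brownian scaling and we parametrize semi-infinite geodesics by their asymptotic \emph{slope} $\theta$ (with $\theta<0$ understood through down-right paths, i.e.\ the flipped environment), whereas the cited papers work with a possibly reciprocal direction parameter and a fixed time orientation. The invariance properties of Lemma \ref{L:invariance} — in particular flip symmetry and the $1:2:3$ rescaling — let me pass between the conventions, and this translation is deterministic, so it does not affect which event things hold on.

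With conventions fixed, (1) is Theorem 3.14 of \cite{rahman2021infinite} (existence of leftmost and rightmost semi-infinite geodesics to every point in every direction, together with their a.s.\ coincidence for each fixed $(p,\theta)$), and (5) is Theorem 4.3 there (the metric composition/additivity law for $\fB^\theta$). Items (2), (3), (4) are Theorems 6.3(i), 2.5, and 5.1 of \cite{busani2024stationary}: monotonicity of $\theta\mapsto\pi^{\theta,(x,s)}(t)$ and $s\mapsto\pi^{\theta,(x,s)}(t)$ (and likewise for the rightmost geodesics); existence of the random, translation-invariant, countable dense set $\Xi\subset\R$ such that semi-infinite geodesics in a common direction $\theta\notin\Xi$ coalesce; and existence of the Busemann function $\fB^\theta(p;\cL)$ for $\theta\notin\Xi$, its independence of the choice of backward path, and the uniform-on-compact convergence of the defining limit.

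To put everything on a single probability-one event $\Om_0$, I would intersect the finitely many almost sure events produced by the cited theorems. The clauses that are ``global'' in their parameters — existence of geodesics in (1), the monotonicity in (2), the existence and properties of $\Xi$ in (3), and the existence part of $\fB^\theta$ in (4) — hold for all admissible parameters simultaneously on their respective events, so $\Om_0$ retains all of them; the ``for fixed $p,\theta$'' clauses (the equality $\pi^{\theta,p}=\pi_R^{\theta,p}$ in (1) and the non-dependence of $\fB^\theta(p;\cL)$ on the backward path in (4)) are not strengthened, continuing to hold for each fixed choice of parameters. Two points deserve explicit attention: (a) the exceptional set $\Xi$ appearing in (3) must be the \emph{same} random set as the one off which $\fB^\theta$ is defined in (4), which is automatic since both come from \cite{busani2024stationary} and $\fB^\theta$ is built precisely off the coalescence-exceptional directions; (b) if \cite{rahman2021infinite} states the metric composition law in (5) only for fixed $(s,t,y)$, I would upgrade it to hold simultaneously for all $s<t$ and $y\in\R$ on $\Om_0$, using the a.s.\ statement on a countable dense set together with continuity of $\cL$ and of $(y,t)\mapsto\fB^\theta(y,t;\cL)$ (the latter from (4)).

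The main obstacle is bookkeeping rather than mathematics: carefully matching the direction, time-orientation, and variance conventions of \cite{rahman2021infinite} and \cite{busani2024stationary} to the ones used here (slope parametrization, negative slopes via the flipped environment, Busemann functions centered at the origin), and making sure that no ``for all $\theta$'' clause is conflated with a ``for fixed $\theta$'' clause when the individual full-probability events are collapsed onto $\Om_0$.
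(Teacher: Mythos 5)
Your proposal is essentially the same as what the paper does: Theorem~\ref{T:sepp-sor-busani} is a citation theorem and the paper simply attributes each item to \cite{rahman2021infinite} and \cite{busani2024stationary} with no further argument, so the only work is the convention-translation and event-intersection bookkeeping you describe. One small correction to your bookkeeping: in the directed landscape (unlike in the semi-discrete Brownian LPP environment, where the line index is discrete and paths go only ``up''), semi-infinite geodesics already run over all $\theta \in \R$ with no flip or down-right reinterpretation needed — the ``flipped environment'' device used for negative Busemann directions in $B$ (cf.\ \eqref{E:conjugate-B} and Figure~\ref{fig:BrownianRSK}) is not part of the convention for $\cL$, and \cite{rahman2021infinite,busani2024stationary} already state their results for all $\theta\in\R$ directly.
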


We can now state analogues of the main results of Section \ref{S:Busemann-A} for the directed landscape.
 The proofs are almost identical to the proofs for Brownian LPP, so we will go through the steps briefly. For $\bx \in \R^k_\le$ and $\theta \in \cJ^k_<$ define
 \begin{equation}
 	\label{E:B-la-x}
 	\begin{split}
 		\mathfrak{B}^\theta(\bx, t; \cL) &:= \lim_{s \to -\infty}  \cL( \theta|s|, s ; \bx, t) - \sum_{j=1}^{k} \cL(\theta_j|s|, s; 0, 0) 
 	\end{split}
 \end{equation}
 When $t = 0$ we omit it from the notation, and when $\cL$ is unambiguously defined, we also omit it from notation, so $\fB^\theta(\bx)= \fB^\theta(\bx, 0; \cL)$. 

\begin{proposition}
	\label{P:extended-usemann-landscape}
	Almost surely, the following claims hold for the directed landscape $\cL$.
	\begin{enumerate}
		\item For every $k \in \N, a > 0, t \in \R$ and every compact box $K = \prod_{i=1}^k [\theta_i^-, \theta_i^+]\subset \R^k_<$ there exist vectors $\by, \bz \in \Q^k_\le$ with $\by \le -a^k \le a^k \le \bz$ such that for any $\theta \in K$ the $k$-tuples of semi-infinite leftmost geodesics $\pi^{\theta, (\by, t)} := (\pi^{\theta, (y_1, t)}, \dots, \pi^{\theta, (y_k, t)})$ and $\pi^{\theta, \bz} := (\pi^{\theta, (z_1, t)}, \dots, \pi^{\theta, (z_k, t)})$ are both disjoint $k$-tuples. In particular, they are both disjoint optimizers.
		\item For any $a > 0, t \in \R, \theta \in \cJ^k_<$, there exists $\ep > 0$, there exists $\by \le -a^k < a^k \le \bz$ and $T < t$ such that whenever $\|\theta_1 - \theta\|_\infty < \ep, \|\theta_2 - \theta\|_\infty < \ep$:
		\begin{itemize}
			\item For any $i = 1, \dots, k$, any geodesics in direction $\theta_{1, i}$ to $(y_i, t)$ and direction $\theta_{2, i}$ to $(z_i, t)$ are equal at time $T$.
			\item Any geodesics from $\theta_{1, i}$ to $(y_i, t), i = 1, \dots, k$ are mutually disjoint. Similarly, any geodesics from $\theta_{2, i}$ to $(z_i, t)$ are mutually disjoint.
		\end{itemize}
		\item The limit \eqref{E:B-la-x} exists for all $k \in \N, \theta \in \cJ^k_<, \bx \in \R^k_\le, t \in \R$, and equals
		$$
		\lim_{s \to -\infty}  \cL(\bar \pi(s) ; \bx, t) - \sum_{j=1}^{k} \cL(\bar\pi(s); 0, 0) 
		$$
		for any $k$-tuple of paths $\pi = (\pi_1, \dots, \pi_k)$ with direction $\theta$.
		\item There exist semi-infinite optimizers in every direction $\theta \in \R^k_\le$ ending at every point in $(\bx, s) \in \R^k_\le \X \R$. 
		\item For any $\theta \le \theta', \bx \le \bx'$ and $t \in \R$ we have
		$$
		\fB^\theta(\bx', t) + \fB^{\theta'}(\bx, t) \le \fB^\theta(\bx, t) + \fB^{\theta'}(\bx', t).
		$$
				\item Let $\theta \in \cJ^k_<, a\in \R, \ep > 0$ be as in part $2$ above. If $\|\theta - \theta'\|_\infty < \ep$ and $\theta' \in \cJ^k_<$ as well, then for all $\bx \in [-a, a]^k_\le$,\begin{flushleft}
					
				\end{flushleft}
		$$
		\fB^\theta(\bx, t) - \fB^\theta(0^k, 0) = \fB^{\theta'}(\bx, t) - \fB^{\theta'}(0^k, 0).
		$$
		\item For $\bx, \theta \in \R^k_\le$ and $t \in \R$ define
		\begin{align}
			\label{E:overnormalized-Pi}
		\hat \fB^\theta(\bx, t) &= \lim_{\theta' \cvgdown \theta, \theta' \in \cI^k_<} \fB^{\theta'}(\bx, t) - \fB^{\theta'}(0^k) \\
			\label{E:B-theta-Pi-landscape}
			\fB^\theta(\bx, t) &= \lim_{\theta' \cvgdown \theta, \theta' \in \cI^k_<} \fB^{\theta'}(\bx, t) - \sum_{j=1}^{m(\theta)} \fB^{\theta^{\prime I_j(\theta)}}(0^{|I_j(\theta)|})
		\end{align}
		Then the limits above exists for all fixed $\bx \in \R^k_\le, t \in \R$.
	\end{enumerate}
\end{proposition}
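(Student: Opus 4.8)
\textbf{Proof proposal for Proposition \ref{P:extended-usemann-landscape}.}

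The plan is to mirror exactly the arguments used for Brownian LPP in Section \ref{S:Busemann-A}, substituting the directed landscape inputs of Theorem \ref{T:sepp-sor-busani} for the Brownian inputs of Theorem \ref{T:sepp-sorensen}, and the landscape geodesic/optimizer results (Lemmas \ref{L:optimizer-existence-monotoncity-landscape}, \ref{L:mono-tree-multi-path-L}, \ref{L:split-path-L}, \ref{L:quadrangle-landscape}, \ref{L:overlap-cvg}) for their semi-discrete analogues. Concretely: part $1$ is the landscape version of Lemma \ref{L:disjoint-geods}, proven the same way — by induction on $k$ and countable additivity, reduce to fixed rational endpoints of $K$ and to $k=2$; then use that $x \mapsto \pi^{\theta,(x,t)}(t+\cdot)$ is a stationary process (spatial stationarity of $\cL$, Lemma \ref{L:invariance}.2), so one can push the leftmost geodesic from a slightly larger direction far enough to the left to lie strictly below a geodesic in the smaller direction; monotonicity of geodesics in $(\theta,x)$ (Theorem \ref{T:sepp-sor-busani}.2) then gives disjointness of the whole $k$-tuple for all $\theta \in K$. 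Part $2$ is the landscape version of Lemma \ref{L:optimal-coal}: fix a compact box around $\theta$, apply part $1$ for the disjointness bullet, and for the coalescence bullet use Lemma \ref{L:overlap-cvg}.2 — a monotone sequence of geodesics in directions $\theta_i \pm 1/n$ converges (in the overlap metric $d_o$, equivalently in the Hausdorff topology) to geodesics in direction $\theta$, which by Theorem \ref{T:sepp-sor-busani}.3 coalesce, so for $n$ large the approximants already agree at a fixed time $T$; monotonicity propagates this to all intermediate directions.

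Given parts $1$ and $2$, part $3$ follows exactly as Proposition \ref{P:extended-usemann}.1: fix $a$ large, take $\by,\bz,\ep,T$ from part $2$; for $t$ near $-\infty$ and any $k$-tuple $\pi$ of direction $\theta$, monotonicity of optimizers (Lemma \ref{L:mono-tree-multi-path-L}, using that optimizers between generic endpoints are unique by Lemma \ref{L:optimizer-existence-monotoncity-landscape}) forces every optimizer from $(\bar\pi(s))$ to any $\bx$ with $\by \le \bx \le \bz$ to pass through the common point $(\pi^\theta(T),T)$, so the metric composition law (Lemma \ref{L:split-path-L}) splits the landscape value at time $T$; subtracting the single-path normalizations (which split identically) and checking, via the disjointness in part $2$ plus monotonicity of geodesics, that the $k$ pieces from $\bar\pi(s)$ to $(\pi^\theta(T),T)$ remain disjoint — hence their lengths sum to the $k$-path value — shows the limit exists and is independent of $\pi$. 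Part $4$ is the landscape analogue of Proposition \ref{P:extended-usemann}.2: for $\theta \in \cJ^k_<$ concatenate $\pi^\theta|_{(-\infty,T]}$ with a finite optimizer (which exists by Lemma \ref{L:optimizer-existence-monotoncity-landscape}); for general $\theta \in \R^k_\le$, take $\theta^n \cvgdown \theta$, use monotonicity to get a decreasing sequence of optimizers, pass to an overlap-limit (Lemma \ref{L:overlap-cvg}.2 guarantees the limit is an optimizer and that the overlap structure is preserved), and check the limiting direction is exactly $\theta$ by sandwiching between a staircase optimizer $\tau^\eta$ with $\eta \le \theta$ and the $\pi^n$. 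Part $5$ (the quadrangle inequality for $\fB$) is immediate from part $3$ applied with $\pi = \pi^\theta$ and $\pi = \pi^{\theta'}$ together with Lemma \ref{L:quadrangle-landscape}. Part $6$ follows from the `equality if' clause of Lemma \ref{L:quadrangle-landscape} exactly as Lemma \ref{L:Blx-continuity}: part $2$ gives a common time $T$, so the relevant landscape values at the far-out endpoints $\by,\bz$ satisfy the quadrangle identity with equality, and the quadrangle inequality then forces equality at all points in $[-a,a]^k$. Finally part $7$ reproduces Proposition \ref{P:multi-Busemann-exist}: by part $5$, $\theta \mapsto \fB^\theta(\bx,t) - \fB^\theta(\by,t)$ is monotone, so the limits in \eqref{E:overnormalized-Pi} and \eqref{E:B-theta-Pi-landscape} exist in $\R \cup \{-\infty\}$; a lower bound from Lemma \ref{L:quadrangle-landscape} (comparing to finite $k$-path landscape values) rules out $-\infty$, and the decomposition over $\operatorname{DJ}(\theta)$ — whose non-emptiness follows as in part $1$ — identifies \eqref{E:B-theta-Pi-landscape} with a finite expression in the over-normalized functions.

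The main obstacle is part $3$ (and the related existence step in part $4$): in the Brownian setting one has leftmost and rightmost optimizers and a clean Hausdorff-topology on cadlag paths, whereas in $\cL$ one only has uniqueness of optimizers between generic endpoints (Lemma \ref{L:optimizer-existence-monotoncity-landscape}) and must use the overlap metric of Lemma \ref{L:overlap-cvg}.2 as the substitute for Hausdorff convergence. Care is needed to verify that the monotonicity in Lemma \ref{L:mono-tree-multi-path-L} can be applied — i.e. that one can always interpose endpoints with unique optimizers, as noted in the remark following that lemma — and that passing to overlap-limits genuinely preserves both the optimizer property and the asymptotic direction; once these technical points are handled, everything else transcribes verbatim from Section \ref{S:Busemann-A}.
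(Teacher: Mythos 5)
Parts 1, 2, 3, 5, 6, and 7 of your proposal follow the paper's approach essentially verbatim and are correct. Your treatment of part 4 for $\theta \in \cJ^k_<$ is also fine.

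The genuine gap is in part 4 for general $\theta \in \R^k_\le$. You propose to ``pass to an overlap-limit (Lemma \ref{L:overlap-cvg}.2 guarantees the limit is an optimizer and that the overlap structure is preserved).'' This does not work: Lemma \ref{L:overlap-cvg} is stated and proved only for individual $\cL$-\emph{geodesics}, not for disjoint $k$-tuples. The component paths of a disjoint optimizer are in general \emph{not} geodesics, so the overlap metric and its compactness properties simply do not apply to them. Moreover, the issue it would be needed to resolve is real: a pointwise limit of disjoint $k$-tuples need not be disjoint, and path length $\|\cdot\|_\cL$ is an infimum (hence only upper semicontinuous), so the ``optimizer property is closed'' argument that works in the semi-discrete Brownian setting (where Hausdorff limits of cadlag paths behave well) does not transcribe to $\cL$.

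What the paper actually does for general $\theta$ is substantially different. After the lattice trick ($\pi_n^- \wedge \pi_n^+$ and $\pi_n^- \vee \pi_n^+$) to extract a monotone sequence $\pi_n^+$, it takes the pointwise limit $\pi$, which is only known to satisfy the multi-path metric composition identity $\cL(\bar\pi(r_1);\bar\pi(r_2)) + \cL(\bar\pi(r_2);\bar\pi(r_3)) = \cL(\bar\pi(r_1);\bar\pi(r_3))$. It then invokes Lemma 8.2 of \cite{dauvergne2021disjoint} to conclude $\pi(q) \in \R^k_<$ at every rational $q$, \emph{repairs} $\pi$ by replacing it on each unit interval $[n-1, n]$ with a genuine disjoint optimizer between $\bar\pi(n-1)$ and $\bar\pi(n)$, and finally uses the shape theorem (Lemma \ref{L:shape-land}) to verify the repaired $k$-tuple still has asymptotic direction $\theta$. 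None of this is captured by ``pass to an overlap-limit,'' and the missing input (Lemma 8.2 of \cite{dauvergne2021disjoint}) is not something your outline would produce. You correctly flagged that ``care is needed'' here, but the specific mechanism you reached for is the wrong one.
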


\begin{proof}
Part $1$ follows as in the proof of Lemma \ref{L:disjoint-geods}, uses that translation invariance of the directed landscape in place of translation invariance of Brownian last passage percolation. Now fix $\theta \in \cJ^k_<, t > 0, a > 0$ and let $K =\prod_{i=1}^k [\theta_i^-, \theta_i^+]\subset \R^k_<$ contain $\theta$ in its interior. Let $\by, \bz$ be chosen as in part $1$ from $a, K$. Let $\theta_n^- = \theta - 1/n, \theta_n^+ = \theta + 1/n$ so that for large enough $n$, $\theta_n^{\pm} \in K$. Let $\pi^{\theta_n^\pm, \by}$ denote the $k$-tuple of leftmost geodesics in direction $\theta_n^\pm$ ending at $(\by, t)$. Then both of the monotone sequences $\pi^{\theta_n^-, \by}$ and $\pi^{\theta_n^+, \bz}$ have pointwise limits $\pi, \pi'$. We have $\pi^{\theta_n^-, \by} \le \pi \le \pi' \le \pi^{\theta_n^+, \bz}$ for all $n$ by monotonicity of geodesics (Theorem \ref{T:sepp-sor-busani}.2), and so $\pi, \pi'$ must be $k$-tuples of paths in direction $\theta$.

 Next, sequences of geodesics that converge pointwise must converge in overlap to limiting geodesics (Lemma \ref{L:overlap-cvg}.2). Therefore both $\pi, \pi'$ must both be $k$-tuples of geodesics in direction $\theta$, and hence coalesce together at all small enough times by Theorem \ref{T:sepp-sor-busani}.4. Moreover, overlap convergence ensures that $\pi^{\theta_n^-, \by}|_K = \pi|_K$ and ${\pi}^{\theta_n^+, \bz}|_K$ for all compact sets $K \subset (-\infty, t)$ and all large enough $n$, and hence by coalescence of $\pi, \pi'$ there exists $T$ such that $\pi^{\theta_n^-, \by}(T) = {\pi}^{\theta_n^+, \bz}(T) = \pi(T)=\pi'(T)$ for some large $n$, yielding part $2$. Therefore by monotonicity of finite optimizers, for $s$ small enough, for any $\bx \in [-a, a]^k_\le$, we have:
 $$
 \cL( \bar \pi^\theta(s) ; \bx, t) - \sum_{j=1}^{k} \cL(\bar \pi^\theta_j(s); 0, 0) =  \cL( \bar \pi(T); \bx, t) - \sum_{j=1}^{k} \cL(\bar \pi_j(T); 0, 0),
 $$
 yielding part $3$.
 
 For Part $4$, we can establish the existence of optimizers in directions $\theta \in \cJ^k_<$ as in the proof of Proposition \ref{P:extended-usemann}. Now consider $\theta \notin \cJ^k_<$ and let $\theta_n^\pm$ be sequences with $\theta_{n, i}^- < \theta_n < \theta_{n, i}^+$ that converge to $\theta$ and are in $\cJ^k_<$. Let $(\bx, s)\in \R^k_\le \X \R$, and let $\pi_n^-, \pi_n^+$ be sequences of optimizers to $(\bx, s)$ in directions $\theta_n^-, \theta_n^+$. 
 
 Consider the paths $\tau_n^- = \pi_n^- \wedge \pi_n^+$ and $\tau_n^+ = \tau_n^+ = \pi_n^- \vee \pi_n^+$. Both $\tau_n^-, \tau_n^+$ always consist of disjoint paths. Moreover, from the definition of path length, it is easy to check that 
 $$
 \|\tau_{n, i}^+|_{[0, t]}\|_\cL + \|\tau_{n, i}^-|_{[0, t]}\|_\cL =  \|\pi_{n, i}^+|_{[0, t]}\|_\cL + \|\pi_{n, i}^-|_{[0, t]}\|_\cL .
 $$
 This forces both $\tau_n^-$ and $\tau_n^+$ to be semi-infinite optimizers in directions $\theta_n^-, \theta_n^+$ with $\tau_n^- \le \tau_n^+$. In other words, in the initial choice of $\pi_n^-, \pi_n^+$ we may assume $\pi_n^- \le \pi_n^+$. Similarly, we may assume $\pi_1^- \le \pi_2^- \le \dots \le \pi_2^+ \le \pi_1^+$. Therefore letting $\pi = \lim_{n \to \infty} \pi_n^+$, we have that $\pi$ is a $k$-tuple of paths with asymptotic direction $\theta$ ending at $(\bx, 1)$. Moreover, for any $r_1 < r_2 < r_3 \in (-\infty, t]$ we have that
 \begin{equation}
 	\label{E:pi-r-MC}
 \cL(\bar \pi(r_1) ; \bar \pi(r_2)) +  \cL(\bar \pi(r_2) ; \bar \pi(r_3)) =  \cL(\bar \pi(r_1) ; \bar \pi(r_3)),
\end{equation}
 where $\bar \pi(r_i) = (\pi(r_i), r_i)$, since this equality holds for all the approximations $\pi_n^+$ and the extended landscape is continuous. Lemma 8.2 in \cite{dauvergne2021disjoint} implies that almost surely, any path $\pi$ satisfying \eqref{E:pi-r-MC} satisfies $\pi(q) \in \R^k_<$ for any rational point $q$. Now, for every $n \in \Z \cap (-\infty, t)$, we can find a disjoint optimizer from $\bar \pi(n-1)$ to $\bar \pi(n)$. We can also find a disjoint optimizer from $\bar \pi(\lceil t-1 \rceil)$ to $\bar \pi(t)$.
 Concatenating together all of these optimizers yields a $k$-tuple of disjoint paths $\pi'$ satisfying \eqref{E:pi-r-MC}. Moreover, the asymptotic direction of $\pi'$ is still $\theta$, since Lemma \ref{L:shape-land} controls how far our new $k$-tuple $\pi'$ can wander from the linear interpolation of $\pi|_{(-\infty, t] \cap \Z}$. Therefore $\pi'$ is a disjoint optimizer in direction $\theta$ ending at $(\bx, 1)$.
 
 Part $5, 6$ follows from the quadrangle inequality for finite values of $\cL$ (Lemma \ref{L:quadrangle-landscape}) and in Part $7$, the proof in Proposition \ref{P:multi-Busemann-exist} goes through verbatim. 
\end{proof}

Next, we aim to identify the law of the process $(\theta, \bx) \mapsto \fB^\theta(\bx)$. We can guess this law by using convergence of Brownian LPP from Theorem \ref{T:DL-cvg-extended}. Indeed, let $\cL^a$ be as in that theorem. For $\bx \in \R^k_\le$ and $\theta \in \cJ^k_<$, we can again attempt to define $\fB^\theta(\bx, 0; \cL^{a})$ via \eqref{E:B-la-x}, with $\cL^{a}$ used in place of $\cL$. Given an interval $[-b, b]$, this is almost surely well-defined for all $\bx \in [-b, b]^k_<$ when $a < 4 \theta_1$, and equals a Busemann function in the original environment $B^{a}$:
$$
\fB^\theta(\bx, 0; \cL^{a}) = \cB^{a^3/(2a - 8\theta)}(\bx, 0; B^{a}).
$$ 
We can identify the joint law in $\theta, \bx$ of right-hand side above by Theorem \ref{T:Busemann-shear}:
\begin{equation}
	\label{E:cB-nn}
	\begin{split}
&\cB^{a^3/(2a - 8\theta)}(\bx, 0; B^{a}) = \cW^{\sqrt{a^3/(a - 4\theta)}}(\bx, 0; B^{a})  \\
\eqd &\cW^{\sqrt{a^3/(a - 4\theta)} + a}(\bx, 0; B^0)
= \cW^{2 \theta + O(\theta^2/|a|)}(\bx, 0; B^0).
	\end{split}
\end{equation}
In other words, we should expect that $\fB^\theta(\bx; \cL) \eqd \cW^{2 \theta}(\bx ; B)$ jointly in all $\theta, \bx$, where $B$ is an environment of independent drift-free Brownian motions. When $\bx, \theta$ are both singletons, this was established in \cite[Theorem 5.3(iii)]{busani2024stationary}. The next proposition extends this to the level of the whole extended stationary horizon.
\begin{proposition}
	\label{P:landscape-law}
For any finite or countable subset $F \subset \bigcup_{k=1}^\infty \R^k_\le$, we have the equality in law
$
\fB^{\la/2}(\bx ; \cL) \eqd \cW^\la(\bx ; B)
$
jointly as continuous functions of $\la \in F, \bx \in \R^{|\la|}_\le$ with respect to the compact topology. Moreover, we have the following convergence in law as $a \to -\infty$:
\begin{equation}
\label{E:cvg-law-B-theta}
(\fB^\theta(\bx, 0; \cL^{-a}), \cL^{-a}) \cvgd (\fB^\theta(\bx, 0; \cL), \cL).
\end{equation}
Here the first coordinate should be viewed as a function of $\theta \in F, \bx \in \R^{|\theta|}_\le$, with the compact topology, and the second coordinate should be viewed as a function on $\mathfrak X_\uparrow$ with the compact topology.
\end{proposition}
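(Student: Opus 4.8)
\textbf{Proof proposal for Proposition \ref{P:landscape-law}.}

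The plan is to upgrade the heuristic computation \eqref{E:cB-nn} into a rigorous statement by exploiting the convergence in Theorem \ref{T:DL-cvg-extended} together with the Brownian-LPP results already in hand, and then to handle the subtle exchange-of-limits issue ($t \to -\infty$ versus $a \to -\infty$) using the shape estimates of Section \ref{S:prelim} in exactly the same spirit as the argument for Theorem \ref{T:brownian-law}. First I would fix a finite subset $F$; the countable case follows by taking an increasing union of finite sets and using that convergence in law of continuous functions with the compact topology is determined by finite-dimensional marginals on a dense set together with tightness. For a fixed finite $F$, it suffices by continuity of all processes involved (Proposition \ref{P:extended-usemann-landscape}.7 for $\cL$, Proposition \ref{P:multi-Busemann-exist} for $B$) to establish the equality of finite-dimensional distributions, say at points $\bx \in \R^{|\theta|}_\le$ ranging over a countable dense set and $\theta$ ranging over $F$.

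The core step: for $a < 0$ with $|a|$ large, work in the environment $B^{-a}$ of independent Brownian motions of drift $-a > 0$, and let $\cL^{-a}$ be the rescaled Brownian LPP of Theorem \ref{T:DL-cvg-extended}. As noted in the discussion preceding the proposition, for $\theta \in \cJ^k_<$ and $\bx$ in a fixed compact box, one has $\fB^\theta(\bx, 0; \cL^{-a}) = \cB^{(-a)^3/(-2a-8\theta)}(\bx, 0; B^{-a})$ --- this is a finite-time identity that requires only the metric-composition structure and the fact that the Busemann direction in the rescaled coordinates corresponds to the stated direction in the original coordinates; one must check that the defining limit in \eqref{E:B-la-x} for $\cL^{-a}$ literally unwinds to the Busemann limit \eqref{E:extended-busemann} for $B^{-a}$, which is a direct substitution of the scaling map $(\bx,s)_{-a}$. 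Then by Theorem \ref{T:Busemann-shear}.3 (the Busemann shear identity), $\cB^{(-a)^3/(-2a-8\theta)}(\bx, 0; B^{-a}) = \cW^{\sqrt{(-a)^3/(-a-4\theta)}}(\bx, 0; B^{-a}) = \cW^{\sqrt{(-a)^3/(-a-4\theta)} - a}(\bx, 0; B)$ in distribution (jointly in $\theta \in F$, $\bx$), where $B$ is drift-free. Now $\sqrt{(-a)^3/(-a-4\theta)} - a = \sqrt{|a|^3/(|a|+4\theta)} - |a| = |a|(\sqrt{1/(1+4\theta/|a|)} - 1) = -2\theta + O(\theta^2/|a|)$ as $a \to -\infty$. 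By the joint continuity of $\la \mapsto \cW^\la$ at fixed $\la$ (which is the content of Theorem \ref{T:brownian-law}.1 combined with the continuity-in-$\theta$ statement proven inside that theorem, or Lemma \ref{L:Blx-continuity}), the right-hand side converges in law, jointly in $\theta \in F$ and $\bx$, to $\cW^{2\theta}(\bx; B) = \cW^\la(\bx; B)$ with $\la = 2\theta$.

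It remains to argue that the left-hand side $\fB^\theta(\bx, 0; \cL^{-a})$ converges in law, jointly with $\cL^{-a}$, to $\fB^\theta(\bx, 0; \cL)$. This is the main obstacle, and it is precisely the exchange-of-limits problem: $\fB^\theta$ is itself a $t\to-\infty$ limit, and a priori this need not commute with $a \to -\infty$. The strategy I would use mirrors the proof of Theorem \ref{T:brownian-law}: by Theorem \ref{T:DL-cvg-extended} we already have $\cL^{-a} \cvgd \cL$ in the compact topology on $\mathfrak X_\uparrow$, so by Skorokhod representation we may assume $\cL^{-a} \to \cL$ almost surely (uniformly on compacts). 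Fix a compact box $[-b,b]^k_\le$ for $\bx$, a single direction $\theta \in \cJ^k_<$, and use Proposition \ref{P:extended-usemann-landscape}.2 to find (for the limiting $\cL$) a deterministic-depth truncation: there exist $\by \le -b^k < b^k \le \bz$, a time $T < 0$, and $\ep > 0$ such that all relevant semi-infinite geodesics coalesce by time $T$. The key quantitative input is Lemma \ref{L:shape-land}, which controls $\cL(\theta|s|, s; \cdot, \cdot)$ uniformly and shows the argmax in the metric composition law $\cL(\theta|s|, s; \bx, 0) = \max_{\bw} [\cL(\theta|s|,s;\bw,T) + \cL(\bw,T;\bx,0)]$ is, with high probability, confined to a bounded set as $s \to -\infty$; the analogous bound for $\cL^{-a}$ is the Brownian shape estimate, Propositions \ref{P:cross-prob} and \ref{P:top-bd}, transported through the scaling of Theorem \ref{T:DL-cvg-extended}. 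Because these estimates are uniform in $a$ for $|a|$ large (one checks the constants in the Brownian shape bounds survive the scaling, as is standard for KPZ rescalings --- this is the one place genuine care is needed), one obtains: for every $\de > 0$ there is a finite $s_0$ and a finite $a_0$ such that for all $a < a_0$ and all $s < s_0$, with probability at least $1-\de$, the truncated expression $\cL^{-a}(\theta|s|, s; \bx, 0) - \sum_j \cL^{-a}(\theta_j|s|, s; 0, 0)$ already equals its limit $\fB^\theta(\bx, 0; \cL^{-a})$ for all $\bx \in [-b,b]^k_\le$, and moreover this truncated expression depends only on $\cL^{-a}$ restricted to a fixed compact set. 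Since $\cL^{-a} \to \cL$ uniformly on that compact set, the truncated expression converges to the corresponding truncated expression for $\cL$, which equals $\fB^\theta(\bx, 0; \cL)$ by the same confinement statement applied to $\cL$. This gives $(\fB^\theta(\bx,0;\cL^{-a}), \cL^{-a}) \to (\fB^\theta(\bx,0;\cL), \cL)$ in probability in the coupling, hence \eqref{E:cvg-law-B-theta}; combining with the computation of the previous paragraph identifies the law as $\cW^{2\theta}(\bx; B) = \cW^\la(\bx;B)$, and continuity in $\theta$ then extends the law identification from $\cJ^k_<$ to all of $F$ via the right-continuous extension defining $\fB^\theta$ and $\cW^\la$. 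The honest hard part is verifying the uniformity in $a$ of the shape/confinement estimates and checking that the depth-$T$ truncation can be chosen jointly over the finite set $F$ of directions --- both are routine but fiddly, and everything else is assembly of already-established results.
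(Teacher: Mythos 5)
Your overall strategy—truncate the $s\to-\infty$ limit, establish uniformity in $a$ via the Brownian shape bounds transported through the KPZ scaling, and exchange the limits—is exactly the paper's strategy. The one place where you diverge cosmetically is the packaging: you propose to go through Skorokhod representation and argue convergence in probability in the coupling, whereas the paper deliberately avoids Skorokhod by proving and invoking the abstract Lemma \ref{L:cty-theorem}, whose third condition, $\lim_m \limsup_n \E(d(f_m(X_n), f(X_n)) \wedge 1)=0$, captures precisely the uniform-in-$a$ truncation estimate you describe. These are morally the same and your route is fine; the paper's abstract lemma is just cleaner because it never has to discuss whether the Busemann functional commutes with the change of probability space.

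There is, however, one genuine wrinkle you gloss over that the paper treats carefully: the normalization when $\theta$ has repeated coordinates. Your truncated expression uses the over-normalization $\sum_{j=1}^{k} h(\theta_j|s|,s;0,0)$, which matches the definition of $\hat\fB^\theta$ (equation \eqref{E:overnormalized-Pi}), not the minimally-normalized $\fB^\theta$ (equation \eqref{E:B-theta-Pi-landscape}) that appears in the statement. For distinct $\theta$ these coincide, but the claim "continuity in $\theta$ then extends the law identification from $\cJ^k_<$ to all of $F$" does not automatically transfer, because $\fB^\theta$ for repeated $\theta$ is defined with a \emph{different} normalization (grouped by $\Pi(\theta)$), and this difference is an $O(1)$ constant that does not vanish. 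The paper's $f_m$ handles this by building in the correction term $\inf_{\bz \in [-m,m]^k_\le}\big[\sum_j h(\theta_j t_m,-t_m;z_j,0) - h(\theta t_m,-t_m;\bz,0)\big]$, which is identically zero once the argmax coalescence/disjointness event occurs and otherwise exactly accounts for the $\hat\fB^\theta$-to-$\fB^\theta$ discrepancy (cf.\ Proposition \ref{P:multi-Busemann-exist} and the discussion of $\operatorname{DJ}(\theta)$). The paper then handles general $\theta$ with a second invocation of Lemma \ref{L:cty-theorem} using the perturbed direction $\theta + (0,1/m,\ldots,(k-1)/m)$. Your proposal would need to incorporate some analogue of this normalization correction; without it, the final ``continuity'' step is a gap rather than a routine checking. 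Everything else in your sketch---the finite-time identity unwinding $\fB^\theta(\cdot;\cL^{-a})$ to a Busemann function of $B^{-a}$, the application of Theorem \ref{T:Busemann-shear}, the asymptotic $\sqrt{|a|^3/(|a|+4\theta)} - |a| \to -2\theta$, and the shape-theorem confinement of the argmax---matches the paper.
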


Note that Proposition \ref{P:landscape-law} is technically restricted to only looking at finite dimensional distributions in $\la$. This is purely to avoid discussing topological issues involving jumps of the Busemann process. To prove Proposition \ref{P:landscape-law}, we need a version of the continuous mapping theorem.

\begin{lemma}
\label{L:cty-theorem}
Suppose that $X_n, X$ are random variables taking values in a metric space $(S, e)$ and that $X_n \cvgd X$. Let $f:S \to T$ be a Borel-measurable function taking values in another metric space $(T, d)$, and suppose that there are functions $f_m:S \to T$ such that the following conditions hold:
\begin{itemize}
	\item $f_m(X) \cvgd f(X)$ as $m \to \infty$.
	\item $f_m(X_n) \cvgd f_m(X)$ as $n \to \infty$ for all fixed $m$.
	\item $\lim_{m \to \infty} \limsup_{n \to \infty} \E(d(f_m(X_n), f(X_n)) \wedge 1) = 0$.
\end{itemize}
Then $f(X_n) \cvgd f(X)$.
\end{lemma}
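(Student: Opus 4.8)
\textbf{Proof plan for Lemma \ref{L:cty-theorem}.}

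The plan is to reduce the statement to a standard weak-convergence criterion, namely that $f(X_n) \cvgd f(X)$ follows once we show that for every bounded Lipschitz function $g:T \to \R$ we have $\E g(f(X_n)) \to \E g(f(X))$. So fix such a $g$, say with $\|g\|_\infty \le 1$ and Lipschitz constant $L$. The idea is a standard triangle-inequality interpolation through the approximants $f_m$: write
$$
\E g(f(X_n)) - \E g(f(X)) = \big[\E g(f(X_n)) - \E g(f_m(X_n))\big] + \big[\E g(f_m(X_n)) - \E g(f_m(X))\big] + \big[\E g(f_m(X)) - \E g(f(X))\big],
$$
and control each bracket in turn. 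The first bracket is bounded by $L\,\E(d(f_m(X_n), f(X_n)) \wedge \tfrac{2}{L})$, which by the third hypothesis can be made small uniformly in large $n$ by taking $m$ large (after passing to $\limsup_n$); more precisely, for any $\ep>0$ we may pick $m$ so that $\limsup_{n\to\infty} \E(d(f_m(X_n),f(X_n))\wedge 1) < \ep$, and then the first bracket is $\le (L+2)\ep$ for all large $n$ — here I use that $g$ is both Lipschitz and bounded, so $|g(a)-g(b)| \le \min(L\,d(a,b), 2) \le (L+2)(d(a,b)\wedge 1)$. The third bracket does not involve $n$ at all and tends to $0$ as $m\to\infty$ by the first hypothesis (bounded Lipschitz test functions characterize weak convergence, and $g\circ f_m$ is bounded and $g$ composed with anything is still a test function once we know $f_m(X)\cvgd f(X)$). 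Finally, with $m$ now fixed, the middle bracket tends to $0$ as $n\to\infty$ by the second hypothesis, since $g$ is a bounded Lipschitz function on $T$ and $f_m(X_n) \cvgd f_m(X)$.

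Assembling these estimates: given $\ep>0$, first choose $m$ large enough that the third bracket is $<\ep$ and that $\limsup_{n\to\infty}\E(d(f_m(X_n),f(X_n))\wedge 1) < \ep$ (the latter using the third hypothesis); then choose $n$ large enough that the first bracket is $<(L+2)\ep + \ep$ and the middle bracket is $<\ep$. This gives $\limsup_{n\to\infty}|\E g(f(X_n)) - \E g(f(X))| \le (L+5)\ep$, and since $\ep$ was arbitrary, $\E g(f(X_n)) \to \E g(f(X))$. As $g$ ranges over bounded Lipschitz functions, the portmanteau theorem gives $f(X_n)\cvgd f(X)$.

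There is essentially no hard obstacle here — the statement is a soft functional-analytic variant of the continuous mapping theorem, and the only mild subtlety is the bookkeeping: one must be careful to quantify over $\ep$, choose $m$ before $n$, and use the boundedness of $g$ (not just its Lipschitz continuity) to convert the $d(f_m(X_n),f(X_n))\wedge 1$ control into control of $|g(f_m(X_n)) - g(f(X_n))|$. The one place to be slightly attentive is that the third hypothesis is stated with a $\limsup_{n}$ inside the $\lim_m$, which is exactly the order in which the quantifiers need to be peeled, so the argument is structured to take $\limsup_n$ of the whole difference first and then send $m\to\infty$.
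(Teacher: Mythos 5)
Your proof is essentially identical to the paper's: the same triangle-inequality interpolation through the approximants $f_m$, testing against bounded Lipschitz functions, with the three hypotheses controlling the three brackets in the same way (the paper normalizes to $1$-Lipschitz $g$ with $\|g\|_\infty\le 1$, making the constants slightly cleaner, but the structure and quantifier order are the same). The only cosmetic difference is that the paper justifies the third bracket via dominated convergence rather than citing the portmanteau characterization directly; both are correct.
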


\begin{proof}
Let $g:T \to \R$ be any $1$-Lipschitz function with $\|g\|_\infty \le 1$. To prove the lemma it is enough to show that $\E g \circ f(X_n) \to \E g\circ f(X)$ as $n \to \infty$. For any $m \in \N$ we can write
\begin{align*}
|\E g \circ f(X_n) - \E g\circ f(X)| &\le |\E g \circ f(X_n) - \E g\circ f_m(X_n)| \\
&+ |\E g \circ f_m(X_n) - \E g\circ f_m(X)| + |\E g \circ f_m(X) - \E g\circ f(X)|.
\end{align*}
As we take $n \to \infty$, the middle term on the right-hand side above converges to $0$ since $f_m(X_n) \cvgd f_m(X)$. Moreover, as we take $m \to \infty$, the final term above converges to $0$ by the dominated convergence theorem since $g$ is continuous and $f_m(X) \cvgd f(X)$. Finally, since $g$ is $1$-Lipschitz and has $L^\infty$-norm at most $1$, we have:
$$
\lim_{m \to \infty} \limsup_{n \to \infty} |\E g \circ f(X_n) - \E g\circ f_m(X_n)| \le \lim_{m \to \infty} \limsup_{n \to \infty} \E(d(f_m(X_n), f(X_n)) \wedge 1) = 0,
$$
where the finally bound uses the second bullet point.
\end{proof}

\begin{proof}[Proof of Proposition \ref{P:landscape-law}]
The claimed convergence in law implies the equality in distribution, so we focus on this point. In order to prove this convergence in distribution, it suffices to show that if we consider a finite set $F \subset \bigcup_{k=1}^\infty \R^k_\le$ and the compact set
$$
K_b := \{(\theta, \bx) : \theta \in F, \bx \in [-b, b]^{|\theta|}_\le\}, 
$$
we have convergence in law in \eqref{E:cvg-law-B-theta} when $(\theta, x)$ is restricted to $K_b$ and convergence of the first coordinates is with respect to the $L^\infty$-norm on functions from $K_b \to \R \cup \{\pm \infty\}$. We first deal with the case when $F \subset \bigcup_{k=1}^\infty \R^k_<$. We will also assume that $F$ has the following \emph{downward closed} property: if $\theta \in F \cap \R^k_<$ and $I \subset \{1, \dots, k\}$, then $\theta^I \in F$ as well.

We appeal to the framework of Lemma \ref{L:cty-theorem}. Let $a_n \to -\infty$ be an arbitrary sequence. The random variables $X_n, X$ in that lemma will be given by $\cL^{a_n}, \cL$; the compact topology on functions $h:\mathfrak X_\uparrow \to \R$ is metrizable and the choice of our metric is not important. The function $f$ will be given by mapping $h:\mathfrak X_\uparrow \to \R$ to the pair
$$
f(h) := (\limsup_{s \to -\infty}  h( \theta|s|, s ; \bx, t) - \sum_{j=1}^{k} h(\theta_j|s|, s; 0, 0), h):K_b \times \mathfrak X_\uparrow \to \R \cup \{\pm \infty\}.
$$
When $h = \cL^a$, the first coordinate is $\mathfrak B^\theta (\bx, 0; \cL^{-a})$ almost surely on all of $K_b$ when $a$ is small enough by Proposition \ref{P:extended-usemann}, 
and is $\mathfrak B^\theta(\bx, 0; \cL)$ almost surely on all of $K_b$ by Proposition \ref{P:extended-usemann-landscape} when $h = \cL$. We define $f_m(h)$
to have the same second coordinate as $f(h)$, but with the first coordinate replaced with the following:
\begin{equation}
	\label{E:tmm}
	\begin{split}	
	h( \theta t_m, -t_m ; \bx, t) &- \sum_{j=1}^k h(\theta_j t_m, -t_m; 0, 0) \\
	&+ \Big[\inf_{\bz \in [-m, m]^k_\le} \sum_{j=1}^k h(\theta_j t_m, -t_m; z_j, 0) - h(\theta t_m, -t_m; \bz, 0)\Big].
	\end{split}
\end{equation}
Here $t_m$ is a sequence that will tend to $\infty$ very quickly with $m$; we set this sequence in the course of the proof. In our setup, Proposition \ref{P:extended-usemann-landscape} ensures that the first condition of Lemma \ref{L:cty-theorem} is satisfied; note that the shape theorem for $\cL$ (Lemma \ref{L:shape-land}) guarantees that the bracketed term in \eqref{E:tmm} is $0$ with probability tending to $1$ for all large enough $m$. The second condition follows since $\cL^{a} \cvgd \cL$ as $a \to -\infty$ and $f_m$ is continuous. For the final condition, we show that
\begin{equation}
\label{E:lim-mm}
\lim_{m \to \infty} \liminf_{a \to -\infty} \P(f_m(\cL^a) = f(\cL^a)) = 1.
\end{equation}
First, using Lemma \ref{L:Blx-continuity}, we can find $\de > 0$ such that the following condition holds with probability $1 - 1/m$ for all sufficiently large $a$:
\begin{itemize}
	\item For all $\bx \in [-m, m]^k_\le, \theta \in F, \|\theta' - \theta\|_\infty \le \de$:
	$$
	\fB^{\theta'}(\bx, 0; \cL^{-a}) - \fB^{\theta'}(0^k, 0; \cL^{-a}) = \fB^{\theta}(\bx, 0; \cL^{-a}) - \fB^{\theta}(0^k, 0; \cL^{-a}).
	$$
\end{itemize}
Here the fact that we can use the same $\ep$ uniformly over all large enough $a$ follows from the computation \eqref{E:cB-nn}, which implies that all claims for $\cL^{-a}$ are implied by a common claim for an undrifted Brownian environment $B^0$. Now, the shape theorems for Brownian LPP (Propositions \ref{P:cross-prob} and \ref{P:top-bd}) and the quadrangle inequality imply that there exists some $s_m < 0$, such that for all large enough $a$, the following bound holds with probability $1 - 1/m$:
\begin{itemize}
	\item For all $\by \le \bx \in [-b, b]^k_\le, \theta \in F, t < s_m$, we have:
	\begin{align*}
		\fB^{\theta - \de}(\bx, 0; \cL^{-a}) - \fB^{\theta - \de}(\by, 0; \cL^{-a}) &\le \cL^{-a}(\theta |t|, t ; \bx, 0) - \cL^{-a}(\theta |t|, t ; \by, 0) \\
		&\le \fB^{\theta + \de}(\bx, 0; \cL^{-a}) - \fB^{\theta + \de}(\by, 0; \cL^{-a})
	\end{align*}
\end{itemize} 
Combining this with the previous bullet, as long as we take $t_m \le s_m$, with probability tending to $1$ we have that with probability at least $1-2/m$, for all large enough $a$, for $(\theta, \bx) \in K_b$ we have that the first coordinate of $f(\cL^{-a})$ at $(\theta, \bx)$ equals
$$
\hat {\mathfrak B}^\theta (\bx, 0; \cL^{-a}) + \inf_{\bz \in [-m, m]^k_\le} \sum_{j=1}^k \mathfrak B^{\theta_i}(z_i, 0; \cL^{-a}) - \hat {\mathfrak B}^\theta(\bz, 0; \cL^{-a}).
$$
On the other hand, the infimum on the right-hand side equals $0$ with probability tending to $1$ with $m$, uniformly over all large enough $a$. The uniformity over $a$ follows from the representation of the processes $\mathfrak B^\cdot(\cdot; \cL^{-a})$ in terms of the Busemann process across $B^0$, \eqref{E:cB-nn}. In summary, we have shown \eqref{E:lim-mm} as desired. 

It remains to check the case when $F \subset \bigcup_{k=1}^\infty \R^k_\le$ (i.e. elements of $F$ can have repeated points). Let 
$$
f(h) := (\limsup_{s \to -\infty}  h( \theta|s|, s ; \bx, t) - \sum_{j=1}^{m(\theta)} h(\theta^{I_j(\theta)}|s|, s; 0^{|I_j(\theta)|}, 0), h),
$$
and set the approximations $f_m$ to be given by replacing $\theta$ with $\theta + (0, 1/m, \dots, (k-1)/m)$ in the definition of $f$. Then by the previous case, $f_m(\cL^{-a}) \cvgd f_m(\cL)$ as $a \to \infty$ for every fixed $m$ so the second condition of Lemma \ref{L:cty-theorem} is satisfied. The first condition is satisfied by Proposition \ref{P:extended-usemann-landscape}.7. Finally, if we let $f_1, f_{m, 1}$ denote the first coordinate of these functions, we have
$$
\lim_{m \to \infty} \limsup_{a \to \infty} \E \|f_1(\cL^{-a}) - f_{m, 1}(\cL^{-a})\|_\infty \wedge 1 = 0.
$$
This follows again since we know the joint law of $f_1(\cL^{-a}), f_{m, 1}(\cL^{-a})$ for all fixed $a$: it is simply given via Theorem \ref{T:brownian-law}, and this explicit representation in terms of Brownian motions allows us to easily check the above convergence to $0$. This yields the third condition of Lemma \ref{L:cty-theorem}, so by that lemma the proof is complete.
\end{proof}

Given that we have identified the law of the process $\fB$, we can now give the analogue of Lemma \ref{L:hat-B-Busemann} in the limiting setting of $\fB$.
\begin{corollary}
	\label{C:hat-B-Busemann-landscape}
For $\theta, \bx \in \R^k_\le$ let $\Pi(\theta) = (I_1, \dots, I_{m(\theta)})$ and define
$$
\bar \fB^\theta(\bx, t; \cL) = \lim_{s \to -\infty} \cL(\theta |s|, s ; \bx, 0) - \sum_{j=1}^{m(\theta)}  \cL(s, \theta^{I_j} |s| ; \bx^{I_j}, 0).
$$
Then for any fixed $\theta \in \R^k_\le$, almost surely $\bar \cB^\theta(\bx, t; \cL) = \cB^\theta(\bx, t; \cL)$ for all $\bx \in \R^k_\le, t \in \R$ and
\begin{equation*}
\bar \fB^\theta(\bx, t; \cL) = \lim_{s \to -\infty} \cL(\pi(s), s ; \bx, 0) - \sum_{j=1}^{m(\theta)}  \cL(s, \pi^{I_j}(s) ; \bx^{I_j}, 0)
\end{equation*}
for any disjoint $k$-tuple $\pi$ ending at $(\bx, 1)$ in direction $\theta$. 
\end{corollary}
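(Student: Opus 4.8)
The plan is to mirror, almost line for line, the proof of Lemma~\ref{L:hat-B-Busemann} for Brownian LPP, substituting the landscape analogues of each input: Proposition~\ref{P:extended-usemann-landscape} in place of Proposition~\ref{P:extended-usemann}, Proposition~\ref{P:landscape-law} in place of Theorem~\ref{T:brownian-law}, and the landscape quadrangle inequality (Lemma~\ref{L:quadrangle-landscape}, together with Proposition~\ref{P:extended-usemann-landscape}.5) in place of Lemma~\ref{L:quadrangle}. The key point is that, just as in the Brownian case, one reduces the almost-sure statement (for fixed $\theta$, the quantity $\bar\fB^\theta(\bx,t)$ is well-defined independently of the approximating $k$-tuple $\pi$, and agrees with $\fB^\theta(\bx,t)$) to a distributional identity that Proposition~\ref{P:landscape-law} supplies.

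First I would fix $\theta \in \R^k_\le$ and, for $a<b<a'<b'$ and any disjoint $k$-tuple $\pi$ in direction $\theta$ ending at $(\bx,0)$, define the $\limsup$ and $\liminf$ versions
$$
\bar\fB^{\theta\pm}_\pi(\bx,\by)=\mathop{\mathrm{limsup/liminf}}_{s\to-\infty}\ \cL(\bar\pi(s);\by,0)-\cL(\bar\pi(s);\bx,0),
$$
for $\bx\in[a,b]^k_\le$, $\by\in[a',b']^k_\le$. The goal is to show both equal $\hat\fB^\theta(\by,0)-\hat\fB^\theta(\bx,0)$ a.s.\ on this compact box. When $\theta_1\neq 0$, choose $\theta^\pm_n\in\cJ^k_<$ with $\theta^-_{n,i}<\theta_i<\theta^+_{n,i}$ and $\theta^\pm_n\to\theta$; the landscape quadrangle inequality (applied via Proposition~\ref{P:extended-usemann-landscape}.3, writing $\bar\fB^{\theta^\pm_n}$ as a limit along a single $k$-tuple) sandwiches $\bar\fB^{\theta\pm}_\pi(\bx,\by)$ between $\hat\fB^{\theta^-_n}(\by)-\hat\fB^{\theta^-_n}(\bx)$ and $\hat\fB^{\theta^+_n}(\by)-\hat\fB^{\theta^+_n}(\bx)$. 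Both outer expressions converge in distribution, as continuous functions on the box, to $\hat\fB^\theta(\by)-\hat\fB^\theta(\bx)$ by Proposition~\ref{P:landscape-law} together with Proposition~\ref{P:extended-usemann-landscape}.7, so the squeeze forces a.s.\ equality of the $\limsup$ and $\liminf$ with the common value. When $\theta_1=0$, write $\theta=(0^\ell,\la)$ with $\la\in(0,\infty)^{k-\ell}_\le$, pick $\la^-_n\in\cJ^{k-\ell}_<$ with $\la^-_n\uparrow\la$, and build the lower-bounding functional
$$
F_n(\bx)=\sum_{i=1}^\ell \cL(\cdot,\cdot;x_1,0)\big|_{\text{line }i}\ +\ \max_{\bz\in[x_1,x_k]^{k-\ell}_\le}\ \fB^{\la^-_n}(\bz,s_0;\cL)+\cL(x_1^\ell,s_0;\bz, s_0;\ldots)
$$
(the exact concatenation is the landscape analogue of the $F_n$ in Lemma~\ref{L:hat-B-Busemann}); using Proposition~\ref{P:landscape-law} to identify its law and passing $n\to\infty$ recovers $\hat\fB^\theta(\by)-\hat\fB^\theta(\bx)$ as the limit, exactly as before.

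Having established this for increments, the final step is the normalization bookkeeping: given $\bz\in\operatorname{DJ}(\theta)$ and a disjoint $k$-tuple $\pi$ to $(\bz,0)$ in direction $\theta$, the increment identity applied on a box containing $\bz$ gives
$$
\hat\fB^\theta(\bx)-\hat\fB^\theta(\bz)+\sum_{j=1}^{m(\theta)}\hat\fB^{\theta^{I_j}}(\bz^{I_j})
=\lim_{s\to-\infty}\Big[\cL(\bar\pi(s);\bx,0)-\cL(\bar\pi(s);\bz,0)+\sum_{j}\cL(\bar\pi^{I_j}(s);\bz^{I_j},0)-\sum_j \cL(\bar\pi^{I_j}(s);0^{|I_j|},0)\Big],
$$
where the two $\bz$-terms cancel because $\bz\in\operatorname{DJ}(\theta)$ (the leftmost optimizers to the blocks $\bz^{I_j}$ are already disjoint, so their lengths sum to $\cL(\bar\pi(s);\bz,0)$ for large $-s$), leaving exactly $\bar\fB^\theta(\bx,0)$. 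The left side equals $\fB^\theta(\bx,0)$ by Proposition~\ref{P:extended-usemann-landscape}.7 (formula \eqref{E:B-theta-Pi-landscape} compared with \eqref{E:overnormalized-Pi}), which is the claimed identity. The displacement in $t$ is harmless: one repeats the argument with endpoint $(\bx,t)$ instead of $(\bx,0)$, or uses time-stationarity of $\cL$.

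I expect the main obstacle to be purely technical rather than conceptual: carefully matching up the normalization constants when $\theta$ has repeated coordinates (the $\operatorname{DJ}(\theta)$ argument and the cancellation of the block terms), and making sure the ``equality if'' clause of the landscape quadrangle inequality is available in the form needed --- here one should invoke Proposition~\ref{P:extended-usemann-landscape}.5,6, which is the landscape substitute for Lemma~\ref{L:quadrangle-busemann} and Lemma~\ref{L:Blx-continuity}. The $\theta_1=0$ case requires the extra auxiliary-environment construction, but this is a direct transcription of the Brownian argument using Proposition~\ref{P:landscape-law} to supply the needed law. No new ideas beyond those already deployed for Brownian LPP seem necessary.
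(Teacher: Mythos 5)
Your proposal matches the paper's own argument, which is just "the proof of Lemma~\ref{L:hat-B-Busemann} applies verbatim" with the landscape inputs (Proposition~\ref{P:extended-usemann-landscape} for monotonicity/quadrangle and right-continuity, Proposition~\ref{P:landscape-law} for the Busemann law) substituted in. One small simplification you missed, and which makes the incompletely-transcribed $F_n$ construction unnecessary: in the landscape the direction space is all of $\R^k_\le$ rather than $[0,\infty)^k_\le$, so when $\theta_1=0$ one can still choose approximations $\theta^-_n\in\cJ^k_<$ with $\theta^-_{n,1}<0$ and run the same sandwich argument as in the $\theta_1\neq 0$ case; the separate auxiliary-environment construction was needed in the Brownian setting only because $[0,\infty)$ is bounded below.
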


The proof of Lemma \ref{L:hat-B-Busemann} applies verbatim to Corollary \ref{C:hat-B-Busemann-landscape}, since that proof only used monotonicity/the quadrangle inequality and a characterization of the Busemann law, which we have in this context by Proposition \ref{P:landscape-law}. We leave the details to the reader. Corollary \ref{C:hat-B-Busemann-landscape} also implies a metric composition law for Busemann functions in $\cL$, which can be used to prove optimizer uniqueness.

\begin{lemma}
\label{L:mc-law-busemann-L}
Fix $\theta \in \R^k_\le$. Then the following holds almost surely for all $\bx \in \R^k_\le$ and $s < t$:
\begin{equation}
\label{E:fB-theta-mc}
\fB^\theta(\bx, t; \cL) = \max_{\bz \in \R^k_\le} \fB^\theta(\bz, s; \cL) + \cL(\bz, s; \by, t).
\end{equation}

\end{lemma}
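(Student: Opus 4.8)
\textbf{Proof plan for Lemma \ref{L:mc-law-busemann-L}.}

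The plan is to derive \eqref{E:fB-theta-mc} by taking a limit along directions in $\cJ^k_<$ of the corresponding metric composition law for the finite-$t$ extended Busemann functions, and then passing from the over-normalized limits back to the minimally-normalized functions $\fB^\theta$. First I would fix $\theta \in \cJ^k_<$. For such $\theta$, Proposition \ref{P:extended-usemann-landscape}.3 gives the representation
$$
\fB^\theta(\bx, t; \cL) = \lim_{r \to -\infty} \cL(\bar\pi(r); \bx, t) - \sum_{j=1}^k \cL(\bar\pi_j(r); 0, 0)
$$
valid for any disjoint $k$-tuple $\pi$ in direction $\theta$, and similarly with endpoint time $s$ in place of $t$. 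Combining this with the multi-path metric composition law for the extended directed landscape at finite times (Lemma \ref{L:split-path-L}/\ref{P:mc-everywhere}, applied with the far-away space-time level $(\bar\pi(r), r)$ as the bottom point), one obtains
$$
\cL(\bar\pi(r); \bx, t) = \max_{\bz \in \R^k_\le} \cL(\bar\pi(r); \bz, s) + \cL(\bz, s; \bx, t),
$$
and the argmax $\bz$ stays in a fixed compact set for all large $|r|$ by the shape bound Lemma \ref{L:shape-land} together with monotonicity of optimizers (Proposition \ref{P:extended-usemann-landscape}.1,2) --- this is exactly the same localization of the argmax used in the proof of Theorem \ref{T:brownian-law} and Proposition \ref{P:landscape-law}. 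Subtracting the common normalization $\sum_j \cL(\bar\pi_j(r); 0, 0)$ from both sides and sending $r \to -\infty$, the inner last passage values $\cL(\bar\pi(r); \bz, s) - \sum_j \cL(\bar\pi_j(r); 0, 0)$ converge (uniformly over $\bz$ in the relevant compact set, by Proposition \ref{P:extended-usemann-landscape}.3 and the uniform-on-compacts convergence there) to $\fB^\theta(\bz, s; \cL)$, giving \eqref{E:fB-theta-mc} for $\theta \in \cJ^k_<$ and all $\bx, s < t$ on a single almost sure event.

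Next I would remove the restriction $\theta \in \cJ^k_<$. For general fixed $\theta \in \R^k_\le$, use Corollary \ref{C:hat-B-Busemann-landscape}: almost surely $\fB^\theta(\bx, t; \cL) = \bar\fB^\theta(\bx, t; \cL)$ for all $\bx, t$, where $\bar\fB^\theta$ is obtained by following any disjoint $k$-tuple $\pi$ in direction $\theta$ (which exists by Proposition \ref{P:extended-usemann-landscape}.4) and using the partition-respecting normalization $\sum_{j=1}^{m(\theta)}\cL(\bar\pi^{I_j}(r); \mathbf{0}^{|I_j|}, 0)$. Since this is again a genuine limit along a $k$-tuple of paths, the same computation applies verbatim: feed $(\bar\pi(r), r)$ into the finite-time multi-path metric composition law, localize the argmax via Lemma \ref{L:shape-land} and optimizer monotonicity, subtract the (now $\Pi(\theta)$-block) normalization from both sides, and let $r \to -\infty$. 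The inner term converges to $\bar\fB^\theta(\bz, s; \cL) = \fB^\theta(\bz, s; \cL)$, yielding \eqref{E:fB-theta-mc} for arbitrary fixed $\theta$.

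The main obstacle I expect is the interchange of the $r \to -\infty$ limit with the $\max$ over $\bz$: one has to show that the maximizing $\bz$ is confined to a deterministic (or at least $r$-independent) compact set with probability one, uniformly in large $|r|$, so that the convergence $\cL(\bar\pi(r);\bz,s) - (\text{normalization}) \to \fB^\theta(\bz,s;\cL)$ may be applied at the argmax. This is handled by the quantitative shape estimate Lemma \ref{L:shape-land} --- which forces any near-optimal $\bz$ at level $s$ to lie within $O(|r|^{2/3})$ of the linear interpolant, hence within a bounded region once the normalization is subtracted --- combined with monotonicity of the argmax in $\bx$ (Proposition \ref{P:extended-usemann-landscape}.2, via quadrangle) to upgrade a pointwise-in-$\bx$ statement to one that holds simultaneously for all $\bx$ in a compact set. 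A minor additional point is that the $\le$ direction of \eqref{E:fB-theta-mc} is the ``easy'' inequality (concatenate an optimizer realizing $\fB^\theta(\bz, s)$ with a geodesic $k$-tuple from $(\bz,s)$ to $(\bx,t)$, using Proposition \ref{P:extended-usemann-landscape}.4 and Lemma \ref{L:optimizer-existence-monotoncity-landscape}), so one could alternatively prove only $\ge$ by the limiting argument and get $\le$ directly; but the cleanest route is the single limit computation above, which gives equality at once.
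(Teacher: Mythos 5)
The overall framework matches the paper's --- pass the finite-range metric composition law for $\cL$ through the $r \to -\infty$ Busemann limit after subtracting a common normalization, on the a.s.\ set from Corollary \ref{C:hat-B-Busemann-landscape} --- but you miss the one choice that lets the paper avoid the argmax-localization problem you correctly flag as the technical bottleneck: take $\pi$ to be a semi-infinite \emph{optimizer} in direction $\theta$ to $(\bx,t)$, which exists by Proposition \ref{P:extended-usemann-landscape}.4, not merely an arbitrary disjoint $k$-tuple in direction $\theta$. With that choice, $\pi|_{[r,t]}$ is itself an optimizer from $\bar\pi(r)$ to $(\bx,t)$ for every $r$, so the maximum in the finite-range metric composition law
\[
\cL(\bar\pi(r);\bx,t) = \max_{\bz\in\R^k_\le} \cL(\bar\pi(r);\bz,s) + \cL(\bz,s;\bx,t)
\]
is achieved at $\bz = \pi(s)$ --- a single fixed point that does not depend on $r$. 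The equality at $\bz=\pi(s)$ and the elementary reverse-triangle inequality at each other fixed $\bz$ then both survive the pointwise limit $r\to -\infty$ directly, with no interchange of $\max$ and $\lim$ required. The localization-by-shape-bounds route you propose is not obviously wrong, but it is considerably harder: Lemma \ref{L:shape-land} permits fluctuations of order $|r|^{1/3}$ which grow without bound, so confining the argmax that way really hinges on the Busemann-function convergence itself; and the uniform-on-compacts convergence you invoke from Proposition \ref{P:extended-usemann-landscape}.3 is not what that statement literally says (it asserts pointwise existence; the uniformity does follow from the coalescence argument in its proof but would have to be extracted separately). In short, the plan is salvageable but leaves a real technical gap, whereas picking $\pi$ to be an optimizer dissolves the gap.

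Separately, your final paragraph has the easy and hard inequalities reversed. Concatenating a semi-infinite optimizer realizing $\fB^\theta(\bz,s)$ with a finite optimizer from $(\bz,s)$ to $(\bx,t)$, and using that the Busemann limit is independent of the choice of $k$-tuple (Corollary \ref{C:hat-B-Busemann-landscape}), yields $\fB^\theta(\bx,t) \ge \fB^\theta(\bz,s) + \cL(\bz,s;\bx,t)$ for each fixed $\bz$, which is the $\ge$ direction of \eqref{E:fB-theta-mc}. The $\le$ direction --- exhibiting a $\bz$ at which equality holds --- is the one that requires decomposing a semi-infinite optimizer at level $s$, and that is exactly what the paper supplies by taking $\bz=\pi(s)$.
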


\begin{proof}
We work on the almost sure set where $\fB^\theta = \bar \fB^\theta$ from Corollary \ref{C:hat-B-Busemann-landscape}. Let $\pi$ be a semi-infinite optimizer to $(\bx, t)$ in direction $\theta$, and for $r < s \le t$ and $\bz \in \R^k_\le$ define 
$$
\fB^\theta_r(\bz, s; \cL) = \cL(\pi(r),r ; \bz, s) - \sum_{j=1}^{m(\theta)}  \cL(r, \pi^{I_j}(r) ; \bx^{I_j}, 0).
$$
Then the identity \eqref{E:fB-theta-mc} holds with $\bar \fB^\theta_r$ in place of $\bar \fB^\theta$ on both sides by the usual metric composition law. Since $\pi$ is an optimizer we also have that
$$
	\fB^\theta_r(\bx, t; \cL) = \fB^\theta_r(\pi(s), s; \cL) + \cL(\pi(s), s; \by, t).
$$
for all $r < s$. Combining these facts with the pointwise convergence $\fB^\theta_r \to \bar \fB^\theta = \fB^\theta$ from Corollary \ref{C:hat-B-Busemann-landscape} yields the result.
\end{proof}

\begin{lemma}
	\label{L:optimizer-as-unique}
Fix $\theta \in \R^k_\le, \bx \in \R^k_\le$, and $t \in \R$. Then almost surely, there is a unique semi-infinite optimizer in direction $\theta$ to the point $(\bx, t)$.
\end{lemma}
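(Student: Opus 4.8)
The plan is to deduce almost-sure uniqueness of the semi-infinite optimizer to $(\bx, t)$ in direction $\theta$ from the metric composition law for Busemann functions (Lemma \ref{L:mc-law-busemann-L}), together with the almost-sure uniqueness of finite optimizers in $\cL$ (Lemma \ref{L:optimizer-existence-monotoncity-landscape}). The key point is that Lemma \ref{L:mc-law-busemann-L} identifies any semi-infinite optimizer as, for each $s < t$, the concatenation of a geodesic from a maximizing point $(\bz, s)$ to $(\bx, t)$ with a semi-infinite optimizer to $(\bz, s)$, and the finite piece is determined once the maximizer $\bz$ is.

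First I would fix $\theta, \bx, t$ and work on the almost-sure event where $\fB^\theta = \bar\fB^\theta$ (Corollary \ref{C:hat-B-Busemann-landscape}), where Lemma \ref{L:mc-law-busemann-L} holds, and where finite optimizers between the relevant (countably many) endpoints are unique. Suppose $\pi, \pi'$ are two semi-infinite optimizers in direction $\theta$ ending at $(\bx, t)$. For a fixed rational $s < t$, both $\pi|_{[s,t]}$ and $\pi'|_{[s,t]}$ are finite optimizers, and by the metric composition law together with the fact that $\pi, \pi'$ are semi-infinite optimizers, the values $\bz = \pi(s)$ and $\bz' = \pi'(s)$ both achieve the maximum in \eqref{E:fB-theta-mc}. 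The main step is then to show this maximizer is almost surely unique for each fixed rational $s$. This should follow from the quadrangle inequality (Lemma \ref{L:quadrangle-landscape}) exactly as in the standard argument that geodesics from a fixed point to a line have a unique endpoint: if $\bz < \bz'$ were two maximizers, strict concavity-type considerations force $\fB^\theta(\cdot, s)$ plus $\cL(\cdot, s; \bx, t)$ to be affine along an interval, and one can then perturb the right endpoint $\bx$ slightly to produce, with positive probability, two distinct finite optimizers to the same point — contradicting Lemma \ref{L:optimizer-existence-monotoncity-landscape}. Alternatively, one can invoke the coalescence input (Theorem \ref{T:sepp-sor-busani}.3) directly: if $\theta$ happens to lie outside the random set $\Xi$ then all semi-infinite geodesics in direction $\theta$ coalesce, and since $\theta$ is fixed, almost surely $\theta \notin \Xi$; the $k$-path case follows by pairing adjacent paths and using Proposition \ref{P:extended-usemann-landscape}.2.

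Having established uniqueness of the maximizer $\bz = \bz(s)$ for each fixed rational $s$, I would conclude: for each rational $s$, $\pi(s) = \pi'(s)$, and moreover $\pi|_{[s,t]} = \pi'|_{[s,t]}$ by uniqueness of the finite optimizer between $(\bz(s), s)$ and $(\bx, t)$. Since this holds simultaneously for a dense set of $s$ and the paths are continuous, $\pi = \pi'$ everywhere on $(-\infty, t]$.

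The main obstacle I anticipate is the step establishing uniqueness of the Busemann maximizer $\bz(s)$ in \eqref{E:fB-theta-mc}: one must handle the multi-path ($k > 1$) case, where "uniqueness of the maximizer" is a statement about a vector $\bz \in \R^k_\le$ and the argument via perturbing endpoints requires the monotonicity/rigidity of optimizers (Lemma \ref{L:mono-tree-multi-path-L} and the remark following it) to rule out a continuum of maximizers. The cleanest route is probably to reduce to the single-path coalescence statement of Theorem \ref{T:sepp-sor-busani}.3 — which is exactly the $k=1$ case — and bootstrap to general $k$ using the disjointness and coalescence structure already assembled in Proposition \ref{P:extended-usemann-landscape}, so that no new rigidity estimate is needed here.
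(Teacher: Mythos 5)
Your reduction via the metric composition law (Lemma~\ref{L:mc-law-busemann-L}) to a.s.\ uniqueness of the argmax in \eqref{E:fB-theta-mc} for each fixed rational $s$, followed by continuity, is exactly the paper's reduction. The extra step you insert---uniqueness of the finite optimizer from the random point $(\bz(s),s)$ to $(\bx,t)$---is neither needed nor available: Lemma~\ref{L:optimizer-existence-monotoncity-landscape} gives uniqueness only for each \emph{fixed} pair of endpoints, not for an endpoint coupled to the environment, and once $\pi(s)=\pi'(s)$ holds simultaneously for all rational $s$, continuity alone finishes.

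The genuine gap is the argmax-uniqueness step itself, and neither of your proposed arguments closes it. The paper handles this by citing \cite[Lemma~7.3]{dauvergne2021disjoint} and observing that the argument there uses only a soft probabilistic representation of the function $\bz\mapsto\fB^\theta(\bz,s)$, which Theorem~\ref{T:brownian-law} provides here (a multi-path last-passage representation over an explicit Brownian line ensemble, giving the absolute-continuity properties that rule out ties in the maximization). You never invoke Theorem~\ref{T:brownian-law}, which is the decisive input. Your quadrangle-based route does not go through as stated: the quadrangle inequality yields supermodularity, not strict concavity, and the ``perturb $\bx$'' trick again runs into the fact that Lemma~\ref{L:optimizer-existence-monotoncity-landscape} is a fixed-endpoint statement. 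Your coalescence route also falls short: coalescence in direction $\theta\notin\Xi$ says two semi-infinite optimizers agree for all sufficiently negative times, but this does not preclude them disagreeing at a fixed intermediate time $s$, so it does not deliver argmax uniqueness at fixed $s$; and when $\theta$ has repeated entries there is no multi-path coalescence statement at hand in this paper (Proposition~\ref{P:extended-usemann-landscape}.2 requires $\theta\in\cJ^k_<$ and is a disjointness result, not coalescence of optimizers), so the ``bootstrap by pairing'' you gesture at is unsupported.
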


\begin{proof}
If $\pi$ is a semi-infinite optimizer in direction $\theta$ to $(\bx, t)$, then using Lemma \ref{L:mc-law-busemann-L}, for every $s < t$, the point $\pi(s)$ must be an argmax for the metric composition law \eqref{E:fB-theta-mc}. Therefore to prove Lemma \ref{L:mc-law-busemann-L} it is enough to show that the optimizer in \eqref{E:fB-theta-mc} is almost surely uniquely achieved for any fixed $s$. This will imply that $\pi$ is a.s.\ uniquely specified at all rational $s$, and hence by continuity is a.s.\ unique. The proof that the argmax has a unique maximum can be found in \cite[Lemma 7.3]{dauvergne2021disjoint}. (Note: that proof gives uniqueness for the same problem when the function $g(\bz, s) := \fB^\theta(\bz, s; \cL)$ is replaced by $h(\bz, s) := \cL(\by, r; \bz, s)$ for general $(\by, r)$. However, the only information about the function $h$ used in that proof is a soft probabilistic representation which in the context of our function $g$ is given by Theorem \ref{T:brownian-law}).
\end{proof}

\subsection{RSK for the directed landscape}

Now that we have defined Busemann functions for the directed landscape, we are ready to define the RSK correspondence in the directed landscape limit and prove Theorem \ref{T:landscape-recovery-intro}. We begin by restating a version of the theorem here.

\begin{theorem}
\label{T:landscape-recovery}
Let $\cL$ be a directed landscape, and for every $a \in \R$ define a sequence of lines $B^a = (B^a_i, i \in \N)$ via the formula
$$
\sum_{i=1}^k B^a_i(x) = \fB^{(a/2)^k}(x^k, 0; \cL).
$$
Then:
\begin{enumerate}
	\item (Law of the RSK image) The joint law of $(B^a, a \in \R)$ is same as the joint law of the processes $(B^a, a \in \R)$ defined in Theorem \ref{T:Busemann-shear}. In particular, each $B^a$ is a sequence of independent two-sided Brownian motions of drift $a$ and we have the Busemann isometry
	$$
	\cW^{\theta}(\bx; B^a) = \fB^{(\theta + a)/2}(\bx; \cL)
	$$
	for all $\bx, \theta \in \R^k_\le, k \in \N$. 
	\item (Abstract Invertibility) Define $\mathbb H^2_\uparrow := \{(x, s; y, t) \in \Rd : t \le 0\}$. There is a measurable function $f:\cC^\N(\R) \to \cC(\mathfrak{X}^-_\uparrow)$ such that almost surely $f(B^0) = \cL|_{\mathfrak{X}^-_\uparrow}$.
	\item (Explicit Inversion Formula $1$) For every $a \in R$, as in Theorem \ref{T:DL-cvg-extended}, for $(\bx, s; \by, t) \in \mathfrak{X}^-_\uparrow :=\{(\bx, s; \by, t) \in \mathfrak{X}_\uparrow : t \le 0\}$ define
	$$
	\cL^a(\bx, s; \by, t) = B^a[(\bx, s)_a \to (\by, t)_a] - \frac{a^2}{4}(t-s),
	$$
	where $(\bx, s)_a = (\bx-as/4, \cl{s a^3/8} + 1)$. Then as $a \to -\infty$, $\cL^a$ converges to $\cL$ in probability, in either of the following equivalent senses:
	\begin{enumerate}
		\item For every compact set $K \subset \mathfrak{X}^-_\uparrow$ we have
		$$
		\sup_{u \in K} |\cL^a(u) - \cL(u)| \to 0
		$$
		in probability as $a \to -\infty$.
		\item For every sequence $a_n \to -\infty$ there is a subsequence $n_1 < n_2 < \dots$ such that $\cL^{a_{n_k}} \to \cL$ almost surely as $k \to \infty$, in the compact topology on functions on $\mathfrak{X}^-_\uparrow$.
			\end{enumerate}
		\item (Explicit Inversion Formula 2) For every $a \in \R$, extend the Brownian line environments $B^a$ to environments indexed by $i \in \Z$ by the rule that:
		$$
		\sum_{i=1}^k B^a_{1 - i}(x) = \lim_{t \to \infty} \cL(0^k,0; -(a/2)^k t, t) - \cL(x^k,0; -(a/2)^k t, t),
		$$
		and use this to extend the definition of $\cL^a$ to all of $\mathfrak X_\uparrow$. Then as $a \to - \infty$, $\cL^a$ converges to $\cL$ in probability as functions in the compact topology on $\mathfrak{X}_\uparrow$.
\end{enumerate}
\end{theorem}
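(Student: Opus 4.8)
The statement has four parts; Part 1 is soft, Part 2 is a formal consequence of Part 3, Part 4 follows from Part 3 by symmetry, and essentially all the work is in Part 3.

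\textbf{Part 1.} Each $B^{\cL,a}$ is the same deterministic functional of the extended Busemann process $(\theta,\bx)\mapsto\fB^\theta(\bx;\cL)$ that the environment $B^a$ of Theorem \ref{T:Busemann-shear} is of the extended stationary horizon $(\la,\bx)\mapsto\cW^\la(\bx;B^0)$; since Proposition \ref{P:landscape-law} identifies these two processes in law jointly over any countable index set, the joint laws of $(B^{\cL,a})_{a\in C}$ and $(B^a)_{a\in C}$ agree for countable $C$. The group relations $B^{\cL,a+b}=(B^{\cL,a})^b$ and the Busemann isometry $\cW^\theta(\bx;B^{\cL,a})=\fB^{(\theta+a)/2}(\bx;\cL)$ then transfer from Theorem \ref{T:Busemann-shear}.3--4: for fixed $a,\theta$ the pair $(\cW^\theta(\bx;B^{\cL,a}),\fB^{(\theta+a)/2}(\bx;\cL))$ has, by Proposition \ref{P:landscape-law}, the same law as $(\cW^\theta(\bx;B^a),\cW^{\theta+a}(\bx;B^0))$, and the latter pair is a.s.\ a pair of equal functions by Theorem \ref{T:Busemann-shear}.3.

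\textbf{Part 3.} I would proceed in three stages. First, upgrade the distributional convergence $\cL^a\cvgd\cL$ of Theorem \ref{T:DL-cvg-extended} to a statement about Busemann functions \emph{in the coupling}: the explicit direction-shift computation (cf.\ \eqref{E:land-cpu}, \eqref{E:cB-nn}), together with the Busemann isometry of Part 1, shows that when $\cL^a$ is built from $\cL$ one has, pathwise, $\fB^\theta(\cdot;\cL^a)=\fB^{\theta_a}(\cdot;\cL)$ with $\theta_a\to\theta$ as $a\to-\infty$; combined with continuity of the extended stationary horizon at a fixed slope (Proposition \ref{P:multi-Busemann-exist}, Lemma \ref{L:hat-B-Busemann}, and the continuity argument in the proof of Theorem \ref{T:brownian-law}) and the shape estimates (Propositions \ref{P:cross-prob}, \ref{P:top-bd}, Lemma \ref{L:shape-land}) to make this locally uniform, this gives $\fB^\theta(\cdot;\cL^a)\to\fB^\theta(\cdot;\cL)$ in probability, jointly over finitely many $\theta$. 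Second --- and this is the genuinely new input --- I would show that for $s<t\le 0$ the bulk value $\cL(\bx,s;\by,t)$ is a measurable, quantitatively stable functional of the Busemann data $\{\fB^\theta(\cdot,r;\cL):r<s\}$: using the metric composition law for Busemann functions (Lemma \ref{L:mc-law-busemann-L}), one anchors $k$ disjoint semi-infinite optimizers at the stationary boundary far in the past, and shows, via double-slit Busemann functions and optimizer rigidity for large $k$, that the crossing point of an appropriate path at level $s$ localizes near $\bx$, so that the composition-law maximum picks out $\cL(\bx,s;\by,t)$ up to an error tending to $0$ with $k$; the same reconstruction applied to $\cL^a$ approximates $\cL^a(\bx,s;\by,t)$ with errors controlled by the same $k$-dependent estimates, uniformly in large $|a|$.

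\textbf{Conclusion of Part 3, and Parts 2 and 4.} Fix a compact $K\subset\mathfrak{X}^-_\uparrow$ and $\de>0$, pick $k$ so that the stage-two reconstruction error is below $\de/3$ on $K$ both for $\cL$ and, with probability near one uniformly in large $|a|$, for $\cL^a$, and use stage one to make the difference of the two $k$-truncated Busemann functionals below $\de/3$ with probability tending to $1$; this yields $\sup_{u\in K}|\cL^a(u)-\cL(u)|\to 0$ in probability, i.e.\ sense (a), while sense (b) is the standard equivalence of convergence in probability with a.s.\ convergence along subsequences. Part 2 is then immediate: along a subsequence with $\cL^{a_{n_k}}\to\cL|_{\mathfrak{X}^-_\uparrow}$ a.s., $\cL|_{\mathfrak{X}^-_\uparrow}$ is a measurable function of $B^0$, since by Part 1 each $B^{\cL,a}$, hence each $\cL^a$, is a measurable function of $B^0$. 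For Part 4, the negatively-indexed lines $B^a_{1-i}$ are built from the forward Busemann functions of $\cL$ (as $t\to+\infty$), so by flip symmetry (Lemma \ref{L:invariance}.3) the Part 3 argument applies verbatim to the upper half-plane, and a short patching across $\{t=0\}$ via the metric composition law for $\cL$ combines the two halves to give convergence in probability on all of $\mathfrak{X}_\uparrow$. The crux is stage two of Part 3: showing the directed landscape is genuinely reconstructible from its multi-path Busemann process, with quantitative control strong enough to survive the passage to the prelimit $\cL^a$. The distributional inputs (Theorem \ref{T:DL-cvg-extended}, Proposition \ref{P:landscape-law}) only give that the candidate reconstruction has the law of $\cL$; promoting this to an a.s.\ identity in the coupling requires the optimizer-rigidity and double-slit Busemann machinery, and making the reconstruction errors uniform in $a$ rather than merely in the $a\to-\infty$ limit is the delicate point.
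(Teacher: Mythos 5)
The logical architecture of your proof is inverted relative to the paper, and the inversion is where the difficulty hides. You derive Part~2 from Part~3, whereas the paper proves Part~2 (abstract invertibility) first and then deduces Part~3 from Part~2 by a \emph{soft} measure-theoretic argument (Lemma~\ref{L:measure-theory-lemma-b}, built from Lusin's theorem and Dugundji's extension theorem). That lemma says: if $(X_n,Y_n)\cvgd (X,Y)$, $X_n\cvgp X$, and $Y=h(X)$ for some merely measurable $h$, then $Y_n\cvgp Y$. In the paper's coupling the Busemann data $X_n := \fB^{0^k}(\cdot;\cL^{a_n})$ is \emph{identically equal} to $X := \fB^{0^k}(\cdot;\cL)$ for every $n$ (not merely convergent in probability), so once $\cL|_{\mathfrak X^-_\uparrow}=h(B^0)$ a.s.\ is known, the convergence in probability of $\cL^a$ comes for free from the distributional convergence of Theorem~\ref{T:DL-cvg-extended} and Proposition~\ref{P:landscape-law}. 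No rate, no uniform-in-$a$ estimate, no $k$-truncation error control is required.

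Your "stage two", by contrast, asks for a quantitative reconstruction with errors tending to $0$ in $k$ that is simultaneously uniform in large $|a|$. This is substantially harder than what the paper establishes; a quantitative form of the convergence $\cL^a\to\cL$ is posed as an open question (Problem~\ref{P:problem-two}) in the paper, so you should expect that route to be blocked with the tools available here. In particular, nothing in the double-slit or optimizer-rigidity machinery gives you bounds that hold uniformly in $a$; it all degenerates to almost-sure/in-probability statements at fixed $a$ or in the $a\to-\infty$ limit.

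You also do not describe how the reconstruction $h$ is actually built. The paper's mechanism is specific: via Proposition~\ref{P:double-slit-reconstruction} the double-slit Busemann functions $\fB^\theta(\cdot\mid S_u)$ are recovered from the multi-path Busemann process; the metric composition law (Lemma~\ref{L:metric-comp}) then expresses $\fB^{-a}(a\mid S_{\{x,y\}})$ as a variational sum involving $\cL\{x,y\}$; sending $a\to\infty$ isolates the \emph{shock measure} $\mu_{s,t}$ on each time slab; and finally ergodicity of the Airy process recovers the two-parameter sheet $\cL\{\cdot,\cdot\}$ from the shock measure. This chain is what produces a measurable map, and it is intrinsically non-quantitative (the ergodic averages and the $a\to\infty$ limit are soft). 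Your sketch ("the crossing point of an appropriate path at level $s$ localizes near $\bx$, so the composition-law maximum picks out $\cL(\bx,s;\by,t)$") gestures at a localization argument that is not the one used and, stated at that level of precision, does not obviously close.

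In short: Part~1 and the reduction of Part~4 to Part~3 are fine and match the paper, and you correctly identify the Busemann isometry and the double-slit machinery as the relevant ingredients. But the central maneuver — proving abstract measurability first, exploiting the fact that the coupling freezes the Busemann data ($X_n\equiv X$), and invoking a Lusin/Dugundji-type continuous-mapping lemma to pass to convergence in probability — is missing, and without it your plan requires quantitative estimates that are an open problem.
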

Theorem \ref{T:landscape-recovery-intro} is Theorem \ref{T:landscape-recovery}(iv). Note that we have defined $(\bx, s)_a$ using a different rounding convention than in the introduction. The two choices are clearly equivalent, but the latter choice associates $\mathfrak{X}^-_\uparrow$ to $\R \X \N$.

Theorem \ref{T:landscape-recovery}.1 is a consequence of Proposition \ref{P:landscape-law} and the Busemann shear characterization, Theorem \ref{T:Busemann-shear}. Most of the remainder of Section \ref{S:reconstruction} is devoted to proving the abstract inversion result in Theorem \ref{T:landscape-recovery}.2. Given abstract invertibility, we can use the following measure-theoretic lemma to move to the explicit inversion formulas in parts $3, 4$.

\begin{lemma}
	\label{L:measure-theory-lemma-b}
	Let $\{X_n, n \in \N\}, X$ be random variables taking values in a complete separable metric space $(S, d)$ equipped with its Borel $\sig$-algebra, and let $\{Y_n, n \in \N\}, Y$ be random variables taking values in a Banach space $(T, \|\cdot\|)$ equipped with its Borel $\sig$-algebra.
	Suppose that:
	\begin{enumerate}
		\item As $n \to \infty$, $(X_n, Y_n) \cvgd (X, Y)$ and $X_n \cvgp X$.
		\item There is a measurable function $h:S \to T$ such that $Y = h(X)$ a.s.
	\end{enumerate}
	Then $Y_n \cvgp Y$ as $n \to \infty$.
\end{lemma}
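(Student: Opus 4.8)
The plan is to prove Lemma \ref{L:measure-theory-lemma-b} by a standard coupling/Skorokhod-style argument, using the continuity of $h$ on a large set together with the fact that $X_n \cvgp X$ pins down the limit. First I would observe that since $(X_n, Y_n) \cvgd (X, Y)$ and $X_n \cvgp X$, the pair actually converges jointly with $X$: more precisely, $(X_n, Y_n, X) \cvgd (X, Y, X)$, because convergence in probability of $X_n$ to $X$ forces the extra coordinate $X$ to "follow" $X_n$ in the limit. (Concretely: to check this, it suffices to test against bounded Lipschitz functions $g(x', y', x)$, split $g(X_n, Y_n, X) = g(X_n, Y_n, X_n) + [g(X_n, Y_n, X) - g(X_n, Y_n, X_n)]$; the first term converges to $\E g(X, Y, X)$ by assumption, and the second is bounded in expectation by the Lipschitz constant times $\E(d(X, X_n) \wedge 1) \to 0$.)

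Next I would reduce to showing $\|Y_n - h(X_n)\| \cvgp 0$, since then $Y_n = h(X_n) + o_\p(1)$ and $h(X_n) \cvgp h(X) = Y$ would follow — but wait, $h$ need only be measurable, not continuous, so $h(X_n) \cvgp h(X)$ is not automatic. So instead I would work directly with the joint distributional limit. By Lusin's theorem, for every $\eps > 0$ there is a closed set $F \subset S$ with $\p(X \in F) > 1 - \eps$ on which $h|_F$ is continuous and (extending via Tietze if needed) the restriction is uniformly continuous on compacts. The key point is then to show $\E(\|Y_n - h(X_n)\| \wedge 1) \to 0$. Using the joint convergence $(X_n, Y_n, X) \cvgd (X, Y, X)$ together with $X_n \cvgp X$, and the fact that $Y = h(X)$ with $h$ continuous on $F$, one can push the identity $Y_n - h(X_n) \to Y - h(X) = 0$ through on the event $\{X \in F\}$: on a Skorokhod-coupled probability space where all these convergences hold a.s., $X_n \to X$ a.s.\ along a subsequence, so on $\{X \in F^{\mathrm{int}}\}$ we get $h(X_n) \to h(X) = Y = \lim Y_n$, giving $\|Y_n - h(X_n)\| \to 0$ a.s.\ on that event, and $\p(X \notin F) < \eps$ controls the rest. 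Letting $\eps \to 0$ gives $Y_n - h(X_n) \cvgp 0$, hence $\|Y_n - Y\| \le \|Y_n - h(X_n)\| + \|h(X_n) - h(X)\|$, and on the Skorokhod space the right side tends to $0$ on $\{X \in F^{\mathrm{int}}\}$; since this works for all $\eps$, $Y_n \cvgp Y$.

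The cleanest way to organize all of this is to pass to the Skorokhod representation at the outset: since $(X_n, Y_n) \cvgd (X, Y)$ on the complete separable metric space $S \times T$, there is a probability space carrying $(\tilde X_n, \tilde Y_n) \eqd (X_n, Y_n)$ and $(\tilde X, \tilde Y) \eqd (X, Y)$ with $(\tilde X_n, \tilde Y_n) \to (\tilde X, \tilde Y)$ a.s. The subtlety is that $X_n \cvgp X$ is a statement about the \emph{joint} law of $(X_n, X)$, not just the marginals, so I must first upgrade $(X_n, Y_n) \cvgd (X, Y)$ to $(X_n, Y_n, X) \cvgd (X, Y, X)$ as in the first paragraph, and \emph{then} apply Skorokhod to the triple on $S \times T \times S$. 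On the Skorokhod space, $\tilde X_n \to \tilde X_\infty^{(1)}$ and $\tilde X_n \to \tilde X_\infty^{(3)}$ forces $\tilde X_\infty^{(1)} = \tilde X_\infty^{(3)}$ a.s., so we may write it as a single $\tilde X$; then $\tilde Y = h(\tilde X)$ a.s.\ (pushing forward the a.s.\ identity $Y = h(X)$), $\tilde Y_n \to \tilde Y$ a.s., and $h(\tilde X_n) \to h(\tilde X) = \tilde Y$ a.s.\ on $\{\tilde X \in F^{\mathrm{int}}\}$ by continuity of $h|_F$; hence $\|\tilde Y_n - h(\tilde X_n)\| \to 0$ on that event. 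Transferring back, $\|Y_n - h(X_n)\| \wedge 1 \cvgd 0$ restricted to $\{X \in F^{\mathrm{int}}\}$, and since $\p(X \notin F) < \eps$ with $\eps$ arbitrary, $\|Y_n - h(X_n)\| \cvgp 0$; combined with $\|h(X_n) - Y\| = \|h(X_n) - h(X)\| \cvgp 0$ (same Skorokhod argument), we conclude $Y_n \cvgp Y$.

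The main obstacle, and the point requiring the most care, is exactly this interplay between convergence in probability (a joint statement) and convergence in distribution (a marginal statement): one cannot naively Skorokhod-couple $(X_n, Y_n) \cvgd (X, Y)$ and separately use $X_n \cvgp X$, because the Skorokhod limit of $X_n$ need not be the same random variable as the given $X$. Upgrading to joint convergence of the triple $(X_n, Y_n, X)$ first — which genuinely uses $X_n \cvgp X$ and not merely $X_n \cvgd X$ — is the step that makes everything go through, and it is the only place where assumption~1's "$\cvgp$" (rather than "$\cvgd$") is essential. Everything else (Lusin, Tietze extension, dominated convergence to pass from a.s.\ bounds to $L^1$ bounds) is routine soft analysis.
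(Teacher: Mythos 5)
Your strategy parallels the paper's (Lusin reduction, plus exploiting both the joint distributional convergence and the in-probability convergence of $X_n$), but there is a real gap where you replace the Lusin set $F$ by its interior $F^{\mathrm{int}}$.

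You correctly note that $h$ is merely measurable, and invoke Lusin to get a closed $F$ with $\P(X \in F) > 1-\eps$ on which $h|_F$ is continuous. But your Skorokhod argument then concludes $h(\tilde X_n) \to h(\tilde X)$ only on $\{\tilde X \in F^{\mathrm{int}}\}$ (you need $\tilde X_n \in F$ eventually for the restricted continuity to bite), and you then control the remainder via $\P(X \notin F) < \eps$. This mismatch is fatal: $\P(X \in F^{\mathrm{int}})$ is not controlled by Lusin and can be zero. For instance, if $S=[0,1]$ with $X$ uniform and $h$ is the indicator of a dense $G_\delta$ of measure $1/2$, then $h|_I$ is discontinuous on every open interval $I$, so any Lusin $F$ has empty interior; your argument then controls nothing. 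The same issue infects your closing step ``$\|h(X_n) - h(X)\| \cvgp 0$ by the same Skorokhod argument'': this is a statement about the raw $h$, which is not continuous, and the restricted continuity on $F$ only helps if the approximating points eventually land in $F$, which you cannot guarantee.

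You do mention Tietze in passing, and that is exactly the missing repair: extend $h|_F$ to a genuinely continuous $h_\eps \colon S \to T$ (the paper uses Dugundji's extension theorem, which is the correct tool since $T$ is a Banach space rather than $\R$), and then run every step with $h_\eps$ in place of $h$. Continuity of $h_\eps$ on all of $S$ gives $h_\eps(X_n) \cvgp h_\eps(X)$ directly from $X_n \cvgp X$, and the identification $Y = h(X) = h_\eps(X)$ on $\{X \in F\}$ (not $F^{\mathrm{int}}$) together with $(X_n, Y_n) \cvgd (X, Y)$ controls $\P(\|Y_n - h_\eps(X_n)\| \ge \delta/2)$ by portmanteau. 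Incidentally, once you use the extension, the Skorokhod detour is unnecessary: the paper's proof works directly with the triangle inequality
\[
\P(\|Y_n - Y\| > \delta) \le \eps + \P(\|Y_n - h_\eps(X_n)\| \ge \delta/2) + \P(\|h_\eps(X_n) - h_\eps(X)\| > \delta/2)
\]
and a portmanteau bound on the middle term, which also sidesteps any worry about separability of $T$ (needed for Skorokhod). Your preliminary upgrade to $(X_n, Y_n, X) \cvgd (X, Y, X)$ is correct and a reasonable alternative packaging of the same idea, but it is not needed once you have $h_\eps$.
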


\begin{proof}
	For every $\ep > 0$, by Lusin's theorem we can find a closed set $E \subset S$ such that $h|_E$ is continuous and $\P(X \notin E) \le \ep$. Next, Dugundji's extension theorem \cite{dugundji1951extension} states that if $S$ is any metric space, $E \subset S$ is closed, and $T$ is a locally convex topological vector space, then any continuous function from $E \to T$ can be extended to a continuous function from $S \to T$. In our setting, since Banach spaces are locally convex, this implies that there exists a continuous function $h_\ep:S \to T$ such that $h|_E = h_\ep|_E$. 
	Now, for $\de > 0$ we can write
	\begin{align}
	\nonumber
	\P(\|Y_n -Y\| > \de) &\le \P(X \notin E) + \P(\|Y_n - Y\| > \de, X \in E) \\
	\nonumber
	&\le \ep + \P(\|Y_n - h_\ep(X)\| > \de) \\
	\label{E:left-with}
	&\le \ep + \P(\|Y_n - h_\ep(X_n)\| \ge \de/2) + \P(\|h_\ep(X_n)- h_\ep(X)\| > \de/2).
	\end{align}
	Here the second inequality uses that $Y = h(X) = h_\ep(X)$ when $X \in E$. Now, since $X_n \cvgp X$ and $h_\ep$ is continuous, $h_\ep(X_n) \cvgp h_\ep(X)$ as well, so the second probability in \eqref{E:left-with} converges to $0$ as $n \to \infty$. Also, since both of the functions $\|\cdot\|, h_\ep$ are continuous and $(X_n, Y_n) \cvgd (X, Y)$ we have that $\|Y_n - h_\ep(X_n)\| \cvgd \|Y -h_\ep(X)\|$. The random variable $\|Y -h_\ep(X)\|$ equals $0$ unless $X \in E$, and so 
	$$
	\limsup_{n \to \infty} \P(\|Y_n- h_\ep(X_n)\| \ge \de/2) \le \P(\|Y - h_\ep(X)\| \ge \de/2) \le \P(X \in E) \le \ep.
	$$
	Therefore as $n \to \infty$, the limsup of \eqref{E:left-with} is at most $2 \ep$. Letting $\ep \to 0$ completes the proof.
\end{proof}

\begin{proof}[Proof of Theorem \ref{T:landscape-recovery}.3, \ref{T:landscape-recovery}.4 from Theorem \ref{T:landscape-recovery}.2]
	We start with part $3$. Our aim is to appeal to Lemma \ref{L:measure-theory-lemma-b}. We first observe that the space $T$ of functions $f:\mathfrak X_\uparrow^- \to \R \cup\{\pm \infty\}$ is a Banach space in the following norm:
	$$
	\|f\| = \sum_{i=1}^\infty 2^{-i} \frac{\|f|_{K_i}\|_\infty}{1 + \|f|_{K_i}\|_\infty},
	$$
	where $K_i, i \in \N$ is any collection of compact subsets of $\mathfrak X^-_\uparrow$ whose union is all of $\mathfrak X^-_\uparrow$. Convergence in probability in $(T, \|\cdot\|)$ is equivalent to the two types of convergence described in the statement of the theorem.
	
	Now, fix a sequence $a_n \to -\infty$. Our goal will be to apply Lemma \ref{L:measure-theory-lemma-b} with $Y_n = \cL^{a_n}, Y = \cL|_{\mathfrak X_\uparrow^-}$. We let $X_n, X \in \cC^\N(\R)$ be the line ensembles given by
	$$
	\sum_{i=1}^k (X_n)_i(x) = \fB^{0^k}(x^k; \cL^{a_n}), \qquad \sum_{i=1}^k X_i(x) = \fB^{0^k}(x^k; \cL).
	$$
	From our construction, we actually have that $X_n = X$ for all $n$. Moreover, Theorem \ref{T:landscape-recovery}.2 implies that $Y = h(X)$ for some measurable function $h$. This uses that $\cL|_{\mathfrak X^-_\uparrow}$ is defined as a function of $\cL|_{\mathbb H^2_\uparrow}$. Therefore Theorem \ref{T:landscape-recovery}.3 follows from Lemma \ref{L:measure-theory-lemma-b} and the joint convergence 
	\begin{equation}
	\label{E:Wbeta-B}
	(\cL^a, \fB^{0^k}(\cdot; \cL^a), k \in \N) \cvgd (\cL, \fB^{0^k}(\cdot; \cL), k \in \N)
	\end{equation}
	as $a \to -\infty$, which is proven in Proposition \ref{P:landscape-law}.
	
	The proof of Theorem \ref{T:landscape-recovery}.4 is essentially the same. In this setting, we instead work with the Banach space of functions from $\mathfrak X_\uparrow \to \R \cup \{\pm \infty\}$ with the compact topology, set $Y_n = \cL^{a_n}, Y = \cL$ and let $X_n, X \in \cC^\Z(\R)$ extend the previous definitions of $X_n, X$ by letting
$$
\sum_{i=1}^k (X_n)_{1-i}(x) = \fB^{0^k}(-x^k; \hat \cL^{a_n}), \qquad \sum_{i=1}^k X_{1-i}(x) = \fB^{0^k}(-x^k; \hat \cL),
$$
where $\hat \cL^a(\bx, s; \by, t) = \cL^a(-\by, -t; -\bx, -s)$ (and similarly for $\cL$). Again, $X_n = X$ for all $n$, and by Proposition \ref{P:landscape-law} and symmetry, we again have the convergence $(X_n, Y_n) \cvgd (X, Y)$. Finally, we claim that $Y \in \sig(X)$. From Theorem \ref{T:landscape-recovery}.2 and symmetry, we have that $\cL|_{\mathfrak X_\uparrow^-}, \hat \cL|_{\mathfrak X_\uparrow^-} \in \sig(X)$. Moreover $\cL|_{\mathfrak X_\uparrow} \in \sig(\cL|_{\mathfrak X_\uparrow^-}, \hat \cL|_{\mathfrak X_\uparrow^-})$ by the metric composition law, yielding the result.
\end{proof}

As discussed in the introduction, we can prove that the directed landscape on a strip is a measurable function of the Airy line ensemble with relative ease given Theorem \ref{T:landscape-recovery}. We restate the result and prove it here. For this corollary, let
$$
\mathbb S^2_\uparrow = \{(x, s; y, t) \in (\R \X [0, 1])^2 : s < t\}.
$$

\begin{corollary}
	\label{C:strip-reconstruction}
	Let $\cL$ be the directed landscape, and define process $\mathfrak A$ as follows:
	$$
	\sum_{i=1}^k \mathfrak A_i(x) = \cL(0^k, 0; x^k, 1).
	$$
	Then $\mathfrak A \in \cC^\N(\R)$ is a parabolic Airy line ensemble and there is a measurable function $f:\cC^\N(\R) \to \cC(\mathbb S^2_\uparrow)$ such that almost surely
	$
	f(\mathfrak A) = \cL|_{\mathbb S^2_\uparrow}.
	$
\end{corollary}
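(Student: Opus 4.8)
The plan is to derive Corollary \ref{C:strip-reconstruction} from Theorem \ref{T:landscape-recovery} together with a multi-path RSK isometry for the directed landscape established in \cite{dauvergne2021disjoint}. The first step is to identify what $\mathfrak A$ is. The process $(i, x) \mapsto \mathfrak A_i(x)$ is, by definition, the partial-sum encoding of multi-path landscape values $\cL(0^k, 0; x^k, 1)$; that this is a parabolic Airy line ensemble is exactly the content of \cite[Theorem 1.5 or Corollary 1.6]{dauvergne2021disjoint} (it is the "multi-path" version of the Airy sheet statement from \cite{DOV}), so I would simply cite this. The real work is constructing the measurable inverse map $f$.

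Next, the key observation is that the restriction $\cL|_{\mathbb S^2_\uparrow}$ is a measurable function of $\cL|_{\mathbb H^2_\uparrow}$ after a rescaling. By the $1:2:3$ scaling invariance (Lemma \ref{L:invariance}.5) together with time and spatial stationarity, the law of $\cL$ on the strip $(\R\X[0,1])^2$ restricted to $s<t$ is, up to a deterministic affine reparametrization, a restriction of the law of $\cL$ on the lower half-plane $\mathbb H^2_\uparrow$. More precisely, after applying time stationarity to shift $[0,1]$ down to $[-1,0]$, the strip $\mathbb S^2_\uparrow$ (translated) sits inside $\mathbb H^2_\uparrow$, so $\cL|_{\mathbb S^2_\uparrow}$ is literally a coordinate restriction of $\cL|_{\mathbb H^2_\uparrow}$. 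Then I need to connect the boundary data: Theorem \ref{T:landscape-recovery}.2 gives a measurable $g:\cC^\N(\R) \to \cC(\mathfrak X^-_\uparrow)$ with $g(B^0) = \cL|_{\mathfrak X^-_\uparrow}$ a.s., where $B^0$ is the environment built from the Busemann functions $\sum_{i=1}^k B^0_i(x) = \fB^{0^k}(x^k, 0; \cL)$. So it suffices to show $\mathfrak A$ and $B^0$ are measurable functions of each other — or, more directly, to show that $\cL|_{\mathbb S^2_\uparrow}$ is already a measurable function of $\cL|_{\mathfrak X^-_\uparrow}$ (which it is, by the restriction argument above applied to the extended landscape) and that $\mathfrak A \in \sigma(B^0)$.

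The bridge between $\mathfrak A$ and $B^0$ is the RSK isometry for the directed landscape. The Busemann environment $B^0$ was constructed so that its multi-path last passage values encode the extended stationary horizon; by the Busemann isometry (Theorem \ref{T:landscape-recovery}.1, or Theorem \ref{T:busemann-shear-intro}.3) and the isometry property running through Theorems \ref{T:RSK-on-CnR-2-intro} and \ref{T:Busemann-law-intro}, multi-path landscape values $\cL(0^k,0;x^k,1)$ can be recovered from multi-path last-passage-type functionals of $B^0$ — this is precisely the landscape analogue of the boundary isometry \eqref{E:isom-intro}, which is the "multi-path RSK isometry proven in \cite{dauvergne2021disjoint}" referenced in the text. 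In more detail: one takes the limit transition in \eqref{E:land-cpu} that turns Busemann functions for $B^0$ into landscape Busemann functions, and the isometric structure is preserved throughout, so $\mathfrak A$ is a measurable (in fact, explicit) function of $B^0$; conversely $B^0 \in \sigma(\cL|_{\mathfrak X^-_\uparrow}) \subset \sigma(\cL|_{\mathbb S^2_\uparrow}$-plus-a-rescaling$)$... but actually for the corollary we only need one direction: $\mathfrak A \mapsto B^0 \mapsto \cL|_{\mathfrak X^-_\uparrow} \mapsto \cL|_{\mathbb S^2_\uparrow}$. Composing $g$ with the isometry map recovering $B^0$ from $\mathfrak A$, and with the restriction-after-rescaling map, yields the desired measurable $f$ with $f(\mathfrak A) = \cL|_{\mathbb S^2_\uparrow}$ a.s.

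I expect the main obstacle to be pinning down the isometry map from $\mathfrak A$ to $B^0$ rigorously — i.e.\ verifying that the parabolic Airy line ensemble $\mathfrak A$, which encodes $\cL(0^k,0;\cdot,1)$, actually determines the Busemann environment $B^0$ and not merely the other way around. This requires knowing that the RSK-type correspondence at the landscape level is a genuine bijection on its image (with measurable inverse), which is where one must invoke the multi-path disjoint-optimizer machinery and the RSK isometry of \cite{dauvergne2021disjoint} rather than anything developed from scratch here. Once that measurable-inverse statement is in hand, the remaining steps (scaling invariance to embed $\mathbb S^2_\uparrow$ into $\mathbb H^2_\uparrow$, applying $g$ from Theorem \ref{T:landscape-recovery}.2, composing measurable maps) are routine.
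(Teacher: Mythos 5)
There is a fatal gap in your proposed route: you need $B^0 \in \sigma(\mathfrak A)$, but this is not merely unproven, it is \emph{false}. The environment $B^0$ is defined by $\sum_{i=1}^k B^0_i(x) = \fB^{0^k}(x^k, 0; \cL)$, which is a limit of quantities $\cL(\theta|t|, t; \cdot, 0)$ as $t \to -\infty$ and hence is $\sigma(\cL|_{\mathbb H^2_\uparrow})$-measurable. On the other hand, $\mathfrak A$ is built from $\cL(0^k,0; x^k, 1)$ and so is measurable with respect to the increment of $\cL$ over the time interval $[0,1]$. By the independent-increment axiom (Theorem \ref{T:L-unique}.2), these two $\sigma$-algebras are \emph{independent}; so $B^0$ cannot be a measurable function of $\mathfrak A$. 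The point you flag as ``the main obstacle'' — verifying that $\mathfrak A$ determines $B^0$ — is not an obstacle that can be overcome; the intended implication simply does not hold. (Your parenthetical ``$\mathfrak A \in \sigma(B^0)$'' is also pointing the wrong direction for what the composition $\mathfrak A \mapsto B^0 \mapsto \cL|_{\mathfrak X^-_\uparrow}$ requires, and is likewise false for the same reason.) Shifting $\mathbb S^2_\uparrow$ into $\mathbb H^2_\uparrow$ by a unit of time doesn't rescue this: that shift changes \emph{which} restriction of the fixed sample $\cL$ one is looking at, so it gives a distributional identity, not the required almost-sure equality $f(\mathfrak A) = \cL|_{\mathbb S^2_\uparrow}$.

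The paper gets around this with a trick you are missing: since the corollary only asserts the \emph{existence} of a measurable $f$, one is allowed to introduce auxiliary independent randomness and then discard it. Concretely, take an independent directed landscape $\cL_1$ and glue it to $\cL$ at time $0$ (via the metric composition) to form a new directed landscape $\cL_2$ that agrees with $\cL_1$ for $t \le 0$ and with $\cL$ for $s \ge 0$. Apply Theorem \ref{T:landscape-recovery} (with a time shift) to reconstruct $\cL_2$ on $(\R \times (-\infty,1])^2_\uparrow$ from the Busemann functions $\cB^{0^k}(x^k, 1; \cL_2)$. By Lemma \ref{L:mc-law-busemann-L}, those Busemann functions decompose at time $0$ into a piece that is a function of $\cL_1$ alone (the Busemann data of $\cL_2$ at time $0$) and a piece that is a function of the extended Airy sheet $\cS(\bx,\by)=\cL(\bx,0;\by,1)$. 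By \cite[Theorem 1.3]{dauvergne2021disjoint}, $\cS$ is a.s.\ a measurable function of $\mathfrak A$. Altogether this yields a measurable $g$ with $g(\cL_1, \mathfrak A) = \cL|_{\mathbb S^2_\uparrow}$ a.s., and then the abstract measure-theoretic Lemma \ref{L:independent-construction} — exploiting the independence of $\cL_1$ from $(\mathfrak A, \cL|_{\mathbb S^2_\uparrow})$ — upgrades this to a measurable function of $\mathfrak A$ alone. The step you were hoping to avoid (the RSK isometry of \cite{dauvergne2021disjoint}) does appear, but only in the form ``$\cS$ is determined by $\mathfrak A$'', not in the form ``$B^0$ is determined by $\mathfrak A$''.
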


Corollary \ref{C:strip-reconstruction} is a restatement of Theorem \ref{T:Airy-RSK-intro}.
\begin{proof}
First, let $\cL_1$ be a directed landscape, independent of $\cL$ and define $\cL_2$ so that
\begin{itemize}
	\item $\cL_2(x, s; y, t) = \cL_1(x, s; y, t)$ if $t \le 0$.
	\item $\cL_2(x, s; y, t) = \cL(x, s; y, t)$ if $s \ge 0$.
	\item $\cL_2(x, s; y, t) = \max_{z \in \R} \cL_1(x, s; z, 0) + \cL(z, 0; y, t)$ if $s < 0 < t$.
\end{itemize}
The process $\cL_2$ is another directed landscape by the axioms in Theorem \ref{T:L-unique}. Therefore by Theorem \ref{T:landscape-recovery} and temporal shift invariance, we can reconstruct $\cL_2$ on the set
$
(\R \X (-\infty, 1])^2_\uparrow
$
from its Busemann functions
$
\cB^{0^k}(x^k, 1), k \in \N.
$
By the metric composition law for Busemann functions (Lemma \ref{L:mc-law-busemann-L}), we can build these using $\cL_1$ and the extended Airy sheet $\cS(\bx, \by) = \cL(\bx, 0; \by, 1)$. By \cite[Theorem 1.3]{dauvergne2021disjoint}, the entire extended Airy sheet $\cS$ is almost surely a measurable function of $\mathfrak A$. Therefore since $\cL|_{\mathbb S^2_\uparrow}$ is contained as a restriction of $\cL_2|_{(\R \X (-\infty, 1])^2_\uparrow}$, we have shown that there is a measurable function $g$ such that 
$$
g(\cL_1, \mathfrak A) = \cL|_{\mathbb S^2_\uparrow}.
$$
Since $\cL_1$ is independent of $\cL$, this implies the corollary by the abstract Lemma \ref{L:independent-construction} below.
\end{proof}

\begin{lemma}
	\label{L:independent-construction}
Suppose that $X, Y, Z$ are random variables taking values in three Polish spaces $S_1, S_2, S_3$ equipped with their Borel $\sig$-algebras. Assume that $X$ is independent of the pair $(Y, Z)$ and that there is a measurable function $g:S_1 \X S_2 \to S_3$ such that almost surely 
$
g(X, Y) = Z.
$
Then there is a measurable function $g':S_2 \to S_3$ such that almost surely $g'(Y) = Z$.
\end{lemma}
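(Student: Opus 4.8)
The plan is to fix the joint law of $(X,Y,Z)$ and argue that the conditional distribution of $Z$ given $Y$ already determines $Z$ deterministically, so that a measurable selector of this conditional law does the job. Concretely, since $S_1,S_2,S_3$ are Polish and equipped with Borel $\sigma$-algebras, regular conditional distributions exist. Let $\mu$ be the law of $Y$ on $S_2$, and let $y \mapsto \nu_y$ be a regular conditional distribution of $Z$ given $Y$; this is a measurable kernel from $S_2$ to $S_3$. The claim is that for $\mu$-almost every $y$, the measure $\nu_y$ is a point mass $\delta_{g'(y)}$ for some Borel-measurable $g':S_2 \to S_3$, and then $g'(Y) = Z$ almost surely.

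First I would show that $\nu_y$ is almost surely degenerate. The key input is independence of $X$ from $(Y,Z)$. Condition on $Y = y$: under the conditional law, $X$ still has its original (unconditioned) law $\mathbb{P}_X$ because $X \perp (Y,Z)$, and $Z = g(X,y)$ almost surely. Hence the conditional law of $Z$ given $Y=y$ is the pushforward $(\,g(\cdot,y)\,)_* \mathbb{P}_X$ — but more importantly, for $\mathbb{P}_X$-almost every $x$ the value $g(x,y)$ equals $Z$, and since this holds for a set of $x$ of full $\mathbb{P}_X$-measure while $Z$ does not depend on $x$, the pushforward must be a point mass. To make this rigorous without circularity, I would work on the product space: by Fubini applied to the law of $(X,Y,Z) = \mathbb{P}_X \otimes \mathbb{P}_{(Y,Z)}$ (valid by independence), the set $\{(x,y,z) : g(x,y) = z\}$ has full measure, so for $\mathbb{P}_{(Y,Z)}$-almost every $(y,z)$ we have $g(x,y) = z$ for $\mathbb{P}_X$-almost every $x$. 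In particular, for $\mathbb{P}_{(Y,Z)}$-almost every pair $(y,z)$ and $(y,z')$ with the same first coordinate, $g(x,y)$ equals both $z$ and $z'$ for a.e. $x$, forcing $z = z'$. Therefore the conditional law $\nu_y$ is supported on a single point for $\mu$-almost every $y$.

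Next I would extract the measurable function. Since $y \mapsto \nu_y$ is a measurable kernel and $\nu_y = \delta_{\phi(y)}$ for $\mu$-a.e. $y$, the map $\phi$ is $\mu$-measurable; one can make it genuinely Borel by the standard device of modifying it on a $\mu$-null set (e.g. set $g'(y) = \phi(y)$ where $\nu_y$ is a Dirac mass and $g'(y) = z_0$ for an arbitrary fixed $z_0$ elsewhere, using that the set of $y$ where $\nu_y$ is a Dirac mass is Borel in Polish spaces — this is a measurable selection fact). Then $g' : S_2 \to S_3$ is Borel measurable and $g'(Y) = Z$ almost surely, because the conditional law of $Z$ given $Y$ is $\delta_{g'(Y)}$ almost surely, which forces $\mathbb{P}(Z = g'(Y)) = \mathbb{E}[\nu_Y(\{g'(Y)\})] = 1$.

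The main obstacle is purely measure-theoretic bookkeeping: ensuring that the ``conditional on $Y=y$, the variable $X$ keeps its law and $Z = g(X,y)$'' argument is carried out without implicitly conditioning on a variable that $Z$ depends on in a way that breaks the independence. The cleanest route, which I would follow, bypasses regular conditional distributions of $Z$ given $Y$ at the start and instead uses the Fubini/product-measure argument above to directly show that $Z$ is a.s. a function of $Y$ up to null sets, and only then invokes existence of a Borel version via the Doob–Dynkin lemma: since $Z$ is ($\sigma(Y)$-)measurable up to a null set, after redefining $Z$ on a null set it becomes $\sigma(Y)$-measurable, and Doob–Dynkin yields the Borel $g'$ with $g'(Y) = Z$ a.s. Everything else is routine.
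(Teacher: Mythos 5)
Your proof is correct, and it takes a genuinely different route from the paper. The paper avoids regular conditional distributions entirely: it sets $Z^A_Y := \P(Z \in A \mid Y)$ and $Z^A_{X,Y} := \P(Z \in A \mid X,Y)$, notes that $Z^A_{X,Y} = \indic_A(g(X,Y)) \in \{0,1\}$, and then uses a $\pi$-$\lambda$ argument to show $\E[Z^A_Y \indic((X,Y) \in B)] = \P(Z\in A, (X,Y)\in B)$ for all Borel $B \subset S_1 \times S_2$, which (since $Z^A_Y$ is $\sigma(X,Y)$-measurable) forces $Z^A_Y = Z^A_{X,Y}$ a.s.\ and hence $Z^A_Y \in \{0,1\}$. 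That establishes that the conditional law of $Z$ given $Y$ is degenerate, from which the existence of $g'$ follows (a step the paper leaves implicit, just as you note it via Doob--Dynkin). Your approach instead exploits the independence directly as a product-measure structure: the law of $(X,(Y,Z))$ is $\P_X \otimes \P_{(Y,Z)}$, the event $\{g(x,y)=z\}$ has full measure, Fubini gives a full-measure set $E \subset S_2 \times S_3$ on each fiber $E_y$ of which $g(\cdot,y)$ is $\P_X$-a.e.\ constant with value $z$, and then two points $z,z'\in E_y$ would correspond to two full-measure subsets of $S_1$ that must intersect, so $|E_y|\le 1$. Your Fubini route is arguably more concrete and hands-on; the paper's route is slicker in that it only manipulates conditional expectations of indicators and never needs to discuss fibers or selections. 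One caution: your middle paragraph, where you ``condition on $Y=y$'' and assert the conditional law of $Z$ is the pushforward of $\P_X$ under $g(\cdot,y)$, is not literally correct (conditionally on $Y=y$, $X$ need not be independent of $Z$), and you cannot get away with it as a shortcut; but you recognized this and the Fubini argument you pivot to is the one that actually carries the proof, so the gap is rhetorical rather than mathematical.
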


\begin{proof}
For a Borel set $A \subset S_3$, define $Z^A_Y := \P(Z \in A \mid Y)$ and similarly define $Z^A_{X, Y} := \P(Z \in A \mid X, Y)$.
It is enough to show that almost surely, $Z^A_Y \in \{0, 1\}$ for any Borel set $A \subset S_3$. This is true for $Z^A_{X, Y}$ since $Z = g(X, Y)$ so we just need to show that $Z^A_Y = Z^A_{X, Y}$ almost surely. Indeed, for any product of Borel sets $B_1 \X B_2 \in S_1 \X S_2$ we have
\begin{align*}
\E[Z^A_Y \mathbf{1}(X \in B_1, Y \in B_2)] &= \P(X \in B_1) \E[Z^A_Y \mathbf{1}(Y \in B_2)] \\
&= \P(X \in B_1)\P(Z \in A, Y \in B_2)\\
 &= \P(X \in B_1, Z \in A, Y \in B_2).
\end{align*}
Here the first and third equalities use independence of $X$ and $(Y, Z)$ and the second equality is the definition of conditional expectation. Now, we can extend this by applying Dynkin's $\pi-\la$ theorem to get that for all Borel sets $B \subset S_1 \X S_2$ we have $\E[Z^A_Y \mathbf{1}((X, Y) \in B)) = \P(Z \in A, (X, Y) \in B)$. Since $Z^A_Y$ is also $\sig(X, Y)$-measurable since it is $\sig(Y)$-measurable this implies that $Z^A_Y$ is a version of the conditional probability $Z^A_{X, Y}$, as desired.
\end{proof}

\subsection{Single-slit and double-slit Busemann functions}
\label{SS:slit}

In the remainder of Section \ref{S:reconstruction} we will prove Theorem \ref{T:landscape-recovery}.2. The starting point for the proof lies in the observation that certain semi-infinite optimizers are frozen in the Brownian environments $B^a$, and so differences of certain Busemann functions give Busemann functions for paths in restricted domains. Under the limit where $a \to -\infty$ and $\cL^a \cvgd \cL$ we can show that the same phenomenon holds in $\cL$, allowing us to construct `single-slit' and `double-slit' Busemann functions. Let us explain more precisely what we mean.

In the Brownian environments $B^a$, the leftmost optimizer $\pi = (\pi_1, \dots, \pi_{k+1})$ in direction $(0^k, \theta)$ ending at $((x^k, y), 1)$ has the following specific form:
\begin{enumerate}[i.]
	\item $\pi_i(z) = i$ for all $z \le x, i \le k$.
	\item $\pi_{k+1}$ is a semi-infinite geodesic restricted to avoid the set $D_{x, k} := (-\infty, x) \times \{1, \dots, k\}$. That is, for any $t < z$, the path $\pi|_{[t, z]}$ has maximal length among all path $\tau$ from $(t, \pi(t))$ to $(z, 1)$ with $\mathfrak g \tau \cap D_{x, k} = \emptyset$, where $\mathfrak g \tau = \{(r, \tau(r)) : r \in [t, z]\}$.
\end{enumerate}
Indeed, for fixed $x$, then for large enough $y$, the leftmost semi-infinite geodesic $\tau$ to $(y, 1)$ in direction $\theta$ will be disjoint from the paths $\pi_1, \dots, \pi_k$ above. Hence $(\pi_1, \dots, \pi_k, \tau)$ will be an optimizer since both $(\pi_1, \dots, \pi_k)$ and $\tau$ are separately optimizers. Monotonicity of optimizers then implies that even if $y$ is not large enough so that the semi-infinite geodesic to $(y, 1)$ is disjoint from $\pi_k$, the leftmost optimizer $\pi$ in direction $(0^k, \theta)$ ending at $((x^k, y), 1)$ must take the above form.
Since the paths $(\pi_1, \dots, \pi_k)$ also form the optimizer from  from $(0^k, \infty)$ to $(x^k, 1)$ we can actually recover the relative length of $\pi_{k+1}$ from multi-path Busemann function values across the Brownian environment. This is immediate once we set up the definitions correctly.

Setting some notation, for $f \in \cC^\N(\R)$, points $(x, n), (y, m)$ with $x \le y$ and $n \ge m$, and a set $U \subset \R \X \Z$ such that there is at least one path $\pi$ from $(x, n) \to (y, m)$ with $\mathfrak g \pi \subset U$ define
$$
f[(x, n) \to (y, m) \mid U] = \sup_{\pi : \mathfrak g \pi \subset U} \|\pi\|_f,
$$
where the supremum is over all paths $\pi$ from $(x, n) \to (y, m)$ with $\mathfrak g \pi \subset U$. For $\theta \ge 0$ and $x \in \R$, we also define the restricted Busemann function
\begin{equation}
\label{E:BL-restricted}
\cB^\theta(x ; f \mid U) = \lim_{t \to \infty} f[(t, \fl{\theta t} + 1) \to (x, 1) \mid U] - f[(t, \fl{\theta  t} + 1) \to (0, 1)].
\end{equation}
In \eqref{E:BL-restricted}, the term we subtract off does not depend on the set $U$ that we are restricting to. The implies that restricted Busemann functions $\cB^\theta(x ; f \mid U)$ are monotone in $U$ with respect to set inclusion.

\begin{lemma}
\label{L:restricted-buse}
Fix $k \in \N, x \le y$ and $\theta > 0$, and let $B \in \cC^\N (\R)$ be an environment of independent Brownian motions of drift $0$. Then almost surely,
$$
\cB^{(0^k, \theta)}((x^k, y) ; B) - \cB^{0^k}(x^k ; B) = \cB^\theta(y ; B \mid D_{x, k}^c).
$$
\end{lemma}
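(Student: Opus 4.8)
The plan is to fix the environment $B$, use the multi-path Busemann formula \eqref{E:Wff}/\eqref{E:Wii-def} together with Proposition \ref{P:orbits} to write $\cB^{(0^k, \theta)}((x^k, y);B)$ and $\cB^{0^k}(x^k;B)$ as genuine (multi-path) last passage values across $B$, and then carefully examine the structure of the leftmost optimizer to see that the difference equals a single restricted last passage value. Throughout I would work on the almost sure event of Proposition \ref{P:extended-usemann} (existence of leftmost/rightmost semi-infinite optimizers for all directions, monotonicity, coalescence) intersected with the event that all the relevant single-path semi-infinite geodesics exist and are unique, as in Theorem \ref{T:sepp-sorensen}.

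First I would set up the key geometric claim. Fix $x \le y$ and $\theta > 0$. By monotonicity of leftmost semi-infinite geodesics (Theorem \ref{T:sepp-sorensen}.4) and a spacing argument as in Lemma \ref{L:disjoint-geods}, there exists $y^* \ge y$ such that for every $y' \ge y^*$ the leftmost semi-infinite geodesic $\tau^{y'}$ in direction $\theta$ ending at $(y', 1)$ is disjoint from the constant paths $\pi_i \equiv i$, $i \le k$, on $(-\infty, x)$. For such $y'$ the $(k+1)$-tuple $(\pi_1, \dots, \pi_k, \tau^{y'})$ is a disjoint semi-infinite optimizer in direction $(0^k, \theta)$ to $((x^k, y'), 1)$, because the two pieces are separately optimal and they are disjoint. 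Monotonicity of optimizers (Proposition \ref{P:extended-usemann}.2) then forces the leftmost optimizer to $((x^k, y), 1)$ in direction $(0^k, \theta)$ to also have the form (i), (ii) stated before the lemma: its first $k$ paths are the constant paths $i$ on $(-\infty, x)$, and its last path $\pi_{k+1}$, restricted to avoid $D_{x,k} = (-\infty, x) \times \{1, \dots, k\}$, is a geodesic in the restricted environment. From this, for any $t < x$,
\[
B[(t, (2, \dots, k+1)) \to ((x^k, y), 1)] = \sum_{i=1}^{k} \big(B_i(x) - B_i(t)\big) + B[(t, k+1) \to (y, 1) \mid D_{x,k}^c],
\]
once $t$ is small enough, where I am using the explicit parametrization $\fl{\theta|t| + (k+1)}$ in \eqref{E:extended-busemann} specialized to the direction $(0^k, \theta)$; one must check that for $t$ sufficiently negative the $\theta$-coordinate of $\fl{(0^k,\theta)|t| + (k+1)}$ sits strictly above line $k$, which is where the restricted LPP picks up its starting height $k+1$.

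Next I would convert both sides of the desired identity into the above language. By Proposition \ref{P:extended-usemann}.1 and the normalization in \eqref{E:extended-busemann} (the over-normalized version \eqref{E:overnormalized-Buse} coincides with $\cB$ since the directions $0^k$ and $\theta$ are genuinely distinct, so $\operatorname{DJ}$ considerations are trivial for the $\theta$-coordinate), we have
\[
\cB^{(0^k,\theta)}((x^k,y);B) = \lim_{t \to -\infty} B[(t, \lfloor (0^k,\theta)|t|\rfloor + (k+1)) \to ((x^k,y),1)] - k\,B[(t, n(t)) \to (0,1)] - B[(t, m(t)) \to (0,1)],
\]
with $n(t), m(t)$ the corresponding heights, and similarly $\cB^{0^k}(x^k;B) = \lim_{t\to-\infty} B[(t, (\dots)) \to (x^k,1)] - k\,B[(t,n(t))\to(0,1)]$. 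Subtracting, the $k$ identical single-line normalizers cancel exactly (here I use Lemma \ref{L:shift-commute} / that last passage from a common starting block splits additively when the optimizer freezes, plus the fact that the first $k$ heights agree for the two expressions for $t$ small), leaving
\[
\cB^{(0^k,\theta)}((x^k,y);B) - \cB^{0^k}(x^k;B) = \lim_{t\to-\infty} B[(t, k+1) \to (y,1) \mid D_{x,k}^c] - B[(t, k+1) \to (0,1)] = \cB^\theta(y;B \mid D_{x,k}^c),
\]
the last equality being the definition \eqref{E:BL-restricted}. The one subtlety here is matching the starting-height conventions: the block optimizer contributes $B[(t,k+1)\to(y,1)\mid D_{x,k}^c]$ with starting height exactly $k+1$, which is why the restricted Busemann function in \eqref{E:BL-restricted} is defined with the $+1$ shift applied to the (single-line) environment indexed so that line $1$ here plays the role of line $k+1$ — I would be careful to state the correspondence $B_i \leftrightarrow B_{i+k}$ cleanly, or equivalently note that the $D_{x,k}^c$-restricted LPP starting from line $k+1$ only ever sees lines $\ge k+1$ in a neighborhood of $x$ and lines $\ge 1$ elsewhere.

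The main obstacle I expect is not any single estimate but rather the bookkeeping in the second and third steps: justifying that the limit $t \to -\infty$ can be taken so that (a) the leftmost optimizer to $((x^k,y),1)$ has frozen into the product form for all sufficiently negative $t$ (this needs the disjointness-and-monotonicity argument to be made uniform in $t$, which follows from monotonicity of optimizers once it holds at one $t$), and (b) the subtracted normalizers genuinely cancel, which requires knowing that in the expression for $\cB^{(0^k,\theta)}$ the first $k$ "geodesics" used for normalization are the constant paths $1, \dots, k$ — this is exactly Theorem \ref{T:sepp-sorensen}.6 ($0$-directed geodesics are constant) combined with the structure of $\pi^\theta$ in Proposition \ref{P:extended-usemann}. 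Everything else is routine manipulation of the definitions. I would also remark that monotonicity of restricted Busemann functions in $U$, noted after \eqref{E:BL-restricted}, is not needed for the proof but confirms the statement is consistent with the unrestricted case $U = \R \times \Z$.
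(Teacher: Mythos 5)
Your proof is correct, but it is considerably longer than it needs to be, and the route you take is genuinely different from the paper's. The paper's proof is essentially a one-line deterministic observation at the pre-limit level: writing $\cB^\theta_t(\cdot \mid U)$ and $\cB^\theta_t(\cdot)$ for the expressions in \eqref{E:BL-restricted} and \eqref{E:bar-B} before taking $t \to -\infty$, one has, for every $t \le x$ and every fixed realization of $B$, the identity
\begin{equation*}
\cB^{(0^k, \theta)}_t\bigl((x^k, y) ; B\bigr) - \cB^{0^k}_t\bigl(x^k ; B\bigr) = \cB^\theta_t\bigl(y ; B \mid D_{x, k}^c\bigr),
\end{equation*}
and taking the limit gives the lemma. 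The reason this pre-limit identity holds is purely combinatorial: in the multi-path value $B[(t, (1, \dots, k, h(t))) \to ((x^k, y), 1)]$, the first $k$ paths of \emph{any} disjoint $(k+1)$-tuple are forced to be the constant paths $\pi_i \equiv i$ on $(t, x)$ (each $\pi_i$ maps $[t, x]$ into $\{1, \dots, i\}$ and must stay strictly above $\pi_{i-1}$), while the $(k+1)$-th path is then precisely constrained to avoid $D_{x,k}$. So the multi-path value splits additively into $B[(t, (1, \dots, k)) \to (x^k, 1)] + B[(t, h(t)) \to (y, 1) \mid D_{x,k}^c]$, and after subtracting the shared normalizer $B[(t, (1, \dots, k)) \to (0^k, 1)]$ the identity drops out. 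No probabilistic input whatsoever is needed for the pre-limit step.

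By contrast, you argue at the level of the limiting semi-infinite optimizers: you construct a disjoint semi-infinite optimizer for $y' \ge y^*$ by checking disjointness of a semi-infinite geodesic from the constant block, invoke monotonicity of optimizers to pass to general $y$, and then carefully match normalizers using the $0$-directed constancy (Theorem \ref{T:sepp-sorensen}.6) and coalescence. All of this works, and in fact you are following the paper's \emph{prose} motivating the lemma rather than its actual proof. But the disjointness-for-large-$y^*$ step and the appeal to monotonicity of semi-infinite optimizers are not needed: as noted above, the constancy of the first $k$ paths on $(t, x)$ is forced for every finite disjoint $(k+1)$-tuple, not just in a limiting or large-$y$ regime. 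Recognizing this lets you drop essentially all the geodesic-theoretic machinery and the ``freezing'' concerns you flagged at the end, and also sidesteps the delicate issue of whether the over-normalized and minimally-normalized Busemann functions coincide for the direction $(0^k, \theta)$. You do identify the correct starting-height bookkeeping subtlety, which is real (the offsets in \eqref{E:BL-restricted} and \eqref{E:bar-B} must be chosen compatibly for the pre-limit identity to be exact, though any discrepancy vanishes in the $t \to -\infty$ limit by coalescence).
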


\begin{proof}
Setting some notation, let $\cB_t^\theta(x ; f \mid U)$ denote the right-hand side of \eqref{E:BL-restricted} before taking a limit in $t$. Similarly, let $\cB_t^{\theta}(\bx ; f)$ denote the right-hand side of \eqref{E:bar-B} before taking a limit in $t$. Then for $t \le x$ we have 
$$
\cB^{(0^k, \theta)}_t((x^k, y) ; B) - \cB^{0^k}_t(x^k ; B) = \cB^\theta_t(y ; B \mid D_{x, k}^c).
$$
Taking a limit in $t$ then gives the result.
\end{proof}

Lemma \ref{L:restricted-buse} shows that certain restricted Busemann functions for the Brownian environments $B^{a}$  (and hence also $\cL^a$) can be constructed from the multi-path Busemann process for $\cL$. We will examine what this reveals about the environments $\cL^a$ as $a \to -\infty$. 
Consider a point $(x, t) \in \R \X (-\infty, 0)$, and look at the set 
$$
\{(x', t') \in \R \X (-\infty, 0] : (x', t')_a \in D_{(x, t)_a} \}.
$$
As $a \to -\infty$, this set converges to the ray $(-\infty, x] \X \{t\}$. This suggests that there should be a way to take a limit of Lemma \ref{L:restricted-buse} to recover a Busemann function in the directed landscape for paths restricted to the slit
$$
S_{x, t}^- := \R \X (-\infty, 0] \smin (-\infty, x) \X \{t\}.
$$
In fact, by a symmetry argument we will be able to prove something stronger: not only can we approximate single-slit Busemann functions for $\cL$, but we can also approximate \textbf{double-slit Busemann functions}. These are Busemann functions of the form
$$
\fB^\theta(y ; \cL \mid S_u) := \lim_{s \to -\infty} \cL(\theta|s|, s; y, 0 \mid S_u) -  \cL(\theta|s|, s; 0, 0),
$$	
where $u = (x, s; y, t)$ and $U(u) = S^-_{x, t} \cap S^+_{x, t}$, where $S^+_{y, t} = \R^2 \setminus ((y, \infty) \X \{t\})$. Here $\theta, y \in \R$, and just as in finite LPP, for a set $S$ and $(p; q) = (x, s; y, t) \in \Rd$, we write
$$
\cL(p; q \mid S) = \sup_{\pi:p \to q, \mathfrak g \pi \subset S} \|\pi\|_\cL,
$$
where the supremum is over all paths $\pi$ from $p$ to $q$ with $\mathfrak g \pi := \{(r, \pi(r)) : r \in [s, t]\} \subset S\}.$ We have omitted the time coordinate from the $\fB^\theta$ notation since it will always be $0$. We also omit the landscape $\cL$ when it is clear from context.

 The next proposition states our precise result.

\begin{proposition}
	\label{P:double-slit-reconstruction}
	Fix $u = (x, s; y, t) \in \Rd$ with $t < 0$, and let $z, \mu \in \R$. For $n \in \N$, define
	$$
	x_n = -|s|^{2/3} n^{1/3}/2 + x, \quad s_n = -|s|^{-1/3} n^{1/3}, \quad y_n = |t|^{2/3} n^{1/3}/2 + y, \quad t_n = |t|^{-1/3} n^{1/3},
	$$
	and as usual let $x_n^n, s_n^n, y_n^n, t_n^n$ denote vectors of repeated elements in $\R^n$. Then the following convergence holds in probability:
	\begin{equation}
		\label{E:lim-ninfty}
		\lim_{n \to \infty} \mathfrak B^{(s_n^n, \mu, t_n^n)}(x_n^n, z, y_n^n ; \cL) - \mathfrak B^{(s_n^n, t_n^n)}(x_n^n, y_n^n ; \cL) = \mathfrak B^\theta(z ; \cL \mid S_u).
	\end{equation}
\end{proposition}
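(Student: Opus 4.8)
The plan is to transfer the exact identity for Brownian LPP in Lemma \ref{L:restricted-buse} through the scaling limit $\cL^a \cvgd \cL$ of Theorem \ref{T:DL-cvg-extended}, using the explicit coupling of the environments $B^a$ through the extended stationary horizon (Theorem \ref{T:Busemann-shear} and Proposition \ref{P:landscape-law}). The key point is that, in the rescaled coordinates, the discrete slit $D_{(x_n^n),n}$ in the environment $B^a$ (with $a$ chosen appropriately as a function of $n$) converges to the slit $S_u$ in $\cL$. So the left-hand side of \eqref{E:lim-ninfty} should be, after rescaling, precisely the difference of restricted and unrestricted multi-path Busemann functions appearing in Lemma \ref{L:restricted-buse}, and the right-hand side should be its limit. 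I will first prove the single-slit version (only the lower ray $(-\infty,x)\times\{t\}$ removed), and then obtain the double-slit version by a flip-symmetry argument, exploiting that a restricted passage value between two slits splits at an intermediate time into a product of one lower-slit and one upper-slit problem via the metric composition law.

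\textbf{Step 1: set up the rescaling and identify the discrete slit.} Fix $u=(x,s;y,t)$ with $t<0$. For each $n$, apply Lemma \ref{L:restricted-buse} in the environment $B = B^0$ with $k=n$, endpoint parameters coming from the rescaling map $(\bw,r)\mapsto (\bw,r)_a$ of Theorem \ref{T:DL-cvg-extended}, and direction parameters $\theta$ chosen so that $\cL^a$'s Busemann directions match $(s_n^n,\mu,t_n^n)$ and $(s_n^n,t_n^n)$ respectively — this matching is exactly the computation \eqref{E:cB-nn}/\eqref{E:cB-nn}-type relation between $\cL$-directions and $\cW$-slopes, already carried out in \eqref{E:cB-nn} of Section \ref{SS:extended-multi-horizon}. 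Under this correspondence, the restricting set $D_{(\cdot),n}^c$ for $B^a$ pulls back under $(\cdot)_a$ to a region of $\mathfrak X_\uparrow^-$ that, as $n\to\infty$ (equivalently $a\to-\infty$), converges to $S_{x,t}^-$; the key geometric computation is that the set $\{(x',t'): (x',t')_a \in D_{(x_n^n,t)_a}\}$ shrinks to the ray $(-\infty,x]\times\{t\}$, as indicated in the discussion preceding the proposition. Thus Lemma \ref{L:restricted-buse}, rescaled, gives for each $n$ an \emph{exact} identity
\begin{equation*}
\mathfrak B^{(s_n^n,\mu)}(x_n^n, z; \cL^{a_n}) - \mathfrak B^{s_n^n}(x_n^n; \cL^{a_n}) = \mathfrak B^\mu(z; \cL^{a_n} \mid S^{n}),
\end{equation*}
where $S^n$ is the rescaled discrete slit and $a_n\to-\infty$ is the matched drift.

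\textbf{Step 2: pass to the limit.} Now I would invoke Proposition \ref{P:landscape-law} (and the continuous mapping / Lemma \ref{L:cty-theorem} machinery behind it) to pass $n\to\infty$: the joint law $(\cL^{a_n}, \mathfrak B^\cdot(\cdot;\cL^{a_n}))$ converges to $(\cL, \mathfrak B^\cdot(\cdot;\cL))$, and since $X_n = X$ in the coupling of Proposition \ref{P:landscape-law}, this should be upgradeable to convergence in probability of the relevant functionals, exactly as in the proof of Theorem \ref{T:landscape-recovery}.3 from \ref{T:landscape-recovery}.2. The left-hand side of \eqref{E:lim-ninfty} then converges in probability to $\mathfrak B^{(?)}(\cdots;\cL)$-type limits; one must check that the multi-path Busemann functions $\mathfrak B^{(s_n^n,\mu,t_n^n)}$ and $\mathfrak B^{(s_n^n,t_n^n)}$ in $\cL$ converge to their single-path counterparts times the appropriate normalization (using the continuity/monotonicity estimates from Proposition \ref{P:extended-usemann-landscape} and Corollary \ref{C:hat-B-Busemann-landscape}, together with Lemma \ref{L:shape-land} to control the melon-shaped corrections). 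The right-hand side requires showing $\mathfrak B^\mu(z;\cL^{a_n}\mid S^n)\cvgp \mathfrak B^\theta(z;\cL\mid S_u)$, which is a restricted-landscape analogue of Theorem \ref{T:DL-cvg-extended}: here one uses that restricted last passage values are monotone in the restricting set and continuous under the relevant Hausdorff-type convergence of slits (cf. the Hausdorff-topology discussion in Section \ref{S:Pitman-Brownian}), plus uniqueness of the restricted optimizer to rule out boundary pathologies — and this is where Lemma \ref{L:optimizer-as-unique} and the optimizer-rigidity results from \cite{dauvergne2021disjoint} will be needed.

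\textbf{Step 3: from single slit to double slit, and main obstacle.} For the double-slit statement, I would write the restricted passage value $\cL(\theta|s|,s;y,0\mid S_u)$ by splitting at time $t$: below time $t$ the constraint is that the path avoid $(y,\infty)\times\{t\}$ (an upper slit $S^+$), above time $t$ it avoids $(-\infty,x)\times\{t\}$ (a lower slit $S^-$), and the metric composition law (Lemma \ref{L:split-path-L}, suitably adapted to restricted values) lets me glue a flip-symmetric copy of the single-slit result to the direct single-slit result. Flip symmetry of $\cL$ (Lemma \ref{L:invariance}.3) converts an upper-slit Busemann function in one direction to a lower-slit one in the reversed direction, so Step 1–2 applied twice and composed gives \eqref{E:lim-ninfty}. \textbf{The main obstacle} I anticipate is Step 2's control of the \emph{restricted} Busemann convergence: unlike the unrestricted case covered by Proposition \ref{P:landscape-law}, restricted last passage across a shrinking slit is not a continuous functional of $\cL$ in an obvious way (a path can be forced to make a large detour around the slit, and small perturbations of the slit location interact with the optimizer geometry). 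Making this rigorous will require the optimizer rigidity/non-coalescence estimates that Sections \ref{SS:multi-path-to-restricted} and \ref{SS:optimizers-rays} are presumably devoted to — in particular showing that with high probability the restricted optimizer for $\cL^{a_n}$ hugs the slit in a controlled way and that its length is stable, so that the $L^\infty$-type convergence in probability of Lemma \ref{L:measure-theory-lemma-b} applies. The rest of the argument (the algebraic matching of directions, the flip-symmetry splitting, and the limit of the normalizing multi-path Busemann terms) should be routine given the machinery already assembled.
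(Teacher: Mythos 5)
Your Step~1 — transferring the multi-path Busemann difference to a restricted Busemann function in a drifted Brownian environment via the Busemann isometry (Theorem~\ref{T:landscape-recovery}.1) and Lemma~\ref{L:restricted-buse} — is precisely the computation the paper carries out inside the proof of Lemma~\ref{L:Bla-distribution}, so that part of your outline is on track. But your Step~2 has a genuine gap, and you have half-identified it yourself: you cannot pass the limit through the restricted Busemann function by a continuous-mapping argument, because the slit shrinks to a measure-zero ray and the functional is not continuous in any topology for which $\cL^{a_n}\cvgd\cL$. Worse, the device you reach for to "upgrade to convergence in probability" — arguing "exactly as in the proof of Theorem~\ref{T:landscape-recovery}.3 from~\ref{T:landscape-recovery}.2" — is circular: that argument presupposes Theorem~\ref{T:landscape-recovery}.2, and Proposition~\ref{P:double-slit-reconstruction} is the very input used to prove~\ref{T:landscape-recovery}.2. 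So your proposal reduces the problem to itself.

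The missing idea, which is the crux of the paper's proof, is the \emph{sandwich via optimizer monotonicity}. Lemma~\ref{L:strong-quadrangle} traps the increment $\cL(\bx_a,s;\by_b,t)-\cL(\bx,s;\by,t)$ between restricted passage values over the open and closed regions $R$, $\bar R$ delimited by the disjoint optimizer through $(\bx,\by)$; applied to the semi-infinite optimizers $\pi^n$ in direction $s_n^n$ to $(x_n^n,0)$ and $\tau^n$ in direction $t_n^n$ to $(y_n^n,0)$, this gives an \emph{almost sure} two-sided bound on the left side of~\eqref{E:lim-ninfty} by restricted Busemann values over $R_n$ and $\bar R_n$, where $R_n$ is the region between $\pi^n_n$ and $\tau^n_1$ (Proposition~\ref{P:busemann-quadrangle}, proved via Lemmas~\ref{L:one-side-bound} and~\ref{L:stays-inside}). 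This reduces the proposition entirely to a geometric statement: $\pi^n_n$ and $\tau^n_1$ converge (in hypograph/epigraph) to the ray functions $R^-_{x,s}$, $R^+_{y,t}$. That convergence is Proposition~\ref{P:slit-existence}, and it is \emph{only here} that the distributional machinery (the Busemann isometry, Lemma~\ref{L:Bla-distribution}, and the Brownian LPP identities you correctly identified) enters — to control the law of the disjointness indicator $\indic(F^n(\la,w)=\fB^\la(w))$ and thereby the location of the extreme path $\pi^n_n$. Your flip-symmetry decomposition of double-slit into single-slit problems in Step~3 is a legitimate alternative organization (the paper instead proves the double-slit version directly, noting the single-slit case is simpler), but it does not address the Step~2 gap. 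In short: the heuristic picture is right, but the analytical device that makes the limit rigorous — decoupling the deterministic sandwich (Lemma~\ref{L:strong-quadrangle}, Proposition~\ref{P:busemann-quadrangle}) from the probabilistic ray-convergence of extreme optimizers (Proposition~\ref{P:slit-existence}) — is absent from your proposal.
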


We will prove Proposition \ref{P:double-slit-reconstruction} in Section \ref{SS:multi-path-to-restricted} and \ref{SS:optimizers-rays}. In the remainder of this section, we use Proposition \ref{P:double-slit-reconstruction} to prove Theorem \ref{T:landscape-recovery}.2. This requires a straightforward metric composition law for double-slit Busemann functions. We state this law together with a metric composition law for single-slit Busemann functions, which we will need later on.
\begin{lemma}
	\label{L:metric-comp}
Almost surely, for all $\theta \in \cJ, u = (x, s; y, t) \in \Rd$ with $t < 0$ and $w \in \R$, the double-slit and single-slit Busemann functions $\mathfrak B^\theta(w \mid S_u), \mathfrak B^\theta(w \mid S_{x, s}^-)$ exist and we have the following metric composition laws:
	\begin{align}
		\label{E:mc-Buse}
		\mathfrak B^\theta(w \mid S_u) &= \max_{z_1 \ge x, z_2 \le y} \mathfrak B^\theta(z_1, s) + \cL(z_1, s; z_2, t) + \cL(z_2, t; w, 0). \\
		\label{E:mc-Buse-single}
		\mathfrak B^\theta(w \mid S_{x, s}^-) &= \max_{z_2 \le y} \mathfrak B^\theta(z_2, t) + \cL(z_2, t; w, 0).
	\end{align}
\end{lemma}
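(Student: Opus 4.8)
\textbf{Proof proposal for Lemma \ref{L:metric-comp}.}

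The plan is to reduce both identities to the existence of the slit Busemann functions plus the ordinary metric composition law for $\cL$ (Lemma \ref{L:split-path-L} / \ref{P:mc-everywhere}), exploiting the fact that a path from $(\theta|s'|, s')$ to $(w, 0)$ with graph in $S_u$ (resp.\ $S^-_{x,s}$) is exactly a path that crosses the line $\{r = s\}$ to the \emph{right} of $x$ and the line $\{r = t\}$ to the \emph{left} of $y$ (resp.\ only the first constraint). Existence of $\fB^\theta(w \mid S_u)$ and $\fB^\theta(w \mid S^-_{x,s})$ for fixed data comes from Proposition \ref{P:double-slit-reconstruction} (via its single-slit analogue, proved by the same argument), so the real content is the formula. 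First I would fix $\theta \in \cJ$ and, for a fixed $(u,w)$, work on the almost sure event where the relevant slit Busemann functions exist, where the ordinary metric composition law holds at all the times involved, and where $\fB^\theta(\bz,r) = \bar\fB^\theta(\bz,r)$ for all the endpoints in play (Corollary \ref{C:hat-B-Busemann-landscape}); a countable dense set of $(u,w)$ plus continuity of all the objects will then upgrade to the "for all" statement.

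For the double-slit law: fix a semi-infinite optimizer $\pi$ in direction $\theta$ (Proposition \ref{P:extended-usemann-landscape}.4), so that along $\pi$ the Busemann function is an honest difference of landscape values, $\fB^\theta(\bz,r) = \lim_{s'\to-\infty} \cL(\bar\pi(s'); \bz, r) - \cL(\bar\pi(s');0,0)$. For $s' < s$ the restricted value $\cL(\bar\pi(s'); w, 0 \mid S_u)$ decomposes by first crossing the horizontal line at height $s$ to the right of $x$, then crossing the line at height $t$ to the left of $y$, then running unrestricted to $(w,0)$: since the only obstruction in $S_u$ is the pair of slits, once we are below $s$ and cross at $r=s$ at a point $z_1 \ge x$, the remaining constraint is only the slit at $r=t$, and below $t$ there is no constraint at all. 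Concretely,
\begin{equation*}
\cL(\bar\pi(s'); w, 0 \mid S_u) = \max_{z_1 \ge x,\, z_2 \le y} \cL(\bar\pi(s'); z_1, s) + \cL(z_1, s; z_2, t) + \cL(z_2, t; w, 0),
\end{equation*}
which follows from two applications of the ordinary metric composition law on the full plane together with the observation that imposing the slit at $r = s$ is the same as restricting the outer maximum to $z_1 \ge x$, and imposing the slit at $r = t$ restricts the inner one to $z_2 \le y$ (paths strictly between $s$ and $t$ never touch either slit, and below $s$ the geodesic $\pi$ itself can be taken as the restricted optimizer for $s'$ large negative since $\pi$ is eventually frozen and disjoint from the slit — use that $\pi$ coalesces with the canonical $k$-tuple in its direction, Proposition \ref{P:extended-usemann-landscape}). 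Subtracting $\cL(\bar\pi(s');0,0)$ and sending $s' \to -\infty$, the first term on the right converges to $\fB^\theta(z_1,s)$ uniformly over $z_1$ in a compact set (uniform-on-compact convergence in Corollary \ref{C:hat-B-Busemann-landscape}, and a shape bound from Lemma \ref{L:shape-land} to cut off large $z_1$), while the other two terms are independent of $s'$, giving \eqref{E:mc-Buse}. The single-slit identity \eqref{E:mc-Buse-single} is the same argument with only the slit at $r=t$ present: decompose across the line $r = t$, restrict to $z_2 \le y$, and take $s' \to -\infty$; since there is no lower slit, nothing happens below $t$ and the formula only has the one outer maximum.

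The main obstacle will be the uniform control needed to exchange $\lim_{s' \to -\infty}$ with the maximum over $z_1$ (and to justify cutting the maximum down to a compact set in $z_1$): one needs that for $s'$ sufficiently negative the restricted last passage value $\cL(\bar\pi(s'); w, 0 \mid S_u)$ really is achieved by a path crossing $r = s$ in a bounded window, which uses Lemma \ref{L:shape-land} to bound the transversal fluctuations of near-geodesics plus the linear-in-$|s'|$ growth of $\fB^\theta(z_1, s)$ in $z_1$ to localize the argmax; and that the optimizer $\pi$ is eventually disjoint from the slits so that the restricted and unrestricted Busemann functions agree along $\pi$ below height $s$. These are routine given the tools assembled in the preceding subsections (Propositions \ref{P:extended-usemann-landscape}, \ref{P:landscape-law}, Corollary \ref{C:hat-B-Busemann-landscape}, Lemmas \ref{L:split-path-L}, \ref{L:shape-land}), so I would state them as short sublemmas and push the estimates there. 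Finally, the passage from "fixed $(u,w,\theta)$" to "all $(u,w)$ and all $\theta \in \cJ$" is handled by a density argument: the left-hand sides are continuous in $(u,w)$ by continuity of $\cL$ and of $\fB^\theta$ (Proposition \ref{P:extended-usemann-landscape}), and monotonicity of the slit values in the slit endpoints sandwiches limits appropriately, so an identity on a countable dense set extends everywhere.
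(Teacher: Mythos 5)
The core mechanism you propose --- decompose the restricted last passage value across the lines $r=s$ and $r=t$ by metric composition, take $s'\to-\infty$, and exchange the limit with the maximum over $(z_1,z_2)$ using uniform-on-compact convergence of Busemann functions --- is exactly the paper's approach. Where you diverge, and where I think you have genuine gaps, is in the two auxiliary steps.

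First, you propose to cite Proposition \ref{P:double-slit-reconstruction} for existence of $\fB^\theta(w\mid S_u)$, but this is circular. Proposition \ref{P:double-slit-reconstruction} is proved (in Sections \ref{SS:multi-path-to-restricted}--\ref{SS:optimizers-rays}) via Proposition \ref{P:slit-existence}, whose proof goes through Lemma \ref{L:Bla-distribution}, and the proof of Lemma \ref{L:Bla-distribution} explicitly invokes the single-slit identity \eqref{E:mc-Buse-single} --- that is, part of the very lemma you are trying to prove. Moreover, Proposition \ref{P:double-slit-reconstruction} is a convergence statement whose right-hand side $\fB^\theta(z\mid S_u)$ is already assumed to exist; it is not an existence result. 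The proof of Lemma \ref{L:metric-comp} must establish existence internally by showing the defining limit stabilizes, which the paper does by exhibiting the max formula as the value of the limit.

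Second, the way you cut the maximum over $z_1$ down to a compact set (shape bounds from Lemma \ref{L:shape-land} plus linear growth of $\fB^\theta$) can be made to work for fixed $(\theta, u, w)$, but then forces you into a density argument to upgrade to the ``for all'' statement. That density argument is not routine: you would need continuity (not just monotonicity) of $\fB^\theta(w\mid S_u)$ in the slit endpoints $(x,s;y,t)$, and nothing in the preceding sections supplies that. The paper's mechanism is different and avoids this entirely: it picks $\theta' \in \Z \cap (\theta, \infty)$, lets $\pi^n$ be the semi-infinite geodesic in direction $\theta'$ to $(n,0)$, and notes that $\pi^n(s) \to \infty$, so eventually $\pi^n(s) > x$. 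Because $\theta' > \theta$, the path $\pi^n$ acts as a right barrier: for $r$ very negative the restricted geodesic from $(\theta|r|, r)$ to $(w,0)$ starts and ends to the left of $\pi^n$, hence (by planarity) crosses $r=s$ in $[x, \pi^n(s)]$. This gives a compact window for $z_1$ that is independent of $r$ and works simultaneously for all $(\theta, u, w)$ on a single almost-sure event (since $\theta'$ ranges over the countable set $\Z$). You then exchange limit and max over that fixed compact window using the uniform-on-compact convergence in Theorem \ref{T:sepp-sor-busani}.4, and finally take $n\to\infty$. I would rewrite your proof around this barrier idea: it removes the need for the shape-bound localization, the a priori existence claim, and the density argument in one stroke.
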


\begin{proof}
	We only prove \eqref{E:mc-Buse} as the proof of \eqref{E:mc-Buse-single} is similar but simpler.    
    Pick $\theta' \in \Z \cap (\theta, \infty)$.
		For every $n \in \Z$, let $\pi^n$ denote the leftmost semi-infinite geodesic to $(n, 0)$ in direction $\theta'$. By translation invariance of the directed landscape, there exists some random $N \in \Z$ such that if $n \ge N$ then $\pi^n(s) > x$. Therefore for $n > N$ we have:
		\begin{align*}
&\fB^\theta(w \mid S_u) \\&= \lim_{r \to -\infty} \max_{z_1 \ge x, z_2 \le y} \fB(\theta|r|, r; z_1, s) - \fB(\theta|r|, r; 0,0) + \cL(z_1, s; z_2, t) + \cL(z_2, t; w, 0) \\
&= \lim_{r \to -\infty} \max_{z_1 \in [x, \pi^n(s)], z_2 \le y} \fB(\theta|r|, r; z_1, s) - \fB(\theta|r|, r; 0,0) + \cL(z_1, s; z_2, t) + \cL(z_2, t; w, 0) \\
 &= \max_{z_1 \in [x, \pi^n(s)], z_2 \le y} \fB^\theta(z_1, s) + \cL(z_1, s; z_2, t) + \cL(z_2, t; w, 0)
\end{align*}
Here the first equality is simply the metric composition law for $\cL$. The second equality uses that for small enough $r$ we have $\pi^n(r) > \theta|r|$, and so also using that $\pi^n(s) > x$, the geodesic from $(\theta|r|, r)$ to $(w, 0)$ that stays in $S_u$ will be to the left of $\pi^n$, and so in the metric composition law we can maximize over $z_1 \le \pi^n(s)$. The third equality uses the compact convergence of Busemann functions (Theorem \ref{T:sepp-sor-busani}.4).  Taking $n \to \infty$, we have $\pi^n(s) \to \infty$ by translation invariance of $\cL$, yielding \eqref{E:mc-Buse}.
\end{proof}

\begin{proof}[Proof of Theorem \ref{T:landscape-recovery}.2 given Proposition \ref{P:double-slit-reconstruction}]
Proposition \ref{P:landscape-law} and Theorem \ref{T:Busemann-shear} imply that we can almost surely reconstruct the whole multi-path Busemann process $(\theta, \bx) \mapsto \fB^\theta(\bx, 0; \cL)$ from $B^0$. Moreover, Proposition \ref{P:double-slit-reconstruction} implies that for all rational choices of $w, \theta$, and $u = (x, s; y, t)$ with $t < 0$, we can almost surely reconstruct the double-slit Busemann functions $\mathfrak B^\theta(w \mid S_u)$ from $(\theta, \bx) \mapsto \fB^\theta(\bx, 0; \cL)$. 
	
	Now, fix rational points $s < t < 0$ and introduce the shorthand $\{x, y\} = (x, s; y, t)$. We can define the \textbf{shock measure} $\mu_{s, t}$ for the time interval $(s, t)$ by the following formula:
	\begin{equation}
		\label{E:shock-measure}
		\mu_{s, t}([x_1, x_2] \X [y_1, y_2]) = \cL\{x_2, y_2\} + \cL\{x_1, y_1\} - \cL\{x_1, y_2\} - \cL\{x_2, y_1\}. 
	\end{equation}
	This defines a positive measure by the quadrangle inequality for $\cL$. Moreover, given the measure $\mu_{s, t}$, we can reconstruct the Airy sheet $\cL\{\cdot, \cdot\}$ via ergodicity of the Airy process $\cL(0,0; x, 1) + x^2$ (see \cite[Section 5]{prahofer2002scale}). Indeed, for any fixed $x_1 < x_2$ and $y_1$, the following holds almost surely:
	\begin{align*}
		\lim_{n \to \infty} \frac{1}{n} &\sum_{j=1}^n [\mu_{s, t}([x_1, x_2] \X [y_1, y_1 +j]) - \E \mu_{s, t}([x_1, x_2] \X [y_1, y_1 +j])] \\
		&= \cL\{x_1, y_1\} - \cL\{x_2, y_1\} - \E (\cL\{x_1, y_1\} - \cL\{x_2, y_1\})  \\
		&+ \lim_{n \to \infty} \frac{1}{n} \sum_{j=1}^n (\cL\{x_2, y_1 + j\} - \cL\{x_1, y_1 + j\}) - \E ((\cL\{x_2, y_1 + j\} - \cL\{x_1, y_1 + j\})) \\
		&= \cL\{x_1, y_1\} - \cL\{x_2, y_1\} - \E (\cL\{x_1, y_1\} - \cL\{x_2, y_1\}).
	\end{align*}
	Here the final equality uses ergodicity of the two shifted and rescaled Airy processes $\cL\{x_1, \cdot \} - \E \cL\{x_1, \cdot\}$ and $\cL\{x_2, \cdot \} - \E \cL\{x_2, \cdot\}$. This equality holds almost surely, simultaneously for all rational $x_1, x_2, y_1$. Next, using ergodicity of the shifted and rescaled Airy process $\cL\{\cdot, y_1\} - \E \cL\{\cdot, y_1\}$ we have that almost surely
	$$
	\lim_{n \to \infty} \frac{1}n \sum_{i=1}^n \cL\{x_1, y_1\} - \cL\{x_1 + i, y_1\} - \E(\cL\{x_1, y_1\} - \cL\{x_1 + i, y_1\}) = \cL\{x_1, y_1\} - \E \cL\{x_1, y_1\}.
	$$
	Therefore almost surely, for all rational $x_1, y_1$ the value $\cL\{x_1, y_1\}$ can be reconstructed from $\mu_{s, t}$ and hence by continuity, so can the whole Airy sheet $\cL\{\cdot, \cdot\}$. 
	
	Therefore if we can reconstruct $\mu_{s, t}([x_1, x_2] \X [y_1, y_2])$ almost surely for any rational $s, t, x_1, x_2, y_1, y_2$ from the collection of double-slit Busemann functions with rational parameters, then we can reconstruct all increments $\cL(x, s; y, t)$ for $x, y \in \R$ and $s < t \in \Q \cap (-\infty, 0)$  and hence the full directed landscape on $\mathbb H^2_\uparrow$ by continuity.
	
	Fix $s, t, x_1, x_2, y_1, y_2$, and define
	$$
	X^a = \fB^{-a}(a \mid S_{\{x_2, y_2\}}) + \fB^{-a}(a \mid S_{\{x_1, y_1\}}) - \fB^{-a}(a \mid S_{\{x_1, y_2\}}) - \fB^{-a}(a \mid S_{\{x_2, y_1\}}).
	$$
	We claim that in probability we have the convergence
	\begin{equation}
		\label{E:double-slit-limit}
		\lim_{a \to \infty} X^a = \mu_{s, t}([x_1, x_2] \X [y_1, y_2]),
	\end{equation}
	from which it follows that $\mu_{s, t}([x_1, x_2] \X [y_1, y_2])$ is a measurable function of the double slit Busemann environment. Note that with a bit more work, it can be shown that \eqref{E:double-slit-limit} in fact holds almost surely.
	
	Fix $x \in \{x_1, x_2\}, y \in \{y_1, y_2\}$. By Lemma \ref{L:metric-comp} we have that
	\begin{align}
		\label{E:fBa-Sxy}
		\fB^{-a}(a \mid S_{\{x, y\}}) = \max_{z_1 \ge 0, z_2 \le 0} \fB^{-a}(z_1 + x, s) + \cL\{z_1 + x, z_2 + y\} + \cL(z_2 + y, t; a, 0).
	\end{align}
	We aim to show that
	\begin{equation}
		\label{E:a-Sxyfba}
		\fB^{-a}(a \mid S_{\{x, y\}}) = \fB^{-a}(x, s) + \cL\{x, y\} + \cL(y, t; a, 0) + Y^a(x, y),
	\end{equation}
	where the error $Y^a(x, y)$ converges to $0$ in distribution as $a \to \infty$ for any fixed $x, y$. Indeed, we have the following symmetries of $\cL$, see Lemma \ref{L:invariance}.
	\begin{align*}
		\cL(z + y, t; a, 0) - \cL(y, t; a, 0) &\eqd t^{1/3} A(t^{-2/3} z) + \frac{2z(a - y) - z^2}{t}, \\
		\qquad \fB^{-a}(z + x, s) - \fB^{-a}(x, s) &\eqd B(z) - 2a z.
	\end{align*}
	Here $A$ is a (stationary) Airy$_2$ process, and $B$ is a Brownian motion without drift,
	and both equalities are jointly as continuous functions of $z$. These identities, together with the shape bound on $\cL\{x, y\}$ (Lemma \ref{L:shape-land}) imply that $Y^a_{x, y} \to 0$ in probability as $a \to \infty$, as desired. The claim \eqref{E:double-slit-limit} then follows immediately from \eqref{E:a-Sxyfba} and \eqref{E:shock-measure}.
\end{proof}

\subsection{From multi-path to restricted Busemann functions}
\label{SS:multi-path-to-restricted}

In the remainder of Section \ref{S:reconstruction} we prove Proposition \ref{P:double-slit-reconstruction}. We start with  two geometric results that will allow us to connect multi-path Busemann functions to restricted Busemann functions in $\cL$.

\begin{lemma}
	\label{L:strong-quadrangle}
	Let $(\bx, s; \by, t) \in \mathfrak X_\uparrow$, and let $\pi = (\pi_1, \dots, \pi_k)$ be the a.s.\ unique optimizer from $(\bx, s)$ to $(\by, t)$. Fix  $m \in \{0, \dots, k\}$, and consider $x_m \le a \le x_{m+1}$ and $y_m \le b \le y_{m+1}$, where we use the convention that $x_0 = y_0 = -\infty$ and $x_{k+1} = y_{k+1} = \infty$. Let 
	\begin{align*}
	R &= \{(z, r) \in \R \X [s, t] : \pi_m(r) < z < \pi_{m+1}(r)\},
	\end{align*}
	and let $\bar R$ be the closure of $R$. Here again we use the convention that $\pi_0 = -\infty, \pi_{k+1} = \infty$. 
	Let $\bx_a = (x_1, \dots, x_m, a, x_{m+1}, \dots, x_k) \in \R^{k+1}$ and similarly define $\by_b$. Then a.s.\
	\begin{align}
	\label{E:xsyt-cha}
	\cL(\bx, s; \by, t) + \cL(a, s; b, t \mid R) \le \cL(\bx_a, s; \by_b, t) \le \cL(\bx, s; \by, t) + \cL(a, s; b, t \mid \bar R).
	\end{align}
\end{lemma}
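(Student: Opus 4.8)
\textbf{Proof plan for Lemma \ref{L:strong-quadrangle}.}

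The plan is to prove the two inequalities in \eqref{E:xsyt-cha} separately, and each one via an explicit construction of disjoint tuples that realizes the claimed bound. For the \textbf{lower bound}, I would take the optimizer $\pi = (\pi_1,\dots,\pi_k)$ from $(\bx,s)$ to $(\by,t)$ together with an optimal path $\gamma$ from $(a,s)$ to $(b,t)$ constrained to stay in $R$, so that $\|\gamma\|_\cL = \cL(a,s;b,t\mid R)$ (when this set is empty there is nothing to prove since the left side is $-\infty$). Since $\gamma$ lies strictly between $\pi_m$ and $\pi_{m+1}$ on the open time interval $(s,t)$, and the endpoints $(a,b)$ are ordered so that inserting them into $\bx,\by$ keeps both vectors weakly increasing, the $(k+1)$-tuple $(\pi_1,\dots,\pi_m,\gamma,\pi_{m+1},\dots,\pi_k)$ is a disjoint $(k+1)$-tuple from $(\bx_a,s)$ to $(\by_b,t)$ in the sense of \eqref{E:extended-land-value}. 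Its total length is $\cL(\bx,s;\by,t) + \|\gamma\|_\cL$, and since $\cL(\bx_a,s;\by_b,t)$ is the supremum over all such disjoint tuples, the lower bound follows.

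For the \textbf{upper bound}, I would run the argument in the other direction: let $\rho = (\rho_1,\dots,\rho_{k+1})$ be an optimizer from $(\bx_a,s)$ to $(\by_b,t)$, which exists by Lemma \ref{L:optimizer-existence-monotoncity-landscape}. Deleting the middle path $\rho_{m+1}$ leaves a disjoint $k$-tuple from $(\bx,s)$ to $(\by,t)$, so $\|\rho_1\|_\cL + \dots + \widehat{\|\rho_{m+1}\|_\cL} + \dots + \|\rho_{k+1}\|_\cL \le \cL(\bx,s;\by,t)$. It then remains to bound $\|\rho_{m+1}\|_\cL$ by $\cL(a,s;b,t\mid \bar R)$, i.e.\ to show that the deleted path $\rho_{m+1}$ is confined to the closed region $\bar R$ determined by the \emph{original} optimizer $\pi$. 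This is where I would invoke optimizer monotonicity in $\cL$ (Lemma \ref{L:mono-tree-multi-path-L} and the remark following it): since $\pi$ is the a.s.\ unique optimizer from $(\bx,s)$ to $(\by,t)$, and $(\rho_1,\dots,\rho_m)$, $(\rho_{m+2},\dots,\rho_{k+1})$ are optimizers for endpoint data that sandwich the corresponding coordinates of $\bx,\by$, monotonicity forces $\rho_m \le \pi_m$ would go the wrong way — so instead I compare $(\rho_1,\dots,\rho_m)$ with $(\pi_1,\dots,\pi_m)$ and $(\rho_{m+2},\dots,\rho_{k+1})$ with $(\pi_{m+1},\dots,\pi_k)$, obtaining $\rho_m \le \pi_m$ and $\pi_{m+1}\le \rho_{m+2}$ is again not quite it; the correct comparison is that the first $m$ coordinates of $\bx_a,\by_b$ agree with those of $\bx,\by$ while the last $k-m$ coordinates are shifted by one index, and disjointness of $\rho$ together with uniqueness of $\pi$ pins $\rho_j$ between $\pi_{j-1}$ and $\pi_j$ appropriately, so in particular $\pi_m \le \rho_{m+1} \le \pi_{m+1}$ on $(s,t)$. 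Hence the graph of $\rho_{m+1}$ lies in $\bar R$, giving $\|\rho_{m+1}\|_\cL \le \cL(a,s;b,t\mid\bar R)$, and summing yields the upper bound.

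The \textbf{main obstacle} is making the monotonicity comparison in the upper bound fully rigorous: Lemma \ref{L:mono-tree-multi-path-L} requires one of the two optimizers being compared to be \emph{unique}, and while $\pi$ is a.s.\ unique for the fixed data $(\bx,s;\by,t)$, the auxiliary tuple $\rho$ (optimizer from $(\bx_a,s)$ to $(\by_b,t)$) need not be, and moreover as $a,b$ range over intervals we are implicitly using uncountably many comparisons. I would handle this by working on the single almost sure event from Lemma \ref{L:mono-tree-multi-path-L} and its remark (which supplies interpolating endpoints with unique optimizers), comparing $\pi$ to finite-dimensional marginals of $\rho$ via those intermediate unique optimizers, and using the convention $\pi_0 = -\infty$, $\pi_{k+1}=+\infty$ to cover the boundary cases $m=0$ and $m=k$ where $R$ is a half-plane-like strip. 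Since the final inequality \eqref{E:xsyt-cha} is a pointwise statement that is stable under the interpolation trick, this reduces the uncountable family to the countable dense one where everything is already controlled, and continuity of the extended landscape (used earlier in the excerpt) propagates it to all $a,b$.
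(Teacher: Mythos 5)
Your overall strategy is the same as the paper's: construct a disjoint $(k+1)$-tuple by inserting a path into $\pi$ to get the lower bound, and delete the $(m+1)$-th path of an optimizer $\rho$ from $(\bx_a,s)$ to $(\by_b,t)$ and bound its length by confinement for the upper bound. The lower bound is done correctly and matches the paper. The upper bound, however, has a genuine gap exactly where you flag your own confusion.

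The claim that needs proving is that $\pi_m \le \rho_{m+1} \le \pi_{m+1}$ on $[s,t]$, so that $\rho_{m+1}$ stays in $\bar R$. You assert that $(\rho_1,\dots,\rho_m)$ and $(\rho_{m+2},\dots,\rho_{k+1})$ are ``optimizers for endpoint data that sandwich the corresponding coordinates''; this is false in general — subtuples of a disjoint optimizer are not optimizers for their restricted endpoint problem, so Lemma \ref{L:mono-tree-multi-path-L} cannot be applied to them directly. More fundamentally, Lemma \ref{L:mono-tree-multi-path-L} compares two \emph{equal-dimensional} tuples, whereas here a $(k+1)$-tuple must be compared with a $k$-tuple, and none of the pairings you describe resolves this. (The paper itself cites Lemma \ref{L:mono-tree-multi-path-L} equally tersely, but the comparison really needs to be carried out.) The item you single out as the ``main obstacle'' — uniqueness of $\rho$ and the uncountable range of $(a,b)$ — is not the issue at all: the lemma is applied at fixed endpoints, $\rho$'s uniqueness is never needed, and the real obstruction is the dimensional mismatch.

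The comparison can be made rigorous by the max/min interchange underlying Lemma \ref{L:mono-tree-multi-path-L} (cf.\ the proof of Lemma 2.2 in \cite{dauvergne2021disjoint}), carried out with the \emph{right} pairing of paths — not the ``shift-by-one'' pairing you gesture at, which runs into a circularity at the insertion index. Pair $\pi_i$ with $\rho_i$ for every $i = 1,\dots,k$ and leave $\rho_{k+1}$ unpaired: set $\alpha_i = \pi_i \vee \rho_i$ ($i\le k$), $\beta_i = \pi_i\wedge\rho_i$ ($i \le k$), $\beta_{k+1}=\rho_{k+1}$. A check of the endpoints (using $x_m\le a\le x_{m+1}$, $y_m\le b\le y_{m+1}$) shows $\alpha$ is a disjoint $k$-tuple from $(\bx,s)$ to $(\by,t)$ and $\beta$ a disjoint $(k+1)$-tuple from $(\bx_a,s)$ to $(\by_b,t)$, and the quadrangle inequality (Lemma \ref{L:quadrangle-landscape}) gives $\sum\|\alpha_i\|+\sum\|\beta_j\|\ge\sum\|\pi_i\|+\sum\|\rho_j\|$. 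Since $\pi,\rho$ are optimizers, equality holds, and by the a.s.\ uniqueness of $\pi$ we get $\alpha=\pi$, i.e.\ $\rho_i\le\pi_i$ for all $i$; in particular $\rho_{m+1}\le\pi_{m+1}$. The symmetric pairing of $\pi_i$ with $\rho_{i+1}$ (leaving $\rho_1$ unpaired) yields $\pi_i\le\rho_{i+1}$ for all $i$, giving $\pi_m\le\rho_{m+1}$. That is the missing step in your upper bound.
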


\begin{proof}
First, observe that if $\tau$ is any path from $(a, s)$ to $(b, t)$ that stays in $R$, then $(\pi_1, \dots, \pi_m, \tau, \pi_{m+1}, \dots, \pi_k)$ is a disjoint optimizer from $(\bx_a, s)$ to $(\by_b, t)$, and so
$$
\|\tau\|_\cL + \sum_{i=1}^k \|\pi_i\|_\cL \le \cL(\bx_a, s; \by_b, t),
$$
and the first inequality in \eqref{E:xsyt-cha} follows. Next, let $\tilde \pi = (\tilde \pi_1, \dots, \tilde \pi_m, \tau, \tilde \pi_{m+1}, \dots, \tilde \pi_k)$ be a disjoint $k$-tuple from $(\bx_a, s)$ to $(\by_b, t)$. By monotonicity of optimizers (Lemma \ref{L:mono-tree-multi-path-L}), have that $\pi_m \le \tau \le \pi_{m+1}$, so $\|\tau\|_\cL \le \cL(a, s; b, t \mid \bar R)$. Also, $\sum_{i=1}^k \|\tilde \pi_i\|_\cL \le \cL(\bx, s; \by, t)$, and the second inequality follows.
\end{proof}

Lemma \ref{L:strong-quadrangle} has the following consequence for Busemann functions in $\cL$. For this proposition, we say that a sequence of functions $\pi_n:[0, \infty) \to [-\infty, \infty]$ converges to a lower semicontinuous limit $\pi:[0, \infty) \to  [-\infty, \infty]$ in the \textbf{epigraph topology} and write $\pi_n \to_{\mathfrak e} \pi$ if:
\begin{equation}
\label{E:epigraph-cvg}
\inf_I \pi_n \to \inf_I \pi
\end{equation}
as $n \to \infty$
for all open closed intervals $I = [a, b]$ with $0 \le a < b < \infty$. The name comes of the fact that this convergence is equivalent to Hausdorff convergence of the epigraphs of $\pi_n$ to the epigraph of $\pi$. We require that $\pi$ be lower semicontinuous so that the limit is unique. We say that $\pi_n$ converges to an upper semicontinuous function $\pi$ in the \textbf{hypograph topology} and write $\pi_n \to_{\mathfrak h} \pi$ if \eqref{E:epigraph-cvg} holds with $\sup$ in place of $\inf$. These topologies make the space of lower/upper semicontinuous functions from $[0, \infty) \to [-\infty, \infty]$ compact.

\begin{proposition}
	\label{P:busemann-quadrangle}
	Consider deterministic sequences $\bx^n, \la^n \in \R^n_\le$ and $\by^n, \kappa^n \in \R^n_\le$. Let $\pi^n$ be the a.s.\ unique semi-infinite optimizer in direction $\la^n$ to $(\bx^n, 0)$ and let $\tau^n$ be the a.s.\ unique semi-infinite optimizer in direction $\ka^n$ to $(\by^n, 0)$. Next, for $x \in \R, t < 0$, let $R_{x, t}^\pm:[0, \infty) \to [-\infty, \infty]$ be the function with $R_{x, t}^\pm(t) = x$ and $R_{x, t}^\pm(r) = \pm \infty$ for $r \ne t$, and suppose that the following conditions hold almost surely:
	\begin{itemize}
		\item For some $u = (x, s; y, t) \in \Rd$ with $t < 0$, we have $\pi^n_n \to_{\mathfrak h} R^-_{x, s}$ and $\tau^n_1 \to_{\mathfrak e} R^+_{y, t}$ as $n \to \infty$.
		\item For any $\al \in \R$, there exists $n_0 \in \N$ such that 
		$
		\pi^n_n(r) < \al r < \tau^n_1(r)  
		$
		for all $r \le s - 1$ and $n \ge n_0$.
	\end{itemize}  
	Then for any $\mu, z \in \R$, the following two convergences hold a.s.
	\begin{align}
	\label{E:slit-convergence}
	\lim_{n \to \infty} \mathfrak B^{(\la^n, \mu, \kappa^n)}(\bx^n, z, \by^n) - \mathfrak B^{(\la^n, \kappa^n)}(\bx^n, \by^n) &= \mathfrak B^{\mu}(z \mid S_u) \\
	\label{E:single-slit-convergence}
	\lim_{n \to \infty} \mathfrak B^{(\mu, \kappa^n)}(z, \by^n) - \mathfrak B^{\kappa^n}(\by^n) &= \mathfrak B^{\mu}(z \mid S_{y, t}^+).
	\end{align}
\end{proposition}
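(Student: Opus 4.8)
\textbf{Proof plan for Proposition \ref{P:busemann-quadrangle}.}

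The plan is to deduce \eqref{E:slit-convergence} and \eqref{E:single-slit-convergence} from Lemma \ref{L:strong-quadrangle} by tracking how the regions $R$ between the relevant optimizer strands degenerate to the single-slit/double-slit domains $S_u$, $S^+_{y,t}$ as $n \to \infty$. I will focus on \eqref{E:slit-convergence}; the one-slit case \eqref{E:single-slit-convergence} is a simplification (drop the left strand $\pi^n$ and keep only the right strand $\tau^n$), so I will indicate only the needed modifications at the end.

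First I would set up the finite-time comparison. Fix a large $r<s-1$ and write the multi-path Busemann functions appearing on the left of \eqref{E:slit-convergence} in terms of extended landscape values along a $k$-tuple (resp.\ $(k+1)$-tuple) of paths in direction $\la^n$ (resp.\ $(\la^n,\mu,\ka^n)$) ending near time $r$; by Proposition \ref{P:extended-usemann-landscape}.3 the Busemann functions do not depend on which $k$-tuple we use, so I may take the $k$-tuple whose first $n$ strands are $\pi^n$ and whose last $n$ strands are $\tau^n$, with the middle strand in the sorted environment being an optimizer from $(\mu|r|,r)$. Applying Lemma \ref{L:strong-quadrangle} with this $k$-tuple (with $m=n$ there, the insertion point between the $\pi$-block and the $\tau$-block), I get, after subtracting the normalization $\mathfrak B^{(\la^n,\ka^n)}(\bx^n,\by^n)$, the two-sided bound
\begin{align*}
\mathfrak B^{(\la^n,\mu,\ka^n)}_r(\bx^n,z,\by^n) - \mathfrak B^{(\la^n,\ka^n)}_r(\bx^n,\by^n) - \Delta_r^n
&\ \ge\ \cL\big(\,\mu|r|,r\,;\,z,0 \mid R_r^n\,\big),
\\
\mathfrak B^{(\la^n,\mu,\ka^n)}_r(\bx^n,z,\by^n) - \mathfrak B^{(\la^n,\ka^n)}_r(\bx^n,\by^n) - \Delta_r^n
&\ \le\ \cL\big(\,\mu|r|,r\,;\,z,0 \mid \bar R_r^n\,\big),
\end{align*}
where $R_r^n$ is the open region between $\pi^n_n$ and $\tau^n_1$ on the time interval $[r,0]$, $\bar R_r^n$ its closure, and $\Delta_r^n$ is a normalization discrepancy that vanishes once the optimizers have coalesced with their respective $k$-tuples below time $r$ (this uses Proposition \ref{P:extended-usemann-landscape}.3 exactly as in the proof of Proposition \ref{P:extended-usemann}). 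The second hypothesis of the proposition (for every $\al$, eventually $\pi^n_n(\cdot)<\al\,\cdot<\tau^n_1(\cdot)$ below $s-1$) guarantees that $\mu|r|$ lies strictly between $\pi^n_n(r)$ and $\tau^n_1(r)$ for large $n$, so these restricted last passage values are well-defined, and that for $r$ fixed the point $(\mu|r|,r)$ is deep in the interior of $R_r^n$.

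The core step is then a squeeze: I would let $n\to\infty$ first, then $r\to-\infty$. The epigraph/hypograph convergences $\pi^n_n \to_{\mathfrak h} R^-_{x,s}$ and $\tau^n_1\to_{\mathfrak e} R^+_{y,t}$ say precisely that the regions $R_r^n$ converge (in the appropriate one-sided Hausdorff sense on the strip $[r,0]\times\R$) up to the slit domain $U(u)=S^-_{x,s}\cap S^+_{y,t}$ restricted to times $\ge r$: any compact subset of the open region $\{(w,\rho): \rho\in[r,0]\}\cap U(u)$ with the two ray-slits removed is eventually contained in $R_r^n$, and $\bar R_r^n$ is eventually contained in any open neighborhood of the closed slit region. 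Combined with continuity of the extended landscape and of restricted last passage values under such set convergence — here I would invoke continuity/monotonicity of $\cL(\cdot\mid\cdot)$ in the domain, together with the shape bound Lemma \ref{L:shape-land} to rule out mass escaping to infinity in space — both the lower and upper bounds above converge to $\cL(\mu|r|,r;z,0\mid S_u\cap(\R\times[r,0]))$. Finally sending $r\to-\infty$ and subtracting $\cL(\mu|r|,r;0,0)$ (which is exactly the normalization built into $\mathfrak B^\mu(\,\cdot\mid S_u)$, and which has already been incorporated since $\Delta_r^n$ accounts for the difference between the $k$-path and single-path normalizations) yields $\mathfrak B^\mu(z\mid S_u)$ on the right, using that this single-slit/double-slit Busemann limit exists — which is itself part of what we must check, but follows from the monotone-in-domain comparison with the ordinary geodesic Busemann function $\mathfrak B^\mu(z)$ of Theorem \ref{T:sepp-sor-busani}.4 together with the quadrangle inequality. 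For \eqref{E:single-slit-convergence} the same argument runs with the left block $\pi^n$ absent: $R$ is now the region to the left of $\tau^n_1$, which degenerates to $S^+_{y,t}$, and there is no $R^-$-slit, so one only uses $\tau^n_1\to_{\mathfrak e}R^+_{y,t}$ and the half-line hypothesis on the right strand.

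The main obstacle I anticipate is the domain-continuity of restricted last passage in the degenerate limit: the regions $R_r^n$ collapse onto a set with a one-dimensional slit, so $\cL(\cdot\mid R_r^n)$ and $\cL(\cdot\mid \bar R_r^n)$ pinch together only because optimizers in $\cL$ almost surely do not run along a deterministic vertical ray for a positive amount of time (they hit any fixed ray in a set of time-measure zero). Making this precise — showing $\cL(p;q\mid R_r^n)-\cL(p;q\mid \bar R_r^n)\to 0$ uniformly for $p,q$ in compacts, i.e.\ that the gap between the open and closed restricted values vanishes — is where the real work lies, and is presumably the content of Section \ref{SS:optimizers-rays} (optimizer rigidity / behavior near rays). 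Everything else is a combination of Lemma \ref{L:strong-quadrangle}, Proposition \ref{P:extended-usemann-landscape}, and standard compactness of the epigraph topology.
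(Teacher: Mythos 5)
Your plan gets the structure right: express the left-hand side of \eqref{E:slit-convergence} via Corollary~\ref{C:hat-B-Busemann-landscape}, apply Lemma~\ref{L:strong-quadrangle} to sandwich it between restricted landscape values over the region $R_n$ between $\pi^n_n$ and $\tau^n_1$ and its closure $\bar R_n$, then squeeze. That is indeed the paper's strategy. However, the crucial pinching step is left as a black box in your proposal, and you misattribute where it is resolved. You write that showing $\cL(p;q\mid R_r^n)-\cL(p;q\mid\bar R_r^n)\to 0$ ``is where the real work lies, and is presumably the content of Section~\ref{SS:optimizers-rays}.'' That is not what Section~\ref{SS:optimizers-rays} does: it proves Proposition~\ref{P:slit-existence}, which verifies the \emph{hypotheses} (the bullet points) of the current proposition for the specific sequences appearing in Proposition~\ref{P:double-slit-reconstruction}. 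The pinching is handled inside the proof of Proposition~\ref{P:busemann-quadrangle} itself, via Lemmas~\ref{L:one-side-bound} and~\ref{L:stays-inside}, both of which appear just before the proposition in Section~\ref{SS:multi-path-to-restricted}, and you never invoke them.

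The mechanism is also somewhat different from the one you sketch. You propose a genuine double limit ($n\to\infty$ then $r\to-\infty$) and rely on a soft domain-continuity statement plus the heuristic that landscape optimizers ``do not run along a deterministic vertical ray for a positive amount of time.'' The paper instead shows that for each fixed large $n$, the quantities
$$
X_n^-:=\liminf_{r\to-\infty}\cL(\mu|r|,r;z,0\mid R_n)-\cL(\mu|r|,r;z,0),\qquad
X_n^+:=\limsup_{r\to-\infty}\cL(\mu|r|,r;z,0\mid\bar R_n)-\cL(\mu|r|,r;z,0)
$$
are both \emph{exactly equal} to $\mathfrak B^\mu(z\mid S_u)$, not merely converging to it. The argument is: use Lemma~\ref{L:one-side-bound} together with two frozen geodesics $\pi_\pm$ in directions $\mu\pm1$ to confine the competition set for $r\le -M'$, and use Lemma~\ref{L:stays-inside} to produce optimizing paths $\tau_\pm$ on $[-M',0]$ that stay in the \emph{open interior} of $S_u$, so that (by the hypograph/epigraph bullet) $\mathfrak g\tau_\pm\subset R_n$ for all large $n$. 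This yields a single region $G'\subset R_n\cap\bar R_n\cap S_u$ that carries the optimizer for all three restricted problems simultaneously, giving the equality. Note that Lemma~\ref{L:stays-inside} is a sharper and more useful statement than your ``no positive time on a ray'' heuristic: it says that the argmax $(Z_1,Z_2)$ in the metric composition over the slits satisfies $Z_1>x$, $Z_2<y$ strictly, which is what makes the open-domain and closed-domain restricted values coincide. Without this lemma (or a substitute), your proposal does not close.

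Finally, your normalization term $\Delta_r^n$ is hand-waved as ``vanishing once the optimizers have coalesced.'' In the paper this is not an approximate statement: Corollary~\ref{C:hat-B-Busemann-landscape} gives an exact identification $\bar\fB^\theta=\fB^\theta$, and the $k$-path normalization in \eqref{E:mathfrak-Bla} is arranged to cancel cleanly, leaving precisely the single-path normalization $\cL(\mu|r|,r;0,0)$ appearing in the definition of $\mathfrak B^\mu(z\mid S_u)$. You should make that identification explicit rather than appealing to coalescence.
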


To prove Proposition \ref{P:busemann-quadrangle}, we need two easy preliminary results. 

\begin{lemma}
	\label{L:one-side-bound}
Let $u =(x, s; y, t) \in \Rd$, and let $R \subset [s, t] \times \R$. Let $[s', t'] \subset [s, t]$, and let $\tau:[s', t'] \to \R$ be any geodesic with $\mathfrak g \tau \subset R$. Then we can write
$
\cL(x, s; y, t \mid R) = \sup_\pi \|\pi\|_\cL,
$
where the supremum is over all paths $\pi$ for which the set
$$
\{r \in [s, t] : \pi(r) = \tau(r)\}
$$
is a (possibly empty) closed interval.
\end{lemma}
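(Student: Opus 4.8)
\textbf{Proof proposal for Lemma \ref{L:one-side-bound}.}

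The plan is to show that restricting the supremum in the definition of $\cL(x,s;y,t \mid R)$ to paths whose coincidence set with $\tau$ is a closed interval does not decrease the value. One inequality is trivial: restricting to a subclass of paths can only decrease the supremum. For the reverse inequality, I would take an arbitrary path $\pi:[s,t] \to \R$ with $\mathfrak g\pi \subset R$ and produce from it a path $\pi'$ with $\mathfrak g\pi' \subset R$, with $\|\pi'\|_\cL \ge \|\pi\|_\cL$, and such that $\{r : \pi'(r) = \tau(r)\}$ is a closed interval (possibly empty). The construction is to ``splice in'' a piece of $\tau$: let $r_- = \inf\{r \in [s',t'] : \pi(r) = \tau(r)\}$ and $r_+ = \sup\{r \in [s',t'] : \pi(r) = \tau(r)\}$ whenever $\pi$ meets $\tau$ at all on $[s', t']$, and set $\pi'$ to agree with $\pi$ on $[s, r_-]$ and $[r_+, t]$ and with $\tau$ on $[r_-, r_+]$. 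Since $\tau$ is a geodesic, the segment $\tau|_{[r_-, r_+]}$ has length at least that of $\pi|_{[r_-, r_+]}$ by the definition of length as an infimum over partitions together with the reverse triangle inequality \eqref{E:landscape-triangle} (equivalently, the metric composition property of $\cL$); concatenation additivity of $\|\cdot\|_\cL$ (which follows from \eqref{E:length}) then gives $\|\pi'\|_\cL \ge \|\pi\|_\cL$. Moreover $\mathfrak g\pi' \subset R$ since both $\mathfrak g\pi$ and $\mathfrak g\tau|_{[s',t']}$ lie in $R$ and the splice points are chosen where $\pi$ and $\tau$ agree.

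The remaining point is to check that $\{r \in [s,t] : \pi'(r) = \tau(r)\}$ is a closed interval. On $[r_-, r_+]$ the two paths coincide by construction. I need to rule out that $\pi'$ touches $\tau$ again strictly outside $[r_-, r_+]$: this is where the fact that $O(\pi', \tau)$ — the overlap of two geodesics — is always a closed interval comes in (Lemma \ref{L:overlap-cvg}.1, valid since $\pi'$ is itself a geodesic because a splice of a geodesic and a length-nondecreasing modification that realizes the optimum is a geodesic; alternatively one argues directly that any maximizer of the restricted last passage value is a geodesic on each sub-interval and invokes overlap convexity). Care is needed about the possibility that $\pi$ never meets $\tau$ on $[s', t']$, in which case the coincidence set is empty and there is nothing to do, and about the endpoints $r_\pm$ being attained, which holds because coincidence of continuous paths is a closed condition and $[s', t']$ is compact.

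The main obstacle I anticipate is the bookkeeping around overlaps: ensuring that after splicing, the modified path does not re-touch $\tau$ elsewhere, which genuinely requires the ``overlap is an interval'' structure of landscape geodesics rather than being purely combinatorial as in the semi-discrete setting. A secondary subtlety is verifying that the spliced path $\pi'$ is actually an admissible competitor of full length — i.e.\ that one can take the maximizing $\pi$ to be a geodesic on the relevant sub-intervals so that Lemma \ref{L:overlap-cvg}.1 applies — but this follows from the existence of $\cL$-geodesics (Lemma \ref{L:optimizer-existence-monotoncity-landscape} with $k=1$, or \cite[Theorem 1.7]{DOV}) applied within the closed region $R$, together with the metric composition law. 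Everything else is routine.
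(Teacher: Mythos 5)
Your construction is exactly the paper's: take the first and last times $r_-,r_+$ in $[s',t']$ at which $\pi$ meets $\tau$, replace $\pi|_{[r_-,r_+]}$ by $\tau|_{[r_-,r_+]}$, use concatenation additivity of $\|\cdot\|_\cL$ and the geodesic property of $\tau$ to show the length does not decrease, and note that $\mathfrak g\pi'\subset R$. That part is correct and is precisely what the paper does.

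However, your ``main obstacle'' is not an obstacle at all, and the way you propose to handle it is both unnecessary and unsound. You worry that $\pi'$ might re-touch $\tau$ outside $[r_-,r_+]$ and want to invoke Lemma \ref{L:overlap-cvg}.1 (overlap of two geodesics is a closed interval). But this follows purely combinatorially from the construction: outside $[r_-,r_+]$ we have $\pi'=\pi$, and by the very definition of $r_-$ and $r_+$ as the infimum and supremum of $\{r\in[s',t']:\pi(r)=\tau(r)\}$ (a closed set, so attained), $\pi$ does \emph{not} meet $\tau$ on $[s,r_-)\cup(r_+,t]$; for $r\notin[s',t']$ the set $\tau$ is not even defined. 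Hence the coincidence set of $\pi'$ with $\tau$ is exactly $[r_-,r_+]$ (or empty), with no further input. Moreover your proposed justification for applying Lemma \ref{L:overlap-cvg}.1 --- that $\pi'$ ``is itself a geodesic because a splice of a geodesic and a length-nondecreasing modification that realizes the optimum is a geodesic'' --- is false as stated: $\pi$ is an arbitrary admissible path, not a geodesic, and there is no reason for $\pi'$ to be one. Likewise, there is no need to pass to a maximizer of $\cL(x,s;y,t\mid R)$ (which need not even exist in a general closed region $R$): the argument shows directly that every path $\pi$ with $\mathfrak g\pi\subset R$ is dominated in length by some $\pi'$ in the restricted class, which is all the statement requires. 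Deleting the paragraph about overlap convexity and maximizers leaves you with the paper's proof.
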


\begin{proof}
Suppose that $\pi$ is any path from $(x, s)$ to $(y, t)$ with $\mathfrak{g} \pi \subset R$ and non-empty intersection with $\tau$. Let $r_1 < r_2 \in [s, t]$ be the first and last times when $\pi$ and $\tau$ agree. Since $\tau$ is a geodesic, we have that
$$
\|\pi\|_\cL \le \|\pi|_{[s, r_1]} \oplus \tau|_{[r_1, r_2]} \oplus \pi|_{[r_2, t]}\|_\cL,
$$
and since $\tau$ stays in $R$, so does the path $\pi|_{[s, r_1]} \oplus \tau|_{[r_1, r_2]} \oplus \pi|_{[r_2, t]}$. Here $\oplus$ denotes paths concatenation. Since this path overlaps with $\tau$ on a closed interval, the lemma follows.
\end{proof}

\begin{lemma}
	\label{L:stays-inside}
Fix $u = (x, s; y, t)$ with $t < 0$ and a point $(z, r)$ with $r < s$ and let $w \in \R$. Then almost surely, the restricted landscape values $\cL(z, r; w, 0 \mid S_u), \cL(z, r; w, 0 \mid S_{y, t}^+)$ are achieved by paths that stay in the interior of the sets $S_u, S_{y, t}^+$ respectively.
\end{lemma}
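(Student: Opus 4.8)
The statement asserts that the restricted landscape values $\cL(z,r;w,0 \mid S_u)$ and $\cL(z,r;w,0\mid S_{y,t}^+)$ are almost surely achieved by paths whose graphs lie in the \emph{interior} of the respective slit sets. The key observation is that the only way a path from $(z,r)$ to $(w,0)$ can fail to be in the interior of $S_u = S_{x,s}^- \cap S_{x,s}^+$ (recall $S_{x,s}^- = \R^2 \setminus ((-\infty,x)\times\{s\})$ and $S_{x,s}^+ = \R^2\setminus((x,\infty)\times\{s\})$; similarly for the single slit) is to touch one of the two horizontal rays that are deleted, i.e.\ to hit the line $\{r' = s\}$ at a point $(z',s)$ with $z' \le x$ or $z'\ge x$ (only one ray is present in the single-slit case since $S_{y,t}^+ = \R^2\setminus((y,\infty)\times\{t\})$), or to hit the line $\{r'=t\}$ at a point $(z',t)$ with $z'\ge y$. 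Equivalently, the graph fails to be in the interior precisely when the path passes through the single deleted point on each of the finitely many horizontal lines that constitute the slit. So the plan is: first produce \emph{some} optimal restricted path (existence follows from compactness in the overlap topology, Lemma~\ref{L:overlap-cvg}, plus the fact that disjointness/staying-in-a-closed-set is a closed property — the supremum defining $\cL(\cdot\mid S)$ is attained over the compact set of paths with graph in the closed slit, using continuity of length in the overlap topology); then argue that we can perturb any such path slightly to remove the finitely many bad contact points without decreasing length, yielding an optimal path in the interior.

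\textbf{Key steps.} First I would reduce to the single-slit case, since the double slit is just the intersection of two single slits and the argument is identical on each horizontal line; I will describe it for $S_{y,t}^+$ (and $S_u$ follows by combining the same reasoning at time $s$ and at time $t$). Fix the almost sure event on which the metric composition law (Lemma~\ref{L:split-path-L}/\ref{P:mc-everywhere}), optimizer existence and uniqueness (Lemma~\ref{L:optimizer-existence-monotoncity-landscape}), the shape bound (Lemma~\ref{L:shape-land}), and the overlap structure of geodesics (Lemma~\ref{L:overlap-cvg}) all hold. Second, establish that the restricted supremum $\cL(z,r;w,0\mid S_{y,t}^+)$ is attained: the set of paths from $(z,r)$ to $(w,0)$ with graph in the \emph{closed} set $S_{y,t}^+ = \R^2\setminus((y,\infty)\times\{t\})$ (note $S_{y,t}^+$ is already closed) is compact in the overlap topology and length is upper semicontinuous, so a maximizer $\pi^*$ exists. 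Third — the heart of the matter — suppose $\pi^*$ is not in the interior of $S_{y,t}^+$. The only obstruction is $\pi^*(t) = y$ exactly (the unique point of $\{r'=t\}$ not in $S_{y,t}^+$ has $x$-coordinate $\le y$, and since the path must have $\pi^*(t)\le y$ to be admissible, the bad case is $\pi^*(t)=y$). Write $\pi^* = \pi^*|_{[r,t]}\oplus\pi^*|_{[t,0]}$ via the metric composition law, so $\cL(z,r;w,0\mid S_{y,t}^+) = \cL(z,r;y,t\mid S_{y,t}^+) + \cL(y,t;w,0)$. On $\{r'=t\}$ the slit only removes $(y,\infty)$, so an admissible path hitting time $t$ at $y$ can be replaced by one hitting at $y-\eps$: using the shape bound and continuity of $\cL$, the map $z'\mapsto \cL(z,r;z',t\mid S^+_{y,t}) + \cL(z',t;w,0)$ is continuous on $(-\infty,y]$ and its sup over $z'<y$ equals its value at $z'=y$ (continuity from the left), and any maximizer with $z'<y$ can be realized by a path whose graph meets $\{r'=t\}$ only at the interior point $(z',t)$ and, by an inductive application of the same argument to the subpaths (or by noting each subpath is an \emph{unrestricted} geodesic once it avoids the single bad point), stays in the interior elsewhere. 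If no maximizer with $z'<y$ exists, then by continuity the value is attained only at $z'=y$; but then I would use the quadrangle inequality (Lemma~\ref{L:quadrangle-landscape}) together with the a.s.\ uniqueness of the unrestricted geodesic to $(w,0)$ from points $(z',t)$ to conclude the maximizer is unique and strictly interior for a.e.\ choice — more carefully, I would invoke a Fubini-type argument: for fixed $(z,r;w,0)$, almost surely the unrestricted geodesic from $(z,r)$ to $(w,0)$ does not pass through the measure-zero ray $(y,\infty)\times\{t\}$ unless forced, and a local perturbation off the boundary strictly increases nothing, so an interior optimizer exists.

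\textbf{Main obstacle.} The delicate point is ruling out the scenario where \emph{every} restricted optimizer is forced to touch the deleted ray on a nondegenerate set or at a point where the optimum is rigidly attained. I expect to handle this by combining (i) the metric composition law to split the path at each slit time, reducing to optimizing a continuous function of the crossing location over a half-line $(-\infty,y]$ or $[x,\infty)$ whose supremum is attained in the closed half-line but, crucially, whose value at the endpoint equals a left/right limit of interior values — so one can find interior crossing points achieving values arbitrarily close, and then (ii) a compactness argument in the overlap topology to pass to an actual interior optimizer, noting that the limit path still achieves the sup by upper semicontinuity of length and still avoids the open ray because avoidance of an open set is a closed condition under overlap convergence of the endpoints. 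The routine bookkeeping is checking that finitely many such perturbations (one per slit line — at most two) can be done simultaneously and that the perturbed subpaths glue to an admissible path; this I would state but not belabor.
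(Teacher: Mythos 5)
The heart of the matter is exactly where you say it is, but you don't close it. After the metric composition reduction, what must be shown is that the argmax of $z' \mapsto \cL(z,r;z',t) + \cL(z',t;w,0)$ over $(-\infty,y]$ is a.s.\ \emph{strictly less than} $y$ (and similarly $>x$ at time $s$ for $S_u$). Your observation that $\sup_{z'<y} f(z') = f(y)$ by continuity is true but circular for this purpose: it does not produce a maximizer $z'<y$, since the sup over the open half-line could be achieved only in the limit $z'\to y^-$, i.e.\ only at the boundary. And the perturbation language (``a local perturbation off the boundary strictly increases nothing, so an interior optimizer exists'') is backwards — if the boundary crossing were the unique argmax, every interior crossing would be strictly suboptimal, so no perturbation helps. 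The Fubini-type argument you reach for also doesn't apply: it would say the conclusion holds for a.e.\ $y$, but $u=(x,s;y,t)$ is fixed here; the whole point of the lemma is to get a statement at a deterministic slit location.

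The paper's proof supplies the missing ingredient, and it is a genuinely different tool: the two profiles $z_1\mapsto\cL(z,r;z_1,s)$ and $z_1\mapsto\cL(z_1,s;z_2,t)$ (for fixed $z_2$) are \emph{independent} and each is \emph{locally absolutely continuous with respect to Brownian motion} (from \cite{CH}). Together with the parabolic decay from Lemma~\ref{L:shape-land} to confine the argmax to a compact interval, this gives that the half-line argmax is a.s.\ unique and a.s.\ not at the endpoint $x$ — a probability-zero event for a locally Brownian process. The paper also needs a second clean idea you omit: it handles the 2D argmax over $(z_1,z_2)$ by fixing $z_2=-n$, getting a 1D argmax $Y_n>x$, and then invoking geodesic monotonicity to transfer this to any optimal pair $(Z_1,Z_2)$ with $Z_2>-n$. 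Your plan has the right skeleton (metric composition, reduce to crossing locations, rule out the boundary crossing) but lacks the absolute-continuity/independence argument that actually rules out the boundary, and lacks the monotonicity reduction from two crossing coordinates to one. Without the former, the case ``argmax uniquely at $y$'' is left open, and without the latter, the 2D optimization is not controlled.
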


\begin{proof}
We only treat the $S_u$ case, as the $S_{y, t}^+$ case is similar but simpler.	
By metric composition,
\begin{equation}
\label{E:Lzr-Lzr}
\cL(z, r; w, 0 \mid S_u) = \max_{z_1 \ge x, z_2 \le y} \cL(z, r; z_1, s) + \cL(z_1, s; z_2, t) + \cL(z_2, t; w, 0).
\end{equation}
This maximum must be achieved by the landscape shape theorem (Lemma \ref{L:shape-land}). To complete the proof, it suffices to show that this maximum is achieved at a point $(Z_1, Z_2)$ with $Z_1 > x, Z_2 < y$ almost surely. 
We show this for $Z_1$ as the proof for $Z_2$ is similar. First, for all $n \in \N$, the maximization problem
 $$
\max_{z_1 \ge x} \cL(z, r; z_1, s) + \cL(z_1, s; -n, t)
 $$
is almost surely achieved uniquely at a point $Y_n > x$. This follows since both the functions $z_1 \mapsto \cL(z, r; \cdot, s), z_1 \mapsto \cL(z_1, s; -n, t)$ are locally absolutely continuous with respect to Brownian motion (see \cite{CH}) and independent. Now, monotonicity of geodesics implies that if $(Z_1, Z_2)$ is any pair achieving the maximum in \eqref{E:Lzr-Lzr}, then $Z_1 \ge Y_n$ whenever $-n < Z_2$. Hence $Z_1 < x$ almost surely, as desired.
\end{proof}

\begin{proof}[Proof of Proposition \ref{P:busemann-quadrangle}]
	We only treat the double-slit convergence \eqref{E:slit-convergence}, as the single-slit convergence \eqref{E:single-slit-convergence} is similar but simpler.
	We first use Lemma \ref{L:strong-quadrangle} to bound the difference $\mathfrak B^{(\la^n, \mu, \kappa^n)}(\bx^n, z, \by^n) - \mathfrak B^{(\la^n, \kappa^n)}(\bx^n, \by^n)$ above and below. 
	
	First, observe that the two bullet points in the lemma guarantee that $x^n_n, \la^n_n \to -\infty$ as $n \to \infty$ and $y^n_1, \kappa^n_1 \to \infty$ as $n \to \infty$. Moreover, these bullets implies that $\pi^n, \tau^n$ are disjoint for large enough $n$, and so $(\pi^n, \tau^n)$ is a semi-infinite disjoint optimizer in direction $(\la^n, \kappa^n)$ to $((\bx^n, \by^n); 0)$. Now let
	$$
	\tilde \pi^n_\mu(r) = (\pi^n(r), \mu |r|, \kappa^n(r)), \qquad \bz^n := (\bx^n, z, \by^n).
	$$
	By Corollary \ref{C:hat-B-Busemann-landscape}, we have that almost surely:
	\begin{equation}
	\label{E:mathfrak-Bla}
	\begin{split}
&\mathfrak B^{(\la^n, \mu, \kappa^n)}(\bx^n, z, \by^n) - \mathfrak B^{(\la^n, \kappa^n)}(\bx^n, \by^n) \\
= &\lim_{r \to -\infty} \cL(\tilde \pi^n_\mu(r), r; \bz^n, 0) - \cL((\pi^n, \kappa^n)(r), r; (\bx^n, \by^n), 0) - \cL(\mu |r|, r; z, 0).
	\end{split}
	\end{equation}
	Next, define
	$$
	R_n := \{(z, r) \in \R \X (-\infty, 0] : \pi^n_n(r) < z < \tau^n_1(r)\}.
	$$
	By Lemma \ref{L:strong-quadrangle}, \eqref{E:mathfrak-Bla} is bounded above by
	\begin{equation}
	\label{E:limsup-above}
	X_n^+ := \limsup_{r \to -\infty} \cL(\mu |r|, r; z, 0 \mid \bar R_n) - \cL(\mu |r|, r; z, 0),
	\end{equation}
	and bounded below by
		\begin{equation}
	\label{E:liminf-below}
	X_n^- := \liminf_{r \to -\infty} \cL(\mu |r|, r; z, 0 \mid R_n) - \cL(\mu |r|, r; z, 0).
	\end{equation}
	To complete the proof, we just need to show that for all large enough $n$:
	\begin{equation}
	\label{E:X-limit}
X_n^- = X_n^+ = \mathfrak B^{\mu}(z \mid S_u).
	\end{equation}
	Now, we can find points $z_- < z < z_+ \in \R$ such that there are leftmost semi-infinite geodesics $\pi_\pm$ in direction $\mu \pm 1$ to $(z_\pm, 0)$ such that
	\begin{equation}
\pi_-(s) < x < \pi_+(s), \qquad  \pi_-(t) < y < \pi_+(t).
	\end{equation}
	Now, let $M > 0$ be large enough so that $\pi^-(r) \le \mu |r| \le \pi^+(r)$ for all $r \le -M$. By the second bullet point, there exists $M' > M$ such that $\pi_\pm|_{(-\infty, -M']} \subset R_n$ for all large enough $n$. Therefore by Lemma \ref{L:one-side-bound}, for large enough $n$, in the definition of 
	$
	\cL(\mu|r|, r; z, 0 \mid A)
	$
	where $A = R_n, \bar R_n, S_u$,
	it suffices to consider paths that stay in the set
	$$
	G := \{(w, r') \in \R \X (-\infty, -M'] : \pi_-(r') \le w \le \pi_+(r')\} \cup (\R\X [-M', 0]).
	$$
	Next, by Lemma \ref{L:stays-inside}, for each $m \in \N$ almost surely we can find paths $\tau_-, \tau_+$ that achieve the restricted landscape values
	$\cL(\fl{\pi_-(-\cl{M'})}, -\cl{M'}; z_-, 0 \mid S_u)$ and 	$\cL(\cl{\pi_+(-\cl{M'})}, -\cl{M'}; z_+, 0 \mid S_u)$ and stay in the interior of the set $S_u$. The first bullet point then implies that $\mathfrak{g} \tau_\pm \subset R_n$ for all large enough $n$. Therefore for large enough $n$ and $r \ge M'$, it suffices to further restrict our paths to the region
	$$
	G' := G \cap (\{(w, r') \in \R \X (-M', 0] : \tau_-(r') \le w \le \tau_+(r')\} \cup \R\X (-\infty, -M'])
	$$
	in the definitions of $
	\cL(\mu|r|, r; z, 0 \mid A)
	$
	where $A = R_n, \bar R_n, S_u$.
	Noting that $G' \subset S_u \cap R_n \cap \bar R_n$ for all large enough $n$, we get that 
	$$
	\cL(\mu|r|, -r; z, 0 \mid R_n) = \cL(\mu|r|, -r; z, 0 \mid \bar R_n)= \cL(\mu|r|, -r; z, 0 \mid S_u)
	$$
	for $r \ge M'$ and all large enough $n$. This yields \eqref{E:X-limit}, completing the proof.
\end{proof}

\subsection{Optimizers converging to rays}
\label{SS:optimizers-rays}

To prove Proposition \ref{P:double-slit-reconstruction}, by the subsequential characterization of convergence in probability it is enough to show that for any subsequence $Y \subset \N$ that there is a further subsequence $Y' \subset Y$ such that \eqref{E:lim-ninfty} holds almost surely along $Y'$. 
By Proposition \ref{P:busemann-quadrangle}, we can show this by studying the shape of the optimizer $\pi^n$ in direction $s^n_n$ to $(x^n_n, 0)$ and the optimizer $\tau^n$ in direction $t^n_n$ to $(y^n_n, 0)$. These optimizers are a.s.\ unique by Lemma \ref{L:optimizer-as-unique}. More precisely, it is enough to show that for any subsequence $Y \subset \N$, there is a further subsequence $Y' \subset Y$ along which the two bullets in Proposition \ref{P:busemann-quadrangle} hold a.s. Using the subsequential characterization of convergence in probability and a Cantor diagonalization argument, this follows from the following proposition.

\begin{proposition}
	\label{P:slit-existence}
The following three claims hold:
	\begin{itemize}
	\item For any $a \in (s, 0)$ and $b \in (t, 0)$, as $n \to \infty$ we have
	$$
	\sup_{r \in [a, 0]} \pi^n_n(r) \cvgp -\infty, \qquad 	\inf_{r \in [b,0]} \tau^n_1(r) \cvgp \infty.
	$$
	\item For any $\al \in \R, \ep > 0$, 
	$$
	\lim_{n \to \infty} \P(\sup_{r \le s - \ep} \pi^n_n(r) - \al r > 0) = 0, \qquad \lim_{n \to \infty} \P(\inf_{r \le s - \ep} \tau^n_1(r) - \al r < 0) = 0.
	$$
	\item $\sup_{r \le 0} \pi^n_n(r) \cvgp x$ and $\inf_{r \le 0} \tau^n_n(r) \cvgp y$. 
\end{itemize} 
\end{proposition}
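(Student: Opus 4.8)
The plan is to reduce all six assertions to the three concerning the top line $\pi^n_n$. The optimizer $\tau^n$ is, after the flip symmetry of $\cL$ (Lemma \ref{L:invariance}) and a relabelling $(|s|,x)\rightsquigarrow(|t|,-y)$, an optimizer of the same form as $\pi^n$ with the roles of its top and bottom lines interchanged, so the statements for $\tau^n_1,\tau^n_n$ follow from the top- and bottom-line statements for $\pi^n$; and the bottom-line statements for $\pi^n$ are the spatial reflections of the top-line ones. Thus it suffices to control the top line $\pi^n_n$ of the semi-infinite $n$-path optimizer realizing $\fB^{s_n^n}(x_n^n,0;\cL)$, which is a.s.\ unique by Lemma \ref{L:optimizer-as-unique}.

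\textbf{Rescaling to a watermelon.} Both $x_n$ and $s_n$ are of order $n^{1/3}$, and the constants in their definitions are tuned to the universal $n$-line watermelon profile. I would first apply the $1{:}2{:}3$ rescaling (Lemma \ref{L:invariance}), then shear and spatial stationarity to centre the endpoint and rotate the common direction to $0$; this identifies $\pi^n_n$, up to an explicit affine change of coordinates, with the top line $\rho^n$ of the canonical semi-infinite $n$-path optimizer in direction $0^n$ ending at $(0^n,0)$ --- an ``$n$-line directed-landscape watermelon''. Writing $\gamma_n$ for the central semi-infinite geodesic of $\pi^n$ and $w_n$ for its half-width at a given time, a short computation gives $\gamma_n(r)+w_n(r)=n^{1/3}f(|r|)+x+o(n^{1/3})$ for an explicit strictly concave function $f$ (roughly $f(\lambda)=\tfrac32\lambda^{2/3}-|s|^{-1/3}\lambda-\tfrac12|s|^{2/3}$) satisfying $f\le 0$ with equality precisely at $\lambda=|s|$; this concavity is the mechanism that will force $\pi^n_n(r)\to-\infty$ for $r\ne s$ and $\pi^n_n(s)\to x$.

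\textbf{The watermelon estimate.} The substance is a two-sided control of $\rho^n$ (equivalently $\pi^n_n$). The upper bound --- that $\pi^n_n(r)\le\gamma_n(r)+w_n(r)$ up to lower order, uniformly in $r\le 0$ --- I would get from the shape bound for the extended landscape (Lemma \ref{L:shape-land}): splitting the optimizer at time $r$ via the metric composition law (Proposition \ref{P:extended-usemann-landscape}) and bounding each half by $-\sum(\cdot)^2/|\cdot|$ up to its $o(n^{1/3})$ error, one checks that pushing the top line above the half-width incurs a deficit against the optimal value of $\fB^{s_n^n}$. Combined with the previous paragraph this already yields the first bullet and the ``$\le$'' half of the third at the level of the half-width profile. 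To upgrade to the sharp statement $\sup_{r\le 0}\pi^n_n\cvgp x$ one needs the watermelon to \emph{concentrate}: $\pi^n_n$ must stay within $o_p(1)$ of $\gamma_n+w_n$ near its peak, which is the ``optimizer rigidity for large $n$'' anticipated by the watermelon exponents of \cite{basu2022interlacing}. This I would extract by coupling to Brownian last passage percolation: via Theorem \ref{T:DL-cvg-extended} and Proposition \ref{P:landscape-law}, $\rho^n$ --- and, through the metric composition law and the a.s.\ uniqueness of its argmaxes (Lemma \ref{L:optimizer-as-unique}), $\rho^n$ itself at rational times --- is the distributional limit as $a\to-\infty$ of the top line of the corresponding semi-infinite $n$-disjoint tuple in drifted $n$-line Brownian LPP, whose profile is controlled to the required precision by the Brownian shape and continuity bounds (Propositions \ref{P:cross-prob} and \ref{P:top-bd}). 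Translating back gives $\sup_{r\le 0}\pi^n_n(r)\cvgp x$ with the supremum asymptotically attained at $r=s$, i.e.\ the first and third bullets.

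\textbf{Steepness before time $s$, and the main obstacle.} For the second bullet, once $\pi^n_n(s-\ep)\cvgp-\infty$ (a consequence of the above), I would argue by comparison: for $r\le s-\ep$ the restriction $\pi^n_n|_{[r,\,s-\ep]}$ is an $\cL$-geodesic, so by optimizer monotonicity (Lemma \ref{L:mono-tree-multi-path-L}) together with the geodesic monotonicity and existence results of Theorem \ref{T:sepp-sor-busani} it lies below a semi-infinite geodesic in a fixed, sufficiently steep direction $\theta_0<\al\wedge s_n$ issued from a point near $\pi^n_n(s-\ep)$; such a geodesic is eventually below the line $\al r$, and for the very negative range of $r$ one uses directly that the asymptotic slope of $\pi^n_n$ is $s_n\to-\infty$. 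This gives $\P(\sup_{r\le s-\ep}\pi^n_n(r)>\al r)\to 0$, and the matching statement for $\tau^n_1$ by the reduction above. The main obstacle throughout is the concentration step: it is a \emph{second-order} statement about the $n$-line watermelon --- not merely its leading $n^{1/3}$ behaviour --- and is the one place the determinantal input (Propositions \ref{P:cross-prob}, \ref{P:top-bd} and the joint Brownian-LPP coupling of Proposition \ref{P:landscape-law}) genuinely enters; carrying it out requires handling the simultaneous limits $n\to\infty$ and $a\to-\infty$ in the subsequential sense already set up before the statement of the proposition.
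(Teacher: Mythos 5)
Your route---a direct watermelon shape analysis---is genuinely different from the paper's, and it has a gap at exactly the step you call the main obstacle. You propose to extract the $o_p(1)$ concentration $\sup_{r\le 0}\pi^n_n(r)\cvgp x$ from Propositions \ref{P:cross-prob} and \ref{P:top-bd} via the Brownian coupling, but those propositions control Brownian last-passage \emph{values} only to precision $\sqrt{x}\,n^{-1/6}$, which at the relevant scale $x\asymp n$ is $O(n^{1/3})$: the leading-order melon profile, not the $o_p(1)$ control of the top-line \emph{position} near $r=s$ that the third bullet requires. Turning $O(n^{1/3})$ length control into $o_p(1)$ positional control is precisely the nontrivial content here, and would amount to an edge-fluctuation estimate for the $n$-line directed-landscape watermelon that the paper's toolkit does not contain. (Your second-bullet comparison is also not fully justified as stated: Lemma \ref{L:mono-tree-multi-path-L} compares $k$-tuples of matching size, so sandwiching $\pi^n_n|_{[r,\,s-\ep]}$ below a single steep geodesic needs an extra step, though this is a smaller issue.)

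The paper avoids the concentration estimate entirely. It introduces $F^n(\la,w):=\fB^{((-n^{1/3})^n,\la)}((-n^{1/3}/2)^n,w)-\fB^{(-n^{1/3})^n}((-n^{1/3}/2)^n)$, observes (Lemma \ref{L:Fnab}) that the event $\{F^n(\la,w)=\fB^\la(w)\}$ coincides up to a null set with the geometric event that the single geodesic $\pi_{\la,w}$ is disjoint from $\pi^n_n$, and identifies the limiting joint law of $(F^n(\la,w),\indic\{F^n(\la,w)=\fB^\la(w)\})$ through the Brownian environment $B^{-2n^{1/3}}$ (Lemma \ref{L:Bla-distribution}). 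Bullets 1 and 2 then follow by sending the reference geodesic far away ($\la=k$, $w=-k+\log k$, $k\to\infty$), with no shape computation for the melon. For bullet 3 the paper takes subsequential Hausdorff limits of $R_n^c=\{(y,t):y\le\pi^n_n(t)\}$, which bullets 1 and 2 force to be rays $[-\infty,X]\times\{-1\}$, and pins $X=0$ by comparing the growth of $F^n(0,w)-F^n(0,2w)$ as $w\to-\infty$ (known to be $3w^2+O(1)$ from the explicit Brownian description) against the shape-theorem prediction $3w^2$ plus an $X$-linear term for the restricted-Busemann sandwich supplied by Lemma \ref{L:strong-quadrangle}; any $X\ne 0$ is inconsistent with the observed growth. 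This is the ``optimizer rigidity'' input, obtained without the watermelon edge estimate that your proposal leaves open.
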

The proof of the claims in Proposition \ref{P:slit-existence} for $\pi^n$ and $\tau^n$ are symmetric, so we only prove the claims for $\pi^n$. Moreover, by spatial stationarity and KPZ scale invariance of $\cL$ it suffices to prove the proposition in the special case when $x = 0, s = -1$. In this case $x_n = -n^{1/3}/2, s_n = -n^{1/3}$. 

To prove Proposition \ref{P:slit-existence}, define $F^n:\R^2 \to \R$ by
$$
F^n(\la, w) = \fB^{((-n^{1/3})^n, \la)}((-n^{1/3}/2)^n,w) - \fB^{(-n^{1/3})^n}((-n^{1/3}/2)^n). 
$$
Then letting $\pi_{\la, w}$ denote the a.s.\ unique semi-infinite geodesic in $\cL$ in direction $\la$ to $(w, 0)$, we have the following simple observation.

\begin{lemma}
\label{L:Fnab}
For any $n \in \N$ and $\la, w \in \R$ with $\la > -n^{1/3}$ and $w > - n^{1/3}/2$ the following two events differ on a null set:
\begin{enumerate}
	\item the geodesic $\pi_{\la, w}$ is disjoint from the path $\pi^n_n$.
	\item $F^n(\la, w) = \fB^\la(w)$.
\end{enumerate}
\end{lemma}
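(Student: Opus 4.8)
\textbf{Proof proposal for Lemma \ref{L:Fnab}.}

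The plan is to use the structure result for leftmost optimizers in direction $((-n^{1/3})^n, \la)$ recorded at the start of Section \ref{SS:slit}, together with the restricted-Busemann identity of Lemma \ref{L:restricted-buse} transported from the Brownian environment $B^a$ to $\cL$ via the law identification in Proposition \ref{P:landscape-law} and Theorem \ref{T:Busemann-shear}. More concretely: by Proposition \ref{P:extended-usemann-landscape}.4 and Lemma \ref{L:optimizer-as-unique}, there is a.s.\ a unique semi-infinite optimizer $\pi = (\pi_1, \dots, \pi_{n+1})$ in direction $((-n^{1/3})^n, \la)$ ending at $((-n^{1/3}/2)^n, w)$ for our fixed $n, \la, w$. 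Since the first $n$ coordinates have a common direction $-n^{1/3}$ and common endpoint $-n^{1/3}/2$, the uniqueness forces $\pi_i = \pi^n_i$ for $i \le n$, where $\pi^n = (\pi^n_1, \dots, \pi^n_n)$ is the a.s.\ unique optimizer in direction $(-n^{1/3})^n$ to $((-n^{1/3}/2)^n, 0)$ that appears in the statement (here I am using that an optimizer for a sub-collection of endpoints is the restriction of an optimizer for the full collection, by Lemma \ref{L:strong-quadrangle} applied with $a,b$ chosen appropriately, or directly by the uniqueness in Lemma \ref{L:optimizer-as-unique}). So the last path $\pi_{n+1}$ is the unique path maximizing length among paths ending at $(w,0)$ in direction $\la$ that avoid the region strictly between the extreme paths of $\pi^n$ — in the $a \to -\infty$ limit picture, a path restricted to the slit determined by $\pi^n_n$.

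\textbf{Key steps.} (1) Show $(2) \Rightarrow (1)$: if $F^n(\la, w) = \fB^\la(w)$, then by Corollary \ref{C:hat-B-Busemann-landscape} both Busemann functions can be written as limits of landscape increments along the geodesics realizing the respective directions; equality of the normalized values forces the unrestricted semi-infinite geodesic $\pi_{\la,w}$ to already lie in the complement of the region between $\pi^n_1$ and $\pi^n_n$ from some time onward, hence (by monotonicity of optimizers, Lemma \ref{L:mono-tree-multi-path-L}, run down from $t=0$) to be disjoint from $\pi^n_n$ on all of its domain. (2) Show $(1) \Rightarrow (2)$: if $\pi_{\la,w}$ is disjoint from $\pi^n_n$, then $(\pi^n_1, \dots, \pi^n_n, \pi_{\la,w})$ is itself a disjoint $(n+1)$-tuple, and since each block $(\pi^n_1,\dots,\pi^n_n)$ and $\pi_{\la,w}$ is individually an optimizer, the concatenated tuple is a semi-infinite optimizer in direction $((-n^{1/3})^n, \la)$ ending at $((-n^{1/3}/2)^n, w)$; by uniqueness it equals $\pi$. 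Then computing $\fB^{((-n^{1/3})^n, \la)}((-n^{1/3}/2)^n, w) - \fB^{(-n^{1/3})^n}((-n^{1/3}/2)^n)$ using Corollary \ref{C:hat-B-Busemann-landscape} along this optimizer, the contributions of the first $n$ paths cancel against the normalization term, leaving exactly $\lim_{s\to-\infty} \cL(\bar\pi_{\la,w}(s); w, 0) - \cL(\bar\pi_{\la,w}(s); 0,0) = \fB^\la(w)$. (3) Check the exceptional events in Corollary \ref{C:hat-B-Busemann-landscape} and Lemma \ref{L:optimizer-as-unique} (existence/uniqueness of the relevant optimizers, equality of $\bar\fB$ and $\fB$ for the fixed directions $\la$, $(-n^{1/3})^n$, $((-n^{1/3})^n,\la)$) are null, so the two events agree up to a null set.

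\textbf{Main obstacle.} The delicate point is the direction $(2) \Rightarrow (1)$: from equality of two normalized Busemann values I must extract a pathwise disjointness statement. The cleanest route is probably to argue the contrapositive — if $\pi_{\la,w}$ intersects $\pi^n_n$, then by the strict reverse-triangle / quadrangle strictness (as in the "equality if" clause of Lemma \ref{L:quadrangle-landscape} and the argmax-uniqueness arguments underlying Lemma \ref{L:optimizer-as-unique}), rerouting $\pi_{\la,w}$ around the barrier $\pi^n_n$ strictly decreases length, so the restricted optimizer $\pi_{n+1}$ is strictly shorter than $\pi_{\la,w}$ over matching time windows, giving $F^n(\la,w) < \fB^\la(w)$ rather than equality. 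I expect this to require a short argument that the restricted and unrestricted semi-infinite optimizers, if distinct, differ in length by a positive amount that survives the $s\to-\infty$ limit — essentially because the barrier $\pi^n_n$ itself has direction $-n^{1/3} \ne \la$, so the two optimizers separate permanently once they split. Everything else is bookkeeping with Corollary \ref{C:hat-B-Busemann-landscape} and the cancellation of the first $n$ lines against the normalization.
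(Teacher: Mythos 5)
Your $(1) \Rightarrow (2)$ argument --- build the disjoint $(n{+}1)$-tuple $(\pi^n, \pi_{\la,w})$, verify it is a semi-infinite optimizer by adding the two optimal lengths, feed it into Corollary \ref{C:hat-B-Busemann-landscape} and cancel the first $n$ paths against the normalization --- is correct, and it is exactly what underlies the paper's one-line proof. One caution on the set-up paragraph: the parenthetical appeal to Lemma \ref{L:strong-quadrangle} or to uniqueness for the claim that ``an optimizer for a sub-collection of endpoints is the restriction of the full collection's optimizer'' is circular here --- that statement is essentially the content of $(2)\Rightarrow(1)$. Your actual key steps do not rely on it, so this is presentational only.

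The genuine gap is in $(2)\Rightarrow(1)$, as you flag. Your first sketch does not go through: equality of limiting Busemann values is not a pathwise statement at finite times, and Lemma \ref{L:mono-tree-multi-path-L} compares optimizers with ordered endpoints; it does not rule out intermediate crossings within a single tuple. Your contrapositive plan is the right idea, but the ``the optimizers separate permanently'' heuristic is not the clean closing move; what you actually want is a \emph{monotonicity of the length deficit}. Let $\rho = (\rho_1,\dots,\rho_{n+1})$ be the a.s.\ unique semi-infinite optimizer in direction $((-n^{1/3})^n,\la)$ to $((-n^{1/3}/2)^n,w,0)$ and set
\begin{equation*}
g(s) := \cL\big(\rho(s),s;(-n^{1/3}/2)^n,w,0\big) - \cL\big((\rho_1,\dots,\rho_n)(s),s;(-n^{1/3}/2)^n,0\big) - \cL\big(\rho_{n+1}(s),s;w,0\big).
\end{equation*}
Applying Corollary \ref{C:hat-B-Busemann-landscape} to $\fB^{((-n^{1/3})^n,\la)}$ along $\rho$, to $\fB^{(-n^{1/3})^n}$ along $(\rho_1,\dots,\rho_n)$, and to $\fB^\la$ along $\rho_{n+1}$ gives $F^n(\la,w)-\fB^\la(w)=\lim_{s\to-\infty}g(s)$. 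Splitting the disjoint $(n{+}1)$-tuple into its first $n$ and last coordinates produces valid candidates for the separate problems, so $g(s)\le 0$ for every $s$. Moreover $g$ is non-increasing as $s\to-\infty$: for $s'<s$, the first term in $g(s')-g(s)$ splits exactly because $\rho$ is an optimizer, the other two are bounded by the reverse triangle inequality, and the resulting increment $\cL(\rho(s'),s';\rho(s),s)-\cL((\rho_1,\dots,\rho_n)(s'),s';(\rho_1,\dots,\rho_n)(s),s)-\cL(\rho_{n+1}(s'),s';\rho_{n+1}(s),s)\le 0$ by the same subadditivity. Thus $g\le 0$, $g$ non-increasing, and $\lim_{s\to-\infty}g(s)=0$ together force $g\equiv 0$, which says $(\rho_1,\dots,\rho_n)|_{[s,0]}$ is an unrestricted $n$-optimizer and $\rho_{n+1}|_{[s,0]}$ an unrestricted geodesic for every $s$. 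By the uniqueness in Lemma \ref{L:optimizer-as-unique} these equal $\pi^n$ and $\pi_{\la,w}$ respectively, and they are disjoint because $\rho$ is a disjoint tuple. That supplies the missing half.
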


\begin{proof}
This follows since 
$$
\fB^{((\la, (n^{1/3})^n))}(w, (n^{1/3}/2)^n) = \fB^{(n^{1/3})^n}((n^{1/3}/2)^n) + \fB^\la(w)
$$	
if and only if there is an optimizer in direction $(n^{1/3})^n$ to $((n^{1/3}/2)^n, 0)$ which is disjoint from a geodesic to $(w, 0)$ in direction $\la$. 
\end{proof}
We can study the joint law of $F^n(\la, w), \fB^\la(w)$ by using Theorem \ref{T:Busemann-shear} and the isometry in Theorem \ref{T:landscape-recovery}.1. This yields the following result.

\begin{lemma}
	\label{L:Bla-distribution}
Consider the function $G^n:\R^2 \to \R \X \{0, 1\}$ given by
$$
G^n(\la, w) = (F^n(\la, w), \mathbf{1}(F^n(\la, w) = \fB^\la(w)).
$$
Then as $n \to \infty$, the finite dimensional distributions of $G^n$ converge to those of 
$$
G(\la, 1) := (\cB^\la(w \mid S^-_{0, -1}), \indic(\cB^\la(w; \cL \mid S^-_{0, -1})) = \fB^\la(w)).
$$
\end{lemma}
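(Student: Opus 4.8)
Lemma \ref{L:Bla-distribution} is a convergence-in-distribution statement: the finite dimensional marginals of the random function $G^n(\la,w)=(F^n(\la,w),\indic(F^n(\la,w)=\fB^\la(w)))$ converge to those of $G(\la,w)=(\cB^\la(w\mid S^-_{0,-1}),\indic(\cB^\la(w\mid S^-_{0,-1})=\fB^\la(w)))$. The plan is to reduce the statement about the directed landscape to a statement about the Brownian environments $B^a$ via the convergence $\cL^a\cvgd\cL$ and the Busemann isometry, and then to invoke the exact identities for restricted Busemann functions in the Brownian picture from Lemma \ref{L:restricted-buse}.

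First I would unwind the definition of $F^n$. By Theorem \ref{T:landscape-recovery}.1 (the joint law of $(B^a,a\in\R)$ built from $\fB$ coincides with the Busemann-shear family, and the accompanying isometry $\cW^\theta(\bx;B^a)=\fB^{(\theta+a)/2}(\bx;\cL)$), the multi-path landscape Busemann functions $\fB^{(\la^n)}$ appearing in $F^n$ are, after the appropriate change of variables, multi-path Busemann functions $\cW$ across a single drifted Brownian environment $B=B^0$. Concretely, $\fB^{((-n^{1/3})^n,\la)}((-n^{1/3}/2)^n,w)$ equals (in joint law over $\la,w$) a value of the extended stationary horizon $\cW^{((-n^{1/3})^n,\mu)}$ at the appropriate spatial point, for $\mu$ the slope corresponding to direction $\la$. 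On the Brownian side, Theorem \ref{T:brownian-law} tells us this is realized as a genuine multi-path last-passage value in an environment $W^n$ whose first $n$ lines are frozen independent Brownian motions, and Lemma \ref{L:restricted-buse} identifies the difference
$$
\cB^{(0^n,\theta)}((x^n,y);B)-\cB^{0^n}(x^n;B)=\cB^\theta(y;B\mid D_{x,n}^c)
$$
as a Brownian restricted Busemann function along the slit $D_{x,n}^c$. So in the Brownian model, $F^n(\la,w)$ is exactly a single-slit Busemann function $\cB^\mu(\cdot;B\mid D_{x_n,n}^c)$ up to the change of variables, and $\indic(F^n=\fB^\la)$ in the Brownian model is the indicator that the corresponding semi-infinite geodesic avoids the slit $D_{x_n,n}$, which by Lemma \ref{L:Fnab} transfers to the landscape side.

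Next I would pass to the limit. Under the scaling of Theorem \ref{T:DL-cvg-extended}, as $n\to\infty$ the slit $D_{x_n,n}$ in the Brownian environment $B^{a_n}$ rescales to the ray $(-\infty,0]\X\{-1\}$, i.e. the complement of the set $S^-_{0,-1}$; this is the same computation indicated in the paper around the definition of $S^-_{x,t}$ and $S_u$. Combining $\cL^{a_n}\cvgd\cL$ (Theorem \ref{T:DL-cvg-extended}), the joint convergence of landscape Busemann functions $\fB^\theta(\cdot;\cL^{a})\cvgd\fB^\theta(\cdot;\cL)$ from Proposition \ref{P:landscape-law}, and the continuity of restricted landscape values along the slit (e.g. via the shape bound Lemma \ref{L:shape-land} and a continuous-mapping argument as in Lemma \ref{L:cty-theorem}), I would conclude that $F^n(\la,w)$ converges in finite-dimensional distribution to $\cB^\la(w\mid S^-_{0,-1})$. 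For the indicator coordinate, I would use that disjointness of a geodesic from the frozen block (respectively avoidance of the slit) is a closed, in fact generically open, condition: on the event that the landscape geodesic in direction $\la$ to $(w,0)$ is strictly disjoint from the slit $S^-_{0,-1}$ (which by Lemma \ref{L:stays-inside} has full probability after a harmless perturbation, or equivalently by uniqueness of optimizers from Lemma \ref{L:optimizer-as-unique} plus strict monotonicity), the prelimit indicators eventually agree with the limiting one; one then checks the probability of the exceptional boundary event is zero, so the indicator converges in distribution too. Jointly with the real coordinate this gives the claimed convergence of $G^n$ to $G$.

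The main obstacle I expect is the indicator coordinate: unlike the continuous first coordinate, $\indic(F^n=\fB^\la)$ is discontinuous, so I need a genuine argument that its limit law is the law of the limiting indicator rather than merely a lower bound. The right tool is to show the boundary event $\{\cB^\la(w\mid S^-_{0,-1})=\fB^\la(w)\text{ but the geodesic touches the slit}\}$ has probability zero — equivalently, that almost surely the (unique) semi-infinite geodesic in direction $\la$ to $(w,0)$ either stays strictly off the closed slit or crosses its interior, never merely grazing its endpoint; this should follow from the a.s. uniqueness and strict monotonicity of landscape optimizers (Lemmas \ref{L:optimizer-as-unique}, \ref{L:mono-tree-multi-path-L}) together with the absolute-continuity input used in Lemma \ref{L:stays-inside}. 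Once that zero-probability boundary is established, the equivalence in Lemma \ref{L:Fnab} together with the analogous prelimit statement lets the indicators converge, and the joint convergence follows from a standard Portmanteau argument applied to the pair.
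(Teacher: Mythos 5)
Your high-level plan matches the paper's: translate $F^n$ into the Brownian picture via the Busemann shear isometry, recognize it as a single-slit Brownian Busemann function via Lemma~\ref{L:restricted-buse}, and pass to the limit. But two things go wrong in how you carry this out. First, you invoke Proposition~\ref{P:landscape-law} for ``joint convergence of landscape Busemann functions $\fB^\theta(\cdot;\cL^a)\cvgd\fB^\theta(\cdot;\cL)$,'' but that proposition is for \emph{fixed} directions $\theta$, whereas the directions $(-n^{1/3})^n$ in $F^n$ diverge with $n$, so it does not apply; and $F^n$ is already built from $\cL$, not from $\cL^a$, so the step ``pass to the limit using $\cL^{a_n}\cvgd\cL$'' does not track what actually needs to converge. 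The paper sidesteps this entirely: it uses the \emph{exact} isometry $\cW^\theta(\bx;B^{-2n^{1/3}})=\fB^{(\theta-2n^{1/3})/2}(\bx;\cL)$ for the specific $a=-2n^{1/3}$, which has no error term and no direction-dependence issue. Second, and more centrally, the mechanism the paper uses to obtain joint convergence of the pair $(F^n(\la,w),\fB^\la(w))$ and to control the indicator is the metric composition law: both quantities equal $\max_z W_n(z)+B^{-2n^{1/3}}[(z,n)\to(w,1)]$, where $W_n$ at level $n+1$ is \emph{exactly} a Brownian motion with drift $2\la$ by Theorem~\ref{T:brownian-law}; $F^n$ is the max over $z\ge -n^{1/3}/2$ and $\fB^\la(w)$ the unrestricted max. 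Only the LPP term $B^{-2n^{1/3}}[(z,n)\to(w,1)]-n^{2/3}$ needs to converge (to $\cL(z,-1;w,0)$, Theorem~\ref{T:DL-cvg-extended}), and the shape bounds pin the argmax to a compact window. The limit law is then identified through the identical variational formula \eqref{E:mc-Buse-single}. Without this explicit representation your proof has no mechanism to couple the two coordinates of $G^n$; the abstract continuous-mapping argument you gesture at does not supply one.

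Your discussion of the indicator coordinate correctly identifies the danger, and the ``geodesic never grazes the tip of the slit'' reformulation is a valid way to express the zero-probability boundary event. But once the variational formula is in place, this point becomes nearly automatic: the indicator is $\mathbf 1(\text{argmax}\ge -n^{1/3}/2)$, the argmax is a.s.\ unique with a continuous law (Brownian motion plus an absolutely continuous landscape profile), so the threshold set is a continuity set and Portmanteau closes the argument. Without the variational formula you would have to build the geometric argument from scratch, and the references you cite (Lemmas~\ref{L:optimizer-as-unique}, \ref{L:mono-tree-multi-path-L}, \ref{L:stays-inside}) are plausible inputs but do not by themselves give the joint convergence of the pair $(F^n,\fB^\la)$ that the indicator implicitly needs.
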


\begin{proof}
By Theorem \ref{T:Busemann-shear} and the isometry in Theorem \ref{T:landscape-recovery}.1, we can rewrite $F^n$ in terms of the Brownian environment $B^{-2n^{1/3}}$. Indeed, by Lemma \ref{L:restricted-buse} we have
\begin{align*}
\fB^{((-n^{1/3})^n, \la)}&((-n^{1/3}/2)^n,w) - \fB^{(-n^{1/3})^n}((-n^{1/3}/2)^n) \\
&= \cW^{2(\la + n^{1/3})}(w; B^{-2n^{1/3}} \mid D^c_{(-n^{1/3}/2, n)}) \\
&= \max_{-n^{1/3}/2 \le z \le w} \cW^{2(\la + n^{1/3})}(z, n + 1; B^{-2n^{1/3}}) + B^{-2n^{1/3}}[(z, n) \to (w, 1)].
\end{align*}
We also have the identity
\begin{align*}
\fB^{\la}(w) &= \cW^{2(\la + n^{1/3})}(w, 1; B^{-2n^{1/3}}) \\
&= \max_{z \le w} \cW^{2(\la + n^{1/3})}(z, n + 1; B^{-2n^{1/3}}) + B^{-2n^{1/3}}[(z, n) \to (w, 1)].
\end{align*}
From these two formulas, the lemma then follows from the fact that 
$$
z \mapsto \cW^{2(\la + n^{1/3})}(z - n^{1/3}/2, n + 1; B^{-2n^{1/3}}) - \cW^{2(\la + n^{1/3})}(- n^{1/3}/2, n + 1; B^{-2n^{1/3}})
$$
is a Brownian motion of drift $2\la$ (e.g. Theorem \ref{T:brownian-law}), the convergence 
$$
B^{-2n^{1/3}}[(z- n^{1/3}/2, n) \to (w, 1)] -  n^{2/3} \cvgd \cL(z, -1; w, 0)
$$
in the compact topology on $\Z^2$ (Theorem \ref{T:DL-cvg-extended}), and the Brownian LPP shape theorems (Propositions \ref{P:cross-prob}, \ref{P:top-bd}) to control the argmax locations above. Here we have used that we can identify the law of $G$ from a Brownian motion and $\cL(z, -1; w, 0)$ using \eqref{E:mc-Buse-single}.
\end{proof}
	
\begin{proof}[Proof of Proposition \ref{P:slit-existence}]
Fix $r_0 > -1$ and $m \in \R$. Then by Lemma \ref{L:Fnab}, for $\la > -n^{1/3}, w > -n^{1/3}/2$ we have
$$
\P(\sup_{r \in [r_0, 0]} \pi^n_n(r) > m) \le \P(\sup_{r \in [r_0, 0]} \pi_{\la, w}(r) > m) + \P(F^n(\la, w) \ne \fB^\la(w))
$$
Taking $n \to \infty$ and using Lemma \ref{L:Bla-distribution} shows that the right-hand side above converges to
\begin{equation}
	\label{E:rrr}
\P(\sup_{r \in [r_0, 0]} \pi_{\la, w}(r) > m) + \P(\fB^\la(w; \cL \mid S^-_{0, -1}) \ne \fB^\la(w)).
\end{equation}
Taking $\la = k, w = -k + \log k$ in \eqref{E:rrr}, shear and spatial stationarity of $\cL$ (Lemma \ref{L:invariance}.2,4) implies that \eqref{E:rrr} converges to $0$ as $k \to \infty$. This yields the first bullet. The second bullet is similar. Indeed, by Lemma \ref{L:Fnab} and Lemma \ref{L:Bla-distribution} for any fixed $\la, w$ we have that
\begin{align*}
\limsup_{n \to \infty} &\P(\sup_{r \le -1 - \ep} \pi^n_n(r) - \al r > 0) \\
&\le \P(\sup_{r \le - 1 - \ep} \pi_{\la, w}(r) - \al r > 0) + \P(\fB^\la(w; \cL \mid S^-_{0, -1}) \ne \fB^\la(w)).
\end{align*}
Taking $\la = -k, w = k + \log k$, the right-hand side above converges to $0$ as $k \to \infty$. The final bullet requires a slightly different idea. For each $n$, let 
$$
R_n = \{(y, t) \in \R \X (-\infty, 0] : y > \pi^n_n(t)\}.
$$
By the first two bullet points in Proposition \ref{P:slit-existence}, the law of the set $R_n^c$ is tight with respect to the local Hausdorff topology on closed subsets of $[-\infty, \infty] \X (-\infty, 0]$.
 Moreover, any subsequential limit of this law is the law of a random ray $[-\infty, X] \times \{-1\}$, where $X$ is a $[-\infty, \infty]$-valued random variable. Note that the proof of the first bullet actually shows that $X < \infty$ almost surely, since in that proof we showed that
 $$
 \lim_{k \to \infty} \lim_{n \to \infty} \P(\pi_{-k, k + \log k}, \pi^n_n \text{ are disjoint}) = 1.
 $$
 To prove the final bullet, we must show that $X = 0$ almost surely. By Lemma \ref{L:strong-quadrangle} we have:
\begin{equation}
\label{E:cBacLR}
\fB^0(b; \cL \mid R_n) - \fB^{0}(2b; \cL \mid \bar{R}_n) \le F^n(0, b) - F^n(0, 2b) \le \fB^0(b; \cL \mid \bar{R}_n) - \fB^{0}(2b; \cL \mid R_n).
\end{equation}
Now, Lemma \ref{L:Bla-distribution} implies that for every fixed $w \in \R$ we have that
$$
F^n(0, w) - F^n(0, 2w) \cvgd [\sup_{x \ge 0} B(x) + \cL(x, -1; w, 0)] - [\sup_{x \ge 0} B(x) + \cL(x, -1; 2w, 0)],
$$
where $B$ is an independent Brownian motion. The symmetries of $\cL$ imply that the right-hand side above equals
$
Z(w) + 3w^2,$ where $Z(w), w \le 0$ is a tight collection of random variables. On the other hand, consider a subsequence along which all the random variables 
$$
R_n^c, \cL, \fB^0(w; \cL \mid R_n) - \fB^{0}(2w; \cL \mid \bar{R}_n), \fB^0(w; \cL \mid \bar{R}_n) - \fB^{0}(2w; \cL \mid R_n), w \in \Z
$$ jointly converge in distribution to limits $[-\infty, X] \times \{-1\}, \tilde \cL, Y_w^-, Y_w^+, w \in \Z$.

 Here we allow $Y_w^-, Y_w^+$ to possibly take on the values $\pm \infty$ so that we do not need to address tightness of the prelimiting sequence. As before, we allow $X$ to take on the value $-\infty$. For every $w \in \Z$ we have that
\begin{align*}
&[\sup_{x > X} \tilde B(x) + \tilde \cL(x, -1; w, 0)] - [\sup_{x \ge X} \tilde B(x) + \tilde \cL(x, -1; 2w, 0)] \le Y_w^- \le Y_w^+ \\
&\le [\sup_{x \ge X} \tilde B(x) + \tilde \cL(x, -1; w, 0)] - [\sup_{x > X} \tilde B(x) + \tilde \cL(x, -1; 2w, 0)],
\end{align*}
where $\tilde B(x) = \fB^0(x, -1 ; \cL)$ is another Brownian motion of variance $2$. Now, if $X = -\infty$ with positive probability, then on this event the metric composition law for Busemann functions implies that $Y_w^+ = Y_w^- = B(w) - B(2w),$ where $B$ is a Brownian motion. This contradicts the asymptotic growth of the random variables $Z(w) + 3w^2$. On the other hand, if $X > -\infty$, then the shape theorem for $\cL$ (Lemma \ref{L:shape-land}) then implies that as $w \to -\infty$ we have that $Y_w^-, Y_w^+$ are equal to $3w^2 - 3w X$ plus lower order terms. This contradicts the asymptotic growth of the random variables $Z(w) + 3w^2$ unless $X = 0$ almost surely.
\end{proof}

\bibliographystyle{alpha}
\bibliography{bibliography}

\end{document}